\newcommand*{\mailto}[1]{\href{mailto:#1}{\nolinkurl{#1}}}
\newcommand{\arxiv}[1]{\href{http://arxiv.org/abs/#1}{arXiv: #1}}
\newtheorem{theorem}{Theorem}[section]
\newtheorem{lemma}[theorem]{Lemma}
\newtheorem{proposition}[theorem]{Proposition}
\newtheorem{corollary}[theorem]{Corollary}
\newtheorem{hypothesis}[theorem]{Hypothesis}
\newtheorem{remark}[theorem]{Remark}
\newcommand{\R}{{\mathbb R}}
\newcommand{\N}{{\mathbb N}}
\newcommand{\C}{{\mathbb C}}
\renewcommand{\H}{{\mathbb C}_+}
\newcommand{\spr}[2]{\langle #1 , #2 \rangle}
\newcommand{\E}{\mathrm{e}}
\newcommand{\I}{\mathrm{i}}
\newcommand{\ess}{\mathrm{ess}}
\newcommand{\im}{\mathrm{Im}}
\newcommand{\supp}{\mathrm{supp}}
\newcommand{\indik}{\mathbbm{1}}
\newcommand{\linspan}{\mathrm{span}}
\newcommand{\dom}[1]{\mathrm{dom}\left(#1\right)}
\newcommand{\mul}[1]{\mathrm{mul}\left(#1\right)}
\renewcommand{\ker}[1]{\mathrm{ker}\left(#1\right)}
\newcommand{\ran}[1]{\mathrm{ran}\left(#1\right)}
\newcommand{\be}{\begin{equation}}
\newcommand{\ee}{\end{equation}}
\newcommand{\OO}{\mathcal{O}}
\newcommand{\oo}{o}
\newcommand{\ledot}{\,\cdot\,}
\newcommand{\redot}{\cdot\,}
\newcommand{\cB}{\mathcal{B}}
\newcommand{\cH}{\mathcal{H}}
\newcommand{\loc}{\mathrm{loc}}
\newcommand{\Tloc}{\mathrm{T}_\loc}
\newcommand{\Tmax}{\mathrm{T}_{\mathrm{max}}}
\newcommand{\Tmin}{\mathrm{T}_{\mathrm{min}}}
\newcommand{\Tmaxpm}{\mathrm{T}_{\mathrm{max},\pm}}
\newcommand{\Tminpm}{\mathrm{T}_{\mathrm{min},\pm}}
\newcommand{\T}{\mathrm{T}}
\newcommand{\Hloc}{\cH_\loc(\R)}
\newcommand{\Hab}{\cH([a,b))}
\newcommand{\Hoab}{\cH_0([a,b))}
\newcommand{\HR}{\cH(\R)}
\newcommand{\dip}{\upsilon}
\newcommand{\Lmu}{L^2(\R;\mu)}
\newcommand{\F}{\mathcal{F}}
\newcommand{\G}{\mathcal{G}}
\newcommand{\M}{\mathrm{M}}
\newcommand{\D}{\mathcal{D}}
\newcommand{\Pe}{\mathrm{L}}
\newcommand{\U}{\mathcal{F}}
\newcommand{\f}{{\bf f}}
\newcommand{\g}{{\bf g}}
\newcommand{\h}{{\bf h}}
\numberwithin{equation}{section}
\begin{document}

\title[Quadratic pencils associated with the Camassa--Holm flow]{Quadratic operator pencils associated with the~conservative~Camassa--Holm flow}

\author[J.\ Eckhardt]{Jonathan Eckhardt}
\address{School of Computer Science \& Informatics\\ Cardiff University\\ Queen's Buildings \\ 
5 The Parade\\ Roath \\ Cardiff CF24 3AA\\ Wales \\ UK}
\email{\mailto{j.eckhardt@cs.cardiff.ac.uk}}

\author[A.\ Kostenko]{Aleksey Kostenko}
\address{Faculty of Mathematics\\ University of Vienna\\
Oskar-Morgenstern-Platz 1\\ 1090 Wien\\ Austria}
\email{\mailto{duzer80@gmail.com}; \mailto{Oleksiy.Kostenko@univie.ac.at}}

\thanks{{\it Research supported by the Austrian Science Fund (FWF) under Grants No.\ J3455 and P26060}}

\keywords{Sturm--Liouville problems, quadratic operator pencils, (inverse) spectral theory}
\subjclass[2010]{Primary 34L05, 34B07; Secondary 34B20, 37K15}

\begin{abstract}
 We discuss direct and inverse spectral theory for a Sturm--Liouville type problem with a quadratic dependence on the eigenvalue parameter, which arises as the isospectral problem for the conservative Camassa--Holm flow.  
\end{abstract}

\maketitle

\section{Introduction}

 The principal purpose of the present article is to discuss direct and inverse spectral theory for a Sturm--Liouville type problem of the form    
 \begin{align}\label{eqnISP}
  -f'' + \frac{1}{4} f = z\, \omega f + z^2 \dip f, 
 \end{align} 
 where $\omega$ is a real-valued Borel measure on $\R$, $\dip$ is a non-negative Borel measure on $\R$ and $z$ is a complex spectral parameter. 
The significance of this rather specific spectral problem stems from the fact that it arises as the isospectral problem of a particular completely integrable nonlinear wave equation.
 More precisely, it has been identified as an isospectral problem for the two-component Camassa--Holm system \cite{clz06, hoiv12} and it turned out recently \cite{ConservMP} that it also serves as an isospectral problem for global conservative solutions of the Camassa--Holm equation \cite{brco07, grhora12, hora07}.
 Regarding further information about the Camassa--Holm equation, we only refer to a brief selection of articles \cite{bkst09, caho93, co01, coes98, como00, cost00, mc03, mc04}.  

 Inverse spectral and scattering theory for the Sturm--Liouville type problem~\eqref{eqnISP} is of peculiar interest for solving the Cauchy problem for the Camassa--Holm equation and it's two-component generalization. 
 Since the coefficient $\omega$ is allowed to change sign and because of the presence of the measure $\dip$, spectral theory for~\eqref{eqnISP} is outside of most standard theory for Sturm--Liouville problems and requires distinct methods to deal with it. 
 In particular, direct and inverse spectral theory for~\eqref{eqnISP} is still not sufficiently developed for applications to the Camassa--Holm flow (but see \cite{besasz00, be04, bebrwe08, bebrwe12, co01, cogeiv06, LeftDefiniteSL, ConservMP, IsospecCH, gewe14}). 
  Moreover, except for \cite{ConservMP}, all of these references only deal with the case when the measure $\dip$ is not present at all. 
 However, let us also mention that problems similar to~\eqref{eqnISP}  have been studied in \cite{krla79, krla80, la76, lawi98} in the context of indefinite strings, where the authors dealt with the spectral problem in a Krein space setting. 
 
 In this article, we provide a thorough operator theoretic framework to treat the spectral problem~\eqref{eqnISP} which will serve as a solid basis for further investigations on the integrability of the conservative Camassa--Holm flow. 
 More precisely, we will provide basic self-adjointness results for realizations of this spectral problem (on an interval $J\subseteq\R$) in Hilbert spaces of the form 
\begin{align} 
 \cH(J) = H^1(J) \times L^2(J;\dip),
\end{align}
 equipped with a suitable scalar product. 
 These self-adjoint realizations are mostly of an auxiliary nature, whereas the more convenient objects seem to be associated quadratic operator pencils in $H^1(J)$ which will be introduced next.
 We will also introduce (singular) Weyl--Titchmarsh functions, which are basic objects of spectral theory for Sturm--Liouville problems (for further information on singular Weyl--Titchmarsh functions we refer to \cite{fu08, fulalu12, gezi06, ka67, ko49, kosate12}).
 All this will be done for the cases of bounded intervals (in Section \ref{sec:03}), semi-axes (in Section \ref{sec:04}) and the whole line (in Section \ref{secWL}) separately. 
 Even though it could be done at once in principle, we decided to present all these cases separately for the sake of clearness and to avoid distracting case differentiations and awkward notation.
 Since the whole line case is of particular importance for applications to the Camassa--Holm flow, we will furthermore introduce a spectral measure and a spectral transformation in this case as well. 
 In the final Section~\ref{secIST}, we will provide several basic inverse uniqueness theorems for the spectral problem~\eqref{eqnISP} following~\cite{LeftDefiniteSL}. 
 More precisely, we will provide Borg--Marchenko type uniqueness results for the spectral problem on semi-axes as well as some uniqueness results for the whole line.  

 Although our main motivation lies in applications to the conservative Camassa--Holm flow, we think that the present article is also of interest to a wider audience since it provides a new way to treat Sturm--Liouville type problems with a quadratic dependence on the spectral parameter. 
 The theory developed in this article for example also works for more general problems of the form  
 \begin{align}
  - f'' + \chi f = z\, \omega f + z^2 \dip f, 
 \end{align}
 where $\chi$ is a non-negative Borel measure on $\R$.  
 In this context, let us also mention that the spectral problem~\eqref{eqnISP} can be transformed via a Liouville transform to a Schr\"odinger spectral problem with an energy dependent potential 
 \begin{align}\label{eqnSPS}
  - f'' + q f + z\, p f = z^2 f, 
 \end{align}
 provided that the measures $\omega$ and $\dip$ are sufficiently smooth and positive. 
 Spectral problems \eqref{eqnSPS} arise in various contexts and we just mention \cite{hrpr12, jaje72, ka75, sasz96} for further information and references. 
 However, for our applications this transformation is not possible since we have to allow more general coefficients $\omega$ and $\dip$. 
  
\subsection*{Notation} 

 For every open interval $J\subseteq \R$, we denote with $H^1(J)$ and $H^1_0(J)$ the usual Sobolev spaces equipped with the modified scalar product
\be\label{eq:modH1}
 \spr{f}{g}_{H^1(J)} = \frac{1}{4}\int_{J}f(x)g(x)^\ast dx + \int_{J}f'(x)g'(x)^\ast dx, \quad f,\,g\in H^1(J). 
\ee
 With $H_{\mathrm{c}}^1(J)$, we denote the dense subspace of functions in $H^1_0(J)$ which have compact support in $J$. 
 If $J\subseteq\R$ is a general (not necessarily open) interval, then the spaces $H^1(J)$, $H^1_0(J)$ and $H^1_{\mathrm{c}}(J)$ simply denote the respective spaces corresponding to the interior of $J$.  
 Moreover,  we will need the space 
 \begin{align}
H_{\mathrm{loc}}^1(\R) = \{f\in AC_{\mathrm{loc}}(\R) \,|\, fg\in H^1_{\mathrm{c}}(\R)\ \text{for all}\ g\in H^1_{\mathrm{c}}(\R)\}.
 \end{align}

As we are dealing with measure coefficients, we employ the following convenient notation: 
For integrals with respect to some Borel measure $\mu$ on $\R$ we set
\begin{align}
 \int_x^y f d\mu = \begin{cases}
                                     \int_{[x,y)} f d\mu, & y>x, \\
                                     0,                                     & y=x, \\
                                     -\int_{[y,x)} f d\mu, & y< x, 
                                    \end{cases}
\end{align}
rendering the integral left-continuous as a function of $y$. 
 Furthermore, we will make extensive use of the following integration by parts formula for Borel measures $\mu$, $\nu$ on $\R$  (see, for example, \cite[Theorem~21.67]{hest65}):
\begin{align}\label{eqnPI}
\int_{x}^y F(s)d\nu(s) = \left. FG\right|_x^y - \int_{x}^y  G(s+)d\mu(s), \quad x,\,y\in\R,
\end{align}
where $F$, $G$ are left-continuous distribution functions of $\mu$, $\nu$, respectively.

\section{The basic differential equation}\label{secBDE}

Throughout this article, we let $\omega$ be a real-valued Borel measure on $\R$ and $\dip$ be a non-negative Borel measure on $\R$. 
As already mentioned in the introduction, the main object of interest is the inhomogeneous ordinary differential equation 
 \begin{align}\label{eqnDEinho}
  -f'' + \frac{1}{4} f = z\, \omega f + z^2 \dip f + \chi, 
 \end{align}
 where $\chi$ is a complex-valued Borel measure on $\R$ and $z\in\C$ is a complex spectral parameter. 
 Of course, this equation has to be understood in a distributional sense, where the right-hand side is a Borel measure as soon as the function $f$ is at least continuous. 
 To be precise, a solution of the differential equation~\eqref{eqnDEinho} is a locally absolutely continuous function $f$ on $\R$ such that 
 \begin{align}\label{eqnDEint}
  - f'(y) + f'(x) + \frac{1}{4} \int_{x}^{y} f(s)ds = z \int_{x}^{y} f d\omega + z^2 \int_{x}^y f d\dip + \int_{x}^{y} d\chi 
 \end{align}
 for some $y\in\R$ and almost all $x\in\R$. 
 In particular, the derivative of such a solution $f$ has a representative which is locally of bounded variation, such that the limits 
 \begin{align}
  f'(x\pm) = \lim_{\varepsilon\downarrow0} f'(x\pm\varepsilon)
 \end{align}
 exist for all $x\in\R$ and coincide, except possibly for the points where one of our measures has mass. 
 For definiteness, we will always choose the unique left-continuous representative for $f'$ such that~\eqref{eqnDEint} holds for all $x$, $y\in\R$.  

\subsection{Existence and uniqueness}

  As a preliminary step, we will first derive a few basic results about the differential equation~\eqref{eqnDEinho} from the general theory of measure differential equations. 
  In particular, this connection immediately yields the following existence and uniqueness result for our differential equation. 

\begin{lemma}\label{lemEE}
 For every complex-valued Borel measure $\chi$ on $\R$, $c\in\R$ and $d_1$, $d_2$, $z\in\C$ there is a unique solution $f$ of the differential equation~\eqref{eqnDEinho} with 
 \begin{align}\label{eqnDEiv}
 f(c) & = d_1, & f'(c) & = d_2.
 \end{align} 
 If $\chi$ is real-valued as well as $d_1$, $d_2$, $z\in\R$, then the solution $f$ is real-valued too. 
\end{lemma}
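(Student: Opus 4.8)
The plan is to reduce the second-order measure differential equation~\eqref{eqnDEinho} to a first-order system and then invoke the standard existence-and-uniqueness theory for measure differential equations (as in the systems treated by, e.g., Atkinson or in the measure Sturm--Liouville literature). Concretely, introduce the vector-valued unknown $u = (f, f'_-)^{\top}$, where $f'_-$ denotes the left-continuous representative of the derivative fixed after~\eqref{eqnDEint}. Rewriting~\eqref{eqnDEint} in differential form, $u$ should satisfy
\begin{align}
 du = \begin{pmatrix} 0 & 1 \\ 0 & 0 \end{pmatrix} u \, dx + \begin{pmatrix} 0 & 0 \\ \tfrac14 \, dx - z\, d\omega - z^2\, d\dip & 0 \end{pmatrix} u + \begin{pmatrix} 0 \\ d\chi \end{pmatrix},
\end{align}
with the initial condition $u(c) = (d_1, d_2)^{\top}$ from~\eqref{eqnDEiv}. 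The coefficient ``matrix measure'' here is $R = \bigl(\begin{smallmatrix} 0 & dx \\ \frac14 dx - z\, d\omega - z^2 d\dip & 0 \end{smallmatrix}\bigr)$, which is finite on every compact interval since $\omega$, $\dip$ are Borel measures (hence locally finite) and Lebesgue measure is locally finite; the inhomogeneity $(0, d\chi)^{\top}$ is likewise locally finite.

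First I would record the precise correspondence: a locally absolutely continuous $f$ solves~\eqref{eqnDEinho} with the stated initial data if and only if the pair $u=(f,f'_-)$ is a left-continuous, locally-bounded-variation solution of the above first-order system with $u(c)=(d_1,d_2)$. The forward direction is immediate from~\eqref{eqnDEint} together with the fundamental theorem of calculus for the first component; the reverse direction recovers local absolute continuity of $f$ because $df = f'_- \, dx$ has no singular part. Then I would quote the general theorem: a first-order measure differential equation $du = dR\, u + dg$ with $R$ of locally bounded variation and the (automatically satisfied here, since the relevant off-diagonal ``jump'' factor is nilpotent) invertibility condition $\det(I + \Delta R(x)) \neq 0$ at atoms has, for every initial point $c$ and initial vector, a unique solution of locally bounded variation. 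In our case $\Delta R(x) = \bigl(\begin{smallmatrix} 0 & 0 \\ -z\,\omega(\{x\}) - z^2\dip(\{x\}) & 0 \end{smallmatrix}\bigr)$ is strictly lower triangular, so $I + \Delta R(x)$ is unipotent and the condition holds trivially for all $z\in\C$; this is exactly why no resonance phenomenon obstructs solvability here.

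The main obstacle, such as it is, is bookkeeping rather than a genuine difficulty: one must match the somewhat delicate conventions of Section~\ref{secBDE} — the left-continuous choice of $f'$, the convention $\int_x^y = \int_{[x,y)}$, and the appearance of $G(s+)$ in the integration-by-parts formula~\eqref{eqnPI} — against the conventions in whichever reference for measure differential equations is cited, so that the equivalence in the previous paragraph is literally correct (in particular, that atoms of $\omega$ and $\dip$ are handled consistently on both sides). Once that alignment is in place, existence and uniqueness transfer verbatim.

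Finally, the reality statement follows by uniqueness: if $\chi$ is real-valued and $d_1, d_2, z \in \R$, then the complex conjugate $\ol{f}$ also solves~\eqref{eqnDEint} with the same real initial data~\eqref{eqnDEiv} (all the measures $dx$, $\omega$, $\dip$, $\chi$ and the scalars $z$, $z^2$, $d_1$, $d_2$ being real, conjugation leaves every term of~\eqref{eqnDEint} invariant), so $\ol{f} = f$ by the uniqueness part already established, i.e.\ $f$ is real-valued.
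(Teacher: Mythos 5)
Your proposal is correct and takes essentially the same route as the paper, whose proof simply cites the general existence-and-uniqueness theory for first-order linear measure differential systems (Persson; see also Atkinson, Bennewitz): your explicit reduction to $du = dR\,u + (0,d\chi)^{\top}$ together with the observation that $I+\Delta R(x)$ is unipotent is precisely what makes that citation applicable (the only slip is the harmless sign of the inhomogeneity, which should be $-d\chi$ since $f''=\tfrac14 f-z\omega f-z^2\dip f-\chi$). The one place you diverge is the reality statement, which the paper gets by inspecting the fixed-point iteration in the cited reference, whereas your conjugation-plus-uniqueness argument is an equally valid and more self-contained alternative.
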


\begin{proof}
 Existence and uniqueness follows readily from \cite[Corollary 2.2]{pe88} (see also \cite[Section~11.8]{at64}, \cite[Theorem~1.1]{be89},  \cite[Theorem~A.2]{MeasureSL}). 
 An inspection of the fixed-point iteration in \cite{pe88} shows that real-valued data yields real-valued solutions.  
\end{proof}

Of course, we could also prescribe the right-hand limit of the derivative of our solution (instead of the left-hand limit) in~\eqref{eqnDEiv} and still obtain a unique solution. 

In order to provide a representation of solutions to the inhomogeneous differential equation~\eqref{eqnDEinho}, we also consider the  corresponding homogeneous equation, 
 \begin{align}\label{eqnDEho}
  -f'' + \frac{1}{4} f = z\, \omega f + z^2 \dip f.
\end{align}
Using integration by parts, it is readily verified that the usual Wronski determinant 
\begin{align}
W(\theta,\phi)(x) = \theta(x)\phi'(x)-\theta'(x)\phi(x), \quad x\in\R,
\end{align}
of two solutions $\theta$, $\phi$ to the homogeneous differential equation~\eqref{eqnDEho} is constant. 
Indeed, set $F=\phi'$ and $G=\theta$ in \eqref{eqnPI} first and then $F=\theta'$ and $G=\phi$. 
Subtracting one of these relations from the other one shows that $W(\theta,\phi)(x)$ does not depend on $x\in\R$.
Moreover, this constant is non-zero if and only if the solutions $\theta$ and $\phi$ are linearly independent. 
In this case, the pair of solutions $\theta$, $\phi$ is called {\em a fundamental system} of the homogeneous differential equation~\eqref{eqnDEho} if furthermore $W(\theta,\phi)=1$.  
Note that, due to Lemma \ref{lemEE}, such fundamental systems always exist. 

\begin{corollary}\label{cor:inhom}
 Let $\chi$ be a  complex-valued Borel measure on $\R$, $c\in\R$ and $z\in\C$. 
 If $\theta$, $\phi$ is a fundamental system of the homogeneous differential equation~\eqref{eqnDEho}, then any solution $f$ of the differential equation~\eqref{eqnDEinho} can be written as 
\begin{align}\label{eq:inhom}
 f(x) = d_1 \theta(x) + d_2 \phi(x)+\int_c^x{\theta(x)\phi(s)-\theta(s)\phi(x)}d\chi(s), \quad x\in\R,
\end{align}
for some constants $d_1$, $d_2\in\C$. 
\end{corollary}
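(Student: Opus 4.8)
The plan is to use the variation of parameters formula for measure differential equations. First I would verify that the proposed function $f$ in \eqref{eq:inhom} indeed solves \eqref{eqnDEinho} with the correct interpretation: one differentiates under the integral sign, using that $W(\theta,\phi)=1$ to see that the boundary terms from the inner derivative cancel appropriately, and that the measure $\chi$ enters precisely as the inhomogeneity on the right-hand side of \eqref{eqnDEint}. Concretely, writing $g(x) = \int_c^x\{\theta(x)\phi(s)-\theta(s)\phi(x)\}d\chi(s)$, one computes $g'(x+)$ (taking the left-continuous representative) and observes that the terms involving differentiation of the integrand with respect to the outer $x$ give $\int_c^x\{\theta'(x)\phi(s)-\theta(s)\phi'(x)\}d\chi(s)$, while the contribution from the variable endpoint vanishes because the integrand is zero at $s=x$. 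Differentiating once more (in the distributional sense) and using that $\theta$, $\phi$ individually solve the homogeneous equation \eqref{eqnDEho}, the homogeneous part reproduces $\tfrac14 g - z\,\omega g - z^2\dip g$, and the endpoint term from the second differentiation contributes exactly $\int_x^y d\chi$, i.e.\ the inhomogeneity; this is the computation I would carry out carefully using the integration by parts formula \eqref{eqnPI}.

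Having shown that the particular integral solves \eqref{eqnDEinho}, the representation then follows from the uniqueness part of Lemma~\ref{lemEE}. Indeed, if $f$ is any solution of \eqref{eqnDEinho} and $g$ is the particular solution just constructed, then $f-g$ solves the homogeneous equation \eqref{eqnDEho}, which by Lemma~\ref{lemEE} is uniquely determined by its initial data at $c$; since $\{\theta,\phi\}$ is a fundamental system, $f-g$ is a linear combination $d_1\theta + d_2\phi$ with $d_1 = (f-g)(c)$ (note $g(c)=0$) and $d_2$ determined likewise from $(f-g)'(c)$. This gives \eqref{eq:inhom}.

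The main obstacle I expect is the careful bookkeeping at points where the measures $\omega$, $\dip$, or $\chi$ have atoms, since there the derivative $f'$ has jumps and one must be consistent about using the left-continuous representative (as fixed in the discussion after \eqref{eqnDEint}). In particular, one needs the integrand $\theta(x)\phi(s)-\theta(s)\phi(x)$ to be handled with the correct one-sided limits when applying \eqref{eqnPI}, and one should check that the variable-endpoint contributions really do cancel in the first derivative and produce exactly $d\chi$ in the second derivative, rather than something shifted by the atom. Once the distributional identity \eqref{eqnDEint} is verified to hold for all $x$, $y\in\R$ with this representative, the rest is immediate. Alternatively, and perhaps more cleanly, one can cite the general variation of parameters formula for measure differential equations from the same references invoked in the proof of Lemma~\ref{lemEE} (e.g.\ \cite{pe88, at64}) and simply specialize it to \eqref{eqnDEinho}, which reduces the proof to identifying the resolvent kernel as $\theta(x)\phi(s)-\theta(s)\phi(x)$ via $W(\theta,\phi)=1$.
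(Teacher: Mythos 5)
Your argument is correct and is essentially the paper's proof: one verifies by two applications of the integration-by-parts formula \eqref{eqnPI} that the particular integral solves \eqref{eqnDEinho} (with the left-continuous convention handling atoms exactly as you describe), and then invokes the uniqueness part of Lemma~\ref{lemEE}. The only cosmetic difference is that the paper matches the initial data of $f$ at $c$ directly via the fundamental matrix $\bigl(\begin{smallmatrix}\theta(c)&\phi(c)\\ \theta'(c)&\phi'(c)\end{smallmatrix}\bigr)$ rather than subtracting the particular solution first; note that your explicit identification $d_1=(f-g)(c)$ is only valid for a fundamental system normalized at $c$, but since the corollary merely asserts the existence of some constants $d_1$, $d_2$, this is immaterial.
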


\begin{proof}
 Using the integration by parts formula \eqref{eqnPI}, 
  one verifies that the derivative of the function on the right-hand side of~\eqref{eq:inhom} is given by 
 \begin{align*}
  d_1 \theta'(x) + d_2 \phi'(x)+\int_c^x{\theta'(x)\phi(s)-\theta(s)\phi'(x)}d\chi(s), \quad x\in\R.
\end{align*}
 Upon integrating by parts once more, one shows that this function is a solution of the differential equation~\eqref{eqnDEinho} indeed. 
 Now after choosing
 \begin{align*}
  \begin{pmatrix} d_1 \\ d_2 \end{pmatrix} = \begin{pmatrix} \theta(c) & \phi(c) \\ \theta'(c) & \phi'(c) \end{pmatrix}^{-1} \begin{pmatrix} f(c) \\ f'(c) \end{pmatrix} = \begin{pmatrix} \phi'(c) & - \phi(c) \\ -\theta'(c) & \theta(c) \end{pmatrix} \begin{pmatrix} f(c) \\ f'(c) \end{pmatrix}, 
 \end{align*}
 the claim follows from the uniqueness part of Lemma~\ref{lemEE}.
\end{proof}

As a final result of this subsection, we show that the solutions of the differential equation~\eqref{eqnDEinho} with fixed initial conditions of the form~\eqref{eqnDEiv} depend analytically on the complex spectral parameter $z\in\C$. 

\begin{lemma}\label{lemSolEnt}
  Let $\chi$ be a  complex-valued Borel measure on $\R$, $c\in\R$ and $d_1$, $d_2\in\C$. 
  If for every $z\in\C$, the unique solution of the differential equation~\eqref{eqnDEinho} with the initial conditions~\eqref{eqnDEiv} is denoted by $f_z$, then the functions 
 \begin{align}\label{eq:fz}
  z & \mapsto f_z(x), & z & \mapsto f_z'(x),
  \end{align}
  are entire for every $x\in\R$.  
\end{lemma}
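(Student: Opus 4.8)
The plan is to establish analyticity by exhibiting the solution $f_z$ as a locally uniform limit of a sequence of polynomials in $z$, obtained from the standard fixed-point (successive approximation) iteration underlying Lemma~\ref{lemEE}. Fix $x_0\in\R$ and a bounded interval $[a,b]\ni c,x_0$; it suffices to prove joint analyticity-type estimates on $[a,b]$. First I would recast the integrated equation~\eqref{eqnDEint} together with the initial conditions~\eqref{eqnDEiv} as a vector-valued integral equation for the pair $(f_z,f_z')$, of the schematic form
\begin{align*}
 \begin{pmatrix} f_z(x) \\ f_z'(x) \end{pmatrix} = \begin{pmatrix} d_1 \\ d_2 \end{pmatrix} + \int_c^x \left( A_0 + z\,A_1(s) + z^2 A_2(s)\right) \begin{pmatrix} f_z(s) \\ f_z'(s-) \end{pmatrix} + \begin{pmatrix} 0 \\ \chi([c,x)) \end{pmatrix},
\end{align*}
where $A_0$ encodes the Lebesgue term $\tfrac14\int f\,ds$ and $A_1$, $A_2$ encode integration against $\omega$ and $\dip$ respectively (understood in the measure sense, with the appropriate left-continuous conventions of Section~\ref{secBDE}).

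Next I would define the Picard iterates $f_z^{[0]} \equiv d_1\theta_0 + \dots$ (or simply the constant initial data) and $f_z^{[n+1]}$ by applying the integral operator above to $f_z^{[n]}$. A straightforward induction shows that each $z\mapsto (f_z^{[n]}(x), f_z^{[n]}{}'(x))$ is a polynomial in $z$ (of degree $\le 2n$) with coefficients that are bounded Borel-measurable functions of $x$ on $[a,b]$. The key quantitative step is a Gronwall-type bound: with $C = |z|^2$ (say $|z|\le R$) and $V$ the total variation of $|A_0|\,dx + |A_1|\,d|\omega| + |A_2|\,d\dip$ on $[a,b]$, one gets
\begin{align*}
 \left\| \begin{pmatrix} f_z^{[n+1]}(x) - f_z^{[n]}(x) \\ f_z^{[n+1]}{}'(x) - f_z^{[n]}{}'(x) \end{pmatrix}\right\| \le \frac{\left(\max(1,R^2)\, V([a,b])\right)^{n}}{n!}\,M,
\end{align*}
for a suitable constant $M$ depending on the data. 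This is the standard estimate for measure differential equations (cf.\ \cite[Corollary~2.2]{pe88}), the only subtlety being bookkeeping with the left-continuous representative of the derivative at atoms of the measures; since that set is at most countable it does not affect the integral estimates.

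The bound above shows the series $\sum_n (f_z^{[n+1]} - f_z^{[n]})$ converges uniformly for $(x,z)\in[a,b]\times\{|z|\le R\}$ to a limit which, by uniqueness in Lemma~\ref{lemEE}, must equal $(f_z, f_z')$. A locally uniform limit of polynomials in $z$ is entire, so $z\mapsto f_z(x)$ and $z\mapsto f_z'(x)$ are entire for every $x\in[a,b]$, and since $[a,b]$ was arbitrary, for every $x\in\R$. The main obstacle I anticipate is purely technical rather than conceptual: making the measure-theoretic estimate clean in the presence of the $z^2\dip$ term and the left-continuous derivative convention — in particular verifying that the iterates stay measurable and that the Gronwall constant depends on $z$ only polynomially (which is what forces entireness rather than mere continuity). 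One could alternatively shortcut the whole argument by invoking that the fixed-point iteration in \cite{pe88} is exactly of this form and citing analytic dependence on parameters for such iterations, but writing out the estimate makes the paper self-contained.
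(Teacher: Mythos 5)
Your proposal is correct and is essentially the paper's own argument written out in full: the paper simply observes that the fixed-point (Picard) iteration converges locally uniformly in $z$ and cites \cite{la76}, \cite{be89}, \cite{MeasureSL} for the underlying estimate, which is exactly the $V^n/n!$-type bound you derive. Spelling out the polynomial nature of the iterates and the uniform convergence on $|z|\le R$ is the intended (and standard) route, so there is nothing to add beyond the technical bookkeeping you already flag.
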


\begin{proof}
 The claim follows from the fixed-point iteration which converges locally uniformly in $z\in\C$ (see \cite[\S 1.3]{la76}, \cite[Lemma~1.5]{be89}, \cite[Theorem~A.5]{MeasureSL}). 
\end{proof}

Let us mention that the entire functions in~\eqref{eq:fz} are of finite exponential type for every $x\in\R$ (cf.\ \cite[\S 1.3]{la76} and \cite[\S 2]{kk74}).  
We will derive this fact for the homogeneous differential equation~\eqref{eqnDEho} in an effortless manner in Corollary~\ref{corFScartwright} below.

\subsection{An associated linear relation}

The differential equation~\eqref{eqnDEinho} gives rise to a linear relation $\Tloc$ in the space of functions $\Hloc = H_{\loc}^{1}(\R) \times L_{\loc}^2(\R;\dip)$.
More precisely, this linear relation is defined by saying that some pair $(f,g)\in\Hloc\times\Hloc$ belongs to $\Tloc$ if and only if  
\begin{align}\label{eqnDErel}
 -f_1'' + \frac{1}{4} f_1 & = \omega g_{1} + \dip g_{2}, &       \dip f_2 & = \dip g_{1}. 
\end{align}
Here, the subscripts denote the respective component of a pair in $\Hloc$. 
Furthermore, we will also employ the following convenient notation for elements of the linear relation $\Tloc$: 
Given some $\f\in \Tloc$, we will denote its first component with $f$ and its second one with $\tau f$ (although $\Tloc$ is in general not an operator). 

The linear relation $\Tloc$ is closely related to the differential equation~\eqref{eqnDEinho}. 
For any $z\in\C$, a pair $(f,g)\in\Hloc\times\Hloc$ belongs to $\Tloc-z$ if and only if 
\begin{align}\label{eqnTloc-z}
 -f_1'' + \frac{1}{4}f_1 & = z\, \omega f_1 +z^2 \dip f_1 + \omega g_1 + z\,\dip g_1 + \dip g_2, & \dip f_2 = \dip g_1 + z\,\dip f_1. 
\end{align}
This simple observation immediately shows that 
\begin{align}\label{eq:Tlocrake}
 \ran{\Tloc - z} & = \Hloc, & \dim\ker{\Tloc - z} & = 2,
\end{align}
in view of the existence and uniqueness result in Lemma~\ref{lemEE}. 
Moreover, some $f$ belongs to $\ker{\Tloc-z}$ if and only if $f_1$ is a solution of the homogeneous differential equation~\eqref{eqnDEho} and $\dip f_2 = z\, \dip f_1$. 
This shows that the kernel of $\Tloc-z$ can be identified with the space of solutions of the homogeneous differential equation~\eqref{eqnDEho}. 

As already mentioned above, the linear relation is in general not an operator. 
In fact, the very definition shows that the multi-valued part of $\Tloc$ is given by
\be\label{eq:multpart}
\mul{\Tloc}=\left\{ h\in\Hloc \,| \ h_2=0,\  \omega h_1 = \dip h_1=0 \right\}. 
\ee
This is verified by noting that some $h\in\Hloc$ belongs to the multi-valued part of $\Tloc$ if and only if $\dip h_1=0$ and $\omega h_1+\dip h_2=0$. 
Now the representation in~\eqref{eq:multpart} of the multi-valued part of $\Tloc$ is readily deduced from this equivalence. 

One of our main interests in the following sections lies in the investigation of self-adjointness of the linear relation $\Tloc$ when restricted to suitable Hilbert spaces. 
In this respect, an important role will be taken by the modified Wronskian 
\begin{align}\label{eqWrmod}
 V(\f,\g)(x) = \tau f_1(x) g_1'(x) - f_1'(x) \tau g_1(x), \quad x\in\R,
\end{align}
defined for every $\f$, $\g\in \Tloc$. 
We note that the function $V(\f,\g)$ is left-continuous and locally of bounded variation as the following Lagrange identity shows.

\begin{proposition}\label{propLagrange}
 For every $\f$, $\g\in \Tloc$ and $x$, $y\in\R$ we have 
 \begin{align}
 \begin{split}\label{eq:lag}
  V(\f,\g)(y) - V(\f,\g)(x) =  \frac{1}{4} \int_{x}^{y} & \tau f_1(s)g_1(s) - f_1(s) \tau g_1(s) ds\\  
  &  +  \int_{x}^{y} \tau f_1'(s)g_1'(s) - f_1'(s) \tau g_1'(s) ds  \\ 
                                       &  +  \int_{x}^{y} \tau f_{2}(s) g_2(s) - f_2(s) \tau g_{2}(s) d\dip(s).
 \end{split}
 \end{align}
\end{proposition}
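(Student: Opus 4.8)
The plan is to prove the Lagrange identity \eqref{eq:lag} by differentiating (in the distributional sense) the modified Wronskian $V(\f,\g)$ and recognizing the right-hand side as the resulting measure. Concretely, I would fix $\f=(f,\tau f)$ and $\g=(g,\tau g)$ in $\Tloc$, so that by the defining relations \eqref{eqnDErel} we have
\begin{align*}
 -f_1'' + \tfrac14 f_1 = \omega\,\tau f_1 + \dip\,\tau f_2, \qquad -g_1'' + \tfrac14 g_1 = \omega\,\tau g_1 + \dip\,\tau g_2,
\end{align*}
and $\dip f_2 = \dip\,\tau f_1$, $\dip g_2 = \dip\,\tau g_1$ (these last two will be needed to symmetrize the $\dip$-term at the end). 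Here $f_1,g_1 \in H^1_\loc(\R)$ with derivatives locally of bounded variation (as noted in Section \ref{secBDE}), and $\tau f_1, \tau g_1 \in H^1_\loc(\R)$ likewise, so all the products appearing in $V(\f,\g)$ and in the integrands of \eqref{eq:lag} are well-defined locally bounded-variation or $L^1_\loc$ functions.

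\textbf{Step 1: set up the integration by parts.} I would apply the formula \eqref{eqnPI} to the two terms making up $V(\f,\g)$ separately. Writing $\tau f_1(x) g_1'(x)$: take $F = g_1'$ (a left-continuous distribution function of the measure $\nu = g_1''\,dx$-part, i.e. of $d g_1'$) and $G = \tau f_1$ (absolutely continuous, hence a left-continuous distribution function of $\mu = (\tau f_1)'\,dx$). Then
\begin{align*}
 \int_x^y \tau f_1(s)\, dg_1'(s) = \tau f_1 g_1'\big|_x^y - \int_x^y g_1'(s+)\,(\tau f_1)'(s)\,ds.
\end{align*}
Similarly, with $F = (\tau f_1)'$ and $G = g_1$,
\begin{align*}
 \int_x^y g_1(s)\, d(\tau f_1)'(s) = g_1 (\tau f_1)'\big|_x^y - \int_x^y (\tau f_1)'(s)\, g_1'(s)\,ds,
\end{align*}
and analogously for the pair $(g_1, \tau g_1)$ with the roles reversed. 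Since $g_1$ is absolutely continuous we may drop the $s+$ (they agree a.e.), so subtracting the appropriate combinations gives
\begin{align*}
 V(\f,\g)\big|_x^y = \int_x^y \tau f_1(s)\,dg_1'(s) - \int_x^y g_1(s)\,d(\tau f_1)'(s) \; - \; \big(\tau f \leftrightarrow \tau g\text{ antisymmetrization}\big),
\end{align*}
and in fact the cleaner route is: the distributional derivative of $V(\f,\g)$ equals $\tau f_1\, dg_1' + (\tau f_1)' g_1'\,dx - f_1'\, d(\tau g_1)' - (\tau g_1)' f_1'\,dx$, minus the symmetric expression; after the two applications of \eqref{eqnPI} the absolutely continuous cross terms $(\tau f_1)' g_1'$ and $f_1'(\tau g_1)'$ telescope away in pairs, leaving precisely $dV(\f,\g) = \tau f_1\, dg_1'' - f_1''\,\tau g_1\,dx - (\text{dually}) $, i.e.
\begin{align*}
 dV(\f,\g) = \tau f_1\, g_1''\,dx + \tfrac14(\cdots) - \ldots
\end{align*}
— I would organize it so that after the dust settles $dV(\f,\g) = \tau f_1\,(g_1'' - \text{its measure}) - (\text{dual})$, whence substituting $g_1'' = \tfrac14 g_1 - \omega\,\tau g_1 - \dip\,\tau g_2$ and the analogous expression for $f_1''$.

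\textbf{Step 2: substitute the differential equations and collect terms.} After inserting $-f_1'' + \tfrac14 f_1 = \omega\,\tau f_1 + \dip\,\tau f_2$ and its counterpart for $g_1$ into the expression obtained in Step 1, the $\tfrac14$-terms assemble into $\tfrac14\int_x^y (\tau f_1 g_1 - f_1\,\tau g_1)\,ds$, which is the first line of \eqref{eq:lag}; the $dx$ cross terms from the integration by parts give $\int_x^y(\tau f_1' g_1' - f_1'\,\tau g_1')\,ds$, the second line; and the $\omega$-terms $\omega(\tau f_1\,\tau g_1 - \tau f_1\,\tau g_1)$ cancel identically. The $\dip$-terms produce $\int_x^y(\tau f_2\,\tau g_1 - \tau f_1\,\tau g_2)\,d\dip$; here I would use the remaining relations $\dip\,\tau g_1 = \dip g_2$ (wait — the correct ones are $\dip f_2 = \dip\,\tau f_1$ and $\dip g_2 = \dip\,\tau g_1$ from \eqref{eqnDErel}) to rewrite $\tau g_1\,d\dip = g_2\,$... rather, $\dip\,\tau g_1 = \dip g_2$ lets me replace $\tau f_2\,\tau g_1\, d\dip$ by $\tau f_2\, g_2\,d\dip$ and $\tau f_1\,\tau g_2\,d\dip$ — using $\dip\,\tau f_1 = \dip f_2$ — by $f_2\,\tau g_2\,d\dip$, yielding exactly the third line $\int_x^y(\tau f_2\, g_2 - f_2\,\tau g_2)\,d\dip$. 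This completes the identity.

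\textbf{The main obstacle} I expect is bookkeeping with the one-sided limits: $f_1'$ and $(\tau f_1)'$ have jumps at the atoms of $\omega$ and $\dip$, so when applying \eqref{eqnPI} one must be careful which representative (left-continuous, per the convention fixed in Section \ref{secBDE}) enters as a distribution function and whether a $G(s+)$ or $G(s)$ appears. The saving grace is that in every product appearing in \eqref{eqnPI} here, at least one factor ($g_1$, $\tau f_1$, $f_1$, or $\tau g_1$) is absolutely continuous, hence continuous, so the $s\pm$ ambiguity never actually bites — the potentially dangerous terms are multiplied by a continuous function. I would state this explicitly once and then proceed with the computation, noting also that left-continuity and local bounded variation of $V(\f,\g)$ follow a posteriori from the identity since the right-hand side is a left-continuous function of $y$ of locally bounded variation (being a sum of an indefinite Lebesgue integral and an indefinite $\dip$-integral, the latter left-continuous by our integral convention).
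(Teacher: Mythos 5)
Your proposal is correct and follows essentially the same route as the paper: an application of the integration-by-parts formula \eqref{eqnPI} combined with the defining relations \eqref{eqnDErel}, followed by antisymmetrization in $\f$ and $\g$ (with the $\omega$-terms cancelling and the relation $\dip f_2=\dip\,\tau f_1$ symmetrizing the $\dip$-term). The paper merely packages the computation more economically, first proving the one-sided identity $\tfrac14\int_x^y f_1h_1\,ds+\int_x^y f_1'h_1'\,ds=\int_x^y\tau f_1 h_1\,d\omega+\int_x^y\tau f_2 h_1\,d\dip+[f_1'h_1]_x^y$ for arbitrary $h\in\Hloc$ and then setting $h=\tau g$.
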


\begin{proof}
 An integration by parts, using the definition of $\Tloc$ in~\eqref{eqnDErel}, shows that
 \begin{align}\begin{split}\label{eqnInP}
  \frac{1}{4} \int_x^y & f_1(s) h_1(s) ds + \int_x^y f_1'(s) h_1'(s) ds \\ 
    & = \int_x^y \tau f_1(s) h_1(s) d\omega(s) + \int_x^y \tau f_2(s) h_1(s) d\dip(s) + [f_1' h_1]_{x}^y
 \end{split}\end{align}
 holds for every $h\in\Hloc$.
 In particular, choosing $h=\tau g$ and subtracting the corresponding equation with the roles of $\f$ and $\g$ reversed (also taking into account the second equation in~\eqref{eqnDErel}) yields the claim.
\end{proof}

Given some $z\in\C$ and $f$, $g\in\ker{\Tloc-z}$, the corresponding pairs $\f = (f,zf)$ and $\g=(g,zg)$ clearly belong to $\Tloc$. 
From the Lagrange identity, it is now readily seen that the modified Wronskian $V(\f,\g)$ is constant on $\R$. 
In fact, this also follows from the connection with the usual Wronski determinant,  
\begin{align}\label{eqnVtoW}
V(\f,\g)(x) = z f_1(x) g_1'(x) -  f_1'(x)\, z g_1(x) = z\, W(f_1,g_1)(x), \quad x\in\R,
\end{align}
which holds in this case. 
In particular, unless $z$ equals zero, this also guarantees that the modified Wronskian $V(\f,\g)$ is non-zero if and only if $\f$ and $\g$ are linearly independent. 
This one exception already foreshadows the somewhat distinct role of the case when $z$ is zero, as then one always has $V(\f,\g)=0$.

\section{The spectral problem on a bounded interval}\label{sec:03}

In this section, we will first discuss some spectral theory for the differential equation~\eqref{eqnDEinho} on a bounded interval of the form $[a,b)$ for fixed $a$, $b\in\R$ with $a<b$. 
As a suitable setting for this purpose, we consider the Hilbert space  
\begin{align}
 \Hab = H^1([a,b))\times L^2([a,b);\dip),
\end{align}
 equipped with the scalar product
\begin{align}\begin{split}
 \spr{f}{g}_{\Hab} = \frac{1}{4} \int_{a}^b  f_1(x) g_1(x)^\ast & dx  + \int_{a}^b f_1'(x) g_1'(x)^\ast dx \\
                            &  + \int_{a}^b f_2(x) g_2(x)^\ast d\dip(x), \quad f,\,g\in\Hab. 
\end{split}\end{align} 
Apart from this, we also introduce the closed linear subspace   
\begin{align} 
 \Hoab = H_0^1([a,b)) \times L^2([a,b);\dip).
\end{align}
 Clearly, point evaluations of the first component are continuous on $\cH([a,b))$. 

\subsection{Self-adjointess of the spectral problem on a bounded interval}\label{ssec31}

 As a first step, we introduce the maximal relation $\Tmax$ in $\Hab$ by restricting $\Tloc$; 
 \begin{equation}\label{eq:Tmaxab}
  \Tmax = \left\lbrace \f\in\Hab\times\Hab \left|\, \f=\g|_{[a,b)} ~\text{for some }\g\in\Tloc \right.\right\rbrace.
 \end{equation}
 Given $\f\in\Tmax$, we will usually identify it with any representative in $\Tloc$. 
 In this respect, one notes that the quantities $f_1(x)$, $f_1'(x)$ as well as $\tau f_1(x)$ are independent of the actually chosen representative in $\Tloc$ for all $x$ in the closed interval $[a,b]$. 
 As a consequence, the modified Wronskian $V(\f,\g)$ is also well-defined on $[a,b]$ for every $\f$, $\g\in\Tmax$, and the Lagrange identity~\eqref{eq:lag} holds for all $x$, $y\in[a,b]$.  

 It is an immediate consequence of~\eqref{eq:Tlocrake} and the very definition of $\Tmax$ that   
 \begin{align}\label{eq:ran_ab}
  \ran{\Tmax} & = \Hab = \Hoab \oplus \ker{\Tmax}, 
  \end{align}
 where the orthogonality follows from a simple integration by parts as in~\eqref{eqnInP}. 

Since $\Tmax$ turns out not to be self-adjoint, we also consider the restriction 
\begin{equation}\label{eq:Tminab}
\Tmin = \left\lbrace \f\in \Tmax \,|\, \tau f_1(a) = f_1'(a) = \tau f_1(b) = f_1'(b) = 0 \right\rbrace,
\end{equation}
which is referred to as the minimal relation. 
 Here, we have   
\begin{align}\label{eq:ran_oab}
  \ran{\Tmin} & =\Hoab, 
\end{align}
as for every $g\in \Hloc$ there is a solution $f\in\Hloc$ of the differential equation~\eqref{eqnDErel} with $f_1'(a) = f_1'(b) = 0$. 
In fact, this can always be achieved by suitably adjusting the constants $d_1$, $d_2\in\C$ in Corollary~\ref{cor:inhom}. 

\begin{theorem}\label{th:3.01}
 The minimal relation $\Tmin$ is symmetric in $\Hab$ with 
 \begin{align}\label{eq:T=T*ab}
   \Tmin^\ast & = \Tmax, & \Tmax^\ast & = \Tmin. 
 \end{align}
 In particular, the linear relations $\Tmin$ and $\Tmax$ are closed. 
\end{theorem}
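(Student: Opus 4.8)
The plan is to prove the two adjoint identities in \eqref{eq:T=T*ab} by a standard Sturm--Liouville argument adapted to the linear relation setting, and then deduce symmetry, closedness, and the remaining claim automatically. First I would establish the inclusion $\Tmin\subseteq\Tmax^\ast$: given $\f,\g\in\Tmin$, the Lagrange identity \eqref{eq:lag} on the interval $[a,b]$ gives
\[
 \spr{\tau f_1}{g_1}_{H^1}+\spr{\tau f_2}{g_2}_{L^2(\dip)}-\spr{f_1}{\tau g_1}_{H^1}-\spr{f_2}{\tau g_2}_{L^2(\dip)} = V(\f,\g)(b)-V(\f,\g)(a),
\]
and the boundary conditions defining $\Tmin$ in \eqref{eq:Tminab} force the right-hand side to vanish, so $\Tmin$ is symmetric and $\Tmin\subseteq\Tmax^\ast$. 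The same computation shows that whenever $\f\in\Tmax$ and $\g\in\Tmin$ the boundary term vanishes, which yields the easy inclusion $\Tmin\subseteq\Tmax^\ast$ together with $\Tmax\subseteq\Tmin^\ast$.

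The substantive direction is $\Tmax^\ast\subseteq\Tmin$; equivalently, I must show that if $\g=(g_1,g_2)$ together with some $\h=(h_1,h_2)\in\Hab$ satisfies $\spr{\tau f_1}{g_1}_{H^1}+\spr{\tau f_2}{g_2}_{L^2(\dip)} = \spr{f_1}{h_1}_{H^1}+\spr{f_2}{h_2}_{L^2(\dip)}$ for all $\f\in\Tmax$, then in fact $\g\in\Tmax$, $\h=\tau\g$, and $\g$ satisfies all four boundary conditions in \eqref{eq:Tminab}. The strategy is the usual one: using \eqref{eq:ran_oab}, pick $\g_0\in\Tmin$ with $\tau\g_0 = P\h$, where $P$ is the orthogonal projection onto $\Hoab$ afforded by the decomposition \eqref{eq:ran_ab}; replacing $\g$ by $\g-\g_0$ (permissible since $\Tmin\subseteq\Tmax^\ast$ is already known) reduces to the case $\h\in\ker{\Tmax}$, i.e. $\h$ is orthogonal to $\ran{\Tmin}=\Hoab$. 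Testing the adjoint relation against $\f\in\Tmin$ with $\tau f$ ranging over all of $\Hoab$ (again via \eqref{eq:ran_oab}) and an integration by parts as in \eqref{eqnInP} then forces $\g$ to satisfy the distributional equation \eqref{eqnDErel}, so $\g\in\Tmax$ with $\tau\g=\h$. Feeding this back into the adjoint identity and using the Lagrange identity once more leaves exactly the boundary term $V(\f,\g)(b)-V(\f,\g)(a)=0$ for all $\f\in\Tmax$; since the map $\f\mapsto(f_1'(a),\tau f_1(a),f_1'(b),\tau f_1(b))$ is surjective onto $\C^4$ (a consequence of Lemma~\ref{lemEE}, as one can prescribe Cauchy data of a solution freely), the four boundary quantities of $\g$ must all vanish, i.e. $\g\in\Tmin$.

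Having shown $\Tmax^\ast=\Tmin$, taking adjoints once more and using that $\Tmax$ is densely defined (so $\Tmax^{\ast\ast}=\ol{\Tmax}$) I would like to conclude $\Tmin^\ast=\Tmax$; here one should check directly from the definition \eqref{eq:Tmaxab} that $\Tmax$ is already closed, or alternatively verify $\Tmin^\ast\subseteq\Tmax$ by the same integration-by-parts argument applied to $\Tmin$ in place of $\Tmax$. Either way, the pair of identities \eqref{eq:T=T*ab} follows, and since the adjoint of any (densely defined) linear relation is closed, both $\Tmin=\Tmax^\ast$ and $\Tmax=\Tmin^\ast$ are automatically closed, giving the final assertion.

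The main obstacle I anticipate is the bookkeeping in the surjectivity step — verifying that the boundary data map $\f\mapsto(f_1'(a),\tau f_1(a),f_1'(b),\tau f_1(b))$ on $\Tmax$ hits all of $\C^4$, and that one can simultaneously arrange the second component $\tau f$ to be any prescribed element of $\Hoab$. Both facts ultimately rest on Corollary~\ref{cor:inhom} and the freedom in choosing the constants $d_1,d_2$ and the inhomogeneity, but care is needed because $\Tmax$ is a relation rather than an operator, so one must track the multi-valued part \eqref{eq:multpart} and make sure the representatives chosen in $\Tloc$ do not interfere with the boundary evaluations, which are in any case representative-independent on $[a,b]$ as noted after \eqref{eq:Tmaxab}.
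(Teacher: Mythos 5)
Your proposal is correct in substance, but it attacks the opposite "hard" inclusion from the paper. You prove $\Tmax^\ast\subseteq\Tmin$ directly by the classical two-stage argument: a regularity step (test the adjoint identity against $\f\in\Tmin$ with $\tau f$ sweeping $\ran{\Tmin}=\Hoab$, integrate by parts as in \eqref{eqnInP}, and read off that the adjoint pair satisfies \eqref{eqnDErel}), followed by elimination of the boundary terms via surjectivity of $\f\mapsto(\tau f_1(a),f_1'(a),\tau f_1(b),f_1'(b))$ onto $\C^4$. The paper instead proves $\Tmin^\ast\subseteq\Tmax$ by a purely abstract argument: given $(f,f_\tau)\in\Tmin^\ast$, use $\ran{\Tmax}=\Hab$ to produce $h$ with $(h,f_\tau)\in\Tmax$, note that $h-f$ annihilates $\ran{\Tmin}=\Hoab$, and conclude $h-f\in\ker{\Tmax}$ from the orthogonal decomposition \eqref{eq:ran_ab}; no weak formulation and no boundary-data surjectivity are needed. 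It then gets $\Tmax^\ast=\Tmin$ for free because $\Tmin$ is closed (the four boundary functionals are bounded on $\Tmax$). Your route buys a self-contained, hands-on identification of the adjoint and its boundary conditions; the paper's buys brevity by pushing all the analysis into the two range identities \eqref{eq:ran_ab} and \eqref{eq:ran_oab}.

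Two caveats, neither fatal. First, the surjectivity of the boundary-data map is not merely "prescribing Cauchy data at a point" -- for a fixed inhomogeneity the data at $b$ are then determined -- so you must vary both the constants $d_1$, $d_2$ of Corollary~\ref{cor:inhom} (the functions $\E^{\pm x/2}$ give a nonsingular $2\times2$ system decoupling $f_1'(a)$ from $f_1'(b)$) and the inhomogeneity $\tau f$ (to hit arbitrary $\tau f_1(a)$, $\tau f_1(b)$); you flag this, and it does work. Second, $\Tmax$ is in general \emph{not} densely defined: the orthogonal complement of $\dom{\Tmax}$ is $\mul{\Tmax^\ast}$, which is large whenever $\omega$ and $\dip$ degenerate (if both vanish on $[a,b)$, then $\dom{\Tmax}$ is two-dimensional). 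So the parenthetical justification of $\Tmax^{\ast\ast}=\ol{\Tmax}$ via density is wrong; for linear relations that identity holds unconditionally, but you would still need to know $\Tmax$ is closed. The clean fix is exactly the alternative you name: run the same integration-by-parts regularity argument with test elements from $\Tmin$ only, which yields $\Tmin^\ast\subseteq\Tmax$ directly and hence the remaining identity without any closedness input.
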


\begin{proof}
From the Lagrange identity~\eqref{eq:lag} it is immediate that
\begin{align}\label{eq:lag2}
\spr{\tau f}{g}_{\Hab} - \spr{f}{\tau g}_{\Hab} =V(\f,\g^\ast)(b) - V(\f,\g^\ast)(a)
\end{align}
holds for all $\f$, $\g\in \Tmax$. 
In particular, this implies that 
\begin{align*}
\spr{f}{\tau g}_{\Hab} = \spr{\tau f}{g}_{\Hab},
\end{align*}
whenever $\f\in \Tmin$ and $\g\in\Tmax$, which guarantees 
\begin{align*}
 \Tmax & \subseteq\Tmin^\ast, & \Tmin & \subseteq \Tmax^*.
\end{align*}
In order to show that $\Tmin^\ast\subseteq \Tmax$, fix some $(f,f_\tau)\in \Tmin^\ast$. 
Because of~\eqref{eq:ran_ab}, there is an $h\in \dom{\Tmax}$ such that $(h,f_\tau)\in\Tmax$ and thus 
\begin{align*}
\spr{h}{\tau g}_{\Hab}= \spr{f_\tau}{g}_{\Hab}, \quad \g\in \Tmin.
\end{align*} 
On the other hand, since $(f,f_\tau)\in \Tmin^\ast$, we also conclude that  
\begin{align*}
\spr{f}{\tau g}_{\Hab}= \spr{f_\tau}{g}_{\Hab}, \quad\g\in \Tmin.
\end{align*}
As a consequence of these equations, we infer that $h-f\in\ker{\Tmax}$ in view of~\eqref{eq:ran_ab} and~\eqref{eq:ran_oab}.
 Thus we conclude $(f,f_\tau)\in\Tmax$, which proves the first equality in \eqref{eq:T=T*ab}.
 In particular, $\Tmax$ turns out to be closed and the second equality in~\eqref{eq:T=T*ab} follows upon noting that $\Tmin$ is closed as well, since the linear functionals $\f\mapsto\tau f_1(a)$, $\f\mapsto f_1'(a)$, $\f\mapsto\tau f_1(b)$ and $\f\mapsto f_1'(b)$ are bounded on $\Tmax$ (cf.\ \cite[Lemma~2.6]{LeftDefiniteSL}). 
 \end{proof}
 
 Given some $z\in\C$, we note that the kernel of $\Tmax-z$ can be identified with the space of solutions of the homogeneous differential equation~\eqref{eqnDEho}.
 In particular, this shows that the dimension of the kernel is two, that is,  
 \begin{align}\label{eqnKerTmax}
  \dim\ker{\Tmax-z} = 2, 
 \end{align}
 which immediately yields the following result. 
 
 \begin{corollary}
 The minimal relation $\Tmin$ has  deficiency indices equal to two. 
 \end{corollary}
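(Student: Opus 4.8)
The plan is to obtain the claim as an immediate consequence of the results already established. Recall that the deficiency indices of a symmetric linear relation $S$ in a Hilbert space are, by definition, $n_\pm(S) = \dim\ker{S^\ast - z}$ with $z$ ranging over the upper, respectively the lower, half-plane, this dimension being independent of the particular choice of such a $z$.

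First I would invoke Theorem~\ref{th:3.01}, which asserts both that $\Tmin$ is symmetric (so that the notion of deficiency index applies) and that $\Tmin^\ast = \Tmax$. Consequently, the deficiency spaces of $\Tmin$ are exactly the spaces $\ker{\Tmax - z}$. Next I would appeal to~\eqref{eqnKerTmax}: since $\ker{\Tmax - z}$ can be identified with the solution space of the homogeneous differential equation~\eqref{eqnDEho}, which is two-dimensional for \emph{every} $z\in\C$ by Lemma~\ref{lemEE}, one has $\dim\ker{\Tmax - z} = 2$. Specializing to any non-real $z$ — for instance $z = \I$ for $n_+$ and $z = -\I$ for $n_-$ — then yields $n_+(\Tmin) = n_-(\Tmin) = 2$, as claimed.

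I do not expect any genuine obstacle here: because~\eqref{eqnKerTmax} holds on all of $\C$ and not merely off the real line, one does not even need the general principle that the dimension of the deficiency space is locally constant on the field of regularity; in the present situation it is simply constant. The only point deserving an explicit word is the symmetry of $\Tmin$, which is what makes the statement meaningful in the first place, and this has already been recorded in Theorem~\ref{th:3.01}.
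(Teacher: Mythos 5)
Your argument is correct and is precisely the one the paper intends: the identity $\Tmin^\ast=\Tmax$ from Theorem~\ref{th:3.01} together with $\dim\ker{\Tmax-z}=2$ for all $z\in\C$ (equation~\eqref{eqnKerTmax}) immediately gives both deficiency indices equal to two. No discrepancies to report.
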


 As a consequence, this guarantees that the minimal relation $\Tmin$ has self-adjoint extensions.  
 Using the Lagrange identity \eqref{eq:lag2}, one can obtain a complete description of all of them in a standard way.  
 Among these self-adjoint extensions, we shall distinguish the following two kinds:
 \begin{enumerate}[label=(\roman*), ref=(\roman*), leftmargin=*, widest=iii]  
  \item {\it Separated boundary conditions}: For all $\alpha$, $\beta\in[0,\pi)$, we set
   \begin{align}\label{eq:bcsep}
    \T_{\alpha,\beta} = \left\lbrace \f\in \Tmax \,\left| \begin{array}{r} \tau f_1(a)\cos\alpha - f_1'(a) \sin\alpha  = 0\\
         \tau f_1(b) \cos\beta - f_1'(b) \sin\beta = 0 \end{array}\right.\right\rbrace.
   \end{align}
  \item {\it $\vartheta$-periodic boundary conditions}: For all $\vartheta\in [0,2\pi)$, we set
   \begin{align}\label{eq:bcper}
    \T_{\vartheta} = \left\lbrace \f\in \Tmax \,\left|\,  \begin{pmatrix} f_1'(a) \\  \tau f_1(a) \end{pmatrix} = \E^{\I \vartheta} \begin{pmatrix} f_1'(b) \\ \tau f_1(b) \end{pmatrix} \right.\right\rbrace.
   \end{align}
 \end{enumerate}
 In this article, we will only discuss self-adjoint extensions $\T_{\alpha,\beta}$ of $\Tmin$ with separated boundary conditions.  
 The eigenvalues of this linear relation are precisely those $z\in\C$ for which there is a non-trivial solution $\phi$ of the homogeneous differential equation~\eqref{eqnDEho} which satisfies the boundary conditions 
\begin{align}
 \label{eqnBCa}  z f(a) \cos\alpha - f'(a)\sin\alpha & =0, \\ 
 \label{eqnBCb} z f(b)\cos\beta - f'(b)\sin\beta & =0,
\end{align}
at $a$ and $b$. 
Before we prove that the spectrum of the self-adjoint linear relation $\T_{\alpha,\beta}$ is purely discrete, we first derive a representation for the resolvent. 

\begin{theorem}\label{th:Resab}
 The spectrum of the self-adjoint linear relation $\T_{\alpha,\beta}$ is purely discrete. 
 If some nonzero $z\in\C$ belongs to the resolvent set of $\T_{\alpha,\beta}$, then   
  \begin{align}\label{eqnResab}
     z\, (\T_{\alpha,\beta}-z)^{-1} g(x) & = \spr{g}{\G(x,\cdot\,)^\ast}_{\Hab} \begin{pmatrix} 1 \\ z \end{pmatrix} - g_1(x) \begin{pmatrix} 1 \\ 0 \end{pmatrix}, \quad x\in[a,b),  
  \end{align}
 for every $g\in\Hab$, where the Green's function $\G$ is given by  
 \begin{align}
  \G(x,s) = \begin{pmatrix} 1 \\ z \end{pmatrix} \frac{1}{W(\psi,\phi)} \begin{cases} \psi(x) \phi(s), & s\leq x, \\ \psi(s) \phi(x), & s> x,    \end{cases}
 \end{align}
 and $\psi$, $\phi$ are linearly independent solutions of the homogeneous differential equation~\eqref{eqnDEho} such that $\psi$ satisfies the boundary condition~\eqref{eqnBCb} at $b$ and $\phi$ satisfies the boundary condition~\eqref{eqnBCa} at $a$. 
\end{theorem}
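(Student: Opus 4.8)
The plan is to treat the two assertions separately, deriving the discreteness of the spectrum from an abstract compactness statement and the resolvent formula from an explicit computation with the Green's function. For the first, recall that $\T_{\alpha,\beta}$ is self-adjoint, so $\C\setminus\R$ lies in its resolvent set; fix some $\zeta$ there. For $g\in\Hab$ the element $\f=(\T_{\alpha,\beta}-\zeta)^{-1}g$ satisfies $\tau f=g+\zeta f$, whence both $\|f\|_{\Hab}$ and $\|\tau f\|_{\Hab}$ are controlled by $\|g\|_{\Hab}$; that is, the resolvent maps bounded subsets of $\Hab$ into subsets of $\Tmax$ that are bounded when $\Tmax$ is viewed in $\Hab\times\Hab$. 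It thus suffices to prove the Rellich-type fact that the projection $\f\mapsto f$ maps bounded subsets of $\Tmax$ onto relatively compact subsets of $\Hab$. For $\f\in\Tmax$, the integrated version of the first equation in~\eqref{eqnDErel} (compare~\eqref{eqnDEint}) exhibits $f_1'$ as a function of bounded variation whose total variation is bounded in terms of $\|f_1\|_\infty$, $\|\tau f_1\|_\infty$ and $\|\tau f_2\|_{L^2([a,b);\dip)}$ — hence, since $\omega$ and $\dip$ are finite on the bounded interval $[a,b)$, in terms of the norm of $\f$ in $\Hab\times\Hab$. On a bounded subset of $\Tmax$ the functions $f_1'$ are therefore uniformly bounded in total variation and in supremum norm, so Helly's selection theorem yields a subsequence along which $f_1'$ converges in $L^2([a,b);dx)$, while $f_1$ converges in $C([a,b])$ by the compact embedding $H^1([a,b))\hookrightarrow C([a,b])$; finally $\dip f_2=\dip\tau f_1$ together with the compact embedding $H^1([a,b))\hookrightarrow L^2([a,b);\dip)$ yields convergence of $f_2$ in $L^2([a,b);\dip)$. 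Thus $(\T_{\alpha,\beta}-\zeta)^{-1}$ is compact and the spectrum of $\T_{\alpha,\beta}$ is purely discrete.

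For the resolvent formula, fix a nonzero $z\in\rho(\T_{\alpha,\beta})$ and $g\in\Hab$. The solutions $\psi$ and $\phi$ of~\eqref{eqnDEho} figuring in the statement — satisfying~\eqref{eqnBCb} at $b$, respectively~\eqref{eqnBCa} at $a$, and nontrivial since $z\neq0$ — are linearly independent because $z$ is not an eigenvalue of $\T_{\alpha,\beta}$; in particular $W(\psi,\phi)\neq0$. Unravelling the relation $\tau f=g+zf$ for $\f=(\T_{\alpha,\beta}-z)^{-1}g$ by means of~\eqref{eqnDErel}, exactly as in the passage from~\eqref{eqnDErel} to~\eqref{eqnTloc-z}, shows that $\f$ is characterized by the requirements that $f_1$ solve~\eqref{eqnDEinho} with $\chi=\omega g_1+z\dip g_1+\dip g_2$, that $f_2=g_1+zf_1$ in $L^2([a,b);\dip)$, and that $f_1$ satisfy the boundary conditions obtained from~\eqref{eq:bcsep} upon substituting $\tau f_1=g_1+zf_1$. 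Since $z$ lies in the resolvent set, it is therefore enough — in view of the uniqueness part of Lemma~\ref{lemEE} — to check that the pair $\f$ defined through the right-hand side of~\eqref{eqnResab}, namely by $f_2=\spr{g}{\G(x,\cdot\,)^\ast}_{\Hab}$ and $zf_1=\spr{g}{\G(x,\cdot\,)^\ast}_{\Hab}-g_1$, has these three properties.

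To this end one evaluates $\spr{g}{\G(x,\cdot\,)^\ast}_{\Hab}$ by splitting the three integrals defining the scalar product at $s=x$, using that $\G(x,\cdot\,)$ equals $W(\psi,\phi)^{-1}\psi(x)(\phi,z\phi)$ on $[a,x]$ and $W(\psi,\phi)^{-1}\phi(x)(\psi,z\psi)$ on $[x,b]$, and then applies the integration by parts identity~\eqref{eqnInP} to the two halves, with $(\phi,z\phi)\in\ker{\Tloc-z}$ respectively $(\psi,z\psi)\in\ker{\Tloc-z}$ in the role of the element of $\Tloc$ and $g$ in the role of $h$. Collecting terms and using $W(\psi,\phi)(x)=\psi(x)\phi'(x)-\psi'(x)\phi(x)$, one arrives at $\spr{g}{\G(x,\cdot\,)^\ast}_{\Hab}=g_1(x)+z f_1(x)$, where $f_1(x)=W(\psi,\phi)^{-1}\bigl(\psi(x)\int_a^x\phi\,d\chi+\phi(x)\int_x^b\psi\,d\chi\bigr)$ corrected by suitable multiples of $\psi$ and $\phi$ carrying the boundary values $g_1(a)$ and $g_1(b)$. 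By Corollary~\ref{cor:inhom} this $f_1$ is a solution of~\eqref{eqnDEinho} with inhomogeneity $\chi$; the relation $f_2=g_1+zf_1$ is immediate from the definition; and the inhomogeneous boundary conditions for $f_1$ follow, using the constancy of $W(\psi,\phi)$ and the boundary conditions obeyed by $\psi$ and $\phi$, from an elementary computation. Hence $\f=(\T_{\alpha,\beta}-z)^{-1}g$ and~\eqref{eqnResab} is proved.

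I expect the main obstacle to be this last integration by parts computation: one must keep careful track of the boundary contributions produced by~\eqref{eqnInP} at the endpoints $a$, $b$ and at the interior corner $s=x$ of $\G(x,\cdot\,)$, and confirm that — after inserting the boundary conditions of $\psi$ and $\phi$ and the identity $W(\psi,\phi)(x)=\psi(x)\phi'(x)-\psi'(x)\phi(x)$ — these contributions recombine into precisely the term $-g_1(x)$ in~\eqref{eqnResab} and into the correct inhomogeneous boundary conditions for $f_1$. For the discreteness assertion the analogous technical point is the Rellich-type relative compactness of bounded subsets of $\Tmax$ in $\Hab$, which is where the finiteness of $\omega$ and $\dip$ on $[a,b)$ and Helly's selection theorem come in.
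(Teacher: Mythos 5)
Your proposal is correct, but its two halves relate to the paper quite differently. For the resolvent formula you run essentially the paper's computation in reverse: the paper starts from a pair $(f,g)\in\T_{\alpha,\beta}-z$ and extracts \eqref{eqnResab} by integrating the Green's function against~\eqref{eqnTloc-z} (two integrations by parts and a comparison), whereas you define $f$ through the right-hand side of~\eqref{eqnResab}, verify via~\eqref{eqnInP} and Corollary~\ref{cor:inhom} that $(f,g)\in\T_{\alpha,\beta}-z$, and invoke injectivity of $\T_{\alpha,\beta}-z$; the corner- and boundary-term bookkeeping you single out as the main obstacle is exactly the content of the paper's two displayed identities, so the substance is the same. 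The discreteness half is where you genuinely diverge. The paper uses no compactness: it observes that the explicitly constructed inverse is bounded at every nonzero non-eigenvalue, so the nonzero spectrum consists of eigenvalues only, and then identifies the eigenvalues with the zeros of the entire function in~\eqref{eqnSARboundSDS}, which by Lemma~\ref{lemSolEnt} have no finite accumulation point; together with the multiplicity bound from~\eqref{eqnKerTmax} this gives discreteness, and the same computation simultaneously delivers the resolvent formula. Your route instead proves that $(\T_{\alpha,\beta}-\zeta)^{-1}$ is compact for non-real $\zeta$, and the argument is sound: the total variation of $f_1'$ is controlled by the graph norm because $\omega$ and $\dip$ are finite on $[a,b)$, Helly plus dominated convergence gives $L^2(dx)$-convergence of the derivatives, Arzel\`a--Ascoli handles $f_1$, and $\dip f_2=\dip\,\tau f_1$ combined with the compact embedding $H^1([a,b))\hookrightarrow C([a,b])\hookrightarrow L^2([a,b);\dip)$ handles the second components. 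This buys a stronger conclusion (compactness of the resolvent, a qualitative precursor of Corollary~\ref{cor:S_p}) without any appeal to analyticity in $z$; what it does not produce is the identification of the spectrum with the zero set of an entire function, which the paper obtains for free here and reuses afterwards (in~\eqref{eqnmalbe2}, Corollary~\ref{corFScartwright} and Corollary~\ref{cor:S_p}).
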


\begin{proof}
 First of all, pick some nonzero $z\in\C$ and assume that it is not an eigenvalue of $\T_{\alpha,\beta}$. 
 Note that in this case, non-trivial solutions $\psi$, $\phi$ of the homogeneous differential equation~\eqref{eqnDEho} with the required boundary conditions always exist. 
 Moreover, they are linearly independent since otherwise $z$ would be an eigenvalue of $\T_{\alpha,\beta}$. 
 If some pair $(f,g)$ belongs to $\T_{\alpha,\beta} - z$, then~\eqref{eqnTloc-z} and integrating by parts shows  
\begin{align*}
 \int_a^b & \G_1(x,s)g_1(s)d\omega(s) + \int_a^b \G_1(x,s) (z\, g_1(s) + g_2(s))d\dip(s) \\ 
               & = f_1(x) -  \frac{\phi(x)}{W(\psi,\phi)} W(\psi,f_1)(b) -  \frac{\psi(x)}{W(\psi,\phi)} W(f_1,\phi)(a), \quad x\in[a,b). 
\end{align*}
On the other side, after another integration by parts, one also has for $x\in[a,b)$ 
\begin{align*}
 z \int_a^b & \G_1(x,s)g_1(s)d\omega(s) + z^2 \int_a^b \G_1(x,s)g_1(s)d\dip(s)  \\ 
                  & = \spr{g_1}{\G_1(x,\redot)^\ast}_{H^1([a,b))} - \frac{\phi(x)}{W(\psi,\phi)}g_1(b)\psi'(b) + \frac{\psi(x)}{W(\psi,\phi)} g_1(a)\phi'(a) - g_1(x). 
\end{align*}
After comparing these equations, and taking into account the boundary conditions
\begin{align*}
(g_1(a)+zf_1(a))\cos\alpha-f_1'(a)\sin\alpha & =0,\\
(g_1(b)+zf_1(b))\cos\beta-f_1'(b)\sin\beta & =0,
\end{align*}
we finally obtain that $f_1$ is given as in~\eqref{eqnResab}, since $z$ is non-zero. 
Furthermore, as an immediate consequence of the definition of $\T_{\alpha,\beta}$ we have 
\begin{align*}
 f_2(x)  = g_1(x)+ z f_1(x)= \spr{g}{\G(x,\redot)^\ast}_{\Hab},
\end{align*}
for almost all $x\in[a,b)$ with respect to $\dip$, which proves that the inverse of $\T_{\alpha,\beta}-z$ is given as in~\eqref{eqnResab} for every $g\in\ran{\T_{\alpha,\beta}-z}$.  
However, an inspection of this expression implies that the inverse of $\T_{\alpha,\beta}-z$ is bounded and hence $z$ actually belongs to the resolvent set of $\T_{\alpha,\beta}$.
This already proves that the nonzero spectrum of $\T_{\alpha,\beta}$ consists only of eigenvalues as well as the representation of the resolvent. 

Next one notes that some $z\in\C$ is an eigenvalue of $\T_{\alpha,\beta}$ if and only if 
\begin{align}\label{eqnSARboundSDS}
 z\phi_\alpha(z,b) \cos\beta - \phi_\alpha'(z,b) \sin\beta = 0,  
\end{align}
 where $\phi_\alpha(z,\redot)$ denotes the solution of~\eqref{eqnDEho} with the initial conditions
\begin{align*}
  \phi_\alpha(z,a) & = \sin\alpha, & \phi_\alpha'(z,a) & = z \cos\alpha. 
\end{align*}
More precisely, this follows (for nonzero $z$) since every solution of~\eqref{eqnDEho} which satisfies the boundary condition~\eqref{eqnBCa} at $a$ is a scalar multiple of $\phi_\alpha(z,\redot)$. 
The case when $z$ is zero can be verified explicitly. 
In view of Lemma~\ref{lemSolEnt}, the function on the left-hand side of~\eqref{eqnSARboundSDS} is entire in $z$. 
Now if zero belongs to the spectrum $\T_{\alpha,\beta}$, then this means that it is an isolated point of the spectrum and hence an eigenvalue of $\T_{\alpha,\beta}$. 
This shows that the spectrum of $\T_{\alpha,\beta}$ consists only of eigenvalues with no finite accumulation point. 
Since the multiplicity of every eigenvalue is at most two by~\eqref{eqnKerTmax}, this shows that the spectrum of $\T_{\alpha,\beta}$ is purely discrete. 
\end{proof}

\begin{corollary}
 Every nonzero eigenvalue of $\T_{\alpha,\beta}$ is simple. 
\end{corollary}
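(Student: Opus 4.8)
The plan is to reduce the eigenvalue equation for the relation $\T_{\alpha,\beta}$ to the scalar two-point boundary value problem~\eqref{eqnBCa}--\eqref{eqnBCb} for the homogeneous differential equation~\eqref{eqnDEho}, and then to observe that for nonzero $z$ the boundary condition at the left endpoint already cuts the two-dimensional solution space of~\eqref{eqnDEho} down to a line.

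In more detail, recall that $\f=(f,\tau f)$ belongs to $\ker{\Tmax-z}$ exactly when $\tau f=zf$, the first component $f_1$ solves~\eqref{eqnDEho}, and $\dip f_2=z\,\dip f_1$; in particular, $f$ is completely determined as an element of $\Hab$ by the solution $f_1$, and $\tau f_1=zf_1$. Imposing in addition the separated boundary conditions~\eqref{eq:bcsep} defining $\T_{\alpha,\beta}$, one sees that $\f\in\ker{\T_{\alpha,\beta}-z}$ if and only if $f_1$ is a solution of~\eqref{eqnDEho} which satisfies~\eqref{eqnBCa} at $a$ and~\eqref{eqnBCb} at $b$.

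Now let $z$ be a nonzero eigenvalue of $\T_{\alpha,\beta}$. Since $z\neq0$, the boundary condition~\eqref{eqnBCa} is a single nontrivial linear constraint on the pair $(f_1(a),f_1'(a))$, whose solution set is the line spanned by the nonzero vector $(\sin\alpha,z\cos\alpha)$; indeed, $\sin\alpha$ and $z\cos\alpha$ cannot vanish simultaneously, as $\sin\alpha=0$ forces $\alpha=0$ and hence $z\cos\alpha=z\neq0$. By the uniqueness part of Lemma~\ref{lemEE}, the solution $f_1$ is therefore a scalar multiple of the solution $\phi_\alpha(z,\redot)$ with initial data $\phi_\alpha(z,a)=\sin\alpha$ and $\phi_\alpha'(z,a)=z\cos\alpha$ already used in the proof of Theorem~\ref{th:Resab}. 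Consequently, $\ker{\T_{\alpha,\beta}-z}$ is contained in the one-dimensional space spanned by $\bigl(\phi_\alpha(z,\redot),\,z\,\phi_\alpha(z,\redot)\bigr)$; since $z$ is an eigenvalue, this kernel is nonzero and hence coincides with that line, so the eigenvalue is simple.

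There is no real obstacle here beyond keeping track of the identification of $\ker{\T_{\alpha,\beta}-z}$ with a space of solutions of~\eqref{eqnDEho}. It is instructive to note where the argument fails for $z=0$: then~\eqref{eqnBCa} degenerates to $f_1'(a)\sin\alpha=0$, which for $\alpha=\beta=0$ imposes no condition at all, so that $\ker{\T_{0,0}}$ can be two-dimensional — precisely why the statement is restricted to nonzero eigenvalues.
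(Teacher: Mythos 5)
Your argument is correct and is essentially the paper's: both reduce the kernel of $\T_{\alpha,\beta}-z$ to solutions of~\eqref{eqnDEho} obeying~\eqref{eqnBCa}, and both exploit that for $z\neq0$ the condition at $a$ pins the initial data $(f_1(a),f_1'(a))$ to a single line. The paper merely packages this as the vanishing of $z\,W(f_1,g_1)$ at $a$ for two eigenfunctions, whereas you invoke uniqueness for the initial value problem; the two formulations are interchangeable.
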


\begin{proof}
 If $z\in\C$ and $f$, $g\in\ker{\T_{\alpha,\beta}-z}$, then
\begin{align*}
 z\, W(f_1,g_1) = zf_1(a) g_1'(a) - f_1'(a)\, z g_1(a) = 0,
\end{align*}
which shows that $f$, $g$ are linearly dependent as long as $z$ is non-zero. 
\end{proof}

The somewhat distinct role of the case when $z$ is zero is due to the appearance of the factor $z$ in the boundary conditions in~\eqref{eqnBCa} and~\eqref{eqnBCb}. 
In this case, these boundary conditions either reduce to Neumann boundary conditions or become void at all. 
Since the kernel of $\T_{\alpha,\beta}$ can be given explicitly, we obtain  
\begin{align}\label{eq:kernel}
 \dim\ker{\T_{\alpha,\beta}} = \begin{cases}  
 0, & \text{if neither } \alpha \text{ nor } \beta \text{ is zero}, \\ 
 1, & \text{if either } \alpha \text{ or } \beta \text{ is zero}, \\ 
 2, & \text{if both, } \alpha \text{ and } \beta \text{ are zero}.  \end{cases}
\end{align}

\subsection{A quadratic operator pencil on a bounded interval}

In this subsection, we will introduce a quadratic operator pencil in $H_0^1([a,b))$, associated with the differential equation~\eqref{eqnDEinho} and Dirichlet boundary conditions at the endpoints. 
To this end, we first consider the linear relation $\T_0$ in $\Hoab$, defined by  
 \begin{align}
  \T_0 = \left\lbrace \f\in\T_{0,0} \,|\, \f\in\Hoab\times\Hoab \right\rbrace.
 \end{align}
 In view of~\eqref{eq:ran_ab}, one immediately sees that  
 \begin{align}
  \T_{0,0} = \T_0 \oplus \left(\ker{\T_{0,0}}\times\lbrace0\rbrace\right),
 \end{align}
 which already guarantees that the linear relation $\T_0$ is self-adjoint in the Hilbert space $\Hoab$ and has purely discrete spectrum with $\sigma(\T_{0,0}) = \sigma(\T_{0})\cup\lbrace0\rbrace$. 

 \begin{theorem}\label{thm:T0inv}
  Zero belongs to the resolvent set of $\T_0$ with 
  \begin{align}\label{eqnT0invBlock}
   \T_{0}^{-1} = \begin{pmatrix} \Omega_0 & \Upsilon_0  \\ \mathrm{I}_0 & 0 \end{pmatrix},  
  \end{align}
 where the operator $\Omega_0: H_0^1([a,b))\rightarrow H_0^1([a,b))$ is given by 
\begin{align}
 \Omega_0 g_1(x) =  \int_{a}^b K_0(x,s) g_1(s)d\omega(s), \quad x\in [a,b),~ g_1\in H_0^1([a,b)),
\end{align}
the operator $\Upsilon_0: L^2([a,b);\dip) \rightarrow H_0^1([a,b))$ is given by 
\begin{align}
 \Upsilon_0 g_2(x) = \int_{a}^b K_0(x,s) g_2(s)d\dip(s), \quad x\in [a,b),~ g_2\in L^2([a,b);\dip),
\end{align}
 and the operator $\mathrm{I}_0: H_0^1([a,b)) \rightarrow L^2([a,b);\dip)$ is the canonical embedding. Here,  
  \begin{align}
   K_0(x,s) =  \frac{2}{\sinh\left(\frac{b-a}{2}\right)} \begin{cases} \sinh\left(\frac{b-x}{2}\right) \sinh\left(\frac{s-a}{2}\right), & s\leq x,   \\
                                                                    \sinh\left(\frac{b-s}{2}\right) \sinh\left(\frac{x-a}{2}\right), & s>x.  \end{cases}
 \end{align} 
 \end{theorem}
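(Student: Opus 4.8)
The plan is to verify directly that the block operator on the right-hand side of~\eqref{eqnT0invBlock} is a bounded two-sided inverse of $\T_0$, which shows at once that zero lies in the resolvent set and gives the claimed formula. Concretely, I would first identify $K_0(x,s)$ as the Green's function of the classical boundary value problem $-u'' + \frac14 u = h$ with Dirichlet conditions at $a$ and $b$; that is, for fixed $s$ the function $x\mapsto K_0(x,s)$ solves the homogeneous equation $-u''+\frac14 u = 0$ on $[a,s)$ and $(s,b)$ with $u(a)=u(b)=0$, is continuous across $x=s$, and has a unit jump in its derivative there. The fundamental system $\sinh(\frac{x-a}{2})$, $\sinh(\frac{b-x}{2})$ has Wronskian $\tfrac12\sinh(\frac{b-a}{2})$ in the normalization associated with our operator (note the operator is $-\frac{d^2}{dx^2}+\frac14$, so the relevant ``Wronskian'' carries the factor $\tfrac12$ coming from the derivative of $\sinh(\,\cdot\,/2)$), which explains the prefactor $2/\sinh(\frac{b-a}{2})$.

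Next I would check the two composition identities $\T_0 \circ \T_0^{-1} = \mathrm{I}$ and $\T_0^{-1}\circ\T_0 = \mathrm{I}$ at the level of the defining relation~\eqref{eqnDErel} restricted to $\Hoab\times\Hoab$. Take $g = (g_1,g_2)\in\Hoab$ and set $f_1 = \Omega_0 g_1 + \Upsilon_0 g_2$, $f_2 = \mathrm{I}_0 g_1$, i.e.\ $f_1(x) = \int_a^b K_0(x,s)\,(g_1\,d\omega + g_2\,d\dip)(s)$ and $f_2 = g_1$ as an element of $L^2([a,b);\dip)$. Differentiating under the integral sign twice, using the jump relations of $K_0$, one obtains $-f_1'' + \frac14 f_1 = \omega g_1 + \dip g_2$ in the distributional sense of~\eqref{eqnDEint}, while $\dip f_2 = \dip g_1$ holds by construction; together with $f_1(a)=f_1(b)=0$ inherited from $K_0$, this says $(f,g)\in\T_0$, so $\T_0\circ\T_0^{-1}$ is the identity. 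For the reverse inclusion, given $(f,g)\in\T_0$ the function $f_1\in H^1_0([a,b))$ solves $-f_1''+\frac14 f_1 = \omega g_1 + \dip g_2$ with Dirichlet data, so by uniqueness of the classical Green's representation (Corollary~\ref{cor:inhom} with the inhomogeneity $\chi = \omega g_1 + \dip g_2$, after matching the Dirichlet boundary conditions) we get $f_1 = \Omega_0 g_1 + \Upsilon_0 g_2$; and $\dip f_2 = \dip g_1$ forces $f_2 = g_1$ in $L^2([a,b);\dip)$, which is precisely $\T_0^{-1}\circ\T_0 = \mathrm{I}$ on the (operator, by self-adjointness and $0\notin\sigma_{\mathrm p}$) relation $\T_0$.

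Finally I would record boundedness: $\Omega_0$ and $\Upsilon_0$ map into $H^1_0([a,b))$ continuously because $K_0$ is bounded and Lipschitz in $x$ uniformly in $s$, so the integral operators against the finite measures $\omega$ (restricted to the bounded interval, hence finite) and $\dip$ are bounded from $H^1_0([a,b))$, respectively $L^2([a,b);\dip)$, into $H^1([a,b))$; and $\mathrm{I}_0$ is bounded since point evaluations are continuous on $H^1([a,b))$ and $[a,b)$ is bounded, so the embedding $H^1_0([a,b))\hookrightarrow L^2([a,b);\dip)$ is continuous. Hence $\T_0^{-1}$ is a bounded everywhere-defined operator, so $0$ belongs to the resolvent set of $\T_0$. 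The main obstacle I anticipate is purely bookkeeping: carefully carrying out the two integrations by parts that turn the integral identity defining $f_1$ into the distributional equation~\eqref{eqnDEint} while keeping track of the left-continuous representative of $f_1'$ and the jump of $K_0$ at $x=s$, together with confirming that the factor of $2$ in $K_0$ is the one forced by our scalar product normalization~\eqref{eq:modH1} rather than the one from the standard (unweighted) $H^1$ inner product.
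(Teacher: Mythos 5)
Your proposal is correct, but it inverts the logical order of the paper's argument and is in that sense a genuinely different route. The paper first puts zero into the resolvent set of $\T_0$ abstractly: the decomposition $\T_{0,0}=\T_0\oplus\left(\ker{\T_{0,0}}\times\lbrace0\rbrace\right)$ shows that $\T_0$ has trivial kernel and $\sigma(\T_{0,0})=\sigma(\T_0)\cup\lbrace0\rbrace$, and since $\sigma(\T_{0,0})$ is purely discrete by Theorem~\ref{th:Resab}, zero lies in $\rho(\T_0)$. With boundedness of $\T_0^{-1}$ thereby free, the paper only needs the one-directional computation (for $(f,g)\in\T_0$, integration by parts gives $f_1=\Omega_0g_1+\Upsilon_0g_2$ and $f_2=g_1$ $\dip$-a.e.), and the boundedness of the three block entries is inherited from that of $\T_0^{-1}$. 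You instead verify directly that the block matrix is an everywhere-defined bounded two-sided inverse of the relation $\T_0$ --- both compositions, plus explicit estimates using that $K_0$ is bounded and uniformly Lipschitz in $x$ and that $|\omega|$ and $\dip$ are finite on the compact interval --- and deduce $0\in\rho(\T_0)$ as a consequence. This is more self-contained (it does not lean on Theorem~\ref{th:Resab} or the discreteness of $\sigma(\T_{0,0})$) at the cost of the extra surjectivity and boundedness checks; both arguments share the same core Green's-function/integration-by-parts computation. Two small bookkeeping remarks: the Dirichlet conditions $f_1(a)=f_1(b)=0$ inherited from $K_0$ are what place $f$ in $\Hoab$, whereas the boundary conditions defining $\T_{0,0}$ read $\tau f_1(a)=g_1(a)=0$ and $\tau f_1(b)=g_1(b)=0$ and hold automatically because $g\in\Hoab$; and the factor $2$ in $K_0$ is forced by the Wronskian $-\tfrac{1}{2}\sinh\left(\tfrac{b-a}{2}\right)$ of the fundamental system $\sinh\left(\tfrac{x-a}{2}\right)$, $\sinh\left(\tfrac{b-x}{2}\right)$ for $-u''+\tfrac{1}{4}u=0$, not by the choice of scalar product in~\eqref{eq:modH1}.
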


 \begin{proof} 
  The fact that zero belongs to the resolvent set of $\T_0$ follows from the very definition of $\T_0$ and Theorem~\ref{th:Resab}.  
  Similarly to the proof of Theorem~\ref{th:Resab}, using~\eqref{eqnDErel}, integration by parts and the boundary conditions, one obtains for every $(f,g)\in\T_0$   
  \begin{align*}
   f_1(x) = \int_{a}^b  K_0(x,s) g_1(s) d\omega(s)  + \int_{a}^b K_0(x,s) g_2(s) d\dip(s), \quad x\in[a,b).
  \end{align*}
  Furthermore, by the definition of $\T_0$, one has $f_2(x) = g_1(x)$ for almost all $x\in[a,b)$ with respect to $\dip$.  
  In particular, all this ensures that the operators $\Omega_0$, $\Upsilon_0$ and $\mathrm{I}_0$ in the claim are bounded (as the inverse of $\T_0$ is as well). 
 \end{proof}
 
 We now define the quadratic operator pencil $\Pe_0$ in $H^1_0([a,b))$ by setting 
\begin{align}
 \Pe_0(z) = \mathrm{I}_0 - z\, \Omega_0 - z^2 \Upsilon_0, \quad z\in\C.
\end{align}
Here, by abuse of notation, we reuse the symbol $\mathrm{I}_0$ to denote the identity operator in $H^1_0([a,b))$ as well as $\Upsilon_0: H^1_0([a,b)) \rightarrow H^1_0([a,b))$ for the integral operator 
\begin{align}  
 \Upsilon_0 g(x) = \int_a^b K_0(x,s) g(s)d\dip(s), \quad x\in[a,b),~ g\in H^1_0([a,b)),
\end{align}
which is bounded in view of Theorem~\ref{thm:T0inv}. 
 
 The quadratic operator pencil $\Pe_0$ is closely related to the self-adjoint linear relation $\T_0$. 
 Reminiscent of this fact is the following result (compare Theorem~\ref{th:Resab}). 
 
  \begin{theorem}\label{thmPencilab}
  The spectrum of the quadratic operator pencil $\Pe_0$ coincides with the spectrum of the self-adjoint linear relation $\T_0$. 
  If some $z\in\C$ belongs to the resolvent set of $\Pe_0$, then 
  \begin{align}
    \Pe_0(z)^{-1}g(x) & = \spr{g}{G(x,\cdot\,)^\ast}_{H^1([a,b))}, \quad x\in[a,b), ~g\in H^1_0([a,b)),  
  \end{align}
   where the Green's function $G$ is given by  
 \begin{align}
  G(x,s) = \frac{1}{W(\psi,\phi)} \begin{cases} \psi(x) \phi(s), & s\leq x, \\ \psi(s) \phi(x), & s> x,    \end{cases}
 \end{align}
 and $\psi$, $\phi$ are linearly independent solutions of the homogeneous differential equation~\eqref{eqnDEho} which satisfy $\psi(b) = \phi(a) = 0$.
  \end{theorem}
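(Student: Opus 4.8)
The plan is to derive the result from the already-established resolvent formula for the linear relation $\T_0$ (Theorem~\ref{thm:T0inv}) together with the general interplay between $\T_{0,0}$, $\T_0$ and the pencil $\Pe_0$. First I would observe that the definition of $\Pe_0(z)$ and the block form of $\T_0^{-1}$ in~\eqref{eqnT0invBlock} give, for any $z\in\C$, the algebraic identity
\begin{align*}
 \T_0^{-1} - z &= \begin{pmatrix} \Omega_0 - z & \Upsilon_0 \\ \mathrm{I}_0 & -z \end{pmatrix},
\end{align*}
acting on $\Hoab$, and a direct computation (multiplying out, using that $\mathrm{I}_0$ is the embedding and $\Upsilon_0 \mathrm{I}_0 = \Upsilon_0$ as an operator on $H_0^1$) shows that $(\T_0^{-1}-z)$ is boundedly invertible on $\Hoab$ precisely when $\Pe_0(z) = \mathrm{I}_0 - z\Omega_0 - z^2\Upsilon_0$ is boundedly invertible on $H_0^1([a,b))$; indeed one can write down the inverse of the $2\times2$ block operator explicitly in terms of $\Pe_0(z)^{-1}$ via a Schur complement with respect to the lower-right corner. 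Since zero is in the resolvent set of $\T_0$, this translates into the statement that $z$ lies in the resolvent set of $\T_0$ if and only if $z$ lies in the resolvent set of $\Pe_0$, hence $\sigma(\T_0) = \sigma(\Pe_0)$. Because $\sigma(\T_{0,0}) = \sigma(\T_0)\cup\{0\}$ and $\T_{0,0}$ has purely discrete spectrum by Theorem~\ref{th:Resab}, this already yields discreteness of the pencil spectrum as a byproduct.

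Next I would identify the first component of $(\T_0 - z)^{-1}g$ explicitly. By the general resolvent relation $(\T_0-z)^{-1} = -z^{-1}\mathrm{I} + z^{-1}\T_0^{-1}(\mathrm{I} - z)^{-1}\big|_{\dots}$ — or, more cleanly, by repeating the integration-by-parts computation from the proof of Theorem~\ref{th:Resab} restricted to Dirichlet data — one finds for $(f,g)\in\T_0-z$ that
\begin{align*}
 f_1(x) &= \int_a^b G(x,s)\bigl(g_1(s)\,d\omega(s) + z\,g_1(s)\,d\dip(s) + g_2(s)\,d\dip(s)\bigr) + z\int_a^b G(x,s) z f_1(s)\,d\dip(s),
\end{align*}
where $G$ is the Green's function of $-f''+\frac14 f$ with Dirichlet conditions, built from the solutions $\psi,\phi$ of~\eqref{eqnDEho} with $\psi(b)=\phi(a)=0$; note $W(\psi,\phi)$ is the relevant Wronskian and is nonzero exactly when $z$ is not an eigenvalue. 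Then I would specialize to the situation relevant for the pencil: given $g\in H_0^1([a,b))$, set $h = \Pe_0(z)^{-1}g$ and check directly, using~\eqref{eqnDEho} for $\psi$ and $\phi$ and integration by parts~\eqref{eqnPI}, that the function $x\mapsto \spr{g}{G(x,\cdot\,)^\ast}_{H^1([a,b))}$ satisfies $\mathrm{I}_0 h - z\Omega_0 h - z^2\Upsilon_0 h = g$; equivalently, that $-h'' + \tfrac14 h = g'' - \tfrac14 g + \dots$ in the appropriate distributional sense with Dirichlet boundary values. This is the same Green's-function verification as in Theorems~\ref{th:Resab} and~\ref{thmPencilab}'s predecessors, just with $\mathrm{I}_0$ in place of the spectral-parameter-free part, and it pins down $\Pe_0(z)^{-1}g$ as claimed.

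The main obstacle I anticipate is bookkeeping rather than conceptual: carefully matching the Dirichlet boundary conditions through the two integrations by parts so that the boundary terms involving $\psi(b)$, $\phi(a)$, $g(a)$, $g(b)$ all vanish (which they do because we are now in $H_0^1$, unlike the $\alpha,\beta$ case in Theorem~\ref{th:Resab}), and correctly tracking the three measure-integral contributions $d\omega$, $z\,d\dip$, $d\dip$ so that they reassemble into $z\Omega_0 + z^2\Upsilon_0$ after eliminating $f_2 = g_1 + zf_1$ via the definition of $\T_0$. One also needs to confirm that the second component comes out consistently, i.e.\ that $f_2 = g_1 + z f_1$ is automatically the $L^2(\dip)$-function determined by $h$, so that no information is lost in passing from the relation $\T_0$ to the pencil $\Pe_0$ on the single space $H_0^1([a,b))$. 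Once the boundary terms are handled and the measure terms are grouped correctly, the identity $\Pe_0(z)\,\spr{g}{G(x,\cdot\,)^\ast}_{H^1} = g$ falls out, and boundedness of $\Pe_0(z)^{-1}$ follows from boundedness of the integral operator with kernel $G$, exactly as in Theorem~\ref{th:Resab}.
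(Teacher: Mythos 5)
Your proposal follows essentially the same route as the paper: the equality of spectra comes from the block representation of $\T_0^{-1}$ in Theorem~\ref{thm:T0inv} via a Schur complement (Frobenius--Schur) argument, and the Green's function formula is read off from the resolvent computation behind Theorem~\ref{th:Resab}. One bookkeeping slip is worth fixing: the Schur complement of your block operator $\T_0^{-1}-z$ with respect to its lower-right corner is $\Omega_0-z+z^{-1}\Upsilon_0=-z\,\Pe_0(z^{-1})$, not a multiple of $\Pe_0(z)$; since invertibility of $\T_0^{-1}-z$ is equivalent to $z^{-1}\in\rho(\T_0)$, the two inversions cancel and your conclusion $\sigma(\Pe_0)=\sigma(\T_0)$ stands, but the intermediate claim as stated is off by the substitution $z\mapsto z^{-1}$. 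The paper sidesteps this by factorizing $(\T_0-z)^{-1}$ itself, which yields the identity $\Pe_0(z)^{-1}=\mathrm{P}\left(z(\T_0-z)^{-1}+\mathrm{I}_{\cH}\right)\mathrm{P}^\ast$ and thereby also the Green's function representation directly from Theorem~\ref{th:Resab}, without redoing the integration by parts you outline.
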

 
 \begin{proof}
  The claim follows from Theorem~\ref{thm:T0inv} and the Frobenius--Schur factorization (see, for example, \cite[Proposition~1.6.2]{tr08}). 
  More precisely, if $\mathrm{P}$ denotes the projection $\mathrm{P}: \Hoab\rightarrow H_0^1([a,b))$, then one gets from~\eqref{eqnT0invBlock} in Theorem~\ref{thm:T0inv} that  
  \begin{align*}
   \Pe_0(z)^{-1} = \mathrm{P} \left( z \left(\T_0-z\right)^{-1} + \mathrm{I}_{\cH}\right)\mathrm{P}^\ast, \quad z\in\C,  
  \end{align*}
  where $\mathrm{I}_{\cH}$ is the identity in $\Hoab$. 
  This already implies that the resolvent set of $\T_0$ is contained in the resolvent set of $\Pe_0$. 
  For the converse, one simply notes that 
  \begin{align*}
   \left(\T_0-z\right)^{-1} =  \begin{pmatrix} \mathrm{I}_0 & 0 \\ z\,\mathrm{I}_0 & \mathrm{I}_\dip \end{pmatrix} \begin{pmatrix}  \Pe_0(z)^{-1} & 0 \\ 0 & \mathrm{I}_\dip \end{pmatrix} \begin{pmatrix} \Omega_0 + z\, \Upsilon_0  & \Upsilon_0  \\ \mathrm{I}_0 & 0 \end{pmatrix}, \quad z\in\C, 
  \end{align*}
  where $\mathrm{I}_{\dip}$ is the identity in $L^2([a,b);\dip)$. 
  Finally, the representation for the inverse of $\Pe_0(z)$ when $z$ belongs to the resolvent set of $\Pe_0$ follows immediately from Theorem~\ref{th:Resab} (to be precise, the case when $z$ is zero is obtained by direct calculation). 
 \end{proof}
 
 The spectrum of $\Pe_0$ is purely discrete and consists precisely of those $z\in\C$ for which there is a non-trivial solution $\phi$ of the homogeneous differential equation~\eqref{eqnDEho} which satisfies the Dirichlet boundary conditions $\phi(a) = \phi(b) =0$. 
 In this case, the kernel of $\Pe_0(z)$ is spanned by the function $\phi$ restricted to $[a,b)$.

\subsection{Weyl--Titchmarsh functions on a bounded interval}\label{subsecWTbound}

We conclude this section by introducing Weyl--Titchmarsh functions associated with the self-adjoint linear relations $\T_{\alpha,\beta}$. 
To this end, let $\theta_\alpha(z,\redot)$, $\phi_{\alpha}(z,\redot)$ be the solutions of the homogeneous differential equation~\eqref{eqnDEho} with the initial conditions 
 \begin{align}
\phi_{\alpha}(z,a)&=\sin\alpha, & \phi_{\alpha}'(z,a)&=z\cos\alpha, \\
\theta_{\alpha}(z,a)&=\cos\alpha, & \theta_{\alpha}'(z,a)&=-z\sin\alpha,
\end{align}
 for every $z\in\C$. 
The complex-valued function $m_{\alpha,\beta}$ is now defined on 
$\rho(\T_{\alpha,\beta})\backslash\lbrace 0\rbrace$ by requiring that the function  
\begin{align}\label{eqnWTdefab}
   \theta_\alpha(z,x) + m_{\alpha,\beta}(z) \phi_\alpha(z,x), \quad x\in\R, 
\end{align} 
satisfies the boundary condition~\eqref{eqnBCb} at $b$ for every $z\in\rho(\T_{\alpha,\beta})\backslash\lbrace 0\rbrace$. 
It is called the Weyl--Titchmarsh function associated with the self-adjoint linear relation $\T_{\alpha,\beta}$.

If $\psi_\beta(z,\redot)$ is a non-trivial solution of the homogeneous differential equation~\eqref{eqnDEho} satisfying the boundary condition~\eqref{eqnBCb} at $b$, then the function $m_{\alpha,\beta}$ is given by 
\begin{align}\label{eq:wf_ab}
m_{\alpha,\beta}(z) = \frac{W(\theta_\alpha,\psi_\beta)(z)}{W(\psi_\beta,\phi_\alpha)(z)} =\frac{z\psi_\beta(z,a)\sin\alpha+\psi_\beta'(z,a)\cos\alpha }{z\psi_\beta(z,a)\cos\alpha-\psi_\beta'(z,a)\sin\alpha},\quad z\in \rho(\T_{\alpha,\beta})\backslash\lbrace 0\rbrace. 
\end{align}
Note that in this case, $\psi_\beta(z,\redot)$ is a scalar multiple of the function in~\eqref{eqnWTdefab}. 

\begin{lemma}\label{lem:R-f}
The function $m_{\alpha,\beta}$ is a Herglotz--Nevanlinna function with 
\begin{align}\label{eqnmHNab}
  m_{\alpha,\beta}(z)^\ast = m_{\alpha,\beta}(z^\ast), \quad z\in\rho(\T_{\alpha,\beta})\backslash\lbrace 0\rbrace.
\end{align}
\end{lemma}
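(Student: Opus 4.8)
The plan is to establish the two asserted properties separately, both by exploiting the connection between $m_{\alpha,\beta}$ and the resolvent of the self-adjoint relation $\T_{\alpha,\beta}$ that was set up in Theorem~\ref{th:Resab}. First I would verify the symmetry relation~\eqref{eqnmHNab}. For this, observe that if $z\in\rho(\T_{\alpha,\beta})\backslash\{0\}$, then so is $z^\ast$ (since $\T_{\alpha,\beta}$ is self-adjoint), and that the solutions $\theta_\alpha$, $\phi_\alpha$, $\psi_\beta$ have the property that their complex conjugates $\theta_\alpha(z,\redot)^\ast$, $\phi_\alpha(z,\redot)^\ast$ solve~\eqref{eqnDEho} with the spectral parameter $z^\ast$ and satisfy the initial conditions defining $\theta_\alpha(z^\ast,\redot)$, $\phi_\alpha(z^\ast,\redot)$ respectively; this uses the reality of the coefficient measures $\omega$, $\dip$ together with Lemma~\ref{lemEE}. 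Likewise $\psi_\beta(z,\redot)^\ast$ satisfies the boundary condition~\eqref{eqnBCb} at $b$ with parameter $z^\ast$, hence is a nontrivial admissible choice of $\psi_{\beta}(z^\ast,\redot)$. Feeding this into the explicit formula~\eqref{eq:wf_ab} and taking complex conjugates then gives $m_{\alpha,\beta}(z)^\ast = m_{\alpha,\beta}(z^\ast)$ directly, since conjugation of the Wronskians amounts to replacing the solutions by their conjugates.

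Next I would show that $m_{\alpha,\beta}$ is a Herglotz--Nevanlinna function, i.e.\ analytic on $\C_+$ with $\im m_{\alpha,\beta}(z) \ge 0$ there (and, by the symmetry just proven, analytic on $\C_-$ with nonpositive imaginary part). Analyticity on $\rho(\T_{\alpha,\beta})\backslash\{0\}$ follows from formula~\eqref{eq:wf_ab}: the numerator and denominator are entire in $z$ by Lemma~\ref{lemSolEnt}, and the denominator $z\psi_\beta(z,a)\cos\alpha - \psi_\beta'(z,a)\sin\alpha$ vanishes precisely at the (nonzero) eigenvalues of $\T_{\alpha,\beta}$ in view of~\eqref{eqnSARboundSDS}, which lie outside $\rho(\T_{\alpha,\beta})$. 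In particular $m_{\alpha,\beta}$ is analytic on $\C_+$ and $\C_-$, since by Theorem~\ref{th:Resab} the spectrum of $\T_{\alpha,\beta}$ is real. For the sign of the imaginary part, the cleanest route is to identify $m_{\alpha,\beta}(z)$, up to a positive factor, with a diagonal matrix element of $z(\T_{\alpha,\beta}-z)^{-1}$, or more directly with the boundary-data expression obtained from the Lagrange identity~\eqref{eq:lag2}: taking $\f = \g$ to be the pair $(u,zu)$ with $u = \theta_\alpha(z,\redot) + m_{\alpha,\beta}(z)\phi_\alpha(z,\redot)$ restricted to $[a,b)$ — which lies in $\T_{\alpha,\beta}$ by construction, as it satisfies both boundary conditions — the left-hand side of~\eqref{eq:lag2} equals $(z - z^\ast)\|u\|_{\Hab}^2$, while the right-hand side reduces, after using the boundary conditions at $a$ and $b$ and the normalization $W(\theta_\alpha,\phi_\alpha)(z) = z$, to a constant multiple of $m_{\alpha,\beta}(z) - m_{\alpha,\beta}(z)^\ast = 2\I\,\im m_{\alpha,\beta}(z)$. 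Comparing the two and using $\|u\|_{\Hab}^2 > 0$ (note $u$ is nontrivial since $\phi_\alpha(z,\redot)$ and $\theta_\alpha(z,\redot)$ are linearly independent) yields $\im m_{\alpha,\beta}(z)/\im z > 0$, which is the Herglotz property.

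The main obstacle I anticipate is the bookkeeping in the boundary-term computation: one must carefully track the factors of $z$ coming from the modified Wronskian $V$ versus the ordinary Wronskian $W$ (cf.~\eqref{eqnVtoW}), handle the $z=0$ case — which is excluded from the domain of $m_{\alpha,\beta}$ but still needs the surrounding argument to make sense — and correctly evaluate $V(\f,\f^\ast)$ at $a$ and $b$ using the separated boundary conditions~\eqref{eqnBCa}, \eqref{eqnBCb} so that only the $m_{\alpha,\beta}$-dependent cross terms survive. Once the identity $\im m_{\alpha,\beta}(z) = \im z \cdot \|u\|_{\Hab}^2 / |\text{(something nonzero)}|^2$ (or the analogous relation) is in hand, the conclusion is immediate.
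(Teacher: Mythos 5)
Your proposal is correct, and for the symmetry and analyticity parts it coincides with the paper's argument: both rest on the explicit formula \eqref{eq:wf_ab} together with the fact that, for a $z$-independent choice of initial data at $b$, the functions $\psi_\beta(\ledot,a)$ and $\psi_\beta'(\ledot,a)$ are real entire (Lemmas~\ref{lemEE} and~\ref{lemSolEnt}), so that the denominator vanishes only at the nonzero eigenvalues. For the Herglotz property the two arguments use the same tool --- the Lagrange identity --- but organize it differently. The paper first treats $\alpha=0$, computing $\im\bigl(\psi_\beta'(z,a)/(z\psi_\beta(z,a))\bigr)$ via the Lagrange identity applied to $\psi_\beta$, and then obtains the general case by observing that $m_{\alpha,\beta}$ is a M\"obius transform of $m_{0,\beta}$ with a transform that preserves the Herglotz class. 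You instead apply the Lagrange identity \eqref{eq:lag2} directly to the Weyl solution $u=\theta_\alpha(z,\redot)+m_{\alpha,\beta}(z)\phi_\alpha(z,\redot)$, whose associated pair $(u,zu)$ lies in $\T_{\alpha,\beta}$; the boundary term at $b$ vanishes because of \eqref{eqnBCb}, and the boundary term at $a$ evaluates (using the initial data of $\theta_\alpha$, $\phi_\alpha$ and $W(\theta_\alpha,\phi_\alpha)=z$) to $-2\I|z|^2\im m_{\alpha,\beta}(z)$, giving the clean identity $\im m_{\alpha,\beta}(z)=\im z\,\|u\|_{\Hab}^2/|z|^2$. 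This avoids the case split and yields a quantitative formula for $\im m_{\alpha,\beta}$, at the cost of the boundary-term bookkeeping you flagged --- which I checked does work out, including the nontriviality of $u$. Both routes are standard and equally rigorous.
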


\begin{proof}
 We choose $\psi_\beta(z,\redot)$ to be the solution of~\eqref{eqnDEho} with the initial values 
 \begin{align*}
  \psi_\beta(z,b) & = \sin\beta, & \psi'(z,b) & = z\cos\beta, 
 \end{align*}
 for every $z\in\C$.
 Since the functions $\psi_\beta(\ledot,a)$ and $\psi_\beta'(\ledot,a)$ are real entire by Lemma~\ref{lemEE} and Lemma~\ref{lemSolEnt}, we infer from~\eqref{eq:wf_ab} that $m_{\alpha,\beta}$ is analytic on $\C\backslash\R$ as well as that the relation~\eqref{eqnmHNab} holds. 
 An evaluation of the expression
 \begin{align*}
  \frac{\psi_\beta'(z,a)}{z\psi_\beta(z,a)} - \left(\frac{\psi_\beta'(z,a)}{z \psi_\beta(z,a)}\right)^\ast = \frac{z^\ast \psi_\beta(z^\ast,a)\psi_\beta'(z,a) - z\psi_\beta(z,a) \psi_\beta'(z^\ast,a)}{|z\psi_\beta(z,a)|^2}, \quad z\in\C\backslash\R,
 \end{align*} 
 using the Lagrange identity shows that $m_{0,\beta}$ is a Herglotz--Nevanlinna function. 
 In order to conclude the proof, one notes that in view of~\eqref{eq:wf_ab}, the function $m_{\alpha,\beta}$ is a composition of $m_{0,\beta}$ with a linear fractional transform which is a Herglotz--Nevanlinna function itself.  
\end{proof}

\begin{remark}
 It is easy to see that $m_{\alpha,\beta}$ is a Weyl function in the sense of \cite{dema91}. 
 In particular, this immediately implies that it is a Herglotz--Nevanlinna function. 
\end{remark}

As the Wronski determinant of two solutions is constant, we get from~\eqref{eq:wf_ab} that 
\begin{align}\label{eqnmalbe2}
 m_{\alpha,\beta}(z) = - \frac{z\theta_\alpha(z,b)\cos\beta - \theta_\alpha'(z,b)\sin\beta}{z\phi_\alpha(z,b)\cos\beta - \phi_\alpha'(z,b)\sin\beta}, \quad z\in\rho(\T_{\alpha,\beta})\backslash\lbrace 0\rbrace. 
\end{align}
This shows that the nonzero spectrum of $\T_{\alpha,\beta}$ coincides with the nonzero poles of the meromorphic function $m_{\alpha,\beta}$.  
Although a pole of $m_{\alpha,\beta}$ at zero always requires zero to be an eigenvalue of $\T_{\alpha,\beta}$, the converse fails if and only if $\alpha\not=0$ and $\beta=0$.  

\begin{corollary}\label{corWTasymp}
  The functions $m_{\alpha,\beta}$ have the asymptotics  
   \begin{align}\label{eqnAsym}
    m_{\alpha,\beta}(z) & = \begin{cases} -\frac{\Lambda_\beta}{2z} + \OO(1), & \alpha=0, \\  - \cot\alpha + \frac{2z}{\Lambda_\beta} \frac{1}{\sin^{2}\alpha} + \OO(z^2), & \alpha\in(0,\pi), \end{cases} \quad z\rightarrow0,  
 \end{align}
  where the positive constants $\Lambda_\beta$ are given by 
 \begin{align}
  \Lambda_0 & =  \coth\left(\frac{b-a}{2}\right), & \Lambda_\beta & = \tanh\left(\frac{b-a}{2}\right), \quad \beta\in(0,\pi). 
 \end{align}
\end{corollary}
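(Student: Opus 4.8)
The plan is to determine the $z\to 0$ behaviour of $m_{\alpha,\beta}$ directly from the representation~\eqref{eq:wf_ab}, using explicit control over the solutions of~\eqref{eqnDEho} as $z\to0$. The key observation is that at $z=0$ the differential equation~\eqref{eqnDEho} collapses to $-f''+\tfrac14 f=0$, whose solutions are spanned by $\E^{\pm x/2}$; in particular, the measures $\omega$ and $\dip$ drop out entirely in the limit. So the first step is to fix a convenient non-trivial solution $\psi_\beta(z,\redot)$ satisfying the boundary condition~\eqref{eqnBCb} at $b$, say the one with initial data $\psi_\beta(z,b)=\sin\beta$, $\psi_\beta'(z,b)=z\cos\beta$ as in the proof of Lemma~\ref{lem:R-f}, and then read off from Lemma~\ref{lemSolEnt} that $z\mapsto\psi_\beta(z,a)$ and $z\mapsto\psi_\beta'(z,a)$ are entire. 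Evaluating at $z=0$: the solution of $-f''+\tfrac14 f=0$ with $f(b)=\sin\beta$, $f'(b)=0$ is $f(x)=\sin\beta\cosh\!\big(\tfrac{x-b}{2}\big)$, so $\psi_\beta(0,a)=\sin\beta\cosh\!\big(\tfrac{b-a}{2}\big)$ and $\psi_\beta'(0,a)=-\tfrac12\sin\beta\sinh\!\big(\tfrac{b-a}{2}\big)$; and for $\beta=0$ we get $f(x)\equiv0$, so one must instead track $\psi_0'(0,a)$, which comes from the solution with $f(b)=0$, $f'(b)=1$, namely $f(x)=2\sinh\!\big(\tfrac{x-b}{2}\big)$, giving $\psi_0'(0,a)=2\sinh\!\big(\tfrac{a-b}{2}\big)=-2\sinh\!\big(\tfrac{b-a}{2}\big)$ (after dividing out the factor $\cos\beta\to1$) — this is precisely where the two different constants $\Lambda_0$ and $\Lambda_\beta$ enter.

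Next I would substitute these limiting values into the second expression in~\eqref{eq:wf_ab}. For $\alpha=0$ the formula reads $m_{0,\beta}(z)=\psi_\beta'(z,a)\big/\big(z\,\psi_\beta(z,a)\big)$; since $\psi_\beta(z,a)\to\psi_\beta(0,a)\ne0$ (for $\beta\in(0,\pi)$) while $\psi_\beta'(z,a)\to\psi_\beta'(0,a)\ne0$, the quotient behaves like $\tfrac1z\cdot\tfrac{\psi_\beta'(0,a)}{\psi_\beta(0,a)}=\tfrac1z\cdot\big(-\tfrac12\tanh\tfrac{b-a}{2}\big)=-\tfrac{\Lambda_\beta}{2z}+\OO(1)$, using analyticity of the ratio away from $z=0$ to justify the $\OO(1)$ remainder. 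For $\beta=0$ one repeats this with $\psi_0(z,a)$ vanishing to first order at $z=0$ and $\psi_0'(0,a)\ne0$: writing $\psi_0(z,a)=z\,\dot\psi_0(0,a)+\OO(z^2)$, one checks $\dot\psi_0(0,a)$ equals the value at $a$ of the $z$-derivative of the solution, which a short computation via the variation-of-constants formula (Corollary~\ref{cor:inhom}) identifies so that $\psi_0'(0,a)/\dot\psi_0(0,a)=-\coth\tfrac{b-a}{2}\cdot\tfrac12\cdot 2$, yielding the coefficient $\Lambda_0=\coth\tfrac{b-a}{2}$ — I will need to be a little careful that the $z$ in the denominator cancels the simple zero of $\psi_0(z,a)$ correctly, and that the two leftover factors of $z$ in numerator and denominator do not introduce a spurious constant.

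For $\alpha\in(0,\pi)$ I would instead use the full first displayed equality $m_{\alpha,\beta}(z)=\big(z\psi_\beta(z,a)\sin\alpha+\psi_\beta'(z,a)\cos\alpha\big)\big/\big(z\psi_\beta(z,a)\cos\alpha-\psi_\beta'(z,a)\sin\alpha\big)$; at $z=0$ both $z\psi_\beta$-terms vanish and the quotient reduces to $\cos\alpha/(-\sin\alpha)=-\cot\alpha$, the claimed leading term. For the linear term I would expand numerator and denominator to first order in $z$, using $\psi_\beta(z,a)=\psi_\beta(0,a)+\OO(z)$ and $\psi_\beta'(z,a)=\psi_\beta'(0,a)+\OO(z)$, and simplify the resulting ratio; the coefficient of $z$ works out to $\psi_\beta(0,a)/\big(\psi_\beta'(0,a)\sin^2\alpha\big)$, and since $\psi_\beta(0,a)/\psi_\beta'(0,a)=-2/\tanh\tfrac{b-a}{2}=-2/\Lambda_\beta$ for $\beta\in(0,\pi)$ — and the analogous ratio for $\beta=0$, after dividing out the common factor $\cos\beta$, equals $-2/\coth\tfrac{b-a}{2}=-2/\Lambda_0$, reconciling the two cases uniformly — this gives $+\tfrac{2z}{\Lambda_\beta}\tfrac1{\sin^2\alpha}+\OO(z^2)$. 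The one honest obstacle is bookkeeping the sign conventions and the factors of $\tfrac12$ coming from $\big(\cosh\tfrac{x}{2}\big)'=\tfrac12\sinh\tfrac{x}{2}$, together with correctly isolating the first-order term of $\psi_0(z,a)$ in the $\beta=0$ subcase; everything else is a routine Taylor expansion of explicit entire functions, with positivity of $\Lambda_\beta$ immediate since $\coth$ and $\tanh$ are positive on $(0,\infty)$ and $b>a$.
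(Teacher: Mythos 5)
Your proposal is correct and is essentially the paper's argument: both substitute the $z\to0$ Taylor expansions of explicit entire solutions (which at $z=0$ solve $-f''+\tfrac14f=0$ and are combinations of $\E^{\pm x/2}$, so that $\omega$ and $\dip$ drop out of the leading coefficients) into the explicit formula for $m_{\alpha,\beta}$ --- the paper via \eqref{eqnmalbe2}, expanding $\theta_0,\phi_0$ at $b$ after reducing to $\alpha=0$ through the fractional-linear relation \eqref{eq:wf_ab}, you via \eqref{eq:wf_ab} directly, expanding $\psi_\beta$ at $a$. The only thing to tidy is your $\beta=0$ bookkeeping: since $\psi_0(0,\cdot)\equiv0$ one has $\psi_0'(0,a)=0$ (not $\neq0$), and the relevant quantities are the first Taylor coefficients $\partial_z\psi_0(z,a)|_{z=0}=-2\sinh\bigl(\tfrac{b-a}{2}\bigr)$ and $\partial_z\psi_0'(z,a)|_{z=0}=\cosh\bigl(\tfrac{b-a}{2}\bigr)$ (your displayed value $2\sinh\bigl(\tfrac{a-b}{2}\bigr)$ is the former, not the latter); with these, the cancellation of the extra factors of $z$ in $\psi_0'(z,a)/(z\psi_0(z,a))$ gives $\Lambda_0=\coth\bigl(\tfrac{b-a}{2}\bigr)$ exactly as you claim, and the $\omega$-dependent first-order correction to $\psi_\beta'(z,a)$ indeed cancels from the linear term when $\alpha\in(0,\pi)$.
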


\begin{proof}
 In view of~\eqref{eq:wf_ab}, it suffices to prove the claim when $\alpha=0$. 
 Noting that  
 \begin{align*}
  \theta_0(z,b) & = \cosh\left(\frac{b-a}{2}\right) + \OO(z), & \theta_0'(z,b) & = \frac{1}{2} \sinh\left(\frac{b-a}{2}\right) + \OO(z), \\
  \phi_0(z,b) & = 2 z \sinh\left(\frac{b-a}{2}\right) + \OO(z^2), & \phi_0'(z,b) & = z \cosh\left(\frac{b-a}{2}\right) + \OO(z^2), 
 \end{align*} 
 as $z\rightarrow0$, the claim is readily verified in this case. 
\end{proof}

Finally, the results of the present section also allow us to provide a particular growth restriction for the solutions $\theta_\alpha$ and $\phi_\alpha$ in an effortless manner.  

\begin{corollary}\label{corFScartwright}
The real entire functions 
\begin{align}\label{eqnphiathetaaent}
  z & \mapsto\phi_\alpha(z,b), & z & \mapsto \phi'_\alpha(z,b), & z & \mapsto\theta_\alpha(z,b), & z&\mapsto\theta'_\alpha(z,b),    
\end{align}
belong to the Cartwright class.
\end{corollary}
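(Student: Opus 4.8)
I would prove this by reducing the Cartwright class membership of the four functions in \eqref{eqnphiathetaaent} to a growth estimate for solutions of the homogeneous equation \eqref{eqnDEho}, and then using the Paley--Wiener-type characterization: a real entire function of exponential type is in the Cartwright class if it satisfies the logarithmic integrability condition at infinity along the real axis, which is automatic once one knows the function is bounded on the real line (or grows at most polynomially there) in addition to being of exponential type. Since $\phi_\alpha(z,b)$, $\phi_\alpha'(z,b)$, $\theta_\alpha(z,b)$, $\theta_\alpha'(z,b)$ are already known to be real entire (Lemma~\ref{lemEE} and Lemma~\ref{lemSolEnt}), the two points that remain are: (1) they are of finite exponential type, and (2) they are bounded (in fact, in an appropriate sense, $\OO(1)$ or at worst polynomially bounded) on the real line.

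**Key steps.**
First I would fix $\alpha$ and work with the solution $\phi_\alpha(z,\redot)$ of \eqref{eqnDEho} with the initial data at $a$. For real $z=\lambda$, the linear relation $\T_{\alpha,\beta}$ is self-adjoint, so I can extract a bound on $\phi_\alpha(\lambda,b)$ and $\phi_\alpha'(\lambda,b)$ from spectral-theoretic considerations — concretely, from the resolvent representation in Theorem~\ref{th:Resab} together with the fact that the Weyl--Titchmarsh function $m_{\alpha,\beta}$ is Herglotz--Nevanlinna (Lemma~\ref{lem:R-f}), whose imaginary part controls the $L^2$-type norm of the solution over $[a,b)$ and hence, by the equation, the boundary values. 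Alternatively, and more cleanly, I would use that the map $\lambda\mapsto$ (eigenvalue-counting function of $\T_{\alpha,\beta}$ below $\lambda$) grows at most linearly — this follows since $\T_{\alpha,\beta}$ has purely discrete spectrum and one can compare with the free problem (the kernel $K_0$ in Theorem~\ref{thm:T0inv}) — and then the zeros of the entire function $z\mapsto z\phi_\alpha(z,b)\cos\beta-\phi_\alpha'(z,b)\sin\beta$, which by \eqref{eqnSARboundSDS} are exactly the eigenvalues, have linear counting function, so a Hadamard factorization shows the function has genus at most one with real zeros — a Cartwright class function. Second, to get the exponential type (needed for the Cartwright class), I would run the fixed-point iteration from Lemma~\ref{lemSolEnt} but track the $z$-dependence explicitly: each iterate picks up a factor controlled by $|z|$ times the total mass of $|\omega|+|z|\dip$ over $[a,b)$, so a Gronwall-type estimate yields $|\phi_\alpha(z,b)|+|\phi_\alpha'(z,b)|\le C(1+|z|)\exp(c|z|(b-a))$ for suitable constants, which is exponential type at most $c(b-a)$. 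This is essentially the argument referenced via \cite[\S 1.3]{la76} and \cite[\S 2]{kk74}. The same reasoning applies verbatim to $\theta_\alpha$.

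**Combining.**
Having both ingredients, I would invoke the standard characterization (e.g. as in Krein's theorem, or Levin's book): a real entire function of finite exponential type whose restriction to $\R$ satisfies $\int_\R \frac{\log^+|F(\lambda)|}{1+\lambda^2}\,d\lambda<\infty$ belongs to the Cartwright class. For $\phi_\alpha(z,b)$ the real-line bound is genuinely needed, and I would get it either from the self-adjointness estimate above or — perhaps most economically — by noting that $z\mapsto\phi_\alpha(z,b)/(z\phi_\alpha(z,b)\cos\beta-\ldots)$ and its companions are, up to elementary factors, ratios of boundary traces that appear as matrix entries of the resolvent of a self-adjoint operator evaluated on the real axis, hence bounded on compact sets and with only the known polynomial/linear growth. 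The same handles $\phi_\alpha'(z,b)$, $\theta_\alpha(z,b)$, $\theta_\alpha'(z,b)$.

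**Main obstacle.**
The delicate point is controlling the behaviour on the \emph{real} axis: exponential type alone does not give Cartwright class, and because $\omega$ changes sign and $\dip$ is a general non-negative measure, one cannot appeal to a positive-definite quadratic form in the usual way to force the solutions to be bounded for real $z$. I expect the cleanest route around this is to argue via the eigenvalue counting function: show directly that $\T_{\alpha,\beta}$ (equivalently the pencil $\Pe_0$ and its boundary-condition variants) has eigenvalues with at most linear counting asymptotics — which can be read off from Theorem~\ref{thm:T0inv} by a Birman--Schwinger / min-max comparison with the explicit free resolvent kernel $K_0$ — and then conclude Cartwright membership from the combination "exponential type + real zeros + linear zero-counting", bypassing any direct real-axis pointwise estimate.
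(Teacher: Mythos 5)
Your proposal has two genuine gaps, both fatal to the route you sketch. First, the exponential-type step: the fixed-point iteration for \eqref{eqnDEho} does \emph{not} yield $\OO\bigl(\exp(c|z|(b-a))\bigr)$. As you yourself note, each iterate picks up a factor controlled by $|z|$ times the mass of $|\omega|+|z|\dip$, so the Gronwall bound is of the form $\exp\bigl(c_1|z|\,|\omega|([a,b))+c_2|z|^2\dip([a,b))\bigr)$ --- order $2$, not finite exponential type, as soon as $\dip\neq0$. Finite exponential type of these solutions is a genuinely nontrivial fact here (this is precisely why the paper remarks after Lemma~\ref{lemSolEnt} that it will be obtained ``in an effortless manner'' only \emph{as a consequence of} Corollary~\ref{corFScartwright}); it cannot be used as an input. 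Second, your fallback ``exponential type $+$ real zeros $+$ linear zero-counting $\Rightarrow$ Cartwright'' is false: $F(z)=\E^{z}\sin(\pi z)/(\pi z)$ is real entire of exponential type, has only real zeros with linear counting function (and genus one), yet $\int_\R\log^+|F(\lambda)|(1+\lambda^2)^{-1}d\lambda=\infty$. So even granting linear eigenvalue counting (which you also do not establish, and which for the pencil with a general $\dip$ is not obvious), the conclusion does not follow. Your other suggested source of real-axis control --- pointwise boundedness of $\phi_\alpha(\lambda,b)$ for $\lambda\in\R$ from resolvent estimates --- is likewise unsubstantiated: resolvent bounds degenerate as $\lambda$ approaches the (unbounded, discrete) spectrum, and with $\omega$ sign-indefinite and $\dip$ a general measure there is no positive form to exploit, as you correctly anticipate in your ``main obstacle'' paragraph.

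The paper's proof sidesteps all of this by never estimating anything on the real axis and never proving exponential type directly. It writes $\phi_\alpha(z,b)^2$ as $\frac{\phi_\alpha(z,b)}{\theta_\alpha(z,b)}\bigl(\frac{\phi_\alpha'(z,b)}{z\phi_\alpha(z,b)}-\frac{\theta_\alpha'(z,b)}{z\theta_\alpha(z,b)}\bigr)^{-1}$ and observes, via \eqref{eqnmalbe2} and the argument of Lemma~\ref{lem:R-f}, that each quotient is a Herglotz--Nevanlinna or anti-Herglotz--Nevanlinna function, hence of bounded type in $\H$ and in the lower half-plane. Krein's theorem (an entire function of bounded type in both half-planes belongs to the Cartwright class) then delivers the conclusion for $\phi_\alpha(\ledot,b)^2$, hence for $\phi_\alpha(\ledot,b)$, and the same bounded-type bookkeeping handles the remaining three functions; finite exponential type comes out as a corollary rather than going in as a hypothesis. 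If you want to salvage your approach, the missing ingredient is exactly this: some mechanism that converts the self-adjointness of $\T_{\alpha,\beta}$ into a \emph{bounded-type} (not pointwise) statement about the boundary traces, and the Weyl--Titchmarsh function identities of Subsection~\ref{subsecWTbound} are that mechanism.
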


\begin{proof}
 First of all, we note the simple identity  
\begin{align}\label{eqnphiCW}
  \phi_\alpha(z,b)^2 =\frac{\phi_\alpha(z,b)}{\theta_\alpha(z,b)} \left(\frac{\phi_\alpha'(z,b)}{z\phi_\alpha(z,b)}-\frac{\theta_\alpha'(z,b)}{z\theta_\alpha(z,b)}\right)^{-1}, \quad z\in\C\backslash\R.  
\end{align}
In view of~\eqref{eqnmalbe2}, the first quotient in this equation is a Herglotz--Nevanlinna function. 
Similarly to the proof of Lemma \ref{lem:R-f}, one shows that also both of the quotients in the brackets are anti-Herglotz--Nevanlinna functions. 
Consequently, the function $\phi_\alpha(\ledot,b)^2$ is of bounded type in the upper and in the lower complex half-plane \cite[Problem~20]{dB68}. 
Applying a theorem by Krein \cite[Theorem~6.17]{roro94}, \cite[Section~16.1]{le96}, we see that this function belongs to the Cartwright class and hence so does $\phi_\alpha(\ledot,b)$.

In order to finish the proof, one notes that the functions in~\eqref{eqnphiathetaaent} are of bounded type in the upper and in the lower complex half-plane, which follows from the fact that all the quotients in~\eqref{eqnphiCW} are (anti) Herglotz--Nevanlinna functions.  
\end{proof}

With this result at hand, it is now not hard to improve on Lemma~\ref{lemSolEnt} for solutions of the homogeneous differential equation~\eqref{eqnDEho}, that is, when $\chi=0$. 
In fact, along the ideas of the proof of Corollary~\ref{corFScartwright} one can show that the entire functions in~\eqref{eq:fz} belong to the Cartwright class for every $x\in\R$ in this case. 

\begin{corollary}\label{cor:S_p}
    The resolvent of every self-adjoint extension of $\Tmin$ belongs to the $p$-th Schatten class for every $p>1$. 
    If $\Tmin$ is semi-bounded from above or from below, then they even belong to the trace class. 
\end{corollary}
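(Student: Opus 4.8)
The plan is to reduce the Schatten-class statement to a kernel estimate for the resolvent of one particular self-adjoint extension, and then use the resolvent identity to transfer this to all self-adjoint extensions. The natural candidate to analyze is $\T_0$ on $\Hoab$, whose inverse was computed explicitly in Theorem~\ref{thm:T0inv} as the block operator matrix with entries $\Omega_0$, $\Upsilon_0$ and the embedding $\mathrm{I}_0$; its kernel $K_0(x,s)$ is the (continuous, bounded) Green's function of $-f''+\tfrac14 f$ with Dirichlet conditions. Since any two self-adjoint extensions of $\Tmin$ differ, on the level of resolvents, by a finite-rank operator (the deficiency indices being two by the Corollary after Theorem~\ref{th:3.01}), it suffices to prove the claim for a single conveniently chosen extension, e.g. $\T_{0,0}=\T_0\oplus(\ker{\T_{0,0}}\times\{0\})$, and then the finite-rank perturbation does not change membership in any Schatten ideal $\mathfrak{S}_p$.

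First I would fix a point $z_0$ in the resolvent set and, via the resolvent identity together with the block decomposition, write $(\T_0-z_0)^{-1}$ as $\T_0^{-1}$ plus a term controlled by $\T_0^{-1}$; so it is enough to show $\T_0^{-1}\in\mathfrak{S}_p$ for every $p>1$. By the block structure in~\eqref{eqnT0invBlock}, this reduces to estimating the singular values of the integral operators $\Omega_0:H_0^1\to H_0^1$, $\Upsilon_0:L^2(\dip)\to H_0^1$ and the embedding $\mathrm{I}_0:H_0^1\to L^2(\dip)$. The key analytic input is that $K_0(x,s)$ is Lipschitz in each variable away from the diagonal with only a mild (Lipschitz, i.e. order-one) corner on the diagonal; equivalently, the associated operator on $L^2$ behaves like $(-d^2/dx^2+\tfrac14)^{-1}$, whose eigenvalues on a bounded interval decay like $n^{-2}$. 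Lifting this to the $H^1$-inner product~\eqref{eq:modH1} (which is the graph norm of $d/dx$) costs one derivative, leaving eigenvalue decay like $n^{-1}$, hence membership in $\mathfrak{S}_p$ for all $p>1$ but in general \emph{not} in $\mathfrak{S}_1$. For the factors involving the measure $\dip$, I would factor through $L^2([a,b);dx)$: the embedding $H^1\hookrightarrow C[a,b]\hookrightarrow L^2(\dip)$ is bounded (point evaluations being continuous), and $\Upsilon_0$ likewise factors as a bounded map composed with the $\mathfrak{S}_p$ operator coming from $K_0$, so the Schatten bound persists.

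For the semi-bounded case, the point is that a \emph{self-adjoint} operator which is bounded below (or above) and whose resolvent lies in $\mathfrak{S}_p$ for all $p>1$ automatically has trace-class resolvent, because semi-boundedness forces the eigenvalues $\lambda_n$ to march off to $+\infty$ (resp.\ $-\infty$) monotonically after finitely many, so $\sum 1/|\lambda_n-z_0|<\infty$ is equivalent to the ordered eigenvalue sequence of $|(\T-z_0)^{-1}|$ being summable; combined with the one-sided eigenvalue asymptotics coming from the $n^{-2}$ behavior of the underlying Sturm--Liouville problem (the $z^2$-term and the indefinite $\omega$ only perturb finitely many eigenvalues to the ``wrong'' side), one gets $\lambda_n\sim c\,n^2$ and hence $\sum|\lambda_n|^{-1}<\infty$. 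Here I should be a little careful: for a quadratic pencil the eigenvalues need not all be real, but semi-boundedness of the linear relation $\T_{\alpha,\beta}$ means genuine self-adjointness with real spectrum, and then the Weyl-type asymptotics from the compact resolvent of the Dirichlet problem give the quadratic growth.

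The main obstacle I anticipate is the sharp Schatten exponent: one must show $K_0$ generates an operator that is exactly ``one derivative worse than Hilbert--Schmidt'' — i.e. in $\mathfrak{S}_p$ for $p>1$ but not $p=1$ — which requires controlling the singular values of a kernel that is continuous but has a diagonal corner, in the $H^1$-metric rather than the $L^2$-metric. The cleanest route is to identify $\T_0^{-1}$ (or its compression to the first component, which is essentially $\Pe_0(0)^{-1}$ via Theorem~\ref{thmPencilab}) with a perturbation of $(-d^2/dx^2+\tfrac14)^{-1/2}$-type operators and invoke standard eigenvalue asymptotics for ordinary differential operators; alternatively one can use the known min-max/interpolation estimates for integral operators with Hölder kernels. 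The remaining steps — the resolvent identity, the finite-rank equivalence across self-adjoint extensions, and the factorization through $L^2(\dip)$ — are routine.
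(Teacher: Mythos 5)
Your overall strategy (reduce to one extension via the finite deficiency indices, then estimate singular values of the explicit block inverse $\T_0^{-1}$ from Theorem~\ref{thm:T0inv}) is genuinely different from the paper's proof and could in principle be carried out, but as written it has a quantitative gap exactly at the step that decides the Schatten exponent. The blocks $\mathrm{I}_0$ and $\Upsilon_0=\mathrm{I}_0^\ast$ are (adjoints of) the embedding $H_0^1([a,b))\hookrightarrow L^2([a,b);\dip)$, and for the claim one needs its singular values to be $\OO(n^{-1})$ for an \emph{arbitrary} finite Borel measure $\dip$. Your proposed justification — factoring through $H^1\hookrightarrow C[a,b]\hookrightarrow L^2(\dip)$ with the second map merely bounded — only yields $s_n(\mathrm{I}_0)\leq \|C\to L^2(\dip)\|\,a_n\bigl(H^1\hookrightarrow C[a,b]\bigr)=\OO(n^{-1/2})$, since the approximation numbers of the Sobolev embedding into $C[a,b]$ decay like $n^{-1/2}$, not $n^{-1}$. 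That gives membership in $\mathfrak{S}_p$ only for $p>2$. The sharp bound $s_n(\mathrm{I}_0)=\OO(n^{-1})$ is equivalent to Kre\u{\i}n's lower bound $\lambda_n\geq c\,n^2/(LM)$ for the Dirichlet eigenvalues of a string of length $L$ and total mass $M$ with arbitrary mass distribution; this is true but is a nontrivial input that your heuristic ("lifting to $H^1$ costs one derivative", "Hölder kernel estimates") does not supply, since those kernel estimates are formulated for $L^2(dx)$ and smooth densities, not for general measures in the $H^1$ metric. A similar remark applies to the trace-class statement: you invoke two-sided Weyl asymptotics $\lambda_n\sim c\,n^2$, which fail for general measure coefficients (the correct and sufficient statement is only a one-sided summability of $\sum|\lambda_n|^{-1}$).

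For comparison, the paper avoids all operator-theoretic estimates: the eigenvalues of $\T_{\alpha,\beta}$ are the zeros of the entire function $z\mapsto z\phi_\alpha(z,b)\cos\beta-\phi_\alpha'(z,b)\sin\beta$, which belongs to the Cartwright class by Corollary~\ref{corFScartwright}; Levin's theorem on the zero distribution of Cartwright-class functions then gives finite density of the zeros (hence $\sum|\lambda_n|^{-p}<\infty$ for $p>1$) and principal-value convergence of $\sum\lambda_n^{-1}$, which becomes absolute convergence once all but finitely many eigenvalues lie on one half-axis — this is precisely the semi-bounded case. If you want to salvage your route, replace the factorization through $C[a,b]$ by the Kre\u{\i}n string eigenvalue estimate for the embedding into $L^2(\dip)$, and note that $\Omega_0=\mathrm{I}_{|\omega|}^\ast M_h\,\mathrm{I}_{|\omega|}$ is then automatically in $\mathfrak{S}_p$ for $p>1/2$; but the complex-analytic argument is both shorter and handles the trace-class refinement cleanly.
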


\begin{proof}
 Since the deficiency indices of $\Tmin$ equal two, it suffices to prove the claim for any self-adjoint realization $\T_{\alpha,\beta}$. 
 Noting that the spectrum of $\T_{\alpha,\beta}$ coincides with the set of zeros of the entire function 
  \begin{align*}
  z \phi_{\alpha}(z,b) \cos\beta - \phi_\alpha'(z,b) \sin\beta, \quad z\in\C, 
 \end{align*}
  the claim follows from \cite[Theorem~17.2.1]{le96} and Corollary~\ref{corFScartwright}.
 \end{proof}

\section{The spectral problem on a semi-axis}\label{sec:04}

We will next consider the spectral problem on a semi-axis of the form $J_{+} = [c,\infty)$ or $J_{-} = (-\infty,c)$ for some point $c\in\R$. 
The corresponding Hilbert space is 
\begin{align}
 \cH(J_{\pm}) = H^1(J_{\pm}) \times  L^2(J_{\pm};\dip),
\end{align} 
 equipped with the scalar product
\begin{align}\begin{split}
 \spr{f}{g}_{\cH(J_{\pm})} = \frac{1}{4} \int_{J_{\pm}} f_1(x) g_1(x)^\ast & dx  + \int_{J_{\pm}} f_1'(x) g_1'(x)^\ast dx \\
                            &  + \int_{J_{\pm}} f_2(x) g_2(x)^\ast d\dip(x), \quad f,\,g\in\cH(J_{\pm}). 
\end{split}\end{align} 
Apart from this, we again introduce the closed linear subspace   
\begin{align} 
 \cH_0(J_{\pm}) = H_0^1(J_{\pm}) \times L^2(J_{\pm};\dip).
\end{align}
 Clearly, point evaluations of the first component are continuous on $\cH(J_\pm)$. 
 For each $x$ in the closure of $J_\pm$, we denote with $\delta_{x,\pm}$ the function in $\cH(J_\pm)$ such that 
 \begin{align}\label{eqnPEdeltacpm}
  \spr{f}{\delta_{x,\pm}}_{\cH(J_\pm)} = f_1(x), \quad f\in\cH(J_\pm). 
 \end{align}
 It is readily verified that this function is simply given by 
 \begin{align}
  \delta_{x,\pm}(s) = \begin{pmatrix} 1 \\ 0 \end{pmatrix}  \begin{cases} \E^{\mp\frac{x-c}{2}} \,2\cosh\left(\frac{s-c}{2}\right), & s\lessgtr x, \\  \E^{\mp\frac{s-c}{2}} \,2\cosh\left(\frac{x-c}{2}\right), & s\gtrless x.    \end{cases}
 \end{align}

\subsection{Self-adjointness of the spectral problem on a semi-axis}

As in Subsection~\ref{ssec31}, the maximal relation $\Tmaxpm$ in $\cH(J_{\pm})$ is defined by restricting $\Tloc$;
\begin{align}\label{eq:4.01}
 \Tmaxpm = \left\lbrace \f\in\cH(J_{\pm})\times\cH(J_{\pm}) \left|\, \f=\g|_{J_{\pm}} \,\text{for some }\g\in\Tloc \right.\right\rbrace.
\end{align}
 Given $\f\in\Tmaxpm$, we will again identify it with any representative in $\Tloc$. 
 In this respect, one notes that the quantities $f_1(x)$, $f_1'(x)$, $\tau f_1(x)$ as well as $V(\f,\g)(x)$ are well-defined and the Lagrange identity~\eqref{eq:lag} holds for all $x$, $y$ in the closure of $J_{\pm}$. 
  
\begin{lemma}\label{lem:4.02}
 For every $\f$, $\g\in \Tmaxpm$ we have 
 \begin{align}\label{eq:V=0}
 \lim_{x\rightarrow\pm\infty} V(\f,\g)(x) = 0. 
 \end{align}
 \end{lemma}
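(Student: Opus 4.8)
The plan is to show that $V(\f,\g)(x)$ has a limit as $x \to \pm\infty$ and that this limit must be zero, exploiting the fact that both $\f$ and $\g$ lie in the Hilbert space $\cH(J_\pm)$. The key point is that the integrands appearing on the right-hand side of the Lagrange identity~\eqref{eq:lag} are all absolutely integrable over $J_\pm$: for instance $\tau f_1 g_1' - f_1' \tau g_1$ is dominated by $|\tau f_1||g_1'| + |f_1'||\tau g_1|$, and each factor lies in $L^2(J_\pm)$ since $f_1, g_1 \in H^1(J_\pm)$ and $\tau f_1, \tau g_1$ are the first components of $\tau f, \tau g \in \cH(J_\pm)$, hence also in $H^1(J_\pm)$. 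Similarly $\tau f_2 g_2 - f_2 \tau g_2$ is dominated by an $L^1(J_\pm;\dip)$ function since $f_2, g_2, \tau f_2, \tau g_2 \in L^2(J_\pm;\dip)$. Therefore the right-hand side of~\eqref{eq:lag}, viewed as a function of $y$ with $x = c$ fixed, converges as $y \to \pm\infty$; equivalently $V(\f,\g)(x)$ has a finite limit $L$ as $x \to \pm\infty$.

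**Ruling out a nonzero limit.** It remains to argue $L = 0$. First I would reduce to the real case: by splitting into real and imaginary parts and using sesquilinearity of $V$ it suffices to treat $\f, \g \in \Tmaxpm$ with all data real. Then the argument is that a nonzero limit would contradict the membership of $f_1$ (or $g_1$) in $H^1(J_\pm) \subset L^2(J_\pm)$. Concretely, if $V(\f,\g)(x) \to L \neq 0$, then for $|x|$ large we have $\tau f_1(x) g_1'(x) - f_1'(x) \tau g_1(x)$ bounded away from zero; but all four of $f_1, \tau f_1, g_1, \tau g_1$ together with their derivatives are $L^2$ near $\pm\infty$, so the product $\tau f_1 g_1' - f_1' \tau g_1$ must be in $L^1$ near $\pm\infty$ — contradicting that it stays bounded below in absolute value over an infinite-measure set. (More carefully: any $L^1$ function on a half-line cannot have a nonzero limit at infinity.) This forces $L = 0$.

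**Main obstacle.** The delicate point is that $V$ is built from $f_1$, $\tau f_1$ and their \emph{derivatives}, not just from the $H^1$ functions themselves, so one has to be slightly careful that the relevant derivative quantities genuinely lie in $L^2$. This is where the hypothesis $\f, \g \in \Tmaxpm$ (rather than merely $\f, \g \in \Tloc$) enters: membership in the Hilbert space $\cH(J_\pm)$ gives $f_1' \in L^2(J_\pm)$ directly from the definition of the norm on $H^1(J_\pm)$ in~\eqref{eq:modH1}, and likewise $\tau f_1 \in H^1(J_\pm)$ so $(\tau f_1)' \in L^2(J_\pm)$. Once this is in place, the absolute integrability of the Lagrange integrands is immediate by Cauchy--Schwarz, and the two-step argument above (existence of the limit, then vanishing of the limit) goes through. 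I would also note the minor technical point that $V(\f,\g)$ is left-continuous and locally of bounded variation by Proposition~\ref{propLagrange}, so ``$\lim_{x\to\pm\infty}$'' is unambiguous.
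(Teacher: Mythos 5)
Your argument is correct and is essentially the paper's own proof: the Lagrange identity~\eqref{eq:lag} shows the limit exists because all integrands on its right-hand side are products of $L^2$ (resp.\ $L^2(J_\pm;\dip)$) functions and hence in $L^1$, and then $V(\f,\g)=\tau f_1 g_1'-f_1'\tau g_1$ is itself in $L^1$ near $\pm\infty$ by Cauchy--Schwarz, so its limit must vanish. The only superfluous step is the reduction to real data (note $V$ is bilinear, not sesquilinear), which is not needed since an integrable complex-valued function on a half-line cannot have a nonzero limit either.
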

 
 \begin{proof}
It follows from the Lagrange identity \eqref{eq:lag} that the limit in \eqref{eq:V=0} exists. 
Since the function $V(\f,\g)$ is integrable near $\pm\infty$, the limit has to be zero.
 \end{proof}

  For a given $z\in\C$, we say that a solution $\psi_\pm$ of the homogeneous differential equation~\eqref{eqnDEho} lies in $\HR$ near $\pm\infty$ if it lies in $H^1(\R)$ near $\pm\infty$ and $z\psi_\pm$ lies in $L^2(\R;\dip)$ near $\pm\infty$.
 With this definition, the kernel of $\Tmaxpm-z$ can be identified with the subspace of all solutions of the homogeneous differential equation~\eqref{eqnDEho} which lie in $\HR$ near $\pm\infty$. 
 In contrast to the situation on a bounded interval, not all solutions are represented in this kernel. 
 The following result will show that the dimension of the kernel of $\Tmaxpm-z$ is indeed at most one, that is, 
 \begin{align}\label{eqnkerpmleq1}
  \dim\ker{\Tmaxpm-z} \leq 1. 
 \end{align}

\begin{corollary}\label{cor:4.03}
 For every $z\in\C$ there is a solution $\theta_\pm$ of the homogeneous differential equation~\eqref{eqnDEho} which does not lie in $\HR$ near $\pm\infty$. 
\end{corollary}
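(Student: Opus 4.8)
The plan is to argue by contradiction: suppose that for some $z \in \C$ every solution of the homogeneous differential equation~\eqref{eqnDEho} lies in $\HR$ near $+\infty$ (the case of $-\infty$ being analogous). Since the solution space is two-dimensional, this means that a fundamental system $\theta$, $\phi$ with $W(\theta,\phi)=1$ both lie in $H^1(\R)$ near $+\infty$ and satisfy $z\theta$, $z\phi \in L^2(\R;\dip)$ near $+\infty$. I would then derive a contradiction from the constancy of the Wronskian together with integrability at $+\infty$.

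The key computation is to express $W(\theta,\phi)$ in terms of quantities that must decay. Fix a base point $c$ and recall $W(\theta,\phi)(x) = \theta(x)\phi'(x) - \theta'(x)\phi(x)$ is constant, equal to $1$. First I would observe that if both $\theta$ and $\phi$ lie in $H^1(\R)$ near $+\infty$, then in particular $\theta, \phi \in L^2$ near $+\infty$ and (since they are absolutely continuous with $L^2$ derivatives) they tend to $0$ at $+\infty$. The trouble is that $\theta'$, $\phi'$ need not go to zero pointwise; however, the pairs $\boldsymbol\theta = (\theta, z\theta)$, $\boldsymbol\phi = (\phi, z\phi)$ belong to $\Tmaxpm$ in the sense of~\eqref{eq:4.01} once restricted to $J_+$ (this uses precisely the definition of ``lies in $\HR$ near $+\infty$''), so Lemma~\ref{lem:4.02} applies and gives $\lim_{x\to+\infty} V(\boldsymbol\theta, \boldsymbol\phi)(x) = 0$. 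But by~\eqref{eqnVtoW} we have $V(\boldsymbol\theta, \boldsymbol\phi)(x) = z\,W(\theta,\phi)(x) = z$ for all $x$. If $z \neq 0$ this is the desired contradiction.

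It remains to handle $z = 0$. When $z = 0$ the differential equation~\eqref{eqnDEho} reduces to $-f'' + \tfrac14 f = 0$, whose solutions are linear combinations of $\E^{x/2}$ and $\E^{-x/2}$; the former does not lie in $H^1(\R)$ near $+\infty$, so one may simply take $\theta_+(x) = \E^{x/2}$ (respectively $\theta_-(x) = \E^{-x/2}$ near $-\infty$) explicitly. Thus in all cases there is a solution not lying in $\HR$ near $\pm\infty$, and moreover~\eqref{eqnkerpmleq1} follows since the solutions lying in $\HR$ near $\pm\infty$ form a proper (hence at most one-dimensional) subspace of the two-dimensional solution space.

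I expect the only real subtlety to be the bookkeeping needed to justify that $\boldsymbol\theta|_{J_+}, \boldsymbol\phi|_{J_+} \in \Tmaxpm$ — i.e. that membership of $\theta$ in $H^1(\R)$ near $+\infty$ together with $z\theta \in L^2(\R;\dip)$ near $+\infty$ is exactly what is encoded in~\eqref{eq:4.01} and~\eqref{eqnDErel} — and to invoke Lemma~\ref{lem:4.02} correctly near the relevant endpoint; the rest is the short Wronskian identity~\eqref{eqnVtoW} and the explicit $z=0$ case.
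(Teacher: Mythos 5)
Your proposal is correct and follows essentially the same route as the paper: both arguments combine the Lagrange identity consequence $\lim_{x\to\pm\infty}V(\f,\g)(x)=0$ (Lemma~\ref{lem:4.02}) with the identity $V(\f,\g)=z\,W(f_1,g_1)$ from~\eqref{eqnVtoW} to show that any two solutions lying in $\HR$ near $\pm\infty$ have vanishing Wronskian, hence are linearly dependent, and both dispose of $z=0$ via the explicit solutions $\E^{\pm x/2}$. The paper phrases this directly (the subspace of such solutions is at most one-dimensional) rather than by contradiction, but the content is identical.
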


\begin{proof}
 Since the claim is obvious when $z$ equals zero, we may assume that $z$ is non-zero. 
 Any two solutions $\psi_\pm$, $\phi_\pm$ of the homogeneous differential equation~\eqref{eqnDEho} which lie in $\HR$ near $\pm\infty$ correspond to elements of the kernel of $\Tmaxpm-z$. 
 In view of~\eqref{eqnVtoW} and Lemma \ref{lem:4.02}, it turns out that their Wronski determinant $W(\psi_\pm,\phi_\pm)$ is equal to zero and thus $\psi_\pm$ and $\phi_\pm$ are linearly dependent.
\end{proof}
 
As the linear relation $\Tmaxpm$ will turn out not to be symmetric, we introduce  
\begin{align}\label{eq:4.02}
\Tminpm = \left\lbrace \f\in \Tmaxpm \,|\, \tau f_1(c)=f_1'(c)=0\right\rbrace,
\end{align}
which is referred to as the minimal relation. 

\begin{theorem}\label{th:4.01}
  The minimal relation $\Tminpm$ is symmetric in $\cH(J_{\pm})$ with  
 \begin{align}\label{eqnTminTmaxpm}
  \Tminpm^\ast &= \Tmaxpm, &  \Tmaxpm^\ast &= \Tminpm. 
  \end{align}
In particular, the linear relations $\Tminpm$ and $\Tmaxpm$ are closed. 
\end{theorem}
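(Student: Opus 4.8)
The plan is to follow the same line of argument used for the bounded interval in Theorem \ref{th:3.01}, with the crucial input from Lemma \ref{lem:4.02} replacing the role of the boundary terms at the finite endpoint $b$. First I would record the analogue of \eqref{eq:lag2}: the Lagrange identity \eqref{eq:lag}, together with Lemma \ref{lem:4.02}, gives for all $\f$, $\g\in\Tmaxpm$ the identity
\begin{align*}
 \spr{\tau f}{g}_{\cH(J_\pm)} - \spr{f}{\tau g}_{\cH(J_\pm)} = \mp V(\f,\g^\ast)(c),
\end{align*}
where the sign accounts for the orientation of $J_\pm$ and the vanishing of the Wronskian at $\pm\infty$. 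Restricting to $\f\in\Tminpm$ (so that the $f_1'(c)$ and $\tau f_1(c)$ terms in $V(\f,\g^\ast)(c)$ vanish), this shows $\Tminpm$ is symmetric and yields the two inclusions $\Tmaxpm\subseteq\Tminpm^\ast$ and $\Tminpm\subseteq\Tmaxpm^\ast$.

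For the reverse inclusion $\Tminpm^\ast\subseteq\Tmaxpm$, I would argue as in Theorem \ref{th:3.01}, but here I first need the semi-axis analogues of the range and orthogonality relations \eqref{eq:ran_ab}--\eqref{eq:ran_oab}. Specifically, I would establish $\ran{\Tmaxpm} = \cH(J_\pm) = \cH_0(J_\pm)\oplus\ker{\Tmaxpm}$ and $\ran{\Tminpm}\supseteq\cH_0(J_\pm)$. The range statement for $\Tmaxpm$ follows because, given $g\in\cH(J_\pm)$, Corollary \ref{cor:inhom} produces a solution $f$ of \eqref{eqnDErel} on all of $\R$ which restricts to an element of $\cH(J_\pm)$ (this uses that $g$, extended by zero, lies in $\Hloc$, and that near $\pm\infty$ the inhomogeneous solution inherits square-integrability from $g$); the freedom in the constants $d_1$, $d_2$ of Corollary \ref{cor:inhom} then lets us additionally arrange $f_1'(c) = \tau f_1(c) = 0$, giving the range statement for $\Tminpm$. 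The orthogonal decomposition and the inclusion $\ran{\Tminpm}\subseteq\cH_0(J_\pm)$ come from the integration by parts formula \eqref{eqnInP}, exactly as in the bounded case. With these in hand, given $(f,f_\tau)\in\Tminpm^\ast$, pick $h\in\dom{\Tmaxpm}$ with $(h,f_\tau)\in\Tmaxpm$; comparing $\spr{h}{\tau g} = \spr{f_\tau}{g} = \spr{f}{\tau g}$ for all $\g\in\Tminpm$ forces $h-f\in\ker{\Tmaxpm}$, hence $(f,f_\tau)\in\Tmaxpm$. This proves $\Tminpm^\ast=\Tmaxpm$, from which $\Tmaxpm$ is closed; and then $\Tmaxpm^\ast=\Tminpm$ follows by taking adjoints once more, since $\Tminpm$ is closed — the functionals $\f\mapsto f_1'(c)$ and $\f\mapsto\tau f_1(c)$ are bounded on $\Tmaxpm$ (cf. \cite[Lemma~2.6]{LeftDefiniteSL}).

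The main obstacle I anticipate is the careful verification that the inhomogeneous solution produced by Corollary \ref{cor:inhom} actually lies in $\cH(J_\pm)$ near $\pm\infty$ — i.e., that $f_1\in H^1$ and $zf_1\in L^2(\dip)$ near $\pm\infty$ whenever $g\in\cH(J_\pm)$. Unlike the bounded interval, there is no a priori compactness, so one has to control the growth of the fundamental system against the decay of $g$; the kernel estimate \eqref{eqnPEdeltacpm} for $\delta_{x,\pm}$ together with the explicit hyperbolic weight is the natural tool, since the relevant variation-of-parameters kernel is built from solutions behaving like $\E^{\pm(x-c)/2}$, and the pairing with $g\in\cH(J_\pm)$ is controlled by $\|\delta_{x,\pm}\|_{\cH(J_\pm)}$, which stays bounded on $J_\pm$. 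Once this mapping property is secured, everything else is a routine transcription of the bounded-interval proof.
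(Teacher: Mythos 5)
Your first half (the Lagrange identity with the boundary term at $\pm\infty$ killed by Lemma~\ref{lem:4.02}, giving symmetry and the inclusions $\Tmaxpm\subseteq\Tminpm^\ast$, $\Tminpm\subseteq\Tmaxpm^\ast$) is exactly the paper's argument and is fine, as is the closing step via boundedness of $\f\mapsto\tau f_1(c)$, $\f\mapsto f_1'(c)$. The genuine gap is the claim $\ran{\Tmaxpm}=\cH(J_\pm)$, on which your proof of $\Tminpm^\ast\subseteq\Tmaxpm$ rests: it is false in general. Given $g\in\cH(J_\pm)$, the first component of a preimage must solve $-f_1''+\tfrac14 f_1=\omega g_1+\dip g_2$, and whether any solution lies in $H^1$ near $\pm\infty$ depends on the convergence of $\int^{\pm\infty}\E^{\mp s/2}\,d(\omega g_1+\dip g_2)(s)$; since $\omega$ and $\dip$ are arbitrary locally finite measures, this can diverge for suitable $g\in\cH(J_\pm)$ (take, say, $\omega=\sum_n \E^{n}\delta_n$ and $g_1(n)=\E^{-n/4}$ on $J_+$). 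Your proposed remedy via $\|\delta_{x,\pm}\|_{\cH(J_\pm)}$ does not help, because the inhomogeneity is paired against the measures $\omega,\dip$, not against Lebesgue measure, so it is not controlled by the $\cH(J_\pm)$-norm of $g$. Note that the paper itself signals this: Theorems~\ref{thm:Tinv_pm} and \ref{thmPencilpm} are stated conditionally on zero belonging to the resolvent set of $\T_\pm$, which would be automatic if $\Tmaxpm$ were surjective. (Only $\overline{\ran{\Tmaxpm}}=\ker{\Tminpm}^\perp=\cH(J_\pm)$ holds, i.e.\ the range is dense, which is not enough to pick $h$ with $(h,f_\tau)\in\Tmaxpm$ for a given $f_\tau$.)

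The way around this, which is what the paper does, is to make the comparison argument purely local: given $(f,f_\tau)\in\Tminpm^\ast$, choose $\h\in\Tloc$ with $\tau h=f_\tau$ on $J_\pm$ (this only needs the local existence result, Lemma~\ref{lemEE}/Corollary~\ref{cor:inhom}, so no integrability at $\pm\infty$ is required of $h$), and test the identity $\spr{f_\tau}{g}=\spr{f}{\tau g}$ only against those $\g\in\Tminpm$ for which $\tau g$ vanishes near $\pm\infty$. For such $\g$ the two integrations by parts produce no boundary contribution at infinity, and the resulting orthogonality relation, combined with the fact that $\ran{\Tminpm}$ contains all compactly supported elements of $\cH_0(J_\pm)$ (which \emph{is} provable by adjusting the constants in Corollary~\ref{cor:inhom}, since only a bounded region matters), forces $h_2=f_2$ $\dip$-a.e.\ and $h_1-f_1$ to be a solution of $-u''+\tfrac14u=0$ locally. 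Hence $(f,f_\tau)$ itself has a representative in $\Tloc$ and lies in $\Tmaxpm$, without ever asserting global solvability in $\cH(J_\pm)$.
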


\begin{proof}
Employing the Lagrange identity \eqref{eq:lag} and Lemma~\ref{lem:4.02}, we get
\begin{align}\label{eq:4.05}
\spr{f}{\tau g}_{\cH(J_{\pm})}- \spr{\tau f}{g}_{\cH(J_{\pm})}= \pm V(\f,\g^\ast)(c)
\end{align} 
for all $\f$, $\g\in\Tmaxpm$. 
As in the proof of Theorem~\ref{th:3.01} this implies 
\begin{align*}
 \Tmaxpm & \subseteq \Tminpm^\ast, &  \Tminpm & \subseteq\Tmaxpm^\ast. 
\end{align*}
In order to show that $\Tminpm^\ast\subseteq\Tmaxpm$, fix some $(f,f_\tau)\in\Tminpm^\ast$ and let $\h\in\Tloc$ such that $\tau h$ coincides with $f_\tau$ on $J_\pm$. 
For any $\g\in\Tminpm$ we clearly have  
\begin{align*}
 \spr{f_\tau}{g}_{\cH(J_\pm)} = \spr{f}{\tau g}_{\cH(J_\pm)}.
\end{align*}
If $\tau g$ vanishes near $\pm\infty$, then integrating the left-hand side by parts, using that $\g\in\Tminpm$, noting that $f_\tau$ coincides with $\tau h$ on $J_\pm$ and integrating by parts once more, using that $\h\in\Tloc$, we end up with 
\begin{align*}
  \frac{1}{4} \int_{J_\pm} (h_1(x)-f_1(x))\tau g_1(x)^\ast & dx + \int_{J_\pm} (h_1'(x)-f_1'(x))\tau g_1'(x)^\ast dx \\ 
                                                              & + \int_{J_\pm} (h_2(x)-f_2(x)) \tau g_2(x)^\ast d\dip(x) = 0. 
\end{align*}
Upon choosing the constants in Corollary~\ref{cor:inhom} appropriately, one sees that the range of $\Tminpm$ actually contains all functions in $\cH_0(J_\pm)$ which vanish near $\pm\infty$; see also \eqref{eq:ran_oab}. 
Hence we infer that $h_2(x) = f_2(x)$ for almost all $x\in J_\pm$ with respect to $\dip$ and moreover, from~\eqref{eq:ran_ab}  we see that $h_1 - f_1$ coincides with a solution of the homogeneous differential equation~\eqref{eqnDEho} with $z=0$ on (every subinterval of) $J_\pm$. 
Altogether, this shows that $(f,f_\tau)$ has a representative in $\Tloc$ and hence belongs to $\Tmaxpm$, which proves the first equality in~\eqref{eqnTminTmaxpm}. 
 In particular, $\Tmaxpm$ turns out to be closed and the second equality in~\eqref{eqnTminTmaxpm} follows upon noting that $\Tminpm$ is closed as well, since the linear functionals $\f\mapsto\tau f_1(c)$ and $\f\mapsto f_1'(c)$ are bounded on $\Tmaxpm$ (cf.\ \cite[Lemma~2.6]{LeftDefiniteSL}). 
\end{proof}

\begin{corollary}\label{cor:4.04}
 The minimal relation $\Tminpm$ has deficiency indices equal to one. 
\end{corollary}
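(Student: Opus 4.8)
The plan is to read off the deficiency indices directly from the adjoint relations established in Theorem \ref{th:4.01} together with the dimension count for the kernel of $\Tmaxpm - z$. Recall that the deficiency indices of the symmetric relation $\Tminpm$ are $\dim\ker{\Tminpm^\ast - z}$ for $z$ in the upper and lower half-planes respectively. By Theorem \ref{th:4.01} we have $\Tminpm^\ast = \Tmaxpm$, so the deficiency indices equal $\dim\ker{\Tmaxpm - z}$ for $z\in\C_+$ and $z\in\C_-$. By \eqref{eqnkerpmleq1} this dimension is at most one, so it only remains to exhibit, for each nonreal $z$, a nonzero element of $\ker{\Tmaxpm - z}$; equivalently, a nontrivial solution of the homogeneous differential equation \eqref{eqnDEho} lying in $\HR$ near $\pm\infty$.

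The key step is therefore to rule out the possibility that the deficiency index is zero, i.e.\ to show that not every solution fails to lie in $\HR$ near $\pm\infty$. Here I would invoke the fact that $\Tminpm$ is a symmetric relation with equal deficiency indices (which hold automatically since the coefficient measures are real, so complex conjugation provides a conjugation commuting with $\Tminpm$, cf.\ Lemma \ref{lemEE}). If the deficiency index were zero, then $\Tminpm$ would already be self-adjoint, and hence $\Tminpm = \Tmaxpm$ by Theorem \ref{th:4.01}; but this is impossible because the two boundary functionals $\f\mapsto\tau f_1(c)$ and $\f\mapsto f_1'(c)$ in \eqref{eq:4.02} are not both identically zero on $\Tmaxpm$. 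Indeed, given any $z\in\C$ one may use Lemma \ref{lemEE} to produce a solution of \eqref{eqnDEho} near $c$ with prescribed nonzero value of $f'(c)$, and by truncating it (multiplying by a smooth cutoff supported near $c$) and correcting via Corollary \ref{cor:inhom} one obtains an element of $\Tmaxpm$ violating the minimal boundary conditions; alternatively, this already follows from \eqref{eq:4.05}, since $\pm V(\f,\g^\ast)(c)$ does not vanish identically on $\Tmaxpm\times\Tmaxpm$.

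The main obstacle I anticipate is making precise that the deficiency index is at least one without appealing to a limit-point/limit-circle dichotomy — i.e.\ we are not claiming \emph{every} solution is $\HR$ near $\pm\infty$, only that at least one nonzero solution is. The cleanest route is the abstract one just sketched: equal deficiency indices together with $\Tminpm \subsetneq \Tmaxpm$ forces the common deficiency index to be positive, and \eqref{eqnkerpmleq1} forces it to be exactly one. This avoids any direct asymptotic analysis of solutions of \eqref{eqnDEho}, which would be delicate given the generality of the measures $\omega$ and $\dip$.
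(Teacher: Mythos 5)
Your argument is correct and follows essentially the same route as the paper: equality of the two deficiency indices from the natural (real) conjugation, the upper bound one from \eqref{eqnkerpmleq1} via $\Tminpm^\ast=\Tmaxpm$, and positivity because $\Tminpm\subsetneq\Tmaxpm$ rules out self-adjointness of $\Tminpm$. The only difference is that you spell out why $\Tmaxpm\neq\Tminpm$ (the paper leaves this as a remark); a slightly quicker way to see it is that the solution $\E^{\mp(x-c)/2}$ of \eqref{eqnDEho} with $z=0$ yields an element of $\Tmaxpm$ with $f_1'(c)\neq0$.
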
 

\begin{proof}
 Since the minimal relation $\Tminpm$ is real with respect to the natural conjugation, the deficiency indices are the same in the upper and lower complex half-plane. 
 In view of~\eqref{eqnkerpmleq1}, it remains to note that $\Tmaxpm$ does not coincide with $\Tminpm$. 
\end{proof}

\begin{corollary}\label{corWS}
 If $z\in\C$ is a point of regular type for $\Tminpm$, then there is a (up to scalar multiples) unique non-trivial solution $\psi_\pm$ of the homogeneous differential equation~\eqref{eqnDEho} which lies in $\HR$ near $\pm\infty$.
\end{corollary}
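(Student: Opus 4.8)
The plan is to extract the solution $\psi_\pm$ directly from an element of the deficiency space of $\Tminpm$. Recall that a point $z\in\C$ is of regular type for $\Tminpm$ precisely when $\Tminpm-z$ is injective with bounded inverse, equivalently when $\ran{\Tminpm-z}$ is closed and $(\Tminpm-z)^{-1}$ is bounded on it. Since $\Tminpm$ is closed with deficiency indices $(1,1)$ by Corollary~\ref{cor:4.04} and Theorem~\ref{th:4.01}, the orthogonal complement $\cH(J_\pm)\ominus\ran{\Tminpm-z}$ has dimension one for every $z$ of regular type; indeed, for real $z$ of regular type this follows because the deficiency index is constant on each connected component of the set of points of regular type, and for non-real $z$ it is the defect number which equals one. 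Equivalently, $\dim\ker{\Tmaxpm - z} = 1$ by the identity $\ker{\Tmaxpm-z} = (\ran{\Tminpm - z^\ast})^\perp$ together with $\Tmaxpm = \Tminpm^\ast$ from~\eqref{eqnTminTmaxpm}, after checking (via the natural conjugation, as in the proof of Corollary~\ref{cor:4.04}) that $z^\ast$ is of regular type whenever $z$ is.

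First I would fix a non-trivial element $f\in\ker{\Tmaxpm-z}$, which exists by the dimension count just made, and invoke the identification established before Corollary~\ref{cor:4.03}: the kernel of $\Tmaxpm-z$ consists exactly of those pairs $(\psi, z\psi)$ where $\psi = f_1$ solves the homogeneous equation~\eqref{eqnDEho} and lies in $\HR$ near $\pm\infty$ (meaning $\psi\in H^1(\R)$ near $\pm\infty$ and $z\psi\in L^2(\R;\dip)$ near $\pm\infty$). Thus $\psi_\pm := f_1$ is a non-trivial solution of~\eqref{eqnDEho} lying in $\HR$ near $\pm\infty$. Here one must note that $f_1$ is not the zero function: if $f_1\equiv 0$ near $\pm\infty$ then, being a solution of~\eqref{eqnDEho}, it would vanish identically by Lemma~\ref{lemEE}, forcing $f_2$ to vanish $\dip$-almost everywhere as well (since $\dip f_2 = z\,\dip f_1 = 0$), contradicting $f\neq 0$.

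For uniqueness up to scalar multiples, suppose $\psi_\pm$ and $\phi_\pm$ are two non-trivial solutions of~\eqref{eqnDEho} lying in $\HR$ near $\pm\infty$. If $z\neq 0$, the associated pairs $(\psi_\pm, z\psi_\pm)$ and $(\phi_\pm, z\phi_\pm)$ lie in $\ker{\Tmaxpm-z}$, which is one-dimensional, so the two solutions are proportional; alternatively this is exactly the content of Corollary~\ref{cor:4.03}, whose proof shows $W(\psi_\pm,\phi_\pm)=0$ via~\eqref{eqnVtoW} and Lemma~\ref{lem:4.02}. The case $z=0$ is not excluded a priori from being of regular type, so it should be treated separately: when $z=0$, being of regular type for $\Tminpm$ means in particular $\ker{\Tminpm}=\{0\}$, and one checks that a solution of $-\psi'' + \tfrac14\psi = 0$ on $J_\pm$ lying in $H^1(\R)$ near $\pm\infty$ is necessarily a multiple of $s\mapsto \E^{\mp(s-c)/2}$, which is one-dimensional. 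The main obstacle is the bookkeeping at $z=0$, where the multivalued nature of $\Tloc$ and the factor $z$ in the correspondence~\eqref{eqnVtoW} degenerate; but since on $J_\pm$ the homogeneous equation at $z=0$ is the explicit constant-coefficient equation $-\psi''+\tfrac14\psi=0$, the $L^2(\R)$-near-$\pm\infty$ condition singles out the decaying exponential and uniqueness is immediate.
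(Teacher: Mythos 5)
Your proposal is correct and follows essentially the same route as the paper: identify the solutions lying in $\HR$ near $\pm\infty$ with $\ker{\Tmaxpm-z}$, and use $\Tmaxpm=\Tminpm^\ast$ together with the deficiency index one (Corollary~\ref{cor:4.04}) to see that this kernel is exactly one-dimensional at points of regular type. The paper compresses this into one line; your added details (constancy of the defect number on the regularity field, non-triviality of $f_1$, and the explicit $z=0$ case) merely flesh out the same argument.
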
 

\begin{proof}
 It suffices to note that the solutions of the homogeneous differential equation~\eqref{eqnDEho} which lie in $\HR$ near $\pm\infty$ correspond to the kernel of $\Tmaxpm-z$, which is one-dimensional by Corollary~\ref{cor:4.04}.
\end{proof}

Since the minimal linear relation $\Tminpm$ has equal deficiency indices, it always has self-adjoint extensions. 
Using the Lagrange identity~\eqref{eq:4.05}, we readily obtain a complete description of all of them in a standard way: 
For all $\gamma\in[0,\pi)$ we set 
 \begin{align}\label{eq:T_a}
   \T_{\gamma,\pm} = \left\lbrace \f\in \Tmaxpm \,|\,  \tau f_1(c) \cos\gamma - f_1'(c) \sin\gamma = 0 \right\rbrace.
 \end{align}
 The eigenvalues of the self-adjoint linear relation $\T_{\gamma,\pm}$ are precisely those $z\in\C$ for which there is a non-trivial solution $\phi$ of the homogeneous differential equation~\eqref{eqnDEho} which lies in $\HR$ near $\pm\infty$ and satisfies the boundary condition 
\begin{align}\label{eqnBCc}  
   z f(c) \cos\gamma - f'(c)\sin\gamma & =0, 
\end{align}
at $c$. 
 In view of~\eqref{eqnkerpmleq1}, every eigenvalue of $\T_{\gamma,\pm}$ turns out to be simple. 
 This even holds for zero in this case, which is an eigenvalue of $\T_{\gamma,\pm}$ if and only if $\gamma$ is zero. 

\begin{theorem}\label{thm:Respm}
 If some nonzero $z\in\C$ belongs to the resolvent set of $\T_{\gamma,\pm}$, then   
  \begin{align}\begin{split}\label{eq:Respm}
   z\, (\T_{\gamma,\pm}-z)^{-1} g(x) & = \spr{g}{\G_\pm(x,\cdot\,)^\ast}_{\cH(J_{\pm})} \begin{pmatrix} 1 \\ z \end{pmatrix} - g_1(x) \begin{pmatrix} 1 \\ 0 \end{pmatrix}, \quad x\in J_{\pm},  
 \end{split}\end{align}
 for every $g\in\cH(J_{\pm})$, 
where the Green's function $\G_\pm$ is given by  
 \begin{align}
  \G_\pm(x,s) =  \begin{pmatrix} 1 \\ z \end{pmatrix} \frac{\pm 1}{W(\psi_\pm,\phi)} \begin{cases} \psi_\pm(x) \phi(s), & s\lessgtr x, \\ \psi_\pm(s) \phi(x), & s \gtrless x,    \end{cases}
 \end{align}
 and $\psi_\pm$, $\phi$ are linearly independent solutions of the homogeneous differential equation~\eqref{eqnDEho} such that $\psi_\pm$ lies in $\HR$ near $\pm\infty$ and $\phi$ satisfies the boundary condition~\eqref{eqnBCc} at $c$. 
 \end{theorem}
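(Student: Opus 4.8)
The plan is to mimic the proof of Theorem~\ref{th:Resab} on the bounded interval, with the role of the endpoint condition at $b$ now played by the $H^1$-integrability condition at $\pm\infty$ from Corollary~\ref{corWS}. First I would fix a nonzero $z$ in the resolvent set of $\T_{\gamma,\pm}$. Since $z$ is not an eigenvalue, there is no nontrivial solution of~\eqref{eqnDEho} lying in $\HR$ near $\pm\infty$ that also satisfies~\eqref{eqnBCc}; combined with Corollary~\ref{corWS} (which, by the reasoning at the start of its proof, applies since resolvent-set points are of regular type) this yields a solution $\psi_\pm$ lying in $\HR$ near $\pm\infty$, and a solution $\phi$ satisfying~\eqref{eqnBCc}, which are necessarily linearly independent. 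These define $\G_\pm$.

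Next I would verify the formula~\eqref{eq:Respm}. Take $(f,g)\in\T_{\gamma,\pm}-z$, so that~\eqref{eqnTloc-z} holds, and carry out exactly the two integrations by parts used in the proof of Theorem~\ref{th:Resab}, but now on $J_\pm$. The key point is that the boundary terms at $\pm\infty$ vanish: the Wronskian-type boundary contributions involving $\psi_\pm$ and $f_1$ go to zero because $\psi_\pm$ corresponds to an element of $\ker{\Tmaxpm-z}$, $f$ corresponds to an element of $\Tmaxpm$ (since $(f,g)\in\Tmaxpm-z$), and Lemma~\ref{lem:4.02} forces the relevant modified Wronskian to tend to $0$; the $\phi$-boundary terms are handled at the finite endpoint $c$ using the boundary condition~\eqref{eqnBCc} together with the relation $f_2 = g_1 + zf_1$ (i.e.\ $(g_1(c)+zf_1(c))\cos\gamma - f_1'(c)\sin\gamma = 0$). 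After comparing the two resulting identities one reads off that $f_1$ is given by the first component of~\eqref{eq:Respm}, using that $z\neq 0$, and that $f_2 = g_1 + zf_1 = \spr{g}{\G_\pm(x,\redot)^\ast}_{\cH(J_\pm)}$ $\dip$-a.e., which gives the second component. This shows $(\T_{\gamma,\pm}-z)^{-1}$ has the claimed form on $\ran{\T_{\gamma,\pm}-z}$.

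Finally, to conclude that $z$ actually lies in the resolvent set rather than merely checking a formula on the range, I would observe (as in Theorem~\ref{th:Resab}) that the operator on the right-hand side of~\eqref{eq:Respm} is everywhere defined and bounded on $\cH(J_\pm)$: boundedness of $g\mapsto\spr{g}{\G_\pm(x,\redot)^\ast}_{\cH(J_\pm)}$ as an element-valued map follows since $\G_\pm(x,\redot)$ lies in $\cH(J_\pm)$ for each $x$ (its first component behaves like $\psi_\pm$ near $\pm\infty$, hence is in $H^1$ there, and $z$ times it is in $L^2(\dip)$ there), with the requisite uniform control in $x$; and then a direct check that this bounded operator indeed maps into $\dom{\T_{\gamma,\pm}-z}$ and inverts $\T_{\gamma,\pm}-z$. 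The main obstacle I anticipate is the careful bookkeeping of the two integration-by-parts steps on the half-line and the justification that the boundary contributions at infinity vanish — this is where Lemma~\ref{lem:4.02} and the precise membership of $\psi_\pm$ in $\HR$ near $\pm\infty$ must be invoked with care, and where the sign factor $\pm 1$ in $\G_\pm$ originates (tracking which of $s\lessgtr x$ or $s\gtrless x$ corresponds to the solution that is $\HR$ near $\pm\infty$).
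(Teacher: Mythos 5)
Your proposal is correct and follows essentially the same route as the paper: perform the two integrations by parts from Theorem~\ref{th:Resab} on truncated intervals $[c,r)$ resp.\ $[r,c)$, use the boundary condition~\eqref{eqnBCc} together with $f_2=g_1+zf_1$ at $c$, and invoke Lemma~\ref{lem:4.02} to make the Wronskian-type boundary term at $\pm\infty$ vanish as $r\to\pm\infty$. Your final paragraph on boundedness of the right-hand side is superfluous here, since (unlike in Theorem~\ref{th:Resab}) membership of $z$ in the resolvent set is a hypothesis rather than a conclusion, so $\ran{\T_{\gamma,\pm}-z}$ is already all of $\cH(J_\pm)$ and the formula on the range is the full resolvent formula.
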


\begin{proof}
 First of all, note that non-trivial solutions $\psi_\pm$, $\phi$ of the homogeneous differential equation~\eqref{eqnDEho} with the required properties always exist. 
 Moreover, they are linearly independent since otherwise $z$ would be an eigenvalue of $\T_{\gamma,\pm}$. 
 If some pair $(f,g)$ belongs to $\T_{\gamma,\pm} - z$, then one shows as in the proof of Theorem~\ref{th:Resab} 
\begin{align*}
  z f_1(x) & =  \spr{g}{\G_\pm(x,\redot)^\ast}_{\cH(J_{r,\pm})} - g_1(x) \\
            & \quad\quad + \frac{\phi(x)}{W(\psi_\pm,\phi)} \left( z\psi_\pm(r)f_1'(r) - \psi_\pm'(r)(zf_1(r)+g_1(r)) \right), \quad x\in J_{\pm}, 
\end{align*}
at least if $\pm\, r > |x|$, were we use the abbreviations $J_{r,+} = [c,r)$ and $J_{r,-} = [r,c)$.  
In order to obtain the representation for $f_1$ as given in~\eqref{eq:Respm}, it remains to note that the last term on the right-hand side converges to zero as $r\rightarrow\pm\infty$ in view of Lemma~\ref{lem:4.02}.  
Finally, as an immediate consequence of the definition of $\T_{\gamma,\pm}$ we have 
\begin{align*}
 f_2(x)  = g_1(x)+ z f_1(x)= \spr{g}{\G_\pm(x,\redot)^\ast}_{\cH(J_{\pm})},
\end{align*}
for almost all $x\in J_{\pm}$ with respect to $\dip$. 
\end{proof}

\subsection{A quadratic operator pencil on a semi-axis}

We will now introduce a quadratic operator pencil in $H_0^1(J_{\pm})$, associated with the differential equation~\eqref{eqnDEinho} and a Dirichlet boundary condition at $c$. 
To this end, we first consider the linear relation $\T_\pm$ in $\cH_0(J_{\pm})$, defined by  
 \begin{align}\label{eq:Tpm}
  \T_\pm = \left\lbrace \f\in\T_{0,\pm} \,|\, \f\in\cH_0(J_{\pm})\times\cH_0(J_{\pm}) \right\rbrace.
 \end{align}
 Upon observing that $\ker{\T_{0,\pm}}$ is spanned by the single function $\E^{\mp\frac{x}{2}}$ on $J_\pm$, it is readily verified that $\cH(J_{\pm}) = \cH_0(J_{\pm}) \oplus \ker{\T_{0,\pm}}$, and therefore one finds   
 \begin{align}
  \T_{0,\pm} = \T_\pm \oplus \left(\ker{\T_{0,\pm}}\times\lbrace0\rbrace\right).
 \end{align} 
 This guarantees that the linear relation $\T_\pm$ is self-adjoint in the Hilbert space $\cH_0(J_{\pm})$ with $\sigma(\T_{0,\pm}) = \sigma(\T_{\pm})\cup\lbrace0\rbrace$ and that zero is not an eigenvalue of $\T_\pm$. 

 \begin{theorem}\label{thm:Tinv_pm}
  If zero belongs to the resolvent set of $\T_{\pm}$, then   
  \begin{align}
   \T_{\pm}^{-1} = \begin{pmatrix} \Omega_\pm & \Upsilon_\pm  \\ \mathrm{I}_\pm & 0 \end{pmatrix},  
  \end{align}
 where the operator $\Omega_\pm: H_0^1(J_{\pm})\rightarrow H_0^1(J_{\pm})$ is given by 
\begin{align}\label{eqnIOomegapm}
 \Omega_\pm g_1(x) =  \int_{J_{\pm}} K_\pm(x,s) g_1(s)d\omega(s), \quad x\in J_{\pm},~ g_1\in H_{\mathrm{c}}^1(J_{\pm}),
\end{align}
the operator $\Upsilon_\pm: L^2(J_{\pm};\dip) \rightarrow H_0^1(J_{\pm})$ is given by 
\begin{align}\label{eqnIOdippm}
 \Upsilon_\pm g_2(x) = \int_{J_{\pm}} K_\pm(x,s) g_2(s)d\dip(s), \quad x\in J_{\pm},~ g_2\in L^2(J_{\pm};\dip),
\end{align}
 and the operator $\mathrm{I}_\pm: H_0^1(J_{\pm}) \rightarrow L^2(J_{\pm};\dip)$ is the canonical embedding. Here,  
  \begin{align}
   K_\pm(x,s) =  \pm 2 \begin{cases} \E^{\mp\frac{x-c}{2}} \sinh\left(\frac{s-c}{2}\right), & s\lessgtr x,   \\
                                                    \E^{\mp\frac{s-c}{2}} \sinh\left(\frac{x-c}{2}\right), & s\gtrless x.  \end{cases}
 \end{align} 
 \end{theorem}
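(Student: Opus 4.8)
The plan is to mirror the structure of the proof of Theorem~\ref{thm:T0inv} on a bounded interval, now in the semi-axis setting. First I would invoke the standing assumption that zero belongs to the resolvent set of $\T_\pm$, which (as in the bounded case, via Theorem~\ref{thm:Respm} with $z$ replaced by a limiting argument, or directly because $\T_{0,\pm}$ has zero as an isolated eigenvalue with eigenspace $\linspan\{\E^{\mp x/2}\}$ that has been split off) guarantees that $\T_\pm^{-1}$ is a bounded everywhere-defined operator on $\cH_0(J_\pm)$. Writing $\f=(f,g)\in\T_\pm$ means $g=\tau f$ with $f\in\cH_0(J_\pm)$, $\tau f\in\cH_0(J_\pm)$, so the defining equations~\eqref{eqnDErel} read $-f_1''+\tfrac14 f_1 = \omega g_1 + \dip g_2$ together with $\dip f_2 = \dip g_1$, subject to the Dirichlet condition $f_1(c)=0$ and $f_1\in H^1_0(J_\pm)$ (so $f_1$ and $f_1'$ vanish at $c$ and $f_1$ decays at $\pm\infty$). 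The second component relation immediately gives $f_2(x)=g_1(x)$ for $\dip$-a.e.\ $x$, which accounts for the bottom row $(\mathrm{I}_\pm,0)$ of the block matrix.

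Next I would solve the inhomogeneous measure differential equation for $f_1$ explicitly. The homogeneous equation $-u''+\tfrac14 u=0$ has the two solutions $\E^{-x/2}$ and $\E^{x/2}$; on $J_+=[c,\infty)$ the solution decaying at $+\infty$ is $\E^{-(x-c)/2}$ and the one vanishing at $c$ is $\sinh((x-c)/2)$, with Wronskian (in the normalization $W(u,v)=uv'-u'v$) equal to $\mp 1$ — this is exactly where the $\pm$ sign and the factor in $K_\pm$ come from. Using the variation-of-parameters / Green's function construction for measure coefficients, exactly as in Corollary~\ref{cor:inhom} and as carried out in the proof of Theorem~\ref{th:Resab}, one obtains
\begin{align*}
 f_1(x) = \int_{J_\pm} K_\pm(x,s)\,g_1(s)\,d\omega(s) + \int_{J_\pm} K_\pm(x,s)\,g_2(s)\,d\dip(s), \quad x\in J_\pm,
\end{align*}
where $K_\pm$ is the stated kernel; the boundary behaviour at $c$ is built in since $K_\pm(c,s)=0$, and the decay at $\pm\infty$ of $K_\pm(x,s)$ in $x$ (the factor $\E^{\mp(x-c)/2}$) together with the fact that $g\in\cH_0(J_\pm)$ ensures $f_1\in H^1_0(J_\pm)$. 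Integration by parts as in~\eqref{eqnPI}, applied twice, verifies that this $f_1$ indeed satisfies~\eqref{eqnDErel}. This identifies $\Omega_\pm g_1$ and $\Upsilon_\pm g_2$ as the top row of $\T_\pm^{-1}$.

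Finally, boundedness of $\Omega_\pm$, $\Upsilon_\pm$, $\mathrm{I}_\pm$ is automatic: since $\T_\pm^{-1}$ is a bounded operator on $\cH_0(J_\pm)=H^1_0(J_\pm)\times L^2(J_\pm;\dip)$ and has the displayed block form, each entry is bounded as a map between the relevant factor spaces. The formulas~\eqref{eqnIOomegapm} and~\eqref{eqnIOdippm} are first established for $g_1\in H^1_{\mathrm c}(J_\pm)$ and $g_2\in L^2(J_\pm;\dip)$ respectively, and then $\Omega_\pm$ extends to all of $H^1_0(J_\pm)$ by continuity; the canonical embedding $\mathrm{I}_\pm$ being bounded reflects that point evaluations (and hence the embedding into $L^2(J_\pm;\dip)$) are continuous on $H^1(J_\pm)$ with the exponentially weighted structure, as recorded via the functions $\delta_{x,\pm}$ in~\eqref{eqnPEdeltacpm}.

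\textbf{Main obstacle.} The one genuinely non-routine point is the hypothesis ``if zero belongs to the resolvent set of $\T_\pm$'': unlike the bounded-interval case of Theorem~\ref{thm:T0inv}, this is not automatic on a semi-axis (it can fail), so the statement is conditional and no proof is needed that it holds — but one must be careful that the explicit kernel $K_\pm$ really does produce an $H^1_0$-function and a \emph{bounded} operator, i.e.\ that the integral $\int K_\pm(x,s)\,d\dip(s)$ converges and defines an element of $H^1_0(J_\pm)$ for every $g_2\in L^2(J_\pm;\dip)$. This is where the exponential decay of $K_\pm(x,s)$ in both variables, combined with the a priori boundedness of $\T_\pm^{-1}$, does the work; establishing the kernel identity on a dense set and then passing to the closure is the cleanest route and avoids delicate direct estimates on $\dip$.
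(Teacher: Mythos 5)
Your plan is correct and follows essentially the same route as the paper: the bottom row comes from $f_2=g_1$ (which also gives boundedness of $\mathrm{I}_\pm$ from boundedness of $\T_\pm^{-1}$), and the top row is obtained by integration by parts with the kernel $K_\pm$, first for compactly supported data and then by continuity. The only step the paper spells out more explicitly is the one you flag as the main obstacle: it truncates at $r$, exhibits the boundary term $2\sinh\bigl(\tfrac{x-c}{2}\bigr)\E^{\mp\frac{r-c}{2}}\bigl(f_1'(r)\pm\tfrac{1}{2}f_1(r)\bigr)$, and shows it vanishes as $r\to\pm\infty$ by an argument as in Lemma~\ref{lem:4.02}.
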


 \begin{proof}
  Suppose that zero belongs to the resolvent set of $\T_\pm$ and let $(f,g)\in\T_\pm$. 
  By the definition of $\T_{\pm}$ one has $f_2(x) = g_1(x)$ for almost all $x\in J_{\pm}$ with respect to $\dip$, which ensures that the canonical embedding $\mathrm{I}_\pm$ is bounded (as the inverse of $\T_{\pm}$ is as well) and that the right-hand side of~\eqref{eqnIOdippm} always exists.  
  In much the same manner as in the proof of Theorem~\ref{th:Resab} and Theorem~\ref{thm:T0inv}, using~\eqref{eqnDErel}, integration by parts and the boundary condition at $c$, one obtains 
  \begin{align*}
  f_1(x) & = \pm\int_{c}^r  K_\pm(x,s)g_1(s) d\omega(s) \pm  \int_{c}^r  K_\pm(x,s) g_2(s) d\dip(s) \\ 
                 & \qquad\qquad\qquad + 2 \sinh\left(\frac{x-c}{2}\right) \E^{\mp\frac{r-c}{2}} \left( f_1'(r) \pm\frac{1}{2}f_1(r) \right), \quad x\in J_{\pm}, 
  \end{align*}
  at least when $\pm\, r>|x|$. 
  If $g_1$ has compact support, then one shows similarly to the proof of Lemma~\ref{lem:4.02} that the last term converges to zero as $r\rightarrow\pm\infty$. 
  This yields  
  \begin{align*}
    f_1(x) = \int_{J_{\pm}} K_\pm(x,s) g_1(s) d\omega(s) & + \int_{J_{\pm}} K_\pm(x,s) g_2(s) d\dip(s), \quad x\in J_{\pm}, 
  \end{align*}
  which proves the claimed representation for the inverse of $\T_\pm$. 
 \end{proof}
 
 Supposing that zero belongs to the resolvent set of $\T_{\pm}$, we now define the quadratic operator pencil $\Pe_\pm$ in $H^1_0(J_{\pm})$ by setting 
\begin{align}
 \Pe_\pm(z) = \mathrm{I}_\pm - z\, \Omega_\pm - z^2 \Upsilon_\pm, \quad z\in\C.
\end{align}
Again, by abuse of notation, we reuse the symbol $\mathrm{I}_\pm$ to denote the identity operator in $H^1_0(J_{\pm})$ as well as $\Upsilon_\pm: H^1_0(J_{\pm}) \rightarrow H^1_0(J_{\pm})$ for the integral operator 
\begin{align}  
 \Upsilon_\pm g(x) = \int_{J_{\pm}} K_\pm(x,s) g(s)d\dip(s), \quad x\in J_{\pm},~ g\in H^1_0(J_{\pm}),
\end{align}
which is bounded in view of Theorem~\ref{thm:Tinv_pm}. 
 
 It is not surprising that the quadratic operator pencil $\Pe_\pm$ is again closely related to the self-adjoint linear relation $\T_\pm$. 
 The proof of the following result (compare Theorem~\ref{thm:Respm}) is almost literally the same as the one for Theorem~\ref{thmPencilab}.  
  
  \begin{theorem}\label{thmPencilpm}
  Suppose that zero belongs to the resolvent set of $\T_\pm$.
  The  spectrum of the quadratic operator pencil $\Pe_\pm$ coincides with the spectrum of the self-adjoint linear relation $\T_\pm$. 
  If some $z\in\C$ belongs to the resolvent set of $\Pe_\pm$, then 
  \begin{align}
    \Pe_\pm(z)^{-1}g(x) & = \spr{g}{G_\pm(x,\cdot\,)^\ast}_{H^1(J_{\pm})}, \quad x\in J_{\pm}, ~g\in H^1_0(J_{\pm}),  
  \end{align}
   where the Green's function $G_\pm$ is given by  
 \begin{align}
  G_\pm(x,s) = \frac{\pm 1}{W(\psi_\pm,\phi)} \begin{cases} \psi_\pm(x) \phi(s), & s\lessgtr x, \\ \psi_\pm(s) \phi(x), & s\gtrless x,    \end{cases}
 \end{align}
 and $\psi_\pm$, $\phi$ are linearly independent solutions of the homogeneous differential equation~\eqref{eqnDEho} such that $\psi_\pm$ lies in $H^1(\R)$ near $\pm\infty$ and $\phi$ satisfies $\phi(c) = 0$.
  \end{theorem}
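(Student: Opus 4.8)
The plan is to transcribe the proof of Theorem~\ref{thmPencilab} almost word for word, with $\T_0$, $[a,b)$ and $K_0$ replaced throughout by $\T_\pm$, $J_\pm$ and $K_\pm$. Under the standing hypothesis that zero lies in the resolvent set of $\T_\pm$, Theorem~\ref{thm:Tinv_pm} provides the bounded block representation of $\T_\pm^{-1}$ on $\cH_0(J_\pm) = H_0^1(J_\pm)\times L^2(J_\pm;\dip)$, and $\Pe_\pm(z) = \mathrm{I}_\pm - z\,\Omega_\pm - z^2\Upsilon_\pm$ is exactly the Schur complement of the lower-right block of $\T_\pm - z$ relative to this decomposition. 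The Frobenius--Schur factorization (see \cite[Proposition~1.6.2]{tr08}) then yields, with $\mathrm{P}\colon\cH_0(J_\pm)\to H_0^1(J_\pm)$ the projection onto the first component,
\begin{align*}
 \Pe_\pm(z)^{-1} = \mathrm{P}\bigl(z\,(\T_\pm - z)^{-1} + \mathrm{I}_{\cH}\bigr)\mathrm{P}^\ast
\end{align*}
for every $z$ in the resolvent set of $\T_\pm$, so that $\rho(\T_\pm)\subseteq\rho(\Pe_\pm)$. For the reverse inclusion one verifies, exactly as in the bounded case, the identity
\begin{align*}
 (\T_\pm - z)^{-1} = \begin{pmatrix} \mathrm{I}_\pm & 0 \\ z\,\mathrm{I}_\pm & \mathrm{I}_\dip \end{pmatrix} \begin{pmatrix} \Pe_\pm(z)^{-1} & 0 \\ 0 & \mathrm{I}_\dip \end{pmatrix} \begin{pmatrix} \Omega_\pm + z\,\Upsilon_\pm & \Upsilon_\pm \\ \mathrm{I}_\pm & 0 \end{pmatrix}
\end{align*}
by multiplying out and invoking Theorem~\ref{thm:Tinv_pm}, where $\mathrm{I}_\dip$ is the identity in $L^2(J_\pm;\dip)$; this gives $\rho(\Pe_\pm)\subseteq\rho(\T_\pm)$, hence $\sigma(\Pe_\pm) = \sigma(\T_\pm)$.

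It then remains to identify $\Pe_\pm(z)^{-1}$ with the stated integral operator. For nonzero $z\in\rho(\T_\pm)$ this follows from the resolvent formula of Theorem~\ref{thm:Respm} applied with $\gamma = 0$: the boundary condition~\eqref{eqnBCc} at $c$ then becomes the Dirichlet condition $\phi(c) = 0$, the vector-valued Green's function $\G_\pm$ of Theorem~\ref{thm:Respm} is obtained from the scalar kernel $G_\pm$ in the present statement by multiplication with $\left(\begin{smallmatrix} 1 \\ z \end{smallmatrix}\right)$, and comparing first components of~\eqref{eq:Respm} with the Frobenius--Schur formula above (after cancelling the overall factor $z$) leaves precisely $\Pe_\pm(z)^{-1}g(x) = \spr{g}{G_\pm(x,\cdot\,)^\ast}_{H^1(J_\pm)}$. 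The case $z = 0$, which is covered by the hypothesis $0\in\rho(\T_\pm)$, is handled by a direct computation: with $\psi_\pm(x) = \E^{\mp\frac{x-c}{2}}$ and $\phi(x) = 2\sinh\left(\frac{x-c}{2}\right)$ the kernel $G_\pm$ coincides with $K_\pm$ from Theorem~\ref{thm:Tinv_pm}, and $K_\pm(x,\cdot\,)$ reproduces point evaluation at $x$ on $H_0^1(J_\pm)$.

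The only point specific to the semi-axis that needs verifying --- and hence the main obstacle, though a mild one --- concerns the distinguished solution $\psi_\pm$. In Theorem~\ref{thm:Respm} this solution is required to lie in $\HR$ near $\pm\infty$ (that is, $\psi_\pm\in H^1(\R)$ near $\pm\infty$ together with $z\psi_\pm\in L^2(\R;\dip)$ near $\pm\infty$), whereas the present statement only asks for $\psi_\pm\in H^1(\R)$ near $\pm\infty$. One therefore has to check that, for $z$ in the resolvent set, these two descriptions single out the same one-dimensional solution space: uniqueness up to scalar multiples is Corollary~\ref{corWS}, and the remaining inclusion follows from an energy estimate obtained by feeding the pair $(\psi_\pm, z\psi_\pm)$ and its complex conjugate into the Lagrange identity~\eqref{eq:lag}, which forces $\int|\psi_\pm|^2\,d\dip < \infty$ near $\pm\infty$ as soon as $\psi_\pm$ and $\psi_\pm'$ are square integrable there. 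With this identification settled, the remaining steps are line-by-line the same as in the proof of Theorem~\ref{thmPencilab}.
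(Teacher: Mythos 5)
Your proposal follows exactly the route the paper intends: the paper's own ``proof'' of this theorem consists of the single remark that it is almost literally the same as the proof of Theorem~\ref{thmPencilab}, and your transcription via the Frobenius--Schur factorization together with the resolvent formula of Theorem~\ref{thm:Respm} (with $\gamma=0$) is precisely that argument. You also correctly isolate the one genuinely semi-axis-specific point, namely that the theorem characterizes $\psi_\pm$ only by membership in $H^1(\R)$ near $\pm\infty$ while Theorem~\ref{thm:Respm} uses membership in $\HR$ near $\pm\infty$.

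The only weak spot is your justification of that last identification. The Lagrange identity applied to $(\psi_\pm,z\psi_\pm)$ and its complex conjugate produces the energy term with an overall factor $z-z^\ast$, so the resulting bound on $\int|\psi_\pm|^2\,d\dip$ near $\pm\infty$ is obtained only for non-real $z$; for real $z$ in the resolvent set the identity degenerates to $0=0$ and gives nothing, yet the claim is still needed there. The clean fix is already available under the standing hypothesis $0\in\rho(\T_\pm)$: by Theorem~\ref{thm:Tinv_pm} the canonical embedding $\mathrm{I}_\pm\colon H^1_0(J_\pm)\to L^2(J_\pm;\dip)$ is bounded, so any solution lying in $H^1(\R)$ near $\pm\infty$ (after multiplication by a cutoff equal to one near $\pm\infty$) automatically lies in $L^2(\R;\dip)$ near $\pm\infty$; this is the same observation the paper makes after Theorem~\ref{thmTinv} in the whole-line case. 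With that substitution the proof is complete.
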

 
 The eigenvalues of $\Pe_\pm$ consist precisely of those $z\in\C$ for which there is a non-trivial solution $\psi_\pm$ of the homogeneous differential equation~\eqref{eqnDEho} which lies in $H^1(\R)$ near $\pm\infty$ and satisfies the Dirichlet boundary condition $\psi_\pm(c) =0$. 
 In this case, the kernel of $\Pe_\pm(z)$ is spanned by the function $\psi_\pm$ restricted to $J_{\pm}$.

\subsection{Weyl--Titchmarsh functions on a semi-axis}\label{subsecWTpm}

In order to introduce a Weyl--Titchmarsh function associated with the self-adjoint linear relation $\T_{\gamma,\pm}$, we let $\theta_{\gamma}(z,\redot)$, $\phi_{\gamma}(z,\redot)$ be the solutions of the homogeneous differential equation~\eqref{eqnDEho} with the initial conditions 
\begin{align}\label{eqnpgammapm}
\phi_{\gamma}(z,c)&=\sin\gamma, & \phi_{\gamma}'(z,c)&=z\cos\gamma,\\ \label{eqntgammapm}
\theta_{\gamma}(z,c)&=\cos\gamma, & \theta_{\gamma}'(z,c)&=-z\sin\gamma,
\end{align}
for every $z\in\C$. 
The complex-valued function $m_{\gamma,\pm}$ is now defined on $\rho(\T_{\gamma,\pm})\backslash\lbrace 0\rbrace$ by requiring that the function 
\begin{align}\label{eq:m_pm}
  \theta_{\gamma}(z,x)\pm m_{\gamma,\pm}(z)\phi_{\gamma}(z,x), \quad x\in\R, 
\end{align}
lies in $\HR$ near $\pm\infty$ for every $z\in\rho(\T_{\gamma,\pm})\backslash\lbrace 0\rbrace$.
 In view of Corollary~\ref{corWS} and the fact that $\phi_\gamma(z,\redot)$ does not lie in $\HR$ near $\pm\infty$ unless $z$ is an eigenvalue of $\T_{\gamma,\pm}$, one notes that this function is well-defined. 
 It is henceforth called the Weyl--Titchmarsh function associated with the self-adjoint linear relation $\T_{\gamma,\pm}$.

If $\psi_\pm(z,\redot)$ is a non-trivial solution of the homogeneous differential equation~\eqref{eqnDEho} which lies in $\HR$ near $\pm\infty$, then the function $m_{\gamma,\pm}$ is given by 
 \begin{align}\label{eq:wf_pm}
  \pm\, m_{\gamma,\pm}(z) = \frac{W(\theta_\gamma,\psi_\pm)(z)}{W(\psi_\pm,\phi_\gamma)(z)}  = \frac{z\psi_\pm(z,c)\sin\gamma + \psi_\pm'(z,c)\cos\gamma}{z\psi_\pm(z,c)\cos\gamma - \psi_\pm'(z,c)\sin\gamma},\quad z\in\rho(\T_{\gamma,\pm})\backslash\lbrace 0\rbrace.
 \end{align} 
Note that in this case, $\psi_\pm(z,\redot)$ is a scalar multiple of the function in~\eqref{eq:m_pm}. 

\begin{lemma}
The function $m_{\gamma,\pm}$ is a Herglotz--Nevanlinna function with 
\begin{align}\label{eqnHNpmsym}
  m_{\gamma,\pm}(z)^\ast = m_{\gamma,\pm}(z^\ast), \quad z\in\rho(\T_{\gamma,\pm})\backslash\lbrace 0\rbrace.
\end{align}
\end{lemma}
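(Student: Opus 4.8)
The plan is to follow the template established by the proof of Lemma~\ref{lem:R-f}, adapting it to the half-line situation. The statement to prove is that $m_{\gamma,\pm}$ is a Herglotz--Nevanlinna function satisfying the symmetry relation~\eqref{eqnHNpmsym}.

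\textbf{Step 1: Analyticity and the symmetry relation.} First I would fix a concrete choice of the solution $\psi_\pm(z,\redot)$ appearing in~\eqref{eq:wf_pm}. Since the construction of $\psi_\pm$ via the Weyl disk / limit procedure can be made consistent with the natural conjugation, one obtains $\psi_\pm(z,\redot)^\ast = \psi_\pm(z^\ast,\redot)$ (this is the standard fact that the Weyl solution respects complex conjugation; it follows because the differential equation has real coefficients, so $\overline{\psi_\pm(z,\redot)}$ is a solution at $z^\ast$ lying in $\HR$ near $\pm\infty$, and such a solution is unique up to scalars by Corollary~\ref{corWS}). From formula~\eqref{eq:wf_pm}, $\pm m_{\gamma,\pm}(z)$ is a quotient of expressions built from $\psi_\pm(z,c)$, $\psi_\pm'(z,c)$, which depend analytically on $z$ away from the spectrum (since $\psi_\pm$ can be chosen analytic in $z$ on $\rho(\T_{\gamma,\pm})$; cf.\ the analytic dependence in Lemma~\ref{lemSolEnt} together with the analyticity of the Weyl solution), so $m_{\gamma,\pm}$ is analytic on $\C\backslash\R$, and the conjugation property of $\psi_\pm$ immediately yields~\eqref{eqnHNpmsym}.

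\textbf{Step 2: Positivity of the imaginary part.} As in Lemma~\ref{lem:R-f}, it suffices to treat the case $\gamma = 0$, since for general $\gamma$ the function $m_{\gamma,\pm}$ is obtained from $m_{0,\pm}$ by composition with a real linear fractional transformation, which preserves the Herglotz--Nevanlinna class. For $\gamma=0$ one has, from~\eqref{eq:wf_pm}, $\pm m_{0,\pm}(z) = \psi_\pm'(z,c)/(z\,\psi_\pm(z,c))$. I would then compute
\begin{align*}
 \frac{\psi_\pm'(z,c)}{z\,\psi_\pm(z,c)} - \left(\frac{\psi_\pm'(z,c)}{z\,\psi_\pm(z,c)}\right)^\ast = \frac{z^\ast \psi_\pm(z^\ast,c)\psi_\pm'(z,c) - z\,\psi_\pm(z,c)\psi_\pm'(z^\ast,c)}{|z\,\psi_\pm(z,c)|^2},
\end{align*}
and observe that the numerator equals $\mp V(\boldsymbol{\psi}_\pm, \boldsymbol{\psi}_\pm^\ast)(c)$ where $\boldsymbol{\psi}_\pm = (\psi_\pm(z,\redot), z\,\psi_\pm(z,\redot))$ and $\boldsymbol{\psi}_\pm^\ast = (\psi_\pm(z^\ast,\redot), z^\ast\,\psi_\pm(z^\ast,\redot))$ are the elements of $\Tmaxpm$ corresponding to these Weyl solutions. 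Applying the Lagrange identity in the form~\eqref{eq:4.05} (equivalently, evaluating~\eqref{eq:lag} and using Lemma~\ref{lem:4.02} to kill the boundary term at $\pm\infty$) expresses $\pm V(\boldsymbol{\psi}_\pm,\boldsymbol{\psi}_\pm^\ast)(c)$ as a multiple of $(z - z^\ast)$ times $\|\boldsymbol{\psi}_\pm\|^2_{\cH(J_\pm)}$, which has a definite sign. Tracking the factors of $z$, $z^\ast$ and the signs carefully then shows $\im m_{0,\pm}(z) > 0$ for $\im z > 0$.

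\textbf{Expected main obstacle.} The routine part is the integration-by-parts/Lagrange-identity computation; the delicate point is bookkeeping — getting all the signs right (the $\pm$ in front of $m_{\gamma,\pm}$, the $\pm$ in~\eqref{eq:4.05}, and the sign coming from the factor $z$ in the denominator versus the factor $z$ in the modified Wronskian~\eqref{eqnVtoW}), and justifying that the chosen Weyl solution $\psi_\pm(z,\redot)$ can simultaneously be taken analytic in $z$ on $\rho(\T_{\gamma,\pm})$, nonzero at $c$ off the spectrum, and compatible with conjugation. All of these are standard once one invokes Corollary~\ref{corWS} and the analyticity machinery, so the proof should be short, essentially a reference back to the argument of Lemma~\ref{lem:R-f} with the interval endpoint at $b$ replaced by the endpoint at $\pm\infty$ and the Lagrange identity~\eqref{eq:4.05} used in place of~\eqref{eq:lag2}.
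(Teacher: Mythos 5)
Your Step 2 is exactly the paper's argument: the Herglotz property is obtained by the same Lagrange-identity computation as in Lemma~\ref{lem:R-f}, with the boundary term at $\pm\infty$ eliminated by Lemma~\ref{lem:4.02}, and the reduction from general $\gamma$ to $\gamma=0$ via a real linear fractional transform is also what the paper does. The symmetry relation~\eqref{eqnHNpmsym} via $\psi_\pm(z,\redot)^\ast=\psi_\pm(z^\ast,\redot)$ (up to scalars, by Corollary~\ref{corWS} and realness of the coefficients) is sound, since~\eqref{eq:wf_pm} is invariant under rescaling $\psi_\pm$.

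The gap is in your justification of analyticity. You assert that $\psi_\pm(z,\redot)$ ``can be chosen analytic in $z$ on $\rho(\T_{\gamma,\pm})$, cf.\ Lemma~\ref{lemSolEnt} together with the analyticity of the Weyl solution,'' but Lemma~\ref{lemSolEnt} only gives analyticity of solutions with $z$-independent initial data, and Corollary~\ref{corWS} only gives uniqueness of $\psi_\pm$ up to scalars at each fixed $z$ of regular type --- neither says anything about analytic dependence on $z$. Since (up to normalization) $\psi_\pm(z,\redot)=\theta_\gamma(z,\redot)\pm m_{\gamma,\pm}(z)\phi_\gamma(z,\redot)$, analyticity of the Weyl solution in $z$ is essentially \emph{equivalent} to analyticity of $m_{\gamma,\pm}$, so the argument as written is circular; the nested-disk construction you allude to is not developed anywhere in the paper for this pencil setting and would have to be carried out. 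The paper closes this gap differently: it plugs the point-evaluation functions $\delta_{x,\pm}$ from~\eqref{eqnPEdeltacpm} into the resolvent formula of Theorem~\ref{thm:Respm}, obtaining an identity expressing $\pm z^{-1}\bigl(\theta_\gamma(z,x)\pm m_{\gamma,\pm}(z)\phi_\gamma(z,x)\bigr)\phi_\gamma(z,x)$ through $z\,\spr{(\T_{\gamma,\pm}-z)^{-1}\delta_{x,\pm}}{\delta_{x,\pm}}_{\cH(J_\pm)}$ plus explicit entire terms; since the resolvent is analytic on $\rho(\T_{\gamma,\pm})$ by general theory and $\phi_\gamma(z,\redot)$ does not vanish identically on $J_\pm$ for $z\neq 0$, both the analyticity of $m_{\gamma,\pm}$ and the relation~\eqref{eqnHNpmsym} drop out at once. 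You should replace your Step 1 by this resolvent argument (or supply an independent construction of an analytic Weyl solution).
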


\begin{proof}
 From Theorem~\ref{thm:Respm} we infer that 
\begin{align}\begin{split}\label{eqnRelResm}
 z\, & \spr{(\T_{\gamma,\pm}-z)^{-1}\delta_{x,\pm}}{\delta_{x,\pm}}_{\cH(J_{\pm})} +  1 + \E^{\mp(x-c)}  \\ 
    & \qquad= \pm\, z^{-1} \left(\theta_\gamma(z,x) \pm m_{\gamma,\pm}(z) \phi_\gamma(z,x) \right) \phi_\gamma(z,x), \quad z\in\rho(\T_{\gamma,\pm})\backslash\lbrace 0\rbrace.
\end{split}\end{align}
Since the solution $\phi_\gamma(z,\redot)$ neither vanishes identically on $J_{+}$ nor on $J_{-}$ for every nonzero $z\in\C$, this shows that the function $m_{\gamma,\pm}$ is analytic on $\C\backslash\R$ as well as the relation~\eqref{eqnHNpmsym}. 
The fact that $m_{\gamma,\pm}$ is a Herglotz--Nevanlinna function is verified along the lines of Lemma~\ref{lem:R-f}, upon additionally taking into account Lemma~\ref{lem:4.02}. 
\end{proof}

\begin{remark}
 It is easy to see that $m_{\gamma,\pm}$ is a Weyl function in the sense of \cite{dema91}. 
 In particular, this immediately implies that it is a Herglotz--Nevanlinna function. 
\end{remark}

Let us explicitly point out the useful relation 
 \begin{align}\label{eq:mRescon}
 -\frac{1}{m_{0,\pm}(z)} & =  m_{\frac{\pi}{2},\pm}(z) = z^2 \spr{(\T_{\frac{\pi}{2},\pm}-z)^{-1}\delta_{c,\pm}}{\delta_{c,\pm}}_{\cH(J_{\pm})} + 2z, \quad z\in\C\backslash\R, 
 \end{align} 
 which follows immediately from~\eqref{eq:wf_pm} and~\eqref{eqnRelResm} upon setting $x=c$.  

We conclude this section by relating the spectrum of the self-adjoint linear relation $\T_{\gamma,\pm}$ to the singularities of the Weyl--Titchmarsh function $m_{\gamma,\pm}$. 

\begin{lemma}\label{lemTrsMdh}
 The resolvent set of the self-adjoint linear relation $\T_{\gamma,\pm}$ coincides with the maximal domain of holomorphy of the Weyl--Titchmarsh function $m_{\gamma,\pm}$. 
\end{lemma}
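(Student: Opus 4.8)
The plan is to prove the two inclusions between the resolvent set $\rho(\T_{\gamma,\pm})$ and the set $\Delta$ of holomorphy of $m_{\gamma,\pm}$ separately. One inclusion is already essentially in hand: since $m_{\gamma,\pm}$ is defined and holomorphic on $\rho(\T_{\gamma,\pm})\setminus\{0\}$, and since the behaviour at zero is controlled (zero is an eigenvalue of $\T_{\gamma,\pm}$ only if $\gamma=0$, and otherwise the relation~\eqref{eq:wf_pm} together with the entirety of the solution data from Lemma~\ref{lemEE} and Lemma~\ref{lemSolEnt} shows $m_{\gamma,\pm}$ extends holomorphically across $0$), we get $\rho(\T_{\gamma,\pm})\subseteq\Delta$. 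The point $z=0$ has to be inspected by hand: if $\gamma\neq0$ the numerator and denominator in the second expression of~\eqref{eq:wf_pm} do not both vanish as $z\to0$ (the denominator tends to $\psi_\pm'(0,c)\sin\gamma$, nonzero since $\psi_\pm(0,\redot)$ solving $-f''+\frac14 f=0$ and lying in $H^1$ near $\pm\infty$ is a nonzero multiple of $\E^{\mp(x-c)/2}$, hence has nonvanishing derivative), so $m_{\gamma,\pm}$ is holomorphic at $0$ and $0\in\rho(\T_{\gamma,\pm})$ fails precisely when... — in any case the two sets agree near $0$.

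For the reverse inclusion $\Delta\subseteq\rho(\T_{\gamma,\pm})$, I would use the resolvent identity~\eqref{eqnRelResm} to transfer holomorphy of $m_{\gamma,\pm}$ to holomorphy of a scalar resolvent entry, and then a standard argument to upgrade scalar holomorphy to full resolvent holomorphy. Concretely: suppose $z_0\in\Delta$, say $z_0\neq0$ (the case $z_0=0$ handled as above). Since $\T_{\gamma,\pm}$ is self-adjoint, $\sigma(\T_{\gamma,\pm})\subseteq\R$, so the only way $z_0$ could fail to be in the resolvent set is if $z_0$ is a real point of the spectrum; because the spectrum of a self-adjoint relation is closed and, by the simplicity observations preceding Theorem~\ref{thm:Respm} together with the Schatten-class bound in Corollary~\ref{cor:S_p}, the relevant portion of the spectrum is discrete (eigenvalues), it suffices to exclude $z_0$ being an eigenvalue. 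If $z_0$ were an eigenvalue, then there is a nontrivial solution $\phi$ of~\eqref{eqnDEho} lying in $\HR$ near $\pm\infty$ and satisfying the boundary condition~\eqref{eqnBCc}, i.e.\ $z_0\phi(c)\cos\gamma-\phi'(c)\sin\gamma=0$. Taking $\psi_\pm(z,\redot)$ to be the Weyl solution, this forces the denominator $z\psi_\pm(z,c)\cos\gamma-\psi_\pm'(z,c)\sin\gamma$ in~\eqref{eq:wf_pm} to vanish at $z=z_0$ while the numerator does not (else $\phi$ would also satisfy a second independent boundary condition at $c$ and hence vanish), so $m_{\gamma,\pm}$ has a genuine pole at $z_0$, contradicting $z_0\in\Delta$. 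Hence $z_0\in\rho(\T_{\gamma,\pm})$.

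The main obstacle, and the step needing the most care, is the handling of $z=0$ and the bookkeeping that $m_{\gamma,\pm}$ really does have a pole (not a removable singularity) at each eigenvalue: one must rule out that both the Wronskian numerator $W(\theta_\gamma,\psi_\pm)$ and denominator $W(\psi_\pm,\phi_\gamma)$ vanish simultaneously. This is where linear independence is used — if both Wronskians with $\psi_\pm$ vanish, then $\psi_\pm$ is proportional to both $\theta_\gamma(z_0,\redot)$ and $\phi_\gamma(z_0,\redot)$, which are themselves linearly independent (their Wronskian is $z_0\neq0$ by the normalization~\eqref{eqnpgammapm}--\eqref{eqntgammapm}), a contradiction. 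Combined with the discreteness of the spectrum and self-adjointness, this closes the argument. The remaining details are the routine verification that the simplicity/discreteness facts quoted from Corollary~\ref{cor:4.04} and the structure of $\T_{\gamma,\pm}$ indeed apply on a semi-axis, which parallels the bounded-interval treatment.
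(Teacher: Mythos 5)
Your forward inclusion $\rho(\T_{\gamma,\pm})\subseteq\Delta$ is essentially the paper's (via \eqref{eqnRelResm}), although your discussion of the point $z=0$ trails off without a conclusion; the paper simply notes that if $0\in\rho(\T_{\gamma,\pm})$ then $\gamma\neq 0$ and setting $x=c$ in \eqref{eqnRelResm} extends $m_{\gamma,\pm}$ analytically to $0$.

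The reverse inclusion, however, contains a genuine gap. You reduce $\Delta\subseteq\rho(\T_{\gamma,\pm})$ to excluding eigenvalues, on the grounds that "the relevant portion of the spectrum is discrete," citing Corollary~\ref{cor:S_p}. That corollary concerns the \emph{bounded-interval} minimal relation and does not transfer to the semi-axis: $\T_{\gamma,\pm}$ can have non-empty essential spectrum (this is exactly why Lemma~\ref{lemSplit} is stated, and why discreteness of $\sigma(\T_+)$ is an extra hypothesis equivalent to Hypothesis~\ref{hypRESol} rather than an automatic fact). Your pole/Wronskian argument only rules out eigenvalues, so it says nothing about a real $z_0\in\Delta$ lying in the essential spectrum, and the proof does not close. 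The paper avoids this entirely: using Theorem~\ref{thm:Respm} it writes $\spr{(\T_{\gamma,\pm}-z)^{-1}f}{f}$ as $H_{f,\pm}(z)+m_{\gamma,\pm}(z)F_{f,\pm}(z)$ with $H_{f,\pm}$, $F_{f,\pm}$ entire, for all $f$ in a dense subspace (those with $f_1=0$ and $f_2$ compactly supported, plus \eqref{eqnRelResm} for the $\delta_{x,\pm}$); analyticity of $m_{\gamma,\pm}$ near a real $\lambda$ then forces the spectral measures $E_{f,f}$ to vanish near $\lambda$ for a dense set of $f$, hence $\lambda\in\rho(\T_{\gamma,\pm})$ — no discreteness needed. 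The point $z=0$ is then handled by the isolated-point argument together with the blow-up $|m_{0,\pm}(\I\varepsilon)|\to\infty$ from \eqref{eq:mRescon}. You would need to replace your discreteness step by an argument of this type (or restrict to a setting where discreteness is actually available).
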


\begin{proof}
 It follows from~\eqref{eqnRelResm} that $m_{\gamma,\pm}$ is analytic on the resolvent set of  $\T_{\gamma,\pm}$ excluding zero. 
 Moreover, if zero belongs to the resolvent set of $\T_{\gamma,\pm}$, then $\gamma$ is non-zero and choosing $x=c$ shows that $m_{\gamma,\pm}$ has an analytic extension to zero. 
  
 In order to prove the converse, first of all one observes that for every $f\in\cH(J_{\pm})$ with $f_1 = 0$ and such that $f_2$ vanishes near $\pm\infty$, Theorem~\ref{thm:Respm} yields   
 \begin{align*}
  \spr{(\T_{\gamma,\pm}-z)^{-1}f}{f}_{\cH(J_{\pm})} = H_{f,\pm}(z) + m_{\gamma,\pm}(z) F_{f,\pm}(z), \quad z\in\C\backslash\R, 
 \end{align*} 
 for some entire functions $H_{f,\pm}$ and $F_{f,\pm}$. 
 Here, one should note that $\theta_\gamma$ and $\phi_\gamma$ are locally uniformly bounded on $\C\times\R$ (which can be deduced from the Gronwall lemma \cite[Lemma~1.3]{be89}, \cite[Lemma~A.1]{MeasureSL}). 
 Now if $m_{\gamma,\pm}$ has an analytic extension to a neighborhood of some nonzero $\lambda\in\R$, then this, in conjunction with~\eqref{eqnRelResm}, shows that the function 
 \begin{align*}
  \spr{(\T_{\gamma,\pm}-z)f}{f}_{\cH(J_{\pm})}, \quad z\in\C\backslash\R, 
 \end{align*}
 has an analytic extension to a (fixed) neighborhood of $\lambda$ as well, for all $f$ in a dense subspace of $\cH(J_{\pm})$. 
 But this guarantees that $\lambda$ belongs to the resolvent set of $\T_{\gamma,\pm}$. 
 
  Finally, suppose $m_{\gamma,\pm}$ has an analytic extension to a neighborhood of zero. 
 If zero belonged to the spectrum of $\T_{\gamma,\pm}$, then it would be an isolated point by the above arguments and hence an eigenvalue of $\T_{\gamma,\pm}$, implying that $\gamma$ is zero.
 In view of~\eqref{eq:mRescon}, this gives the contradiction $|m_{\gamma,\pm}(\I\varepsilon)|\rightarrow\infty$ as $\varepsilon\downarrow0$.
\end{proof}

\begin{corollary}\label{corWTasympm}
 The functions $m_{\gamma,\pm}$ have the asymptotics 
 \begin{align}
   m_{\gamma,\pm}(\I\varepsilon) = \begin{cases} - \frac{1}{2\I\varepsilon} (1+\oo(1)), & \gamma = 0, \\ \mp\cot\gamma \pm \frac{2\I\varepsilon}{\sin^2\gamma} + \oo(\varepsilon), & \gamma\in(0,\pi), \end{cases} \quad \varepsilon\downarrow0. 
 \end{align}
\end{corollary}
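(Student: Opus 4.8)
The plan is to derive the small-$\varepsilon$ behavior of $m_{\gamma,\pm}(\I\varepsilon)$ directly from the defining formula \eqref{eq:wf_pm} together with the asymptotics of the distinguished Weyl solution $\psi_\pm$ at the base point $c$ as the spectral parameter tends to zero. First I would observe that it suffices to treat $\gamma=0$: by \eqref{eq:wf_pm} the general $m_{\gamma,\pm}$ is the image of $m_{0,\pm}$ under the same linear fractional transform that appears in the bounded-interval case, so once the leading behavior of $m_{0,\pm}(\I\varepsilon)$ is known, the $\gamma\in(0,\pi)$ formula follows by expanding
\begin{align*}
 m_{\gamma,\pm}(z) = \frac{z\psi_\pm(z,c)\sin\gamma + \psi_\pm'(z,c)\cos\gamma}{z\psi_\pm(z,c)\cos\gamma - \psi_\pm'(z,c)\sin\gamma}
\end{align*}
and substituting $z\psi_\pm(z,c)/\psi_\pm'(z,c) \to 0$; this mirrors exactly the reduction in Corollary~\ref{corWTasymp}.

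Next I would pin down the limit $z\to0$ of the relevant solution. When $z=0$ the homogeneous equation \eqref{eqnDEho} reduces to $-f''+\tfrac14 f = 0$, whose solutions that lie in $H^1(\R)$ near $\pm\infty$ are the multiples of $\E^{\mp x/2}$ (this is precisely the fact used to compute $\ker{\T_{0,\pm}}$ just before Theorem~\ref{thm:Tinv_pm}, and it matches the explicit form of $\delta_{x,\pm}$ and of $K_\pm$). Using that the solutions $\theta_\gamma(z,\redot)$, $\phi_\gamma(z,\redot)$ depend analytically on $z$ (Lemma~\ref{lemSolEnt}) and are locally uniformly bounded in $(z,x)$ by the Gronwall estimate cited in the proof of Lemma~\ref{lemTrsMdh}, one gets that the Weyl solution normalized appropriately converges, as $z\to0$ along the imaginary axis, to $c_\pm\,\E^{\mp(x-c)/2}$ for a nonzero constant; in particular $\psi_\pm(z,c)\to\psi_\pm(0,c)\neq0$ while $\psi_\pm'(z,c)\to \mp\tfrac12\psi_\pm(0,c)$. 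Hence in \eqref{eq:wf_pm} with $\gamma=0$,
\begin{align*}
 \pm m_{0,\pm}(z) = \frac{\psi_\pm'(z,c)}{z\,\psi_\pm(z,c)} \longrightarrow \frac{\mp 1/2}{z}\cdot\frac{1}{1+\oo(1)}\qquad (z=\I\varepsilon,\ \varepsilon\downarrow0),
\end{align*}
which gives $m_{0,\pm}(\I\varepsilon) = -\tfrac{1}{2\I\varepsilon}(1+\oo(1))$. An alternative, perhaps cleaner, route to the same conclusion is to feed the relation \eqref{eq:mRescon} into this: since $-1/m_{0,\pm}(z) = m_{\frac\pi2,\pm}(z)$ and $\delta_{c,\pm}$ has $\cH(J_\pm)$-norm computable from its explicit formula, the resolvent term $z^2\spr{(\T_{\frac\pi2,\pm}-z)^{-1}\delta_{c,\pm}}{\delta_{c,\pm}}$ is $\oo(1)$ as $\varepsilon\downarrow0$ (the spectral measure is finite and has no mass at zero because zero is not an eigenvalue of $\T_{\frac\pi2,\pm}$), so $m_{\frac\pi2,\pm}(\I\varepsilon) = 2\I\varepsilon + \oo(\varepsilon)$ and thus $m_{0,\pm}(\I\varepsilon) = -1/(2\I\varepsilon+\oo(\varepsilon)) = -\tfrac{1}{2\I\varepsilon}(1+\oo(1))$.

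For $\gamma\in(0,\pi)$ I would then expand the linear fractional transform: writing $w(z) = \pm m_{0,\pm}(z) = \psi_\pm'(z,c)/(z\psi_\pm(z,c))$, formula \eqref{eq:wf_pm} gives $\pm m_{\gamma,\pm}(z) = (w\sin\gamma\cos\gamma + \cos^2\gamma \cdot \tfrac{?}{?})\dots$; more directly, dividing numerator and denominator of the middle expression in \eqref{eq:wf_pm} by $\psi_\pm'(z,c)$ yields
\begin{align*}
 \pm m_{\gamma,\pm}(z) = \frac{\tfrac{z\psi_\pm(z,c)}{\psi_\pm'(z,c)}\sin\gamma + \cos\gamma}{\tfrac{z\psi_\pm(z,c)}{\psi_\pm'(z,c)}\cos\gamma - \sin\gamma},
\end{align*}
and since $z\psi_\pm(z,c)/\psi_\pm'(z,c) = \mp 2z + \oo(z)$ as $z=\I\varepsilon\to0$, a first-order Taylor expansion of this Möbius map about the point $0$ gives $\pm m_{\gamma,\pm}(\I\varepsilon) = -\cot\gamma + (\mp2\I\varepsilon)\csc^2\gamma + \oo(\varepsilon)$, i.e. $m_{\gamma,\pm}(\I\varepsilon) = \mp\cot\gamma \pm \tfrac{2\I\varepsilon}{\sin^2\gamma} + \oo(\varepsilon)$, as claimed. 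The main obstacle is the justification that $z\psi_\pm(z,c)/\psi_\pm'(z,c) = \mp 2z + \oo(z)$, equivalently that the Weyl solution $\psi_\pm(z,\redot)$ converges (after normalization) to $\E^{\mp(x-c)/2}$ together with its derivative at $c$ as $z\to0$ nontangentially; this is a continuity-of-the-Weyl-disk argument, and I would handle it either via the resolvent identity \eqref{eq:mRescon} and finiteness of the associated spectral measure (noting zero is not an eigenvalue of $\T_{\frac\pi2,\pm}$), or by a direct limiting-absorption argument using the Gronwall bound on $\theta_\gamma,\phi_\gamma$ and Corollary~\ref{corWS} to control the unique $L^2$ solution as $z\downarrow0$.
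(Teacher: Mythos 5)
Your proposal is correct and, in its second ("alternative, cleaner") route, is exactly the paper's proof: reduce to $\gamma=0$ via the linear fractional transform in \eqref{eq:wf_pm}, then read off $m_{0,\pm}(\I\varepsilon)=-\tfrac{1}{2\I\varepsilon}(1+\oo(1))$ from \eqref{eq:mRescon} using that zero is not an eigenvalue of $\T_{\frac{\pi}{2},\pm}$. One small slip: the resolvent term $z^2\spr{(\T_{\frac{\pi}{2},\pm}-z)^{-1}\delta_{c,\pm}}{\delta_{c,\pm}}$ needs to be $\oo(\varepsilon)$ (not merely $\oo(1)$) to conclude $m_{\frac{\pi}{2},\pm}(\I\varepsilon)=2\I\varepsilon+\oo(\varepsilon)$, which your own justification via the spectral measure having no mass at zero does in fact deliver.
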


\begin{proof}
  In view of~\eqref{eq:wf_pm}, it suffices to prove the claim when $\gamma=0$. 
  Noting that zero is not an eigenvalue of $\T_{\frac{\pi}{2},\pm}$, the claim follows from~\eqref{eq:mRescon} in this case. 
\end{proof}

\section{The spectral problem on the whole line}\label{secWL}

It remains to finally discuss spectral theory for the differential equation~\eqref{eqnDEinho} on the whole line. 
To this end, we consider the Hilbert space 
\begin{align}
 \HR = H^1(\R) \times L^2(\R;\dip),
\end{align}
 equipped with the scalar product
\begin{align}\begin{split}
 \spr{f}{g}_{\HR} = \frac{1}{4} \int_{\R} f_1(x) g_1(x)^\ast & dx + \int_{\R} f_1'(x) g_1'(x)^\ast dx \\ & + \int_{\R} f_2(x) g_2(x)^\ast d\dip(x), \quad f,\,g\in\HR.  
\end{split}\end{align}
 Clearly, point evaluations of the first component are continuous on $\HR$. 
 For every $c\in\R$, we denote with $\delta_c$ the function in $\HR$ such that 
 \begin{align}\label{eqnPEdeltac}
  \spr{f}{\delta_c}_{\HR} = f_1(c), \quad f\in\HR. 
 \end{align}
 It is readily verified that this function is simply given by 
 \begin{align}\label{eqnPEdeltacexp}
  \delta_c(x) =  \E^{-\frac{|x-c|}{2}} \begin{pmatrix} 1 \\ 0 \end{pmatrix}, \quad x\in\R. 
 \end{align}

\subsection{Self-adjointness of the spectral problem on the whole line}

We first introduce the linear relation $\T$ in $\HR$ by restricting $\Tloc$ to $\HR\times\HR$; 
\begin{align}
   \T = \left\lbrace \f\in\T_{\loc} \,|\, \f\in\HR\times\HR \right\rbrace.
\end{align}
There is no need to introduce maximal and minimal relations in this case. 

\begin{theorem}\label{thmSAR}
 The linear relation $\T$ is self-adjoint in $\HR$. 
\end{theorem}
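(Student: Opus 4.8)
The plan is to mimic the semi-axis case (Theorem~\ref{th:4.01}): first I would show that $\T$ is symmetric, and then I would establish the non-trivial inclusion $\T^\ast\subseteq\T$. The key preliminary observation for symmetry is the whole-line analogue of Lemma~\ref{lem:4.02}, namely that $V(\f,\g)(x)\to0$ as $x\to\pm\infty$ for all $\f$, $\g\in\T$. Indeed, for $\f\in\T$ both $f_1$ and $\tau f_1$ lie in $H^1(\R)$, so $V(\f,\g)=\tau f_1\,g_1'-f_1'\,\tau g_1$ is a sum of products of $L^2(\R)$ functions and hence integrable on $\R$; since the Lagrange identity~\eqref{eq:lag} shows that the limits of $V(\f,\g)$ at $\pm\infty$ exist, they must be zero. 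Letting $x\to-\infty$ and $y\to+\infty$ in~\eqref{eq:lag} (with $\g$ replaced by $\g^\ast$) then gives $\spr{\tau f}{g}_{\HR}=\spr{f}{\tau g}_{\HR}$ for all $\f$, $\g\in\T$ (compare~\eqref{eq:4.05}), so that $\T\subseteq\T^\ast$.

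For the reverse inclusion I would fix $(f,f_\tau)\in\T^\ast$, so that $f$, $f_\tau\in\HR$ automatically, and use $\ran{\Tloc}=\Hloc$ from~\eqref{eq:Tlocrake} to pick $\h\in\Tloc$ with $\tau h=f_\tau$. The idea is then to test the defining identity $\spr{f_\tau}{g}_{\HR}=\spr{f}{\tau g}_{\HR}$ only against those $\g\in\T$ for which $\tau g$ has compact support. For such a $\g$, the first component $g_1$ solves the homogeneous equation~\eqref{eqnDEho} with $z=0$ off a compact set and lies in $H^1(\R)$, hence $g_1$ and $g_1'$ decay exponentially at $\pm\infty$; consequently the Lagrange identity~\eqref{eq:lag} applied to $\h$ and $\g^\ast$, with all boundary terms at $\pm\infty$ vanishing, yields $\spr{\tau h}{g}_{\HR}=\spr{h}{\tau g}_{\HR}$ (the latter expression being well defined since $\tau g$ is compactly supported). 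Since $\tau h=f_\tau$ and $\spr{f_\tau}{g}_{\HR}=\spr{f}{\tau g}_{\HR}$, this gives
\begin{align*}
 \frac14\int_\R (h_1-f_1)\,\overline{\tau g_1}\,dx + \int_\R (h_1'-f_1')\,\overline{\tau g_1'}\,dx + \int_\R (h_2-f_2)\,\overline{\tau g_2}\,d\dip = 0
\end{align*}
for all such $\g$.

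Next I would argue, as in the proof of Theorem~\ref{th:4.01} via Corollary~\ref{cor:inhom}, that the pairs $(\tau g_1,\tau g_2)$ arising in this way exhaust all compactly supported pairs in $H^1(\R)\times L^2(\R;\dip)$, and that $\tau g_1$ and $\tau g_2$ may be prescribed independently. Here the solvability of the two linear equations for the constants $d_1$, $d_2$ in Corollary~\ref{cor:inhom} relies on the fact that~\eqref{eqnDEho} with $z=0$ has no non-trivial solution in $H^1(\R)$ (its solution space is spanned by $\E^{\pm x/2}$), equivalently that $0$ is never an eigenvalue of $\T$. Choosing $\tau g_1=0$ and $\tau g_2$ arbitrary compactly supported forces $h_2=f_2$ $\dip$-almost everywhere, while choosing $\tau g_2=0$ and $\tau g_1$ arbitrary compactly supported forces $u:=h_1-f_1$ to be a classical solution of $-u''+\frac14u=0$ on $\R$. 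Since $\h\in\Tloc$, the former yields $\dip f_2=\dip h_2=\dip\,(f_\tau)_1$, and the latter yields $-f_1''+\frac14f_1=-h_1''+\frac14h_1=\omega\,(f_\tau)_1+\dip\,(f_\tau)_2$; thus $(f,f_\tau)$ satisfies~\eqref{eqnDErel} and, lying in $\HR\times\HR$, belongs to $\T$. Combined with the symmetry step this gives $\T=\T^\ast$, and in particular $\T$ is closed.

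The symmetry part is routine. The main obstacle I anticipate is the step in the third paragraph that the admissible test pairs $(\tau g_1,\tau g_2)$ cover all compactly supported data, i.e.\ that the inhomogeneous equation~\eqref{eqnDEinho} at $z=0$ with compactly supported right-hand side always admits an $H^1(\R)$ solution; this is exactly where the absence of $L^2$ solutions of $-u''+\frac14u=0$ enters, which is what makes $\T$ self-adjoint on the whole line with no further limit-point or boundary restrictions. A secondary point needing care is the vanishing of the Wronskian boundary terms at $\pm\infty$, which I would justify via the exponential decay of $g_1$ forced by $g_1\in H^1(\R)$.
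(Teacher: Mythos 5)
Your proposal is correct and follows essentially the same route as the paper: symmetry via the vanishing of the modified Wronskian $V(\f,\g)$ at $\pm\infty$ (Lemma~\ref{lem:4.02}), and the inclusion $\T^\ast\subseteq\T$ by repeating the argument of Theorem~\ref{th:4.01}, the key point being that the range of $\T$ contains all compactly supported functions in $\HR$ (which you correctly trace back to the fact that no non-trivial solution of $-u''+\tfrac14u=0$ lies in $H^1(\R)$, so the constants in Corollary~\ref{cor:inhom} can be adjusted to produce an $H^1(\R)$ solution for compactly supported data). The paper's proof is only a two-line reference to the semi-axis case; your write-up supplies exactly the details being referenced.
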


\begin{proof} 
 First of all, note that by Lemma~\ref{lem:4.02} we have 
 \begin{align*}
  \lim_{|x|\rightarrow\infty} V(\f,\g)(x) = 0
 \end{align*}
 for every $\f$, $\g\in\T$, 
 from which we infer as in Theorem~\ref{th:4.01} that $\T$ is symmetric. 
 The converse inclusion follows in much the same manner as in Theorem~\ref{th:4.01} upon noting that the range of $\T$ contains all functions in $\HR$ with compact support.  
\end{proof}

 Given some $z\in\C$, we say that a solution $\phi$ of the homogeneous differential equation~\eqref{eqnDEho} lies in $\HR$ if it belongs to $H^1(\R)$ and $z\phi$ belongs to $L^2(\R;\dip)$. 
 With this notation, some $z\in\C$ is an eigenvalue of $\T$ if and only if there is a non-trivial solution $\phi$ of the homogeneous differential equation~\eqref{eqnDEho} which lies in $\HR$. 
 In view of Corollary~\ref{cor:4.03}, every eigenvalue of $\T$ is simple. 
 
 On the other side, the following result shows that the essential spectrum of $\T$ splits into two components, one arising from the left endpoint $-\infty$ and one from the right endpoint $+\infty$. 
 In particular, the essential spectrum of $\T$ is independent of the local behavior of the measures $\omega$ and $\dip$. 

 \begin{lemma}\label{lemSplit}
  For any $c\in\R$, the essential spectrum of $\T$ divides into   
  \begin{align}
   \sigma_{\ess}(\T) = \sigma_{\ess}(\T_-) \cup \sigma_{\ess}(\T_+). 
  \end{align}
 \end{lemma}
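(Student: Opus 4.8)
The plan is to deduce the splitting of the essential spectrum from a standard decomposition argument: since the essential spectrum is stable under finite-rank (or, more precisely, relatively compact) perturbations and under decoupling at an interior point, one compares $\T$ with the orthogonal sum of the two half-line relations $\T_-$ and $\T_+$ obtained by imposing a boundary condition at $c$. First I would note that $\HR$ decomposes as $\cH(J_-)\oplus\cH(J_+)$ in a natural way, since $H^1(\R)$ differs from $H^1(J_-)\oplus H^1(J_+)$ only by the one codimensional constraint of continuity at $c$, while the $L^2(\R;\dip)$ part splits cleanly. Under this identification, the relation $\T_{0,-}\oplus\T_{0,+}$ (Dirichlet decoupling at $c$, i.e.\ $\gamma=0$) is a self-adjoint relation whose resolvent difference with the resolvent of $\T$ is a finite-rank operator, because the two relations have representatives satisfying the same measure differential equation \eqref{eqnDErel} and differ only through finitely many boundary/transmission conditions at the single point $c$.

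The key steps, in order, are: (i) set up the unitary identification $\HR\cong\cH(J_-)\oplus\cH(J_+)$ modulo the rank-one continuity constraint, and describe $\T$ and $\T_{0,-}\oplus\T_{0,+}$ as restrictions of the common relation $\Tloc$ to these spaces; (ii) use Theorem~\ref{thm:Respm} together with Theorem~\ref{th:Resab}-style resolvent formulas to write $(\T-z)^{-1}$ and $(\T_{0,-}-z)^{-1}\oplus(\T_{0,+}-z)^{-1}$ explicitly via Green's functions built from the same fundamental system of \eqref{eqnDEho}, and observe that their difference is an integral operator whose kernel is a finite sum of products of solutions — hence of finite rank; (iii) invoke Weyl's theorem on the invariance of the essential spectrum under relatively compact (here finite-rank) perturbations of the resolvent to conclude $\sigma_{\ess}(\T)=\sigma_{\ess}(\T_{0,-}\oplus\T_{0,+})=\sigma_{\ess}(\T_{0,-})\cup\sigma_{\ess}(\T_{0,+})$; and (iv) remove the spurious point $\{0\}$: since $\sigma(\T_{0,\pm})=\sigma(\T_\pm)\cup\{0\}$ with $0$ an \emph{isolated} eigenvalue of finite multiplicity of $\T_{0,\pm}$ (by the structure $\T_{0,\pm}=\T_\pm\oplus(\ker{\T_{0,\pm}}\times\{0\})$ and the fact that $\ker{\T_{0,\pm}}$ is one-dimensional), the point $0$ does not contribute to the essential spectrum, so $\sigma_{\ess}(\T_{0,\pm})=\sigma_{\ess}(\T_\pm)$, giving the claimed formula. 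The independence of $c$ is then automatic, since the left-hand side does not involve $c$; alternatively it follows because changing $c$ is again a finite-rank modification.

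I expect the main obstacle to be step (ii): carefully justifying that the resolvent difference really is finite-rank rather than merely compact. This requires handling the linear relation (non-operator) setting — one must work with $z(\T-z)^{-1}$ as in \eqref{eq:Respm} and track the multi-valued parts, and verify that at a regular point $z$ of both relations the two Green's functions differ only by a combination of terms of the form $\psi_-(x)\psi_-(s)$, $\psi_+(x)\psi_+(s)$, and cross terms supported through the matching at $c$, each contributing finite rank. A clean way to organize this is to pick $z$ real in a common resolvent set (which exists since both relations are self-adjoint with, say, $\lambda$ large negative a point of regular type, using Corollary~\ref{cor:S_p}-type bounds or direct estimates), express both resolvents through the same pair of solutions $\psi_\pm$ of \eqref{eqnDEho} and the transmission data at $c$, and read off that the difference is a rank $\le 4$ operator. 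Once finite-rankness is in hand, the remaining steps are routine applications of Weyl's theorem and the already-established structural identities $\sigma(\T_{0,\pm})=\sigma(\T_\pm)\cup\{0\}$.
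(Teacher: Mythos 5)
Your proposal is correct and follows essentially the same route as the paper: decouple at the interior point $c$ and invoke the invariance of the essential spectrum under finite-dimensional perturbations of a self-adjoint linear relation. The paper organizes this slightly more economically by using the orthogonal decomposition $\HR = \cH_0(J_-)\oplus\linspan\{\delta_c\}\oplus\cH_0(J_+)$ and comparing $\T$ directly with $\T_-\oplus(\{0\}\times\linspan\{\delta_c\})\oplus\T_+$, which avoids both the passage to the enlarged space $\cH(J_-)\oplus\cH(J_+)$ and the subsequent removal of the spurious point $0$ from $\sigma(\T_{0,\pm})$ (for which, note, you should argue via the direct-sum structure $\T_{0,\pm}=\T_\pm\oplus(\ker{\T_{0,\pm}}\times\{0\})$ rather than isolation of the eigenvalue $0$, which need not hold).
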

 
 \begin{proof}
 To prove the claim, it suffices to note that with respect to the decomposition 
 \begin{align*}
  \HR = \cH_0(J_{-})\oplus\linspan\lbrace\delta_c\rbrace\oplus\cH_0(J_{+}),
 \end{align*}
 the self-adjoint linear relation $\T_- \oplus \left( \lbrace0\rbrace\times\linspan\lbrace\delta_c\rbrace \right) \oplus \T_+$ is a finite dimensional perturbation of $\T$. 
 \end{proof}
 
 
 
 Before we derive a representation for the resolvent of $\T$, let us first also mention the following auxiliary result, which is reminiscent of Corollary~\ref{corWS}. 
 
 \begin{corollary}\label{corTrespsipm}
  If $z\in\C$ belongs to the resolvent set of $\T$, then there is a (up to scalar multiples) unique non-trivial solution $\psi_\pm$ of the homogeneous differential equation~\eqref{eqnDEho} which lies in $\HR$ near $\pm\infty$. 
 \end{corollary}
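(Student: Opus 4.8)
The plan is to deduce this from the semi-axis theory already developed in Section~\ref{sec:04}, together with the fact (just proved in Theorem~\ref{thmSAR}) that $\T$ is self-adjoint. First I would observe that if $z$ belongs to the resolvent set of $\T$, then $z$ is in particular a point of regular type for the minimal relations $\Tminpm$: indeed, $\Tminpm$ is a restriction of $\Tmaxpm=\Tminpm^\ast$, and one can realize the corresponding solution spaces via restriction of elements of $\Tloc$ to the half-lines $J_\pm$. More concretely, given $c\in\R$, decompose $\HR = \cH_0(J_-)\oplus\linspan\lbrace\delta_c\rbrace\oplus\cH_0(J_+)$ as in the proof of Lemma~\ref{lemSplit}. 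The self-adjoint relation $\T_-\oplus(\lbrace0\rbrace\times\linspan\lbrace\delta_c\rbrace)\oplus\T_+$ differs from $\T$ by a finite-rank (hence relatively compact, indeed bounded finite-dimensional) perturbation, but for the present purpose I only need that $z\in\rho(\T)$ forces $z$ to be a point of regular type for $\Tminpm$ (equivalently, $z$ is not in the approximate point spectrum of $\Tminpm$): this follows because any sequence in $\dom{\Tminpm}$ witnessing failure of regular type could be extended by zero across $c$ to produce an approximate eigensequence for $\T$ (after a suitable cut-off near $c$), contradicting $z\in\rho(\T)$.

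Once $z$ is known to be a point of regular type for $\Tminpm$, Corollary~\ref{corWS} applies directly and gives, for each choice of sign, a solution $\psi_\pm$ of the homogeneous equation~\eqref{eqnDEho} which lies in $\HR$ near $\pm\infty$, unique up to scalar multiples. That is exactly the assertion. So the proof reduces to the implication ``$z\in\rho(\T)$ $\Rightarrow$ $z$ is of regular type for $\Tminpm$''. The cleanest way to package this, avoiding cut-off estimates, is to invoke the decoupling already used in Lemma~\ref{lemSplit}: since $\T_-\oplus(\lbrace0\rbrace\times\linspan\lbrace\delta_c\rbrace)\oplus\T_+$ is a finite-dimensional perturbation of the self-adjoint relation $\T$, its deficiency spaces at $z$ are finite-dimensional for every $z\in\C\backslash\R$, and more to the point $z\in\rho(\T)$ together with self-adjointness of $\T$ forces $z$ to lie in the resolvent set of this decoupled relation up to a finite-dimensional obstruction; in particular $\ran{\Tminpm-z}$ is closed and $\Tminpm-z$ is injective on the relevant quotient, which is precisely ``point of regular type''. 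Then Corollary~\ref{corWS} finishes it.

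The main obstacle is making the passage from ``$z\in\rho(\T)$'' to ``$z$ regular type for $\Tminpm$'' rigorous without circularity, since $\Tminpm$ lives on a half-line and $\T$ on the whole line. I expect the smoothest route is the finite-rank perturbation argument of Lemma~\ref{lemSplit}: a finite-dimensional perturbation of a self-adjoint relation has the property that points in the resolvent set of the unperturbed relation are either in the resolvent set of the perturbed one or are eigenvalues of finite multiplicity; in the latter case the perturbed relation still has closed range with finite-dimensional kernel at $z$, so its summands $\T_\pm$ — and a fortiori their restrictions $\Tminpm$ — have $\Tminpm-z$ bounded below on a finite-codimension subspace, which already yields existence and uniqueness of $\psi_\pm$. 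I would simply cite Corollary~\ref{corWS} after recording this observation, keeping the proof to two or three sentences in the spirit of the surrounding corollaries.

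\begin{proof}
 Fix some $c\in\R$ and recall from the proof of Lemma~\ref{lemSplit} that, with respect to the decomposition $\HR = \cH_0(J_-)\oplus\linspan\lbrace\delta_c\rbrace\oplus\cH_0(J_+)$, the self-adjoint linear relation $\T_-\oplus(\lbrace0\rbrace\times\linspan\lbrace\delta_c\rbrace)\oplus\T_+$ is a finite-dimensional perturbation of $\T$. Consequently, if $z\in\C$ belongs to the resolvent set of $\T$, then $z$ is a point of regular type for $\T_\pm$ and hence also for the restriction $\Tminpm$. The claim now follows immediately from Corollary~\ref{corWS}.
\end{proof}
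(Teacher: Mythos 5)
Your overall strategy --- reduce to Corollary~\ref{corWS} by showing that $z$ is a point of regular type for $\Tminpm$, using the decoupling from Lemma~\ref{lemSplit} --- is the same as the paper's, but the key step in your proof block does not hold. From $z\in\rho(\T)$ you conclude that ``$z$ is a point of regular type for $\T_\pm$''. This fails in exactly the case that requires work: the decoupled relation $\T_-\oplus(\lbrace0\rbrace\times\linspan\lbrace\delta_c\rbrace)\oplus\T_+$ is only a finite-dimensional perturbation of $\T$, so $z\in\rho(\T)$ leaves open the possibility that $z$ is a discrete eigenvalue of $\T_\pm$ (or of $\T_{0,\pm}$) created by the decoupling, and an eigenvalue of a self-adjoint relation is never a point of regular type. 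Your informal discussion notices this but then retreats to ``bounded below on a finite-codimension subspace'', which is strictly weaker than ``point of regular type'' and is not what Corollary~\ref{corWS} needs. There is also a smaller slip: $\Tminpm$ is not a restriction of $\T_\pm$ (the former imposes $\tau f_1(c)=f_1'(c)=0$, while membership in $\T_\pm$ forces $f_1(c)=\tau f_1(c)=0$), so regularity would not pass from $\T_\pm$ to $\Tminpm$ in any case; both are restrictions of $\T_{0,\pm}$, and that is the extension one has to argue with.

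The paper closes this gap by a case distinction. If $z\in\rho(\T_{0,\pm})$, then $z$ is of regular type for the restriction $\Tminpm$ and Corollary~\ref{corWS} applies. Otherwise, Lemma~\ref{lemSplit} (which only controls the essential spectrum) shows that $z$ lies in the discrete spectrum of $\T_{0,\pm}$; the identity $m_{\frac{\pi}{2},\pm}=-1/m_{0,\pm}$ from~\eqref{eq:wf_pm} together with Lemma~\ref{lemTrsMdh} then shows that $z$ belongs to the resolvent set of the \emph{other} self-adjoint extension $\T_{\frac{\pi}{2},\pm}$ of $\Tminpm$, which again makes $z$ a point of regular type for $\Tminpm$. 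To salvage your version you would have to supply an argument of this kind, or prove directly that the boundary functional $\f\mapsto f_1'(c)$ does not vanish on eigenvectors of $\T_{0,\pm}$ at $z\neq0$, so that $\Tminpm-z$ remains bounded below even when $z$ is such an eigenvalue; as written, the proof does not go through.
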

 
 \begin{proof}
  In view of Corollary~\ref{corWS}, it suffices to show that $z$ is a point of regular type for the linear relation $\Tminpm$ for some $c\in\R$. 
  This is immediate if $z$ belongs to the resolvent set of $\T_{0,\pm}$. 
  Otherwise, Lemma~\ref{lemSplit} implies that $z$ is part of the discrete spectrum of $\T_{0,\pm}$. 
  Now Lemma~\ref{lemTrsMdh} and~\eqref{eq:wf_pm} show that $z$ belongs to the resolvent set of $\T_{\frac{\pi}{2},\pm}$ and hence is a point of regular type for $\Tminpm$. 
 \end{proof}
  
\begin{theorem}\label{thmSARes}
 If some nonzero $z\in\C$ belongs to the resolvent set of $\T$, then  
 \begin{align}\begin{split}\label{eq:Reswl}
  z\, (\T-z)^{-1} g(x) & = \spr{g}{\G(x,\redot)^\ast}_{\HR} \begin{pmatrix} 1 \\ z \end{pmatrix} - g_1(x) \begin{pmatrix} 1 \\ 0 \end{pmatrix}, \quad x\in\R,  
 \end{split}\end{align}
 for every $g\in\HR$, where the Green's function $\G$ is given by  
 \begin{align}
  \G(x,s) = \begin{pmatrix} 1 \\ z \end{pmatrix} \frac{1}{W(\psi_+,\psi_-)} \begin{cases} \psi_+(x) \psi_-(s), & s\leq x, \\ \psi_+(s) \psi_-(x), & s> x,    \end{cases}
 \end{align}
 and $\psi_+$, $\psi_-$ are linearly independent solutions of the homogeneous differential equation~\eqref{eqnDEho} such that $\psi_+$ lies in $\HR$ near $+\infty$ and $\psi_-$ lies in $\HR$ near $-\infty$.
\end{theorem}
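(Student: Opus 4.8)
The plan is to mimic the proof of Theorem~\ref{thm:Respm} on a semi-axis, now gluing together the two Weyl solutions $\psi_+$ and $\psi_-$. First I would note that, since $z$ lies in the resolvent set of $\T$, in particular $z$ is not an eigenvalue, so by Corollary~\ref{corTrespsipm} there exist non-trivial solutions $\psi_\pm$ of~\eqref{eqnDEho} lying in $\HR$ near $\pm\infty$, and these are linearly independent — if they were multiples of each other the common solution would lie in $\HR$ and hence be an eigenfunction of $\T$ at $z$, a contradiction. Thus $W(\psi_+,\psi_-)\neq 0$ and the Green's function $\G$ is well-defined.

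Next I would take a pair $(f,g)\in\T-z$ and show that $f$ is given by~\eqref{eq:Reswl}. Using~\eqref{eqnTloc-z} (which reformulates membership in $\Tloc-z$) together with repeated integration by parts via~\eqref{eqnPI}, exactly as in the proofs of Theorem~\ref{th:Resab} and Theorem~\ref{thm:Respm}, one obtains for $x$ fixed and $-\infty<r_-<x<r_+<\infty$ a representation of $z f_1(x)$ as $\spr{g}{\G(x,\redot)^\ast}_{\cH([r_-,r_+))} - g_1(x)$ plus two boundary terms at $r_-$ and $r_+$, each of the form $\psi_\mp(x)/W(\psi_+,\psi_-)$ times a Wronskian-like combination $z\psi_\pm(r)f_1'(r)-\psi_\pm'(r)(zf_1(r)+g_1(r))$ evaluated at $r=r_\pm$. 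The point is that this combination is precisely $\mp V(\boldsymbol{\psi}_\pm,\f^\ast)(r)$ up to sign (with $\boldsymbol{\psi}_\pm=(\psi_\pm,z\psi_\pm)$ and $\f=(f,zf+g)$, both in $\Tmaxpm$), so by Lemma~\ref{lem:4.02} both boundary terms vanish as $r_\pm\to\pm\infty$. Letting $r_\pm\to\pm\infty$ and using that $g\in\HR$ so that the inner products converge, we get the first component of~\eqref{eq:Reswl}. The second component is immediate: since $\f\in\Tloc$ the relation $\dip f_2=\dip(zf_1+g_1)$ from~\eqref{eqnDErel}–\eqref{eqnTloc-z} gives $f_2(x)=g_1(x)+zf_1(x)=\spr{g}{\G(x,\redot)^\ast}_{\HR}$ for $\dip$-a.e.\ $x$, matching the bottom entry of the right-hand side of~\eqref{eq:Reswl}.

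Finally I would check that the operator $g\mapsto z(\T-z)^{-1}g$ defined by the right-hand side of~\eqref{eq:Reswl} is genuinely this bounded inverse: the map is visibly linear, it is bounded on $\HR$ because $\G(x,\redot)$ depends continuously (in $\HR$-norm, uniformly for $x$ in compacts) on $x$ and the kernel decays exponentially so that $\G(\redot,\redot)$ is Hilbert--Schmidt in the relevant sense, and by the computation above it agrees with $(\T-z)^{-1}$ on $\ran(\T-z)$, which is all of $\HR$ since $z$ is in the resolvent set. The main obstacle — and the step that requires genuine care rather than routine bookkeeping — is controlling the two boundary terms at $r_\pm$ simultaneously: one must be sure that the decaying solution at $+\infty$ really does kill the boundary term there (and likewise at $-\infty$) despite the presence of the measure $\dip$, which is exactly what Lemma~\ref{lem:4.02} guarantees once the boundary term is correctly identified as a modified Wronskian $V$ of two elements of $\Tmaxpm$; keeping track of the signs $\pm$ and the $z$-factors in $\G$ through the two integration-by-parts steps is the part most prone to error.
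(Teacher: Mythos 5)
Your proposal is correct and follows essentially the same route as the paper: the paper likewise invokes Corollary~\ref{corTrespsipm} for the existence of $\psi_\pm$, notes their linear independence because otherwise $z$ would be an eigenvalue of $\T$, and then states that the representation follows ``in much the same manner as Theorem~\ref{thm:Respm}'', which is exactly the integration-by-parts argument with boundary terms identified as modified Wronskians and killed by Lemma~\ref{lem:4.02} that you spell out.
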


\begin{proof}
 First of all, one notes that solutions $\psi_+$, $\psi_-$ of the homogeneous differential equation~\eqref{eqnDEho} with the required properties always exist by Corollary~\ref{corTrespsipm}. 
  Moreover, they are linearly independent since otherwise $z$ would be an eigenvalue of $\T$. 
 Now the claim follows in much the same manner as Theorem~\ref{thm:Respm}.  
\end{proof}

\subsection{A quadratic operator pencil on the whole line}
 
 In this subsection, we will introduce a quadratic operator pencil in $H^1(\R)$, associated with the differential equation~\eqref{eqnDEinho}. 
 First of all, let us mention that zero is not an eigenvalue of $\T$. 
 
 \begin{theorem}\label{thmTinv}
  If zero belongs to the resolvent set of $\T$, then 
  \begin{align}
   \T^{-1} = \begin{pmatrix} \Omega & \Upsilon  \\ \mathrm{I} & 0 \end{pmatrix},  
  \end{align}
 where the operator $\Omega: H^1(\R)\rightarrow H^1(\R)$ is given by 
\begin{align}\label{eqnIOomega}
 \Omega g_1(x) =  \int_{\R} \E^{-\frac{|x-s|}{2}} g_1(s)d\omega(s), \quad x\in\R,~ g_1\in H_{\mathrm{c}}^1(\R),
\end{align}
the operator $\Upsilon: L^2(\R;\dip) \rightarrow H^1(\R)$ is given by 
\begin{align}\label{eqnIOdip}
 \Upsilon g_2(x) = \int_\R \E^{-\frac{|x-s|}{2}} g_2(s)d\dip(s), \quad x\in\R,~ g_2\in L^2(\R;\dip),
\end{align}
 and the operator $\mathrm{I}: H^1(\R) \rightarrow L^2(\R;\dip)$ is the canonical embedding. 
 \end{theorem}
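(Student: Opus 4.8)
The plan is to transcribe the proofs of Theorem~\ref{thm:T0inv} and Theorem~\ref{thm:Tinv_pm}, with the role previously played by the endpoint conditions now taken over by square-integrability at both $\pm\infty$, exactly as in the passage from Theorem~\ref{th:Resab} and Theorem~\ref{thm:Respm} to Theorem~\ref{thmSARes}. Since zero is assumed to lie in the resolvent set of $\T$, the operator $\T^{-1}$ is bounded and everywhere defined on $\HR$, so it suffices to compute $(f_1,f_2)$ in terms of $(g_1,g_2)$ for an arbitrary pair $(f,g)\in\T$. From the second relation in~\eqref{eqnDErel} one reads off $\dip f_2=\dip g_1$, i.e.\ $f_2=g_1$ almost everywhere with respect to $\dip$; this already gives the bottom row of the block matrix, the $(2,1)$-entry being the canonical embedding $\mathrm{I}\colon H^1(\R)\to L^2(\R;\dip)$ (which is bounded because $\T^{-1}$ is) and the $(2,2)$-entry being zero.

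For the first component, $f_1$ solves the inhomogeneous equation $-f_1''+\frac{1}{4}f_1=\omega g_1+\dip g_2$ in the distributional sense of~\eqref{eqnDEint} with $z=0$. I would use the two solutions $\psi_+(x)=\E^{-x/2}$ and $\psi_-(x)=\E^{x/2}$ of the homogeneous equation $-u''+\frac{1}{4}u=0$; they satisfy $W(\psi_+,\psi_-)=1$, and $\psi_+(x)\psi_-(s)=\psi_+(s)\psi_-(x)=\E^{-|x-s|/2}$ for $s\le x$ and $s>x$ respectively, while $\psi_\pm$ lies in $H^1(\R)$ near $\pm\infty$. Restricting to a finite interval $[r_-,r_+]$ and integrating by parts twice using~\eqref{eqnDErel} and the formula~\eqref{eqnPI}, precisely as in the proofs of Theorem~\ref{thmSARes} and Theorem~\ref{thm:Tinv_pm}, one obtains
\begin{align*}
 f_1(x) = \int_{r_-}^{r_+}\E^{-|x-s|/2}\bigl(g_1(s)\,d\omega(s)+g_2(s)\,d\dip(s)\bigr) + B_+(r_+) + B_-(r_-), \quad r_-<x<r_+,
\end{align*}
where each $B_\pm(r)$ is $\psi_\mp(x)$ times a modified Wronskian-type expression in $f_1$, $\psi_\pm$ and $g_1$ evaluated at $r$.

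It then remains to let $r_\pm\to\pm\infty$ and to verify that $B_\pm(r)\to0$; this is the one genuinely analytic point. I would handle it exactly as in Lemma~\ref{lem:4.02}: since $f_1$ and $\psi_\pm$ both lie in $H^1(\R)$ near $\pm\infty$, the relevant Wronskian-type quantity is integrable near $\pm\infty$ and hence its limit vanishes. Passing to the limit yields $f_1=\Omega g_1+\Upsilon g_2$ with the kernels in the statement. These computations are valid for $g_1\in H^1_{\mathrm{c}}(\R)$, where the integral against the signed measure $\omega$ converges absolutely, and for all $g_2\in L^2(\R;\dip)$, where $\E^{-|x-\redot|/2}\in L^2(\R;\dip)$ because $\mathrm{I}$ is bounded; by density of $H^1_{\mathrm{c}}(\R)$ in $H^1(\R)$ and boundedness of $\T^{-1}$, the operators $\Omega$, $\Upsilon$ and $\mathrm{I}$ extend boundedly to the spaces indicated, completing the identification of $\T^{-1}$. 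The main obstacle is thus the vanishing of the boundary contributions at $\pm\infty$ together with the bookkeeping of domains for the $\omega$-integral; everything else is a direct transcription of the finite-interval and semi-axis arguments already carried out.
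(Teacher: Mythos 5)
Your proposal is correct and follows essentially the same route as the paper, whose proof of this theorem consists precisely of the remark that the claim follows in much the same manner as Theorem~\ref{thm:Tinv_pm}; you have simply spelled out that transcription (fundamental system $\E^{\mp x/2}$, double integration by parts on $[r_-,r_+]$, vanishing of the boundary terms via the argument of Lemma~\ref{lem:4.02} for compactly supported $g_1$, and extension by boundedness of $\T^{-1}$).
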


 \begin{proof}
  The claim follows in much the same manner as Theorem~\ref{thm:Tinv_pm}. 
 \end{proof}
 
 Under the additional assumption that zero belongs to the resolvent set of $\T$, we now define the quadratic operator pencil $\Pe$ in $H^1(\R)$ by setting 
\begin{align}
 \Pe(z) = \mathrm{I} - z\, \Omega - z^2 \Upsilon, \quad z\in\C.
\end{align}
Here, by abuse of notation, we reuse the symbol $\mathrm{I}$ to denote the identity operator in $H^1(\R)$ as well as $\Upsilon: H^1(\R) \rightarrow H^1(\R)$ for the integral operator 
\begin{align}  
 \Upsilon g(x) = \int_\R \E^{-\frac{|x-s|}{2}} g(s)d\dip(s), \quad x\in\R,~ g\in H^1(\R),
\end{align}
which is bounded in view of Theorem~\ref{thmTinv}. 
 
  It is not surprising that the quadratic operator pencil $\Pe$ is again closely related to the self-adjoint linear relation $\T$. 
 The proof of the following result (compare Theorem~\ref{thmSARes}) is almost literally the same as the one for Theorem~\ref{thmPencilab}.  
 
 \begin{theorem}
  Suppose that zero belongs to the resolvent set of $\T$.
  The  spectrum of the quadratic operator pencil $\Pe$ coincides with the spectrum of the self-adjoint linear relation $\T$. 
  If some $z\in\C$ belongs to the resolvent set of $\Pe$, then 
  \begin{align}
    \Pe(z)^{-1} g(x) & = \spr{g}{G(x,\cdot\,)^\ast}_{H^1(\R)}, \quad x\in\R, ~g\in H^1(\R),  
  \end{align}
   where the Green's function $G$ is given by  
 \begin{align}
  G(x,s) = \frac{1}{W(\psi_+,\psi_-)} \begin{cases} \psi_+(x) \psi_-(s), & s\leq x, \\ \psi_+(s) \psi_-(x), & s> x,    \end{cases}
 \end{align}
 and $\psi_+$, $\psi_-$ are linearly independent solutions of the homogeneous differential equation~\eqref{eqnDEho} such that $\psi_+$ lies in $H^1(\R)$ near $+\infty$ and $\psi_-$ lies in $H^1(\R)$ near $-\infty$.
  \end{theorem}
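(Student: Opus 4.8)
The plan is to follow the proof of Theorem~\ref{thmPencilab} essentially verbatim, exploiting the block representation of $\T^{-1}$ from Theorem~\ref{thmTinv} together with the Frobenius--Schur factorization (see, for example, \cite[Proposition~1.6.2]{tr08}). Write $\mathrm{P}\colon\HR\to H^1(\R)$ for the orthogonal projection onto the first component, so that $\mathrm{P}^\ast\colon H^1(\R)\to\HR$ is the associated isometric embedding. First I would record, directly from $\T^{-1}=\begin{pmatrix}\Omega&\Upsilon\\ \mathrm{I}&0\end{pmatrix}$ and a Schur-complement computation with respect to the decomposition $\HR=H^1(\R)\times L^2(\R;\dip)$, the identity
\begin{align*}
 \Pe(z)^{-1} = \mathrm{P}\left(z\,(\T-z)^{-1}+\mathrm{I}_{\cH}\right)\mathrm{P}^\ast, \quad z\in\C,
\end{align*}
where $\mathrm{I}_{\cH}$ is the identity on $\HR$. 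This is a purely algebraic identity valid whenever $z$ belongs to the resolvent set of $\T$, and it immediately yields $\rho(\T)\subseteq\rho(\Pe)$.

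For the reverse inclusion I would exhibit the factorization
\begin{align*}
 (\T-z)^{-1} = \begin{pmatrix} \mathrm{I} & 0 \\ z\,\mathrm{I} & \mathrm{I}_\dip \end{pmatrix} \begin{pmatrix} \Pe(z)^{-1} & 0 \\ 0 & \mathrm{I}_\dip \end{pmatrix} \begin{pmatrix} \Omega + z\,\Upsilon & \Upsilon \\ \mathrm{I} & 0 \end{pmatrix}, \quad z\in\C,
\end{align*}
with $\mathrm{I}_\dip$ the identity on $L^2(\R;\dip)$, which is again checked by multiplying out the block operators using only $\T^{-1}=\begin{pmatrix}\Omega&\Upsilon\\ \mathrm{I}&0\end{pmatrix}$ and the definition $\Pe(z)=\mathrm{I}-z\,\Omega-z^2\Upsilon$. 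Since $\Omega$, $\Upsilon$ and $\mathrm{I}$ are bounded by Theorem~\ref{thmTinv}, this presents $(\T-z)^{-1}$ as a bounded operator as soon as $\Pe(z)^{-1}$ exists, hence $\rho(\Pe)\subseteq\rho(\T)$; together with the previous step this gives $\sigma(\Pe)=\sigma(\T)$.

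It then remains to identify $\Pe(z)^{-1}$ for $z\in\rho(\Pe)=\rho(\T)$. For nonzero $z$ I would simply apply $\mathrm{P}$ and $\mathrm{P}^\ast$ to the resolvent formula of Theorem~\ref{thmSARes}: its Green's function $\G(x,s)$ has first entry $\frac{1}{W(\psi_+,\psi_-)}\psi_+\psi_-$, with $\psi_\pm$ the subordinate solutions near $\pm\infty$ supplied by Corollary~\ref{corTrespsipm}, so one reads off at once that $\Pe(z)^{-1}g(x)=\spr{g}{G(x,\redot)^\ast}_{H^1(\R)}$ with $G$ as claimed. The remaining value $z=0$ (not covered by Theorem~\ref{thmSARes}) is handled by direct computation: here $\Pe(0)$ is the identity on $H^1(\R)$, while $\psi_+(x)=\E^{-x/2}$ and $\psi_-(x)=\E^{x/2}$ give $W(\psi_+,\psi_-)=1$ and $G(x,s)=\E^{-|x-s|/2}$, so the asserted formula reduces to $g(x)=\spr{g}{\E^{-|x-\redot|/2}}_{H^1(\R)}$, which is exactly the reproducing identity~\eqref{eqnPEdeltac}--\eqref{eqnPEdeltacexp} restricted to the first component. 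I do not expect any genuine obstacle here: every analytic ingredient (boundedness of $\Omega$, $\Upsilon$, $\mathrm{I}$ and existence and uniqueness of the $\psi_\pm$) is already available, the two displayed block identities are algebraic, and the only point needing a separate line is the degenerate case $z=0$.
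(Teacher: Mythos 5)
Your proposal is correct and follows exactly the route the paper takes: the paper dispatches this theorem by declaring its proof "almost literally the same" as that of Theorem~\ref{thmPencilab}, which is precisely the Frobenius--Schur factorization argument you spell out, including the two block identities, the identification of the Green's function via Theorem~\ref{thmSARes} for $z\neq0$, and the direct verification of the degenerate case $z=0$ through the reproducing identity for $\delta_x$.
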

 
 The eigenvalues of $\Pe$ consist precisely of those $z\in\C$ for which there is a non-trivial solution $\phi$ of the homogeneous differential equation~\eqref{eqnDEho} which belongs to $H^1(\R)$. 
 Here, note that such a function automatically belongs to $L^2(\R;\dip)$ as well by Theorem~\ref{thmTinv}. 
 In this case, the kernel of $\Pe(z)$ is spanned by the function $\phi$.

\subsection{Singular Weyl--Titchmarsh functions on the whole line}\label{subsecWT}

 We are now going to introduce a singular Weyl--Titchmarsh function associated with $\T$, following essentially \cite{kosate12}  (see also \cite{fu08, fulalu12, gezi06, ka67, ko49}). 
 To this end, we say that some function $\phi: \C\times\R\rightarrow\C$ is {\em a real entire solution} of the homogeneous differential equation~\eqref{eqnDEho} if the following two properties hold: 
  \begin{enumerate}[label=(\roman*), ref=(\roman*), leftmargin=*, widest=iii]   
   \item The function $\phi(z,\redot)$ is a non-trivial solution of the homogeneous differential equation~\eqref{eqnDEho} for every $z\in\C$. 
   \item The functions $\phi(\ledot,x)$ and $\phi'(\ledot,x)$ are real entire for one (and hence by Lemma \ref{lemEE} and Lemma~\ref{lemSolEnt} for all) $x\in\R$. 
  \end{enumerate}
 Now our basic (necessary) prerequisite which will be assumed throughout the remaining part of this section is contained in the following hypothesis. 
 
 \begin{hypothesis}\label{hypRESol}
  There is a real entire solution $\phi$ of the homogeneous differential equation~\eqref{eqnDEho} such that $\phi(z,\cdot\,)$ lies in $\HR$ near $+\infty$ for every $z\in\C$ and 
  \begin{align}\label{eqnRESolnormzero}
   \phi(0,x) = \E^{-\frac{x}{2}}, \quad x\in\R. 
  \end{align}
 \end{hypothesis}
 
 Note that the assumption on the normalization at the origin in~\eqref{eqnRESolnormzero} does not inflict any additional constraints, as it can always be achieved by a simple scaling. 
 
 One way of characterizing Hypothesis~\ref{hypRESol} is contained in the following result (cf.\ \cite[Lemma~3.2]{gezi06}, \cite[\S 5.3]{ka07}, \cite[Lemma~2.2]{kosate12}).  

 \begin{lemma}
  Hypothesis~\ref{hypRESol} holds if and only if the self-adjoint linear relation $\T_{+}$ in $\cH_0(J_{+})$ has purely discrete spectrum for one (and hence all) $c\in\R$. 
 \end{lemma}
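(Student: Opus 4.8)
The plan is to prove both implications by relating the spectrum of $\T_+$ to the existence of a real entire solution that lies in $\HR$ near $+\infty$. The key tool is the singular Weyl--Titchmarsh theory of Section~\ref{sec:04}, together with the fact that for a one-dimensional deficiency problem, purely discrete spectrum of a self-adjoint restriction is equivalent to the Weyl solution being, after suitable renormalization, entire in $z$.

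For the \emph{only if} direction, suppose Hypothesis~\ref{hypRESol} holds, so that there is a real entire solution $\phi$ of~\eqref{eqnDEho} lying in $\HR$ near $+\infty$ for every $z\in\C$. Fix $c\in\R$. By Corollary~\ref{corWS} (or Corollary~\ref{corTrespsipm}), for $z$ in the resolvent set of $\T_{0,+}$ the subspace of solutions lying in $\HR$ near $+\infty$ is one-dimensional and thus spanned by $\phi(z,\cdot\,)$; in particular $\phi(z,\cdot\,)$ is, up to a scalar, the Weyl solution $\psi_+(z,\cdot\,)$. Now the eigenvalues of $\T_{\gamma,+}$ are exactly the $z$ for which $\phi(z,\cdot\,)$ additionally satisfies the boundary condition~\eqref{eqnBCc}, i.e.\ the zeros of the entire function $z\mapsto z\phi(z,c)\cos\gamma - \phi'(z,c)\sin\gamma$. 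Taking $\gamma=0$, the spectrum of $\T_+ $ agrees with $\sigma(\T_{0,+})\setminus\{0\}$, and $\sigma(\T_{0,+})$ is the zero set of the entire function $z\mapsto\phi'(z,c)$, which is not identically zero (otherwise $\phi(z,\cdot\,)$ would solve an initial value problem with $\phi(z,c)$ arbitrary and $\phi'(z,c)=0$, contradicting non-triviality once $\phi(z,c)$ is also forced to vanish along the constancy of the Wronskian with a second solution). Hence $\sigma(\T_{0,+})$, and therefore $\sigma(\T_+)$, has no finite accumulation point; since each eigenvalue is simple and $\T_+$ is self-adjoint with $\ran{\T_+ -z}$ closed, this means $\T_+$ has purely discrete spectrum.

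For the \emph{if} direction, suppose $\T_+$ has purely discrete spectrum for some $c\in\R$. Then zero is not an eigenvalue of $\T_+$ (recalling $\sigma(\T_{0,+})=\sigma(\T_+)\cup\{0\}$ and that zero is never an eigenvalue of $\T_+$), and, since $\sigma(\T_+)$ has no finite accumulation point, we may pick a $z_0$ in the resolvent set; by Corollary~\ref{corWS} there is an essentially unique solution $\psi_+(z_0,\cdot\,)$ lying in $\HR$ near $+\infty$. The task is to build a genuinely \emph{entire} family. Following the construction in \cite{kosate12} (see also \cite{gezi06}), one uses that for fixed $x$ the Weyl solution $\psi_+(z,\cdot\,)$, regularized by the Weyl--Titchmarsh function $m_{0,+}$ via $\psi_+(z,x)=\theta_0(z,x)+m_{0,+}(z)\phi_0(z,x)$ (with $\theta_0,\phi_0$ as in~\eqref{eqntgammapm},~\eqref{eqnpgammapm}), is meromorphic with poles only at the (discrete) poles of $m_{0,+}$, i.e.\ at the eigenvalues of $\T_{0,+}$. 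One then removes these poles: since the residues are rank-one and the eigenfunctions are, up to scalars, the $\phi_0(z_k,\cdot\,)$ themselves, dividing by a real entire function whose simple zeros are exactly the eigenvalues (such a function exists by Hadamard factorization over the discrete set $\sigma(\T_{0,+})$, and can be taken real since the spectrum is real) yields a solution $\phi(z,x)$ that is entire in $z$ for each $x$, non-trivial for every $z$, lies in $\HR$ near $+\infty$, and is real entire by construction. Finally, rescaling $\phi$ by a nonzero constant (harmless by clause~(i) of the definition of a real entire solution) arranges the normalization~\eqref{eqnRESolnormzero}; here one uses that $\phi(0,\cdot\,)$ must be a multiple of the unique solution in $\HR$ near $+\infty$ at $z=0$, which, since zero is not an eigenvalue of $\T_+$, is the non-$H^1_0$ solution, and a direct computation with the $z=0$ equation $-f''+\frac14 f=0$ forces it to be proportional to $\E^{-x/2}$.

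The main obstacle is the \emph{if} direction: turning the discreteness of $\sigma(\T_+)$ into an honest entire (not merely meromorphic) solution requires the pole-removal argument, which hinges on two facts that must be checked carefully — that the singularities of $m_{0,+}$ are confined to the eigenvalue set (this follows from Lemma~\ref{lemTrsMdh}) and that a real entire function with precisely those simple zeros exists and can be used to cancel them without introducing new zeros or destroying non-triviality of $\phi(z,\cdot\,)$ at any $z$. Everything else is bookkeeping with the constant-coefficient $z=0$ equation and the uniqueness statements already established in Corollaries~\ref{corWS} and~\ref{corTrespsipm}.
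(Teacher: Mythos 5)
Your \emph{if} direction is essentially the paper's own argument: pass to $\T_{0,+}$, use Lemma~\ref{lemTrsMdh} to see that $m_{0,+}$ is meromorphic, and multiply the Weyl solution $\theta_0(z,\cdot)+m_{0,+}(z)\phi_0(z,\cdot)$ by a real entire function with simple zeros at the poles of $m_{0,+}$ (you say ``dividing,'' but you mean multiplying, and the existence of the cancelling function comes from the Weierstra\ss{} product theorem rather than Hadamard factorization, since no growth control on the pole set is available a priori). The checks you defer --- non-triviality of $\phi(z,\cdot)$ at the zeros of the cancelling function, and the behaviour at $z=0$ where $m_{0,+}$ itself has a pole and where the normalization \eqref{eqnRESolnormzero} must be verified --- are exactly the points the paper handles via the simplicity of the poles and the limit computed from Theorem~\ref{thmPencilpm}; your sketch is sound in outline there.

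The genuine gap is in the \emph{only if} direction. You identify the eigenvalues of $\T_{0,+}$ with the zeros of an entire function (incidentally, for $\gamma=0$ the boundary condition \eqref{eqnBCc} reads $zf(c)=0$, so the relevant function is $z\mapsto z\phi(z,c)$, not $\phi'(z,c)$) and conclude that the eigenvalue set has no finite accumulation point. That is correct, but it does not give purely discrete spectrum: a self-adjoint relation can have continuous spectrum at points which are not eigenvalues, and your parenthetical ``$\T_+$ is self-adjoint with $\ran{\T_+-z}$ closed'' is precisely the assertion that every non-eigenvalue lies in the resolvent set --- that is, it is equivalent to what you are trying to prove, and you offer no argument for it. The paper closes exactly this hole with Lemma~\ref{lemTrsMdh}: under Hypothesis~\ref{hypRESol}, formula \eqref{eq:wf_pm} with $\psi_+=\phi$ exhibits $m_{0,+}$ as a quotient of entire functions, hence meromorphic on all of $\C$, and Lemma~\ref{lemTrsMdh} identifies the resolvent set of $\T_{0,+}$ with the maximal domain of holomorphy of $m_{0,+}$, so the spectrum is contained in the discrete set of poles. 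Since you already invoke Lemma~\ref{lemTrsMdh} in the converse direction, the repair is a one-line substitution, but as written this step is unjustified.
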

 
 \begin{proof}
  If Hypothesis~\ref{hypRESol} holds, then the Weyl--Titchmarsh function $m_{0,+}$ associated with $\T_{0,+}$ is meromorphic in view of~\eqref{eq:wf_pm} for every $c\in\R$. 
  Now Lemma~\ref{lemTrsMdh} guarantees that $\T_{0,+}$ (and hence also $\T_+$) has purely discrete spectrum. 
  
  Conversely, if the linear relation $\T_{+}$ has purely discrete spectrum for some $c\in\R$, then so does $\T_{0,+}$ and the function $m_{0,+}$ is meromorphic by Lemma~\ref{lemTrsMdh}. 
  By the Weierstra\ss\ product theorem, there is a real entire function $h$ which has simple zeros exactly at all (necessarily simple) poles of $m_{0,+}$. 
  Now the function $\phi$ defined by  
  \begin{align*}
   \phi(z,x) = \frac{h(z)}{z} \theta_{0}(z,x) + h(z)m_{0,+}(z) \frac{\phi_{0}(z,x)}{z}, \quad x\in\R,~z\in\C, 
  \end{align*}
  upon recalling~\eqref{eqnpgammapm} and~\eqref{eqntgammapm}, is a real entire solution of the homogeneous differential equation~\eqref{eqnDEho}.  
  Using~\eqref{eq:m_pm}, it is readily verified that $\phi(z,\redot)$ lies in $\HR$ near $+\infty$ for every nonzero $z\in\C$. 
  Furthermore, we infer from Theorem~\ref{thmPencilpm} that 
  \begin{align*}
    \lim_{z\rightarrow0} \frac{\phi_0(z,x)}{z} \left(\theta_0(z,x) + m_{0,+}(z)\phi_0(z,x)\right) = 2 \sinh\left(\frac{x-c}{2}\right) \E^{-\frac{x-c}{2}}, \quad x\in\R,
  \end{align*}
  which shows that $\phi(0,\redot)$ lies in $\HR$ near $+\infty$ as well. 
 \end{proof}
 
 \begin{remark} 
 In this context, let us mention that Hypothesis~\ref{hypRESol} will be satisfied if the measures $\omega$ and $\dip$ have a strong enough decay near $+\infty$. 
 For example, this is immediate under the rather strong assumption that they are not supported near $+\infty$ at all. 
 Then we may simply choose real entire solutions $\theta$, $\phi$ of the homogeneous differential equation~\eqref{eqnDEho} such that   
\begin{align}
 \phi(z,x) & = \E^{-\frac{x}{2}}, & \theta(z,x) & = \E^{\frac{x}{2}},
\end{align}
for all $z\in\C$ and $x$ near $+\infty$. 
More generally, it also suffices to only assume that $\omega$ and $\dip$ have finite total variation near $+\infty$; cf.\ \cite[Theorem~3.1]{IsospecCH}. 
\end{remark}
 
 In order to introduce a singular Weyl--Titchmarsh function associated with $\T$, we furthermore need another, linearly independent real entire solution $\theta$ of the homogeneous differential equation~\eqref{eqnDEho}. 
 The fact that such a solution always exists can be deduced by almost literally following the proof of \cite[Lemma~2.4]{kosate12}. 

 \begin{lemma}\label{lemTheta}
  There is a real entire solution $\theta$ of the homogeneous differential equation~\eqref{eqnDEho} such that $W(\phi,\theta)=1$ and  
  \begin{align}
    \theta(0,x) = \E^{\frac{x}{2}}, \quad x\in\R.
  \end{align}
 \end{lemma}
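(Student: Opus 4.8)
The plan is to construct $\theta$ explicitly by starting from the entire solution $\theta_0(z,\cdot\,)$ of the homogeneous equation~\eqref{eqnDEho} with initial conditions~\eqref{eqntgammapm} at $c=0$ (so $\theta_0(z,0)=1$, $\theta_0'(z,0)=0$, which is entire by Lemma~\ref{lemSolEnt}), and then adding a suitable entire multiple of $\phi$ so as to normalize the Wronskian and pin down the value at the origin. Concretely, recalling from Hypothesis~\ref{hypRESol} that $\phi(0,x)=\E^{-x/2}$, one checks $\phi(0,0)=1$ and $\phi'(0,0)=-\tfrac12$, so $W(\phi,\theta_0)(0)=\phi(0,0)\theta_0'(0,0)-\phi'(0,0)\theta_0(0,0)=\tfrac12\neq 0$; more importantly $W(\phi,\theta_0)(z)$ is an entire function of $z$ (being a Wronskian of two solutions with entire initial data), and it is real entire since both solutions are real for real~$z$.

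First I would set $\theta(z,x) = a(z)\,\theta_0(z,x) + b(z)\,\phi(z,x)$ with real entire coefficients $a$, $b$ to be chosen. The Wronskian requirement gives $W(\phi,\theta)=a(z)\,W(\phi,\theta_0)(z)$, so we are forced to take $a(z)=1/W(\phi,\theta_0)(z)$. The catch is that $a$ must be entire, which requires $W(\phi,\theta_0)(z)$ to be zero-free on $\C$. This is where the freedom in choosing $\theta_0$ (i.e.\ the choice of the second solution) must be exploited, exactly as in \cite[Lemma~2.4]{kosate12}: if $W(\phi,\theta_0)$ does have zeros, one replaces $\theta_0$ by $\theta_0 + g\,\phi$ for an appropriate entire $g$ — this does not change the Wronskian $W(\phi,\theta_0+g\phi)=W(\phi,\theta_0)$, so that trick alone does not remove zeros. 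The correct device, following \cite{kosate12}, is instead to allow $a$ to be meromorphic a priori and observe that the potential poles of $a$ (the zeros of $W(\phi,\theta_0)$) are precisely the eigenvalues of a self-adjoint realization; they are real and one then absorbs them by adjusting with a meromorphic multiple of $\phi$, the point being that $\phi(z,\cdot\,)$ itself vanishes nowhere identically, so a careful construction yields a genuinely entire $\theta$ with $W(\phi,\theta)=1$. I expect this to be the main obstacle, and I would handle it by literally transcribing the argument of \cite[Lemma~2.4]{kosate12}, noting that the relevant inputs — existence and analyticity of solutions (Lemmas~\ref{lemEE}, \ref{lemSolEnt}), constancy of the Wronskian, and purely discrete spectrum of $\T_+$ encoded in Hypothesis~\ref{hypRESol} — are all available here in identical form.

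Once a real entire $\theta$ with $W(\phi,\theta)=1$ is in hand, the remaining task is the normalization $\theta(0,x)=\E^{x/2}$, $x\in\R$. Since $\phi(0,\cdot\,)=\E^{-x/2}$ and $W(\phi,\theta)=1$, the function $\theta(0,\cdot\,)$ is a solution of~\eqref{eqnDEho} at $z=0$, i.e.\ a solution of $-f''+\tfrac14 f=0$ on each interval, with Wronskian $1$ against $\E^{-x/2}$; the general such solution is $\E^{x/2}+c\,\E^{-x/2}$ for some constant $c\in\R$. Replacing $\theta(z,x)$ by $\theta(z,x)-c\,\phi(z,x)$ leaves $W(\phi,\theta)=1$ intact and is still real entire, and achieves $\theta(0,x)=\E^{x/2}$ exactly. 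This last adjustment is routine; the genuine content of the lemma lies entirely in producing an entire (not merely meromorphic) second solution with unit Wronskian, which is why I would invoke \cite[Lemma~2.4]{kosate12} verbatim for that step.
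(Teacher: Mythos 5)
Your proposal is correct and follows essentially the same route as the paper, whose entire proof consists of deferring the existence of a real entire $\theta$ with $W(\phi,\theta)=1$ to \cite[Lemma~2.4]{kosate12} (the normalization $\theta(0,x)=\E^{x/2}$ being exactly the routine adjustment $\theta\mapsto\theta-c\,\phi$ with a real constant $c$ that you describe). Your informal sketch of how that reference removes the zeros of $W(\phi,\theta_0)$ is not quite how the argument actually runs --- it solves a Bezout-type identity $\alpha(z)\phi(z,c)+\beta(z)\phi'(z,c)=1$ with real entire $\alpha$, $\beta$, using that $\phi(\ledot,c)$ and $\phi'(\ledot,c)$ are entire without common zeros, rather than absorbing real poles attached to eigenvalues --- but since you invoke the reference verbatim for that step, just as the paper does, this does not affect correctness.
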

   
 Given this real entire fundamental system of solutions $\theta$, $\phi$, the complex-valued function $M$ is now defined on $\rho(\T)$ by requiring that the function 
 \begin{align}\label{eqnWTdefwl}
   \theta(z,x) + M(z) \phi(z,x), \quad x\in\R,
 \end{align}
 lies in $\HR$ near $-\infty$ for every $z\in\rho(\T)$. 
 In view of Corollary~\ref{corTrespsipm}, this function is well-defined and henceforth called {\em the singular Weyl--Titchmarsh function} associated with $\T$. 
 Note that due to the normalization of the real entire solutions $\theta$ and $\phi$ at zero, we have $M(0) = 0$, as long as zero belongs to the resolvent set of $\T$. 

If $\psi(z,\redot)$ is a non-trivial solution of the homogeneous differential equation~\eqref{eqnDEho} which lies in $\HR$ near $-\infty$, then for any $c\in\R$, the function $M$ is given by 
 \begin{align}\label{eqn_mpsi}
   M(z) = \frac{W(\psi,\theta)(z)}{W(\phi,\psi)(z)} = \frac{\psi(z,c)\theta'(z,c) - \psi'(z,c)\theta(z,c)}{\phi(z,c)\psi'(z,c) - \phi'(z,c)\psi(z,c)}, \quad z\in\rho(\T).
 \end{align} 
 Note that in this case, $\psi(z,\redot)$ is a scalar multiple of the function in~\eqref{eqnWTdefwl}. 

\begin{lemma}\label{lemManal}
 The function $M$ is analytic on $\rho(\T)$ with 
 \begin{align}\label{eqnMconj}
  M(z)^\ast = M(z^\ast), \quad z\in\rho(\T).
 \end{align}
\end{lemma}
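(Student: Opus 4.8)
The plan is to read off both assertions directly from the representation \eqref{eqn_mpsi} of $M$ in terms of a Weyl solution $\psi(z,\cdot\,)$ that lies in $\cH(\R)$ near $-\infty$, combined with the entirety of $\theta$, $\phi$ and their derivatives guaranteed by Hypothesis~\ref{hypRESol} and Lemma~\ref{lemTheta}. The essential input is that for $z$ in the resolvent set of $\T$ the Weyl solution can be chosen to depend analytically on $z$ locally; once that is in hand, \eqref{eqn_mpsi} exhibits $M$ locally as a quotient of analytic functions with nonvanishing denominator, and the reflection symmetry \eqref{eqnMconj} drops out from the reality of $\theta$, $\phi$ together with the fact that $\overline{\psi(z,\cdot\,)}$ solves \eqref{eqnDEho} at $z^\ast$ and lies in $\cH(\R)$ near $-\infty$.

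First I would fix $z_0\in\rho(\T)$ and use Corollary~\ref{corTrespsipm} to produce, up to scalars, the unique non-trivial solution $\psi(z_0,\cdot\,)$ of \eqref{eqnDEho} lying in $\cH(\R)$ near $-\infty$. To get analyticity, I would realize $\psi(z,\cdot\,)$ near $z_0$ via the resolvent of $\T$ (or of a half-line restriction $\T_{\gamma,-}$, using Lemma~\ref{lemSplit}, Lemma~\ref{lemTrsMdh} and \eqref{eq:wf_pm} exactly as in the proof of Corollary~\ref{corTrespsipm}): applying $(\T-z)^{-1}$ to a fixed compactly supported element of $\cH(\R)$ yields a solution of the inhomogeneous equation whose values and derivatives at a fixed point $c$ depend analytically on $z$, and suitable constants (again via Corollary~\ref{cor:inhom}) turn this into an analytic family of Weyl solutions $\psi(z,\cdot\,)$ on a neighborhood of $z_0$. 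Then $W(\psi,\theta)(z)$ and $W(\phi,\psi)(z)$ are analytic there; the denominator $W(\phi,\psi)(z)$ is nonzero because $\phi(z,\cdot\,)$ lies in $\cH(\R)$ near $+\infty$ and $\psi(z,\cdot\,)$ near $-\infty$, so a common zero would force them proportional and hence $z$ to be an eigenvalue of $\T$, contradicting $z\in\rho(\T)$. Since $z_0$ was arbitrary, $M$ is analytic on all of $\rho(\T)$.

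For the symmetry \eqref{eqnMconj}, I would note that $\rho(\T)$ is invariant under complex conjugation ($\T$ being self-adjoint and real with respect to the natural conjugation). Since $\theta(\ledot,x)$, $\phi(\ledot,x)$, $\theta'(\ledot,x)$, $\phi'(\ledot,x)$ are real entire, we have $\theta(z^\ast,x) = \theta(z,x)^\ast$ and likewise for $\phi$; and if $\psi(z,\cdot\,)$ solves \eqref{eqnDEho} at $z$ and lies in $\cH(\R)$ near $-\infty$, then $\psi(z,\cdot\,)^\ast$ solves \eqref{eqnDEho} at $z^\ast$ and also lies in $\cH(\R)$ near $-\infty$ (the defining integral relation \eqref{eqnDEint} with $\chi=0$ is preserved under conjugation when $z$ is replaced by $z^\ast$, and $|\psi(z,\cdot\,)^\ast| = |\psi(z,\cdot\,)|$). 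Hence $\psi(z,\cdot\,)^\ast$ is a legitimate choice for $\psi(z^\ast,\cdot\,)$ in \eqref{eqn_mpsi}, and substituting it there gives $M(z^\ast) = M(z)^\ast$ after conjugating the Wronskians, which are bilinear in their entries.

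The main obstacle is the first step: making precise that the Weyl solution $\psi(z,\cdot\,)$ can be chosen to depend analytically on $z$ in a neighborhood of an arbitrary point of $\rho(\T)$. Everything else is bookkeeping with Wronskians and conjugation. The cleanest route is to piggyback on the resolvent representation from Theorem~\ref{thmSARes} (or Theorem~\ref{thm:Respm} for the half-line), which already isolates $\psi_\pm$ as the non-$\cH(\R)$-integrability-obstructed solution and whose proof implicitly tracks the analytic dependence; alternatively one can cite the standard construction of analytic Weyl solutions at points of regular type, as is done in \cite{kosate12}, and I would do the latter to keep the argument short.
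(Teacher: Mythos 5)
Your argument is correct, but it takes a different route from the paper. The paper's proof is a two-line affair: it invokes Theorem~\ref{thmSARes} with $g=\delta_c$ and the reproducing property~\eqref{eqnPEdeltac} to obtain the scalar identity
\begin{align*}
 M(z)\,\phi(z,c)^2 = z\,\spr{(\T-z)^{-1}\delta_c}{\delta_c}_{\HR} - \theta(z,c)\phi(z,c) + 1, \quad z\in\rho(\T),
\end{align*}
and then simply divides by $\phi(z,c)^2$, choosing for each $z$ some $c$ with $\phi(z,c)\neq0$; analyticity and the reflection symmetry~\eqref{eqnMconj} are inherited at once from the analyticity of the resolvent, the identity $((\T-z)^{-1})^\ast=(\T-z^\ast)^{-1}$, and the real-entirety of $\theta(\ledot,c)$, $\phi(\ledot,c)$. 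This packaging deliberately sidesteps the step you identify as the main obstacle, namely producing a locally analytic family of Weyl solutions $\psi(z,\redot)$: all the $z$-dependence of $\psi$ is absorbed into the single analytic scalar $z\mapsto\spr{(\T-z)^{-1}\delta_c}{\delta_c}_{\HR}$. Your route via~\eqref{eqn_mpsi} is the classical one and is sound — the non-vanishing of $W(\phi,\psi)$ on $\rho(\T)$ and the conjugation argument are exactly right — but it does require you to actually carry out the analytic-family construction, including the slightly delicate point that $z\mapsto f_1'(c)$ is analytic for $f=(\T-z)^{-1}g$ (which one gets, e.g., from the boundedness of the functional $\f\mapsto f_1'(c)$ on the graph of the maximal relation, as used in the proofs of Theorems~\ref{th:3.01} and~\ref{th:4.01}, or by citing \cite{kosate12} as you propose). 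In short: both proofs work; the paper's buys brevity by testing the resolvent against $\delta_c$, yours makes the dependence of the Weyl solution on $z$ explicit, at the cost of that extra construction.
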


\begin{proof}
 From Theorem~\ref{thmSARes} we get for every $c\in\R$, upon recalling~\eqref{eqnPEdeltac},
 \begin{align}\label{eqnTresMwl} 
   M(z) \phi(z,c)^2 =   z\, \spr{(\T-z)^{-1} \delta_c}{\delta_c}_{\HR} & - \theta(z,c)\phi(z,c) + 1, \quad z\in\rho(\T). 
 \end{align}
 But this shows that $M$ is analytic on the resolvent set of $\T$ as well as~\eqref{eqnMconj}, since for each $z\in\C$ there is some $c\in\R$ such that $\phi(z,c)\not=0$. 
 \end{proof}

 Note that the subtle difference between~\eqref{eqnTresMwl} and~\eqref{eq:mRescon}, namely the additional $z$ term on the right-hand side of~\eqref{eq:mRescon}, comes from the altered normalization $W(\phi,\theta)=1$ (instead of $W(\phi,\theta) = z$) of the fundamental system of solutions. 

 Actually, we can improve on Lemma~\ref{lemManal} and show that the spectrum of $\T$ can be read off from the singular Weyl--Titchmarsh function $M$ (cf.\ \cite[Corollary~3.5]{kosate12}). 
 
 \begin{lemma}
  The resolvent set of the self-adjoint linear relation $\T$ coincides with the maximal domain of holomorphy of the singular Weyl--Titchmarsh function $M$. 
 \end{lemma}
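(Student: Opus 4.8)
The plan is to mimic the proof of Lemma~\ref{lemTrsMdh}, using the resolvent formula~\eqref{eqnTresMwl} together with the representation of the resolvent in Theorem~\ref{thmSARes}. One inclusion is essentially Lemma~\ref{lemManal}: we already know that $M$ is analytic on $\rho(\T)$, so the resolvent set is contained in the maximal domain of holomorphy of $M$. For the converse we must show that if $M$ has an analytic continuation across some real point $\lambda$, then $\lambda\in\rho(\T)$.

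First I would fix a point $c\in\R$ and, proceeding exactly as in the proof of Lemma~\ref{lemTrsMdh}, observe that for every $f\in\HR$ of the form $f_1=0$ with $f_2$ of compact support, Theorem~\ref{thmSARes} expresses $\spr{(\T-z)^{-1}f}{f}_{\HR}$ in the form $H_f(z) + M(z) F_f(z)$, where $H_f$ and $F_f$ are entire; here one uses that the real entire solutions $\theta(z,\redot)$ and $\phi(z,\redot)$ are locally uniformly bounded on $\C\times\R$ (via the Gronwall estimate cited in Lemma~\ref{lemTrsMdh}). Hence if $M$ extends analytically to a neighborhood of some real $\lambda$, the scalar functions $z\mapsto\spr{(\T-z)^{-1}f}{f}_{\HR}$ extend analytically to a fixed neighborhood of $\lambda$ for all $f$ in a dense subspace of $\HR$. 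Since $\T$ is self-adjoint, this forces $\lambda$ to lie in the resolvent set of $\T$; the mild additional point, handled as in Lemma~\ref{lemTrsMdh}, is that the dense set of such $f$ is enough because the resolvent norm can only blow up near spectrum and uniform local boundedness on a dense set of quadratic forms rules this out. Finally the point $\lambda=0$ (if it is in the closure of where the continuation lives) is handled separately: if zero belonged to the spectrum of $\T$ it would be an isolated eigenvalue by the above, with eigenfunction $\phi(0,\redot)$, contradicting Hypothesis~\ref{hypRESol} together with the normalization~\eqref{eqnRESolnormzero} which forces $\phi(0,\redot)=\E^{-x/2}\notin H^1(\R)$; so zero is automatically in the resolvent set whenever $M$ is regular there.

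The main obstacle I anticipate is the passage from ``the quadratic forms $\spr{(\T-z)^{-1}f}{f}$ extend analytically on a dense set'' to ``$\lambda\in\rho(\T)$.'' This is the standard but slightly delicate argument: one needs that $z\mapsto(\T-z)^{-1}$, which is a priori only defined off $\R$, has a bounded extension to a neighborhood of $\lambda$, and pointwise analytic continuation of the sesquilinear forms on a dense subspace is not by itself enough unless one also controls the operator norm. The resolution, exactly as sketched in Lemma~\ref{lemTrsMdh}, is that the functions $z\mapsto\spr{(\T-z)^{-1}f}{f}_{\HR}$ are, on the dense subspace, bounded on a fixed neighborhood of $\lambda$ (being analytic there), and self-adjointness then upgrades this to local boundedness of $\|(\T-z)^{-1}\|$ via the spectral theorem, whence $\lambda\notin\sigma(\T)$. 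I would simply invoke the argument from Lemma~\ref{lemTrsMdh} rather than repeat it, noting that the only differences are the replacement of $m_{\gamma,\pm}$ by $M$, of $\delta_{c,\pm}$ by $\delta_c$, and of~\eqref{eqnRelResm} by~\eqref{eqnTresMwl}, and that the role played by~\eqref{eq:mRescon} in excluding zero from the spectrum is here played by Hypothesis~\ref{hypRESol}.
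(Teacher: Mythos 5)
Your proposal is correct and follows essentially the same route as the paper, whose proof of this lemma is literally just the remark that the claim follows in much the same manner as Lemma~\ref{lemTrsMdh}, with~\eqref{eqnTresMwl} replacing~\eqref{eqnRelResm} and Theorem~\ref{thmSARes} replacing Theorem~\ref{thm:Respm}. Your handling of the point zero (an isolated spectral point would have to be an eigenvalue, but at $z=0$ the only candidate eigenfunctions are multiples of $\E^{-x/2}$, which do not lie in $H^1(\R)$) is the appropriate substitute for the role played by~\eqref{eq:mRescon} in the semi-axis case.
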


 \begin{proof}
  The claim follows in much the same manner as in Lemma~\ref{lemTrsMdh}. 
 \end{proof}

\begin{remark}\label{remSecFS}
 Concluding, let us mention that real entire solutions as in Hypothesis~\ref{hypRESol} and Lemma~\ref{lemTheta} are not unique. 
 In fact, any other such fundamental system is given by 
 \begin{align}
  \tilde{\theta}(z,x) & = \E^{-g(z)} \theta(z,x) - f(z) \phi(z,x), & \tilde{\phi}(z,x) = \E^{g(z)} \phi(z,x)
 \end{align}
 for some real entire functions $f$ and $g$ with $f(0) = g(0)=0$.  
 The corresponding singular Weyl--Titchmarsh functions are then simply related via
 \begin{align}
  \tilde{M}(z) = \E^{-2g(z)} M(z) + \E^{-g(z)} f(z), \quad z\in\rho(\T).
 \end{align}
 In particular, the maximal domain of holomorphy or the structure of poles and singularities of the singular Weyl--Titchmarsh functions do not change.
\end{remark}

\subsection{The spectral transformation}\label{subsST}

 As the next step, we will now show that it is possible to associate a spectral measure with the singular Weyl--Titchmarsh function $M$ introduced in the preceding subsection. 
 To this end, recall that for all $f$, $g\in\HR$ there is a unique complex Borel measure $E_{f,g}$ on $\R$ such that
\begin{align}\label{eqnLDstietrans}
  \spr{ (\T-z)^{-1} f}{g}_{\HR} = \int_\R \frac{1}{\lambda-z} dE_{f,g}(\lambda), \quad z\in\rho(\T).
\end{align}
 
\begin{lemma}\label{lemSM}
There is a unique non-negative Borel measure $\mu$ on $\R$ such that 
\begin{align}\label{eqnSMdens}
 E_{\delta_a,\delta_b}(B) = \int_{B} \phi(\lambda,a)  \phi(\lambda,b) d\mu(\lambda)
\end{align}
for all $a$, $b\in\R$ and every Borel set $B\subseteq\R$.
\end{lemma}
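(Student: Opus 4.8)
The plan is to read the resolvent of $\T$ along point evaluations off Theorem~\ref{thmSARes}, strip away an entire (hence spectrally invisible) piece, and recognise what is left as $\phi(\cdot\,,a)\phi(\cdot\,,b)$ times a single scalar measure.

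\emph{Step 1: an explicit resolvent formula.} For finite real linear combinations $f=\sum_j\alpha_j\delta_{a_j}$ and $g=\sum_k\beta_k\delta_{b_k}$ of point evaluations, put $\psi_f(z)=\sum_j\alpha_j\phi(z,a_j)$, which is real entire. Substituting $\psi_+=\phi(z,\redot)$ and $\psi_-=\theta(z,\redot)+M(z)\phi(z,\redot)$ (these are linearly independent since $W(\psi_+,\psi_-)=W(\phi,\theta)=1$ by Lemma~\ref{lemTheta}) into the Green's function of Theorem~\ref{thmSARes} and using the defining relation~\eqref{eqnPEdeltac}, one obtains for every nonzero $z\in\rho(\T)$ that
\[
 z\,\spr{(\T-z)^{-1}f}{g}_{\HR} = M(z)\,\psi_f(z)\,\psi_g(z) + \Phi_{f,g}(z),
\]
where $\Phi_{f,g}(z)=\sum_{j,k}\alpha_j\beta_k\bigl(\phi(z,\max(a_j,b_k))\,\theta(z,\min(a_j,b_k))-\E^{-|a_j-b_k|/2}\bigr)$ is real entire in $z$. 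The normalisations $\phi(0,x)=\E^{-x/2}$ and $\theta(0,x)=\E^{x/2}$ give $\Phi_{f,g}(0)=0$, so $\Phi_{f,g}(z)/z$ is entire as well; for $f=g=\delta_c$ the displayed identity is precisely~\eqref{eqnTresMwl}.

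\emph{Step 2: the key identity.} I would then show that $\psi_g(\lambda)^2\,dE_{f,f}(\lambda)=\psi_f(\lambda)^2\,dE_{g,g}(\lambda)$ as Borel measures on $\R$, where $E_{f,f},E_{g,g}$ are the finite non-negative spectral measures from~\eqref{eqnLDstietrans}. It suffices to check this on each bounded interval $(-N,N)$, on which both measures are finite. Writing $\psi_g(\lambda)^2=\psi_g(z)^2+(\lambda-z)q_z(\lambda)$ with $q_z$ entire, the Cauchy transform of the difference of the two restricted measures equals, modulo a function holomorphic across $(-N,N)$, the quantity $\psi_g(z)^2\spr{(\T-z)^{-1}f}{f}_{\HR}-\psi_f(z)^2\spr{(\T-z)^{-1}g}{g}_{\HR}$; by Step~1 the two $M$-terms cancel, leaving only $\bigl(\psi_g(z)^2\Phi_{f,f}(z)-\psi_f(z)^2\Phi_{g,g}(z)\bigr)/z$, which is entire. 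Thus the difference measure has a Cauchy transform that extends holomorphically — and, the measure being real, conjugation-symmetrically — across every point of $(-N,N)$, so the Stieltjes inversion formula forces it to vanish there; letting $N\to\infty$ yields the identity.

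\emph{Step 3: construction of $\mu$ and conclusion.} Fix $c\in\R$; the real entire function $\phi(\redot,c)$ has only isolated zeros, and on the open set $\{\lambda:\phi(\lambda,c)\neq0\}$ I define $d\mu=\phi(\redot,c)^{-2}\,dE_{\delta_c,\delta_c}$, a locally finite non-negative Borel measure there. Since every $\lambda\in\R$ admits some $c$ with $\phi(\lambda,c)\neq0$, and since the key identity (with $f=\delta_{c'}$, $g=\delta_c$) reads $\phi(\redot,c')^2\,dE_{\delta_c,\delta_c}=\phi(\redot,c)^2\,dE_{\delta_{c'},\delta_{c'}}$ — so these local prescriptions agree wherever two of them overlap — they patch together to a single non-negative Borel measure $\mu$ on $\R$ with $dE_{\delta_c,\delta_c}=\phi(\redot,c)^2\,d\mu$ for every $c\in\R$. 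Feeding this back into the key identity gives $dE_{f,f}=\psi_f^2\,d\mu$ for all real combinations $f$, and real polarisation (legitimate because the $\delta_c$ and $\phi$ are real-valued and $\T$ is real with respect to the natural conjugation) then gives $dE_{f,g}=\psi_f\psi_g\,d\mu$; specialising to $f=\delta_a$ and $g=\delta_b$ produces~\eqref{eqnSMdens}. Uniqueness is immediate, since two such measures agree off $\{\lambda:\phi(\lambda,c)=0\}$ for every $c$ and $\bigcap_c\{\lambda:\phi(\lambda,c)=0\}=\emptyset$.

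I expect Step~2 to be the crux. The bookkeeping that exposes the cancellation of the $M$-singularity is exactly what guarantees that $\mu$ picks up no spurious mass at the zeros of $\phi(\redot,c)$, even though $M$ itself may be badly singular there; and the Cauchy-transform/Stieltjes-inversion argument has to be run with some care because $\psi_g^2\,E_{f,f}$ need not be a globally finite measure, which is why one localises to bounded intervals.
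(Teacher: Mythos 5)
Your proof is correct, but it takes a genuinely different route from the paper's. The paper starts from the same resolvent identity $z\spr{(\T-z)^{-1}\delta_a}{\delta_b}_{\HR}=M(z)\phi(z,a)\phi(z,b)+zH_{a,b}(z)$, but then \emph{defines} $\mu$ explicitly as the Stieltjes--Perron inversion of the Herglotz-type function $M(z)/z$ (formula~\eqref{eqndefmu}) and outsources the verification of~\eqref{eqnSMdens} to \cite[Lemma~3.3]{kosate12}; the delicate point there is controlling the boundary behaviour of $\im(M(\lambda+\I\varepsilon)/(\lambda+\I\varepsilon))\phi(\lambda+\I\varepsilon,a)\phi(\lambda+\I\varepsilon,b)$ near the real axis, including at zeros of $\phi(\cdot,a)$. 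You instead never touch the boundary values of $M$: you prove the consistency identity $\psi_g^2\,dE_{f,f}=\psi_f^2\,dE_{g,g}$ by cancelling the $M$-terms inside a Cauchy transform and invoking inversion only for a measure whose transform is entire, and then build $\mu$ by patching the locally defined measures $\phi(\cdot,c)^{-2}dE_{\delta_c,\delta_c}$ over the cover $\{\phi(\cdot,c)\neq0\}$ (a countable subcover exists since $\phi(\lambda,\cdot)$ and $\phi'(\lambda,\cdot)$ cannot vanish simultaneously, so the patching is routine even though you gloss over it). This buys a self-contained argument that handles the zeros of $\phi(\cdot,c)$ transparently and makes the uniqueness clause essentially free; what it does not deliver is the explicit representation~\eqref{eqndefmu} of $\mu$ in terms of $M$, which the paper's construction produces as a by-product and uses later (e.g.\ in the proof of Theorem~\ref{thmIU}) --- by the uniqueness you prove, your $\mu$ is the same measure, but that formula would still need a separate verification.
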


\begin{proof}
 As in the proof of Lemma~\ref{lemManal}, one obtains for all $a$, $b\in\R$
\begin{align*}
 \spr{(\T-z)^{-1} \delta_a}{\delta_b}_{\HR} = \frac{M(z)}{z} \phi(z,a)\phi(z,b) + H_{a,b}(z), \quad z\in\rho(\T),
\end{align*}
where $H_{a,b}$ is a real entire function. 
With the Borel measure $\mu$ defined by 
\begin{align}\label{eqndefmu}
 \mu([\lambda_1,\lambda_2)) = \lim_{\delta\downarrow 0} \lim_{\varepsilon\downarrow 0} \frac{1}{\pi} \int_{\lambda_1-\delta}^{\lambda_2-\delta}  \im\left(\frac{M(\lambda+\I\varepsilon)}{\lambda+\I\varepsilon}\right) d\lambda, 
\end{align}
for $\lambda_1$, $\lambda_2\in\R$ with $\lambda_1<\lambda_2$, the claim follows along the lines of~\cite[Lemma~3.3]{kosate12}. 
\end{proof}

 The measure $\mu$  introduced in Lemma~\ref{lemSM} and defined by~\eqref{eqndefmu} will turn out to be a central object in spectral theory for $\T$.  
 As a next step, we define the transform  
 \begin{align}\begin{split}\label{eqnFhat}
   \hat{f}(z) = \frac{1}{4} \int_\R \phi(z,x) f_1(x) & dx + \int_\R \phi'(z,x) f_1'(x)dx \\ 
                           & + \int_\R z \phi(z,x)f_2(x) d\dip(x), \quad z\in\C,  
 \end{split}\end{align}
 for any function with compact support $f\in\cH_{\mathrm{c}}(\R)$.

\begin{lemma}\label{lemSMst}
 Given compactly supported functions $f$, $g\in\cH_{\mathrm{c}}(\R)$, we have 
\begin{align}
 E_{f,g}(B) = \int_{B} \hat{f}(\lambda)  \hat{g}(\lambda)^\ast d\mu(\lambda)
\end{align}
for every Borel set $B\subseteq\R$.
\end{lemma}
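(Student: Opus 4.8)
The plan is to reduce the general statement for compactly supported $f,g\in\cH_{\mathrm c}(\R)$ to the special case of $\delta$-functions already established in Lemma~\ref{lemSM}, via a density/bilinearity argument. First I would observe that both sides of the asserted identity are sesquilinear in $(f,g)$, so by polarization it suffices to treat $f=g$; moreover, since finite linear combinations of the functions $\delta_c$, $c\in\R$, together with their "second-component" analogues span a dense subspace of $\cH(\R)$, and since the relevant objects depend continuously on $f$, it is enough to verify the formula for $f$, $g$ ranging over a convenient spanning set. The natural spanning set consists of the point-evaluation functionals $\delta_c$ (for the $H^1(\R)$ part) and, for the $L^2(\R;\dip)$ part, vectors of the form $(0,\indik_B)$ with $B$ bounded Borel; one then needs to know how $\hat{f}$ behaves on these generators.

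The key computation is to identify $\widehat{\delta_c}(z)$. Using the definition~\eqref{eqnFhat} together with~\eqref{eqnPEdeltacexp}, one integrates by parts (via~\eqref{eqnPI}) against $\phi(z,\cdot\,)$, exploiting that $\phi(z,\cdot\,)$ solves the homogeneous equation~\eqref{eqnDEho}, to obtain $\widehat{\delta_c}(z) = \phi(z,c)$ (the boundary terms at $\pm\infty$ vanish because $\delta_c$ has exponential decay and $\phi$ lies in $\HR$ near $+\infty$, while near $-\infty$ one splits and uses that $\delta_c$ is itself a solution there). Granting this, Lemma~\ref{lemSM} is precisely the asserted identity for $f=\delta_a$, $g=\delta_b$. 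For the $\dip$-component generators one similarly computes the transform of $(0,h_2)$ with $h_2\in L^2(\R;\dip)$ compactly supported, getting $\int_\R z\,\phi(z,x)h_2(x)\,d\dip(x)$, and one needs the analogue of~\eqref{eqnSMdens} for the corresponding spectral measures; this follows from the same resolvent representation in Theorem~\ref{thmSARes}, since the Green's function $\G$ has the rank-one structure $\binom{1}{z}\frac{1}{W}\psi_+\psi_-$ off the diagonal, so $\spr{(\T-z)^{-1}f}{g}_{\HR}$ always decomposes as $\frac{M(z)}{z}\hat f(z)\overline{\hat g(z^\ast)}$ plus an entire remainder, exactly as in the proof of Lemma~\ref{lemSM}.

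Concretely, the cleanest route is: (i) show that for every compactly supported $f\in\cH_{\mathrm c}(\R)$ one has, for $z\in\rho(\T)$,
\begin{align*}
 \spr{(\T-z)^{-1}f}{f}_{\HR} = \frac{M(z)}{z}\,|\hat f(z)|^2 + H_f(z),
\end{align*}
where $H_f$ is entire and $\hat f$ is entire of at most the expected growth; this is obtained by inserting the Green's function from Theorem~\ref{thmSARes}, writing $\psi_\pm$ in terms of the real entire fundamental system $\theta$, $\phi$, and carrying out the same integrations by parts as in Lemma~\ref{lemManal} and Lemma~\ref{lemSM}. (ii) Apply the Stieltjes inversion formula to $\spr{(\T-z)^{-1}f}{f}_{\HR}$, using the definition~\eqref{eqndefmu} of $\mu$ and the fact that $H_f$ contributes nothing to the imaginary part in the limit $\varepsilon\downarrow0$; this yields $E_{f,f}(B)=\int_B|\hat f(\lambda)|^2\,d\mu(\lambda)$. (iii) Polarize to recover the general bilinear statement. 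The main obstacle is step (i): one must carefully justify the integration-by-parts manipulations for measure coefficients on the whole line, control the boundary terms at $\pm\infty$ using that $f$ has compact support (so outside $\supp f$ the resolvent applied to $f$ is a genuine solution of~\eqref{eqnDEho} lying in $\HR$ near the respective endpoint), and check that the "$\dip$-component" of $f$ enters $\hat f$ with exactly the weight $z\,\phi(z,x)$ dictated by~\eqref{eqnFhat} — this last point is where the second equation in~\eqref{eqnDErel}, i.e. $\dip f_2 = \dip g_1$, must be used, mirroring the bookkeeping already seen in the proof of Theorem~\ref{thmSARes}.
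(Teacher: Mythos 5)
Your ``concrete route'' (i)--(iii) is exactly the paper's proof: polarize to reduce to $f=g$, use the resolvent representation of Theorem~\ref{thmSARes} to obtain $\spr{(\T-z)^{-1}f}{f}_{\HR}=\frac{M(z)}{z}\,\hat f(z)\hat f(z^\ast)^\ast+H_f(z)$ with $H_f$ real entire and $\hat f$ entire (using that $\theta$, $\phi$ are locally uniformly bounded on $\C\times\R$), and conclude by Stieltjes inversion as in \cite[Lemma~3.3]{kosate12}. The only correction is that the analytic expression in your step (i) must be $\hat f(z)\hat f(z^\ast)^\ast$ rather than $|\hat f(z)|^2$ (the two agree only for real $z$), as you in fact write correctly in your second paragraph.
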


\begin{proof}
Due to the polarization identity, it suffices to prove the claim in the case when $f=g$. 
Using Theorem~\ref{thmSARes}, a lengthy but straightforward calculation gives  
\begin{align*}
 \spr{(\T-z)^{-1} f}{f}_{\HR} = \frac{M(z)}{z} \hat{f}(z) \hat{f}(z^\ast)^\ast + H_{f}(z), \quad z\in\rho(\T),
\end{align*}
for some real entire function $H_{f}$ (also note that the transform $\hat{f}$ is entire). 
Here, one should note that $\theta$ and $\phi$ are locally uniformly bounded on $\C\times\R$ (which can be deduced from the Gronwall lemma \cite[Lemma~1.3]{be89}, \cite[Lemma~A.1]{MeasureSL}). 
Now the claim can again be verified in much the same manner as~\cite[Lemma~3.3]{kosate12}. 
\end{proof}

In particular, the preceding lemma shows that the mapping $f\mapsto \hat{f}$ is a partial isometry from a dense subspace of $\HR$ into $\Lmu$.
More precisely, for every function $f\in\HR$ with compact support we obtain 
\begin{align}
 \| \hat{f}\|^2_{\Lmu} = \int_\R \hat{f}(\lambda) \hat{f}(\lambda)^\ast d\mu(\lambda) = \int_\R dE_{f,f} =  \|\mathrm{P} f\|_{\HR}^2,
\end{align}
where $\mathrm{P}$ is the orthogonal projection onto the closure $\D$ of $\dom{\T}$ (note that the domain of $\T$ can be non-dense indeed since the multi-valued part of $\T$ can be non-trivial). 
Consequently, we may extend this mapping uniquely to a partial isometry $\F$ from $\HR$ into $\Lmu$ with initial subspace $\D$. 
Of course, the result of Lemma~\ref{lemSMst} now immediately extends to all functions $f$, $g\in\HR$, that is, 
\begin{align}\label{eqnSMF}
 E_{f,g}(B) =  \int_B \F f(\lambda) \F g(\lambda)^\ast d\mu(\lambda)
\end{align}
for every Borel set $B\subseteq\R$. 
The obvious similarity of the assertions in Lemma~\ref{lemSM} and in Lemma~\ref{lemSMst} becomes clear in view of the following result. 

\begin{proposition}\label{propTransPE}
 For every $c\in\R$ we have 
 \begin{align}
  \F\delta_c(\lambda) = \phi(\lambda,c)
 \end{align} 
 for almost all $\lambda\in\R$ with respect to $\mu$.
\end{proposition}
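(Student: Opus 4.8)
The plan is to compute $\F\delta_c$ directly from the definition of the transform $\hat{\,\cdot\,}$ in~\eqref{eqnFhat}, exploiting the explicit form~\eqref{eqnPEdeltacexp} of $\delta_c$. However, $\delta_c$ itself does not have compact support, so I cannot simply plug it into~\eqref{eqnFhat}; instead I would proceed by a density/approximation argument. First I would recall that $\F$ is defined as the unique continuous extension to $\HR$ (with initial subspace $\D$) of the map $f\mapsto\hat f$ on compactly supported $f\in\cH_{\mathrm{c}}(\R)$, so it suffices to identify $\F\delta_c$ as an element of $\Lmu$ by testing against a total set. The most efficient route is to pick any compactly supported $g\in\cH_{\mathrm{c}}(\R)$ and use~\eqref{eqnSMF} together with~\eqref{eqnSMdens}: one has, for every Borel set $B$,
\begin{align*}
 \int_B \F\delta_c(\lambda)\,\F g(\lambda)^\ast\, d\mu(\lambda) = E_{\delta_c,g}(B),
\end{align*}
and I would separately compute $E_{\delta_c,g}$ and compare it with $\int_B \phi(\lambda,c)\,\hat g(\lambda)^\ast\, d\mu(\lambda)$. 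If these agree for all such $g$ and all $B$, then since $\{\hat g : g\in\cH_{\mathrm{c}}(\R)\}$ is dense in $\Lmu$ (being the range of a dense subspace under the isometry, restricted to $\D$, and the orthogonal complement of $\D$ maps to zero under $\F$ and contributes nothing since $\phi(\lambda,c)$ is the would-be transform of $\delta_c$ whose component in $\D^\perp$ must be handled), we conclude $\F\delta_c(\lambda)=\phi(\lambda,c)$ for $\mu$-a.e.\ $\lambda$.

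To compute $E_{\delta_c,g}$, I would use the resolvent representation of Theorem~\ref{thmSARes} evaluated against $g$ and $\delta_c$, exactly as in the proof of Lemma~\ref{lemSM} and Lemma~\ref{lemSMst}: one obtains
\begin{align*}
 \spr{(\T-z)^{-1}\delta_c}{g}_{\HR} = \frac{M(z)}{z}\,\phi(z,c)\,\hat g(z^\ast)^\ast + H_{c,g}(z), \quad z\in\rho(\T),
\end{align*}
for some real entire function $H_{c,g}$, where the key point is that the defining formula~\eqref{eqnFhat} for $\hat g$ arises precisely from pairing the Green's function $\G(x,\cdot\,)$ (written via $\phi$ and $\psi$) against $g$ in the $\HR$ inner product, while the $\psi$-dependent piece, combined with $M(z)=W(\psi,\theta)/W(\phi,\psi)$, reassembles into the $\phi$-part times $M(z)/z$ plus an entire remainder. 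Then by the Stieltjes inversion formula applied to~\eqref{eqnLDstietrans}, together with the definition~\eqref{eqndefmu} of $\mu$ and the fact that $H_{c,g}$ is entire (hence contributes nothing to the singular part), one reads off
\begin{align*}
 E_{\delta_c,g}(B) = \int_B \phi(\lambda,c)\,\hat g(\lambda)^\ast\, d\mu(\lambda),
\end{align*}
which is exactly $\int_B \phi(\lambda,c)\,\F g(\lambda)^\ast\,d\mu(\lambda)$ since $\F g = \hat g$ for $g\in\cH_{\mathrm{c}}(\R)$.

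Comparing with the identity $\int_B \F\delta_c\,\F g^\ast\, d\mu = E_{\delta_c,g}(B)$ and varying $B$ gives $\F\delta_c(\lambda)\,\overline{\F g(\lambda)} = \phi(\lambda,c)\,\overline{\F g(\lambda)}$ for $\mu$-a.e.\ $\lambda$, for each fixed $g$; taking a countable family of $g$'s whose transforms are total in $\Lmu$ then forces $\F\delta_c(\lambda)=\phi(\lambda,c)$ $\mu$-a.e. I expect the main obstacle to be the bookkeeping in the resolvent pairing: one must verify carefully that pairing $\G(x,\cdot\,)$ against $g$ in the full $\HR$ scalar product (all three terms, including the $\dip$-term with its factor $z$) reproduces exactly the transform $\hat g(z)$ as written in~\eqref{eqnFhat}, and that the leftover terms involving $\psi$ and the Wronskians combine cleanly into $\tfrac{M(z)}{z}\phi(z,c)\hat g(z^\ast)^\ast$ plus something entire — this is the "lengthy but straightforward calculation" alluded to in the proof of Lemma~\ref{lemSMst}, and the only genuinely delicate point is controlling the $z=0$ behavior, which is harmless here because $M(0)=0$ when zero is in the resolvent set and otherwise zero is an isolated eigenvalue handled by the same Stieltjes-inversion argument.
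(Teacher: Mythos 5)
Your first two steps are exactly the paper's: pair the resolvent formula of Theorem~\ref{thmSARes} with $\delta_c$ and a compactly supported $g$, split off a real entire remainder, and apply Stieltjes inversion as in Lemma~\ref{lemSMst} to obtain
\begin{align*}
 E_{\delta_c,g}(B)=\int_B \phi(\lambda,c)\,\F g(\lambda)^\ast\, d\mu(\lambda)
\end{align*}
for every Borel set $B$. The gap is in your final step. You conclude by testing against the family $\lbrace \F g \,|\, g\in\cH_{\mathrm{c}}(\R)\rbrace$ and asserting it is total in $\Lmu$, ``being the range of a dense subspace under the isometry''. That argument only shows the family is dense in $\ran{\F}$, which at this stage is merely \emph{some} closed subspace of $\Lmu$: surjectivity of $\F$ is established only later, in Lemma~\ref{lemSTonto}, and its proof uses precisely the identity $\F\delta_x=\phi(\ledot,x)$ you are trying to prove. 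So the density claim is circular, and without it the relation $(\F\delta_c-\phi(\ledot,c))\,\F g^\ast=0$ $\mu$-a.e.\ tells you nothing on the set where all the $\F g$ vanish.

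The repair is small and is what the paper actually does: extend the displayed identity by continuity from $g\in\cH_{\mathrm{c}}(\R)$ to $g=\delta_c$ (both sides are continuous in $g$, and $\phi(\ledot,c)\in\Lmu$ by Lemma~\ref{lemSM}), and then combine the three expressions for $E_{\delta_c,\delta_c}$: equation~\eqref{eqnSMF} gives $\|\F\delta_c\|_{\Lmu}^2$, Lemma~\ref{lemSM} gives $\|\phi(\ledot,c)\|_{\Lmu}^2$, and the extended identity gives $\spr{\F\delta_c}{\phi(\ledot,c)}_{\Lmu}$. Hence
\begin{align*}
 \spr{\F\delta_c}{\phi(\ledot,c)}_{\Lmu}=\|\F\delta_c\|_{\Lmu}^2=\|\phi(\ledot,c)\|_{\Lmu}^2,
\end{align*}
which is the equality case of the Cauchy--Schwarz inequality and forces $\F\delta_c=\phi(\ledot,c)$ in $\Lmu$. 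With this replacement of the last step, your argument coincides with the paper's proof.
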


\begin{proof}
Given $c\in\R$ and a real-valued $f\in\cH_{\mathrm{c}}(\R)$, we infer from Theorem~\ref{thmSARes} that    
\begin{align*}
 \spr{(\T-z)^{-1} f}{\delta_c}_{\HR} = \frac{M(z)}{z} \hat{f}(z) \phi(z,c) + H_{f,c}(z), \quad z\in\rho(\T),
\end{align*}
for some real entire function $H_{f,c}$. 
Once more, we conclude as in~\cite[Lemma~3.3]{kosate12} 
\begin{align*}
 E_{f,\delta_c}(B) = \int_B \hat{f}(\lambda) \phi(\lambda,c) d\mu(\lambda) 
\end{align*} 
for every Borel set $B\subseteq\R$, which again extends to all functions $f\in\HR$ in a straightforward way. 
In view of~\eqref{eqnSMF} and Lemma~\ref{lemSM},  this finally yields  
\begin{align*}
    \spr{\F\delta_c}{\phi(\ledot,c)}_{\Lmu} & = E_{\delta_c,\delta_c}(\R) = \|\F\delta_c\|_{\Lmu}^2 = \|\phi(\ledot,c)\|_{\Lmu}^2,
\end{align*} 
proving the claim. 
\end{proof}

It will turn out that the transformation $\F$ maps the self-adjoint linear relation $\T$ to multiplication with the independent variable in $\Lmu$. 
Before we get to prove this, we first derive a few more properties of this transformation. 

\begin{lemma}\label{lemFadjoint}
 The adjoint of the operator $\F$ is given by 
 \begin{align}
  \F^\ast g(x) = \lim_{r\rightarrow\infty} \int_{-r}^r \begin{pmatrix} 1 \\ \lambda \end{pmatrix} \phi(\lambda,x) g(\lambda) d\mu(\lambda), \quad x\in\R,~g\in\Lmu, 
 \end{align} 
 where the limit has to be understood as a limit in $\HR$. 
\end{lemma}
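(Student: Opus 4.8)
The statement to prove is the formula for the adjoint $\F^\ast$ of the spectral transformation $\F\colon\HR\to\Lmu$. Since $\F$ is (the extension of) a partial isometry and the formula claims $\F^\ast g$ is a limit in $\HR$ of the truncated integrals $\int_{-r}^r \binom{1}{\lambda}\phi(\lambda,x)\,g(\lambda)\,d\mu(\lambda)$, the natural strategy is to verify the defining relation $\langle \F f,g\rangle_{\Lmu} = \langle f,\F^\ast g\rangle_{\HR}$ for $f$ ranging over a dense subspace --- namely the compactly supported functions $f\in\cH_{\mathrm c}(\R)$ --- and then pass to the limit using boundedness of $\F$. First I would fix $g\in\Lmu$ and, for each $r>0$, introduce the candidate $g_r(x) := \int_{-r}^r \binom{1}{\lambda}\phi(\lambda,x)\,g(\lambda)\,d\mu(\lambda)\in\HR$; this belongs to $\HR$ because $\phi(\lambda,\redot)$ and $\phi'(\lambda,\redot)$ are locally uniformly bounded on $\C\times\R$ (Gronwall, as used in the proofs of Lemma~\ref{lemSMst} and Lemma~\ref{lemManal}) and the integration is over the bounded set $[-r,r]$ against the finite measure $\mu|_{[-r,r]}$, while the $H^1$-norm and the $L^2(\R;\dip)$-norm both involve only the pieces $\phi(\lambda,\redot)$, $\phi'(\lambda,\redot)$, $\lambda\phi(\lambda,\redot)$.

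The key computation is to identify $\langle f,g_r\rangle_{\HR}$ for compactly supported $f\in\cH_{\mathrm c}(\R)$. Writing out the scalar product in $\HR$ and using Fubini (justified since $f$ has compact support, $g\,\mu$ is a finite measure on $[-r,r]$, and the integrand is bounded there), one obtains
\begin{align*}
 \langle f,g_r\rangle_{\HR} = \int_{-r}^r \overline{g(\lambda)}\left(\frac14\int_\R\phi(\lambda,x)f_1(x)^\ast dx + \int_\R\phi'(\lambda,x)f_1'(x)^\ast dx + \int_\R\lambda\phi(\lambda,x)f_2(x)^\ast d\dip(x)\right)d\mu(\lambda),
\end{align*}
and the inner bracket is precisely $\widehat{f^\ast}(\lambda) = \hat f(\lambda)^\ast$ from~\eqref{eqnFhat} (recall $\phi$ is real entire, so $\phi(\lambda,x)$ is real for real $\lambda$). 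Hence $\langle f,g_r\rangle_{\HR} = \int_{-r}^r \hat f(\lambda)^\ast g(\lambda)\,d\mu(\lambda) = \langle \indik_{[-r,r]}g,\hat f\rangle_{\Lmu}^\ast = \langle \F f,\indik_{[-r,r]}g\rangle_{\HR}$-type expression; more precisely $\langle \F f,\indik_{[-r,r]}\,g\rangle_{\Lmu} = \overline{\langle f,g_r\rangle_{\HR}}$, so taking complex conjugates, $\langle f,g_r\rangle_{\HR} = \langle \indik_{[-r,r]}g,\F f\rangle_{\Lmu}^\ast$. As $r\to\infty$, $\indik_{[-r,r]}g\to g$ in $\Lmu$, so the right-hand side converges to $\langle g,\F f\rangle_{\Lmu}^\ast = \langle \F f,g\rangle_{\Lmu}^\ast{}^\ast$; matching conventions, the limit of $\langle f,g_r\rangle_{\HR}$ is $\langle f,\F^\ast g\rangle_{\HR}$ for every compactly supported $f$, hence for all $f\in\HR$ by density and the uniform bound $\|g_r\|\le\dots$.

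The main obstacle is showing that $g_r$ actually converges in $\HR$ (not merely weakly, and not merely that the weak limit has the right pairings): the statement asserts a genuine $\HR$-limit. The clean way is to first establish the weak convergence argument above to see that any $\HR$-limit point of $(g_r)$ must equal $\F^\ast g$, and then to show $(g_r)$ is Cauchy in $\HR$. For the latter I would compute $\|g_r - g_s\|_{\HR}^2$ for $r<s$ by the same Fubini manipulation applied to $\langle g_r-g_s, g_r-g_s\rangle_{\HR}$; this reduces to an integral of the form $\int\int_{([-s,s]\setminus[-r,r])^2}\overline{g(\lambda)}g(\nu)\,\langle(\dots),(\dots)\rangle\,d\mu\,d\mu$, which one controls using that $\F^\ast$ restricted to indicator-truncated functions is the genuine adjoint on that piece, giving $\|g_r-g_s\|_{\HR}^2 = \|\F^\ast(\indik_{[-s,s]\setminus[-r,r]}g)\|_{\HR}^2 \le \|\indik_{[-s,s]\setminus[-r,r]}g\|_{\Lmu}^2\to 0$. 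Alternatively, and more economically, one observes that $g_r = \F^\ast(\indik_{[-r,r]}g)$ exactly (this is the content of the Fubini computation, once one knows $\F^\ast$ acts by integration against $\binom{1}{\lambda}\phi(\lambda,\redot)$ on the dense set of compactly supported spectral-side functions, which follows from Lemma~\ref{lemSMst} and Proposition~\ref{propTransPE}), so that $g_r\to\F^\ast g$ in $\HR$ simply because $\F^\ast$ is bounded and $\indik_{[-r,r]}g\to g$ in $\Lmu$. I would present this second route, deriving $g_r=\F^\ast(\indik_{[-r,r]}g)$ from the pairing identity above (valid against the dense family of compactly supported $f$), and then conclude by continuity of $\F^\ast$.
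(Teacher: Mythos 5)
Your overall architecture (identify the truncated integral with $\F^\ast(\indik_{[-r,r]}\,g)$ and then let $r\to\infty$ using boundedness of $\F^\ast$) matches the paper's final step, but two of your intermediate claims have genuine gaps, and together they skip over the actual heart of the lemma. First, the assertion that $g_r$ belongs to $\HR$ ``because $\phi(\lambda,\redot)$ and $\phi'(\lambda,\redot)$ are locally uniformly bounded and the integration is over a bounded set'' does not work: membership in $\HR$ requires square-integrability over all of $\R$ in the variable $x$, and for fixed $\lambda$ the solution $\phi(\lambda,\redot)$ generically grows like $\E^{-x/2}$ as $x\to-\infty$ (Hypothesis~\ref{hypRESol} only places it in $\HR$ near $+\infty$). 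That the superposition nevertheless lands in $\HR$ is precisely the nontrivial content and cannot be obtained from local boundedness. Second, your ``more economical'' route derives $g_r=\F^\ast(\indik_{[-r,r]}\,g)$ from the pairing identity against compactly supported $f$. For the first component this is not enough: if $u$ is only locally $H^1$ and $\spr{f_1}{u-v}_{H^1(\R)}=0$ for all $f_1\in H^1_{\mathrm{c}}(\R)$, then $u-v$ is merely a weak solution of $-w''+\frac{1}{4}w=0$, i.e.\ a combination of $\E^{\pm x/2}$, and the a priori growth of $g_{r,1}$ near $-\infty$ (of order $\E^{-x/2}$) does not exclude a nonzero such term. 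Note that $\delta_x$ is \emph{not} compactly supported, so it is not covered by your test class; the same circularity affects your Cauchy-sequence variant, which presupposes $g_r\in\HR$.

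The paper closes both gaps by testing against the reproducing elements directly: by Proposition~\ref{propTransPE}, $\spr{g}{\F\delta_x}_{\Lmu}=\spr{\F^\ast g}{\delta_x}_{\HR}=(\F^\ast g)_1(x)$ yields the \emph{pointwise} identity $g_{r,1}(x)=(\F^\ast(\indik_{[-r,r]}\,g))_1(x)$ for every $x$, which simultaneously proves $g_{r,1}\in H^1(\R)$. The second component needs a separate argument: the paper first establishes the uniform bound $\int_a^b|g_{r,2}|^2\,d\dip\le\|g\|_{\Lmu}^2$ by a duality bootstrap (feeding $\indik_{[a,b)}g_{r,2}$ back through $\F$ and using that $\F$ is a contraction), and only then identifies $g_{r,2}$ with $(\F^\ast(\indik_{[-r,r]}\,g))_2$ by testing against compactly supported second components --- which does suffice there, since orthogonality in $L^2(\R;\dip)$ to all compactly supported functions forces the difference to vanish $\dip$-almost everywhere. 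If you insert the $\delta_x$ evaluation for the first component and an $L^2(\R;\dip)$ argument for the second, the remainder of your proof (the Fubini computation and continuity of $\F^\ast$) goes through.
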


\begin{proof}
 For every function $g\in\Lmu$ with compact support, we set
 \begin{align*}
  \check{g}(x) = \int_\R \begin{pmatrix} 1 \\ \lambda \end{pmatrix} \phi(\lambda,x) g(\lambda) d\mu(\lambda), \quad x\in\R, 
 \end{align*}
 and note that $\check{g}_1$ belongs to $H^1(\R)$ since 
 \begin{align*}
   \check{g}_1(x) & 
                            = \spr{g}{\F\delta_x}_{\Lmu}  = \spr{\F^\ast g}{\delta_x}_{\HR}   
                            =  (\F^\ast g)_1(x), \quad x\in\R.
 \end{align*} 
 Now for arbitrary $a$, $b\in\R$ with $a<b$ we obtain upon interchanging integrals 
\begin{align*}
 L_{a,b}^{2} & = \int_a^b \left|\check{g}_2(x)\right|^2 d\dip(x)  
            =  \int_a^b \check{g}_2(x) \int_\R \lambda \phi(\lambda,x) g(\lambda)^\ast d\mu(\lambda)\, d\dip(x)  \\
          & = \int_\R g(\lambda)^\ast \int_a^b \lambda\phi(\lambda,x) \check{g}_2(x) d\dip(x) d\mu(\lambda) 
           = \int_\R  g(\lambda)^\ast \F\begin{pmatrix} 0 \\ \indik_{[a,b)} \check{g}_2 \end{pmatrix}(\lambda) \,  d\mu(\lambda) \\         
          & \leq \left\| g\right\|_{\Lmu} \left\| \F\begin{pmatrix} 0 \\ \indik_{[a,b)} \check{g}_2 \end{pmatrix}\right\|_{\Lmu}  
          \leq \left\| g\right\|_{\Lmu} L_{a,b},         
\end{align*}
 which shows that $\check{g}$ belongs to $\HR$. 
 Now if $f\in\HR$ is such that $f_1=0$ and $f_2$ has compact support, then upon interchanging integrals one sees 
 \begin{align*}
  \spr{\check{g}_2}{f_2}_{L^2(\R;\dip)} = \spr{g}{\hat{f}}_{\Lmu} & = \spr{\F^\ast g}{f}_{\HR} = \spr{(\F^\ast g)_2}{f_2}_{L^2(\R;\dip)},  
 \end{align*}
 implying $\check{g} = \F^\ast g$ and hence the claim. 
 \end{proof}

\begin{lemma}\label{lemSTonto}
 The mapping $\F$ is onto with (in general multi-valued) inverse 
 \begin{align}
  \F^{-1} = \F^\ast \oplus \left(\lbrace0\rbrace \times \mul{\T}\right). 
 \end{align}
\end{lemma}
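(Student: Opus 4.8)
The plan is to reduce the statement to the single nontrivial claim that $\F$ is \emph{onto}; the rest is a routine identification of the inverse relation. Since $\F$ is a partial isometry with initial subspace $\D=\overline{\dom{\T}}$ and $\T$ is self-adjoint, we have $\ker{\F}=\D^{\perp}=\mul{\T}$, and $\ran{\F}$ is automatically closed. Granting surjectivity, so that $\F\F^{\ast}=\mathrm{I}$ on $\Lmu$, the asserted formula follows by a short computation: for $g\in\Lmu$ and $f\in\HR$ one has $\F f=g$ if and only if $\F(f-\F^{\ast}g)=0$, i.e.\ if and only if $f-\F^{\ast}g\in\ker{\F}=\mul{\T}$; since $\ran{\F^{\ast}}\subseteq\D=\mul{\T}^{\perp}$, the sum on the right-hand side of the asserted identity is direct, and we obtain exactly $\F^{-1}=\F^{\ast}\oplus(\lbrace0\rbrace\times\mul{\T})$.

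To prove surjectivity it suffices to show that $\ran{\F}$ is dense in $\Lmu$, since it is closed. The key auxiliary fact is the following: if $u\in\Lmu$ satisfies $u(\lambda)\,\phi(\lambda,c)=0$ for $\mu$-almost every $\lambda$ and every $c$ in some fixed countable dense subset $D_{0}\subseteq\R$, then $u=0$ in $\Lmu$. Indeed, the exceptional sets form a countable union of $\mu$-null sets, and at any point $\lambda$ outside it with $u(\lambda)\neq0$ one would have $\phi(\lambda,c)=0$ for all $c\in D_{0}$, hence $\phi(\lambda,\cdot\,)\equiv0$ by continuity, contradicting that $\phi(\lambda,\cdot\,)$ is a non-trivial solution of~\eqref{eqnDEho}. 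Using this, I would first establish the intertwining identity $\F\bigl((\T-z)^{-1}f\bigr)=(\lambda-z)^{-1}\,\F f$ for every $f\in\HR$ and every $z\in\C\backslash\R$: substituting $(\T-z)^{-1}f$ for $f$ in~\eqref{eqnLDstietrans} and applying the resolvent identity gives $dE_{(\T-z)^{-1}f,h}(\lambda)=(\lambda-z)^{-1}\,dE_{f,h}(\lambda)$, so~\eqref{eqnSMF} yields $\bigl(\F((\T-z)^{-1}f)-(\lambda-z)^{-1}\F f\bigr)\,\F h(\lambda)^{\ast}=0$ for $\mu$-almost every $\lambda$ and every $h\in\HR$; taking $h=\delta_{c}$, so that $\F\delta_{c}=\phi(\cdot\,,c)$ by Proposition~\ref{propTransPE}, and invoking the auxiliary fact gives the claimed identity. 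In particular, $(\lambda-z)^{-1}\,\F f\in\ran{\F}$ for every $f\in\HR$ and every $z\in\C\backslash\R$.

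Finally, suppose $g\in\Lmu$ is orthogonal to $\ran{\F}$. Then for every $f\in\HR$ and every $z\in\C\backslash\R$ the function $(\lambda-z)^{-1}\F f$ lies in $\ran{\F}$ and is therefore orthogonal to $g$, that is,
\begin{align*}
 \int_{\R}\frac{\F f(\lambda)\,g(\lambda)^{\ast}}{\lambda-z}\,d\mu(\lambda)=0.
\end{align*}
Since $\F f\cdot g^{\ast}\in L^{1}(\R;\mu)$, this means that the Cauchy transform of the finite complex measure $\F f(\lambda)\,g(\lambda)^{\ast}\,d\mu(\lambda)$ vanishes on $\C\backslash\R$; hence that measure is zero and $\F f\cdot g^{\ast}=0$ for $\mu$-almost every $\lambda$. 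Choosing $f=\delta_{c}$ with $c\in D_{0}$ and applying the auxiliary fact once more forces $g=0$, so $\ran{\F}$ is dense and thus equals $\Lmu$. The one genuinely delicate point in this scheme is the repeated passage from integral identities---orthogonality and the vanishing of Cauchy transforms---to pointwise $\mu$-a.e.\ statements, which is exactly where the non-triviality of the solutions $\phi(\lambda,\cdot\,)$ and Proposition~\ref{propTransPE} are used; the remaining manipulations, with partial isometries and linear relations, are routine.
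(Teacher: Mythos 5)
Your proof is correct, but it takes a genuinely different route to surjectivity than the paper. The paper's argument is constructive: for each $\lambda_0$ it picks $x$ with $\phi(\lambda_0,x)\neq0$, applies the bounded Borel function $G(\lambda)=\phi(\lambda,x)^{-1}\indik_J(\lambda)$ of $\T$ (via a variant of the spectral theorem for linear relations) to $\delta_x$, and uses Proposition~\ref{propTransPE} to conclude that $\indik_J=G\cdot\F\delta_x$ lies in $\ran{\F}$ for all small intervals $J$ around $\lambda_0$; density of the span of such characteristic functions plus closedness of the range of a partial isometry then finishes the proof. You instead show density of $\ran{\F}$ by an orthogonality argument: the first resolvent identity applied to \eqref{eqnLDstietrans} together with \eqref{eqnSMF} yields the intertwining $\F\bigl((\T-z)^{-1}f\bigr)=(\lambda-z)^{-1}\F f$, and any $g\perp\ran{\F}$ then has vanishing Cauchy transform of the finite complex measure $\F f\cdot g^\ast\,d\mu$, forcing $\F f\cdot g^\ast=0$ $\mu$-a.e.; your auxiliary fact (that $u\,\phi(\cdot,c)=0$ $\mu$-a.e.\ for all $c$ in a countable dense set forces $u=0$, by non-triviality and continuity of the solutions $\phi(\lambda,\cdot)$) is used both here and to pin down the intertwining, and it plays exactly the role that the pointwise choice of $x$ with $\phi(\lambda_0,x)\neq0$ plays in the paper. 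What your approach buys is that it only needs resolvents, which are directly encoded in \eqref{eqnLDstietrans}, rather than the full bounded functional calculus for the relation $\T$ (which the paper invokes before Theorem~\ref{th:TsimM} is available), at the price of the Stieltjes--Poisson uniqueness step for Cauchy transforms; the paper's version is shorter and produces explicit preimages. Your identification of the inverse relation from $\F\F^\ast=\mathrm{I}$, $\ker{\F}=\D^\perp=\mul{\T}$ and $\ran{\F^\ast}\subseteq\D$ coincides with the paper's concluding remark.
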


 \begin{proof}
  Let $\lambda_0\in\R$ and choose some $x\in\R$ such that $\phi(\lambda_0,x)\not=0$. 
  Then for every small enough interval $J\subseteq\R$ around $\lambda_0$, the function
 \begin{align*}
  G(\lambda) = \begin{cases}
                \phi(\lambda,x)^{-1}, & \lambda\in J, \\
                0,                    & \lambda\in\R \backslash J,
               \end{cases}
 \end{align*}
 is bounded. 
 By a variant of the spectral theorem, we infer that there is a $g\in\HR$ such that 
 $\F g(\lambda) = G(\lambda)\F \delta_x(\lambda) = \indik_J$ for almost all $\lambda\in\R$ with respect to $\mu$. 
 Therefore, the range of $\F$ contains all characteristic functions of bounded intervals. 
 But this shows that $\F$ is onto since the range of a partial isometry is always closed. 
 
 To verify the remaining claim, it suffices to note that $\F\F^\ast$ is the identity operator in $\Lmu$ and $\F^\ast\F$ is the orthogonal projection onto $\D=\mul{\T}^\bot$ in $\HR$. 
\end{proof}

 In the following, we will denote with $\M_{\mathrm{id}}$ the maximally defined operator of multiplication with the independent variable in $\Lmu$.

\begin{theorem}\label{th:TsimM}
 The transformation $\F$ maps the self-adjoint linear relation $\T$ to multiplication with the independent variable in $\Lmu$.  
 More precisely,   
 \begin{align}
  \F\,\T\, \F^\ast = \M_{\mathrm{id}}.
 \end{align} 
\end{theorem}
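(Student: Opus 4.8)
The plan is to show the two operator-theoretic statements $\F\,\T\,\F^\ast\subseteq\M_{\mathrm{id}}$ and $\M_{\mathrm{id}}\subseteq\F\,\T\,\F^\ast$ separately, exploiting that $\F$ is a partial isometry with initial space $\D=\overline{\dom\T}=\mul{\T}^\bot$ and final space all of $\Lmu$ (Lemma~\ref{lemSTonto}), together with the resolvent formula of Theorem~\ref{thmSARes} and the spectral-measure identity \eqref{eqnSMF}. First I would record the key intertwining property: for every $\f\in\T$ and every $z\in\rho(\T)$, the pair $(f,\tau f)$ satisfies $(\T-z)(f,\tau f)$-type relations, so that applying $\F$ and using \eqref{eqnLDstietrans}, \eqref{eqnSMF} one gets
\begin{align*}
 \F(\tau f - z f)(\lambda) = (\lambda - z)\,\F f(\lambda)
\end{align*}
for $\mu$-a.e.\ $\lambda$. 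The cleanest way to obtain this is: fix $g=(\T-z)^{-1}h$ for $h\in\HR$, so $(g,h+zg)\in\T$; then for arbitrary $k\in\HR$,
\begin{align*}
 \int_\R \frac{\F h(\lambda)\,\F k(\lambda)^\ast}{\lambda-z}\,d\mu(\lambda)
  = \spr{(\T-z)^{-1}h}{k}_{\HR} = E\text{-integral},
\end{align*}
and comparing with $\F g = \F(\T-z)^{-1}h$ via \eqref{eqnSMF} forces $\F g(\lambda) = (\lambda-z)^{-1}\F h(\lambda)$, i.e.\ $(\M_{\mathrm{id}}-z)\F g = \F h$ in $\Lmu$. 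Rewriting $h = \tau g - z g$ (valid since $(g,\tau g)=(g,h+zg)\in\T$), this says exactly $\F\,\tau g = \M_{\mathrm{id}}\F g$ for all $g\in\dom\T$, which gives the inclusion $\F\,\T\,\F^\ast\subseteq\M_{\mathrm{id}}$ once one checks that $\F^\ast$ maps into $\overline{\dom\T}$ and that multi-valued parts are handled correctly (here Lemma~\ref{lemSTonto}, $\F^{-1}=\F^\ast\oplus(\{0\}\times\mul\T)$, does the bookkeeping).

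For the reverse inclusion I would use self-adjointness of both relations. Since $\T$ is self-adjoint (Theorem~\ref{thmSAR}) and $\M_{\mathrm{id}}$ is self-adjoint in $\Lmu$, and since $\F$ restricted to $\D$ is a unitary map onto $\Lmu$ (Lemma~\ref{lemSTonto}: $\F\F^\ast=\mathrm{I}_{\Lmu}$, $\F^\ast\F=\mathrm{P}_\D$), the operator part $\T_{\mathrm{op}}=\T\cap(\D\times\D)$ is a self-adjoint operator in $\D$, and $\F|_\D$ intertwines $\T_{\mathrm{op}}$ with a symmetric restriction of $\M_{\mathrm{id}}$; a symmetric operator that is unitarily equivalent to a restriction of a self-adjoint operator and is itself self-adjoint must be mapped onto the whole self-adjoint operator. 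Concretely: from the first inclusion, $\F\,\T_{\mathrm{op}}\,\F^\ast$ is a self-adjoint operator in $\Lmu$ contained in $\M_{\mathrm{id}}$; two self-adjoint operators with one contained in the other coincide. Finally, the multi-valued part of $\T$ is exactly $\mathrm{mul}(\T)=\D^\bot=\ker\F$, which is sent to $\{0\}$, matching $\mathrm{mul}(\M_{\mathrm{id}})=\{0\}$, so $\F\,\T\,\F^\ast=\M_{\mathrm{id}}$ as linear relations.

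The main obstacle I expect is the first, computational step — establishing $\F(\tau f - zf)(\lambda)=(\lambda-z)\F f(\lambda)$ rigorously. On compactly supported elements one can compute $\widehat{(\tau f - zf)}$ directly from the definition \eqref{eqnFhat} of the transform and the differential equation \eqref{eqnTloc-z}, integrating by parts to shift $-\phi''+\tfrac14\phi$ onto $\omega,\dip$ and using $-\phi''+\tfrac14\phi = z\omega\phi + z^2\dip\phi$; this should yield $(\lambda - z)\hat f(\lambda)$ after regrouping, but it requires care with the $L^2(\R;\dip)$-component, the $z$-dependence in $\phi(z,\cdot)$, and the fact (noted in the proof of Lemma~\ref{lemSMst}) that $\theta,\phi$ are locally uniformly bounded. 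Once this holds on the dense set of compactly supported $\f\in\T$, it extends to all of $\T$ by continuity of $\F$ together with the resolvent estimate, and then the functional-analytic argument above closes the proof. A secondary subtlety is checking that $\dom\T$, though possibly non-dense in $\HR$, is dense in $\D$ and that $\F$ genuinely restricts to a unitary $\D\to\Lmu$ — but this is precisely the content already extracted in Lemma~\ref{lemSTonto} and the paragraph preceding Proposition~\ref{propTransPE}.
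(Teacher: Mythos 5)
Your argument is correct, but it takes a genuinely different route from the paper's. The paper's proof is a two-step computation built entirely on~\eqref{eqnSMF}: first, the domain equivalence $g\in\dom{\M_{\mathrm{id}}}\Leftrightarrow\F^\ast g\in\dom{\T}$ follows from the second-moment characterization of the domain of a self-adjoint relation (since $\int_\R\lambda^2\,dE_{\F^\ast g,\F^\ast g}=\int_\R\lambda^2|g|^2d\mu$); second, the action is identified through the first-moment identity $\int_\R\lambda\,dE_{\F^\ast g,\F^\ast h}(\lambda)=\spr{f}{\F^\ast h}_{\HR}$ for $(\F^\ast g,f)\in\T$, quoted from \cite[Lemma~B.4]{MeasureSL} --- both inclusions drop out simultaneously. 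You instead obtain the forward inclusion $\F\,\T\,\F^\ast\subseteq\M_{\mathrm{id}}$ by combining the resolvent representation~\eqref{eqnLDstietrans} with~\eqref{eqnSMF} and the surjectivity of $\F$ (Lemma~\ref{lemSTonto}) to force $\F(\T-z)^{-1}h=(\M_{\mathrm{id}}-z)^{-1}\F h$, and you close the reverse inclusion by the maximality of self-adjoint operators, using that $\F|_{\D}$ is unitary onto $\Lmu$ and that $\mul{\T}=\ker{\F}$. Your version buys independence from the external moment lemma at the price of an extra functional-analytic layer (operator part, unitary equivalence, maximality); the paper's version is more economical but leans on the cited lemma. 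One remark: your closing paragraph about the ``main obstacle'' --- verifying $\F(\tau f-zf)=(\lambda-z)\F f$ by direct integration by parts on compactly supported elements --- is unnecessary, since the resolvent argument you present first already establishes this identity on all of $\dom{\T}$ without any pointwise computation.
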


\begin{proof}
 First of all, we infer from~\eqref{eqnSMF} that for every $g\in\Lmu$ one has  
 \begin{align*}
  g\in\dom{\M_{\mathrm{id}}} 
              & \quad\Leftrightarrow\quad \F^\ast g\in\dom{\T} \quad\Leftrightarrow\quad  g\in\dom{\F\,\T\,\F^\ast}.
 \end{align*}
 In this case, equation~\eqref{eqnSMF} and~\cite[Lemma~B.4]{MeasureSL} show  that 
 \begin{align*}
   \spr{\M_{\mathrm{id}} g}{h}_{\Lmu} & = \int_\R \lambda\, g(\lambda) h(\lambda)^\ast d\mu(\lambda) = \int_\R \lambda\, dE_{\F^\ast g,\F^\ast h}(\lambda) \\
                                                            & = \spr{f}{\F^\ast h}_{\HR}  = \spr{\F f}{h}_{\Lmu}, \quad h\in\Lmu, 
 \end{align*}
 whenever $(\F^\ast g,f)\in\T$, which yields the claim. 
 \end{proof}

 Note that Theorem~\ref{th:TsimM} establishes a connection between the spectral properties of $\T$ and $\M_{\mathrm{id}}$. 
 In particular, the spectrum of $\T$ coincides with the support of the measure $\mu$. 
 The mass of $\mu$ at an eigenvalue may be given explicitly in terms of $\phi$.

\begin{corollary}
 If $\lambda_0\in\sigma(\T)$ is an eigenvalue of $\T$, then 
 \begin{align}\begin{split}
  \mu(\lbrace \lambda_0\rbrace)^{-1} & 
     = \frac{1}{4} \int_\R |\phi(\lambda_0,x)|^2 dx + \int_\R |\phi'(\lambda_0,x)|^2dx + \int_\R |\lambda_0\phi(\lambda_0,x)|^2 d\dip(x).  
 \end{split}\end{align}
\end{corollary}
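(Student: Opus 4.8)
The plan is to run the spectral transformation $\F$ of Subsection~\ref{subsST} backwards on the characteristic function of $\lbrace\lambda_0\rbrace$ and compare norms. As a preliminary step I would identify the eigenfunction: since $\lambda_0$ is an eigenvalue of the self-adjoint relation $\T$ it is real, and there is a non-trivial solution of~\eqref{eqnDEho} lying in $\HR$; by~\eqref{eqnkerpmleq1} the solutions lying in $\HR$ near $+\infty$ span an at most one-dimensional space, which also contains the real entire solution $\phi(\lambda_0,\redot)$ (Hypothesis~\ref{hypRESol}), so the eigenfunction is a nonzero multiple of $\phi(\lambda_0,\redot)$. Consequently the element $u := (\phi(\lambda_0,\redot),\lambda_0\phi(\lambda_0,\redot))$ belongs to $\HR$ and satisfies $(u,\lambda_0 u)\in\T$; in particular $u\in\dom{\T}\subseteq\D$. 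By the very definition of the scalar product on $\HR$, the right-hand side of the claimed identity is precisely $\|u\|_{\HR}^2$, so it suffices to prove $\mu(\lbrace\lambda_0\rbrace)^{-1} = \|u\|_{\HR}^2$.

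The key computation is to evaluate $\F^\ast\indik_{\lbrace\lambda_0\rbrace}$ by means of Lemma~\ref{lemFadjoint}: since the set $\lbrace\lambda_0\rbrace$ is bounded, the defining integral collapses to the single point $\lambda_0$ and, recalling that the two components of $u$ are $\phi(\lambda_0,\redot)$ and $\lambda_0\phi(\lambda_0,\redot)$, one obtains $\F^\ast\indik_{\lbrace\lambda_0\rbrace} = \mu(\lbrace\lambda_0\rbrace)\,u$. Applying $\F$ and using $\F\F^\ast = \mathrm{I}$ on $\Lmu$ from Lemma~\ref{lemSTonto} gives $\indik_{\lbrace\lambda_0\rbrace} = \mu(\lbrace\lambda_0\rbrace)\,\F u$. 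Before proceeding I would record that $\mu(\lbrace\lambda_0\rbrace)>0$: indeed $\F u$ is nonzero, because $u\in\D$ and $\F$ is isometric on $\D$, and by Theorem~\ref{th:TsimM} it is an eigenvector of $\M_{\mathrm{id}}$ at $\lambda_0$, which is impossible unless $\mu$ charges $\lbrace\lambda_0\rbrace$.

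Finally I would take $\Lmu$-norms in $\indik_{\lbrace\lambda_0\rbrace} = \mu(\lbrace\lambda_0\rbrace)\,\F u$. The left-hand side has squared norm $\mu(\lbrace\lambda_0\rbrace)$, while the right-hand side has squared norm $\mu(\lbrace\lambda_0\rbrace)^2\|\F u\|_{\Lmu}^2 = \mu(\lbrace\lambda_0\rbrace)^2\|u\|_{\HR}^2$, where the last equality once more uses that $\F$ acts isometrically on $\D\ni u$. Cancelling one factor $\mu(\lbrace\lambda_0\rbrace)>0$ yields $\mu(\lbrace\lambda_0\rbrace)^{-1} = \|u\|_{\HR}^2$, which is the assertion after writing the norm out via the scalar product on $\HR$. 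The only points needing care are the bookkeeping that $\F$ is isometric exactly on $\D$ together with $u\in\D$ (which rests on the self-adjointness of $\T$, so that $\dom{\T}\subseteq\D=\mul{\T}^{\perp}$), the identification of the eigenfunction with a multiple of $\phi(\lambda_0,\redot)$ via~\eqref{eqnkerpmleq1}, and the positivity $\mu(\lbrace\lambda_0\rbrace)>0$ that legitimizes the cancellation; everything else is routine.
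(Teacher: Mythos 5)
Your proof is correct and follows essentially the same route as the paper's: identify the eigenfunction with $\phi(\lambda_0,\redot)$, observe that its image under $\F$ is a multiple of $\indik_{\lbrace\lambda_0\rbrace}$, and exploit that $\F$ is isometric on $\D$. The only cosmetic difference is that you pin down the proportionality constant by computing $\F^\ast\indik_{\lbrace\lambda_0\rbrace}$ via Lemma~\ref{lemFadjoint} together with $\F\F^\ast=\mathrm{I}$ (which also neatly forces $\mu(\lbrace\lambda_0\rbrace)>0$), whereas the paper reads off the value $\F\Phi(\lambda_0)=\|\Phi\|_{\HR}^2$ directly from~\eqref{eqnFhat}; both yield the same norm identity.
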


\begin{proof}
 It follows from Theorem~\ref{th:TsimM} that the transform of the function $\Phi\in\D$,  
 \begin{align*}
  \Phi(x) = \begin{pmatrix} 1 \\ \lambda_0 \end{pmatrix} \phi(\lambda_0,x), \quad x\in\R, 
 \end{align*} 
 is given by $\F\Phi(\lambda_0) \indik_{\lbrace\lambda_0\rbrace}$, where $\F\Phi(\lambda_0) = \|\Phi\|_{\HR}^2$ in view of~\eqref{eqnFhat}.  
 Now the claim follows from the fact that $\F$ is a partial isometry. 
\end{proof}

The measure $\mu$ is uniquely determined by the property that the mapping $f\mapsto\hat{f}$ uniquely extends to a partial isometry onto $\Lmu$, which maps $\T$ onto multiplication with the independent variable in $\Lmu$. 
For this reason, the measure $\mu$ is referred to as {\em the spectral measure} of $\T$ associated with the real entire solution $\phi$.

\begin{remark}
 Given any other real entire fundamental system as in Remark~\ref{remSecFS}, the corresponding spectral measures are related via  
 \begin{align}
  \tilde{\mu}(B) = \int_B \E^{-2g(\lambda)} d\mu(\lambda), 
 \end{align}
for every Borel set $B\subseteq\R$. 
 In particular, the measures are mutually absolutely continuous and the associated spectral transforms just differ by a simple rescaling. 
\end{remark}

 One of the most important properties of Herglotz--Nevanlinna functions is the existence of an integral representation.
 By this means, it is possible to relate (classical) Weyl--Titchmarsh functions to their associated spectral measures. 
 Such an integral representations also exists for the singular Weyl--Titchmarsh function $M$, relating it to the spectral measure $\mu$. 
 We omit to state and prove this result here, which can be done along the lines of \cite[Section~4]{kosate12}.

\section{Inverse spectral theory}\label{secIST}

In this final section, we are going to provide some basic inverse uniqueness theorems for our spectral problem.
The proofs of these results rely on de Branges' subspace ordering theorem for certain Hilbert spaces of entire functions. 
For an exposition of de Branges' theory, we refer to de Branges' book \cite{dB68} as well as to \cite{dy70, dymc76, re02}, in which particular emphasis is placed on its applications to spectral theory for Sturm--Liouville operators.

\subsection{Paley--Wiener spaces}\label{subsecdB}

As a preparatory step, we will first discuss the main properties of a certain family of de Branges spaces associated with our spectral problem under the additional assumption of Hypothesis~\ref{hypRESol}.
To this end, we fix some $c\in\R$ and introduce the entire function
\begin{align}
 E(z,c) = z \phi(z,c) - \I \phi'(z,c), \quad z\in\C,
\end{align}
as well as the function
\begin{align}\label{eqnEdB}
   K(\zeta,z,c) = \frac{E(z,c)E(\zeta,c)^\ast - E(\zeta^\ast,c)E(z^\ast,c)^\ast}{2\I (\zeta^\ast-z)}, \quad \zeta,\, z\in\C.
\end{align}
 Employing the Lagrange identity in Proposition~\ref{propLagrange} as well as Lemma~\ref{lem:4.02} yields  
\begin{align}\begin{split}\label{eqnKint}
 K(\zeta,z,c) =  \frac{1}{4} \int_{c}^\infty  \phi(z,x) \phi(\zeta^\ast,x) & dx + \int_{c}^\infty  \phi'(z,x)  \phi'(\zeta^\ast,x) dx \\
                          &  +  \int_{c}^{\infty} z\phi(z,x)  \zeta^\ast \phi(\zeta^\ast,x) d\dip(x), \quad \zeta,\, z\in\C. 
\end{split}\end{align}
Upon choosing $\zeta=z$, this equation shows that $E(\ledot,c)$ is a de Branges function, that is, $|E(z,c)| > |E(z^\ast,c)|$ for all $z$ in the open upper complex half-plane $\H$. 

 The associated de Branges space $\cB(c)$ (see, for example, \cite[Section~19]{dB68} or \cite[Section~2]{re02}) consists of all entire functions $F$ such that $F(\ledot)/E(\ledot,c)$ and 
 $F(\ledot^\ast)^\ast/E(\ledot,c)$ belong to the Hardy space $H^2(\H)$ for the upper half-plane $\H$.
 Equipped with the scalar product
 \begin{align}
  \spr{F}{G}_{\cB(c)} = \frac{1}{\pi} \int_\R \frac{F(\lambda) G(\lambda)^\ast}{|E(\lambda,c)|^2} d\lambda, \quad F,\, G\in \cB(c), 
 \end{align}
 the space $\cB(c)$ turns into a reproducing kernel Hilbert space with 
 \begin{align}\label{eq:6.05}
  F(\zeta) = \spr{F}{K(\zeta,\cdot\,,c)}_{\cB(c)}, \quad F\in\cB(c),
 \end{align}
 for every $\zeta\in\C$, which can be seen readily from \cite[Theorem~19]{dB68} and \eqref{eqnEdB}.  

In order to reveal the de Branges space $\cB(c)$ as a Paley--Wiener space corresponding to the generalized Fourier transform $\F$, we define the transform 
 \begin{align}\label{eqnfUc}\begin{split}
 \U_c f(z) = \frac{1}{4} \int_{c}^\infty \phi(z,x) f_1(x) & dx + \int_{c}^\infty \phi'(z,x) f_1'(x) dx \\
         & +  \int_{c}^\infty z\phi(z,x) f_2(x) d\dip(x), \quad z\in\C, 
\end{split}\end{align}
for every function $f\in\cH([c,\infty))$.  
Moreover, we let $\D(c)$ be the smallest closed subspace of $\cH([c,\infty))$ which contains all the functions $\Phi_c(z,\redot)$, $z\in\C$, where  
\begin{align}
 \Phi_c(z,x) & = \begin{pmatrix} 1 \\ z \end{pmatrix} \phi(z,x), \quad x\in[c,\infty). 
\end{align}

\begin{lemma}\label{lemdBTrans}
 For each $c\in\R$, the transformation $\U_c$ is a surjective partial isometry from $\cH([c,\infty))$ onto $\cB(c)$ with initial subspace $\D(c)$. 
\end{lemma}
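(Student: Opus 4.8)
The plan is to verify the three assertions — that $\U_c$ maps into $\cB(c)$, that it is a partial isometry with the claimed initial subspace, and that it is onto — essentially by reducing everything to the reproducing kernel $K(\zeta,\redot,c)$ and its preimage $\Phi_c(\zeta,\redot)$. First I would establish the key identity
\begin{align}\label{eqnUcKPhi}
 \U_c \Phi_c(\zeta,\redot)(z) = K(\zeta^\ast,z,c), \quad \zeta,\, z\in\C,
\end{align}
which is an immediate consequence of comparing the definition of $\U_c$ in~\eqref{eqnfUc} with the integral representation~\eqref{eqnKint} of the kernel (the components of $\Phi_c(\zeta,\redot)$ are exactly $\phi(\zeta,\redot)$ and $\zeta\phi(\zeta,\redot)$, matching the $\phi(\zeta^\ast,x)$ and $\zeta^\ast\phi(\zeta^\ast,x)$ factors in~\eqref{eqnKint} after conjugating the parameter). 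Since $K(\zeta^\ast,\redot,c)\in\cB(c)$ for every $\zeta$, and these kernels are total in $\cB(c)$ by the reproducing property~\eqref{eq:6.05}, this already shows that $\U_c$ maps the dense subspace $\linspan\{\Phi_c(\zeta,\redot):\zeta\in\C\}$ of $\D(c)$ into $\cB(c)$ with dense range.

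Next I would check the isometry property on this spanning set. Using~\eqref{eqnUcKPhi} together with the reproducing property~\eqref{eq:6.05} and the Hermitian symmetry $K(\zeta,z,c)^\ast = K(z,\zeta,c)$ of the kernel, one computes
\begin{align}
 \spr{\U_c\Phi_c(\zeta,\redot)}{\U_c\Phi_c(\eta,\redot)}_{\cB(c)}
  = \spr{K(\zeta^\ast,\redot,c)}{K(\eta^\ast,\redot,c)}_{\cB(c)}
  = K(\zeta^\ast,\eta^\ast,c),
\end{align}
while on the other side, the integral representation~\eqref{eqnKint} applied with $z=\eta$ and $\zeta^\ast=\zeta$ shows that $\spr{\Phi_c(\zeta,\redot)}{\Phi_c(\eta,\redot)}_{\cH([c,\infty))}$ equals the same quantity $K(\zeta^\ast,\eta^\ast,c)$. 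Hence $\U_c$ is isometric on $\linspan\{\Phi_c(\zeta,\redot)\}$, and therefore extends to an isometry from the closure $\D(c)$ onto a closed subspace of $\cB(c)$; since its range is dense, it is all of $\cB(c)$. Setting $\U_c$ equal to zero on $\D(c)^\bot$ (which is forced by the requirement that the initial subspace be $\D(c)$) makes $\U_c$ a surjective partial isometry with initial subspace $\D(c)$, provided one checks that the extension of~\eqref{eqnfUc} off the dense set agrees with this; this is routine once one knows $\U_c f$ depends continuously on $f\in\cH([c,\infty))$ for each fixed $z$, which follows from continuity of point evaluations and of the $\dip$-integral (the latter using that $\phi(z,\redot)$ lies in $\HR$ near $+\infty$ by Hypothesis~\ref{hypRESol}, so it is $\dip$-square-integrable on $[c,\infty)$).

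The main obstacle I anticipate is justifying that $\U_c f$ is well-defined and entire in $z$ for \emph{every} $f\in\cH([c,\infty))$ (not just compactly supported ones), i.e.\ that the three integrals in~\eqref{eqnfUc} converge and depend analytically on $z$; this requires the local uniform bounds on $\phi$ and $\phi'$ on $\C\times\R$ together with the near-$+\infty$ integrability supplied by Hypothesis~\ref{hypRESol}, and is the one place where the hypothesis is genuinely used. The remaining subtlety is to see that the range really is dense in $\cB(c)$ rather than merely contained in it — but this is exactly the content of~\eqref{eqnUcKPhi} combined with totality of the reproducing kernels, so once~\eqref{eqnUcKPhi} is in hand the argument closes. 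I would therefore organize the proof as: (i) prove $\U_c$ is well-defined and entire via the Gronwall bounds; (ii) prove~\eqref{eqnUcKPhi}; (iii) deduce the isometry identity on the span of the $\Phi_c(\zeta,\redot)$; (iv) extend by continuity and conclude surjectivity from density of the range plus closedness of the range of a partial isometry.
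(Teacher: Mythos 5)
Your proposal is correct and follows essentially the same route as the paper: identify $\U_c\Phi_c(\zeta,\redot)$ with the reproducing kernel $K(\zeta^\ast,\redot,c)$ via \eqref{eqnKint}, verify the isometry on the span of these functions, extend by density (using totality of the kernels in $\cB(c)$ and closedness of the range of an isometry), and identify the extension with $\U_c$ through continuity of the functionals $f\mapsto\U_c f(z)$ together with the observation that $\U_c f=0$ for $f\perp\D(c)$. No gaps.
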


\begin{proof}
 For every $\zeta\in\C$, the transform of the function $\Phi_c(\zeta,\cdot\,)$ is simply given by $K(\zeta^\ast,\redot,c)$, which obviously belongs to the de Branges space $\cB(c)$. 
 Furthermore, for any given $\zeta_1$, $\zeta_2\in\C$ we have
 \begin{align*}
  \spr{\Phi_c(\zeta_1,\cdot\,)}{\Phi_c(\zeta_2,\cdot\,)}_{\cH([c,\infty))} & = K(\zeta_1^\ast,\zeta_2^\ast,c) 
    = \spr{K(\zeta_1^\ast,\cdot\,,c)}{K(\zeta_2^\ast,\cdot\,,c)}_{\cB(c)} \\ 
   & = \spr{\U_c \Phi_c(\zeta_1,\cdot\,)}{\U_c \Phi_c(\zeta_2,\cdot\,)}_{\cB(c)}.
 \end{align*}
 Since the linear span of the functions $K(\zeta,\redot,c)$, $\zeta\in\C$ is dense in $\cB(c)$, there is a surjective partial isometry $\mathcal{V}_c$ from $\cH([c,\infty))$ onto $\cB(c)$ with initial subspace $\D(c)$, which coincides with $\U_c$ on the functions $\Phi_c(\zeta,\cdot\,)$, $\zeta\in\C$. 
 In order to identify $\mathcal{V}_c$ with $\U_c$, first note that the functionals $f\mapsto\mathcal{V}_c f(z)$ and $f\mapsto\U_c f(z)$ are continuous on $\D(c)$ for every $z\in\C$, which shows that $\mathcal{V}_c$ coincides with $\U_c$ on $\D(c)$. 
 Secondly, if $f\in\cH([c,\infty))$ is orthogonal to $\D(c)$, then it follows readily from the definition of $\D(c)$ that $\U_c f(z) = 0$ for every $z\in\C$, which shows that $\mathcal{V}_c$ coincides with $\U_c$.
\end{proof}

The closed linear subspace $\cB_0(c)$ of functions in $\cB(c)$, which vanish at the origin, 
\begin{align}
\cB_0(c) = \lbrace F\in \cB(c) \,|\, F(0)=0 \rbrace,
\end{align}
 will take a particular role, as it is exactly the image of $\cH_0([c,\infty))$ under the transformation $\U_c$. 
In fact, an integration by parts shows that 
\begin{align}\label{eqndBTranszero}
 \U_c f(0) 
                    = \frac{1}{2} \E^{-\frac{c}{2}} f_1(c), \quad f\in \cH([c,\infty)).
\end{align}
The orthogonal complement of $\cB_0(c)$ corresponds to the orthogonal complement of $\cH_0([c,\infty))$, which is spanned by the function $\Phi_c(0,\redot)$. 
 More precisely, the transform of this function is given by 
\begin{align}\label{eqndBTransOrth}
 \U_c \Phi_c(0,\redot)(z) =  K(0,z,c) 
                                                = \frac{1}{2} \E^{-\frac{c}{2}} \phi(z,c), \quad z\in\C.
\end{align}

A few crucial properties of the de Branges spaces $\cB(c)$ only hold if $c$ belongs to the set $\Sigma = \supp(|\omega|+\dip)$, that is, the topological support of the measure $|\omega| + \dip$. 
From the characterization of the multi-valued part in~\eqref{eq:multpart}, we immediately see that the closure $\D$ of the domain of $\T$ is given by
\begin{align}\label{eqnCDomT}
  \D = \overline{\dom{\T}} = \mul{\T}^\bot =  \overline{\mathrm{span}\lbrace \delta_c \,|\, c\in\Sigma \rbrace}\times L^2(\R;\dip).
\end{align}
 Let us mention that the first components of functions in $\D$ are uniquely determined by their values on $\Sigma$. 
 In fact, if $f$, $g\in \D$ such that $f_1(c)=g_1(c)$ for all $c\in\Sigma$, then $f_1 - g_1$ is orthogonal to $\mathrm{span}\lbrace \delta_c \,|\, c\in\Sigma \rbrace$ and hence $f_1=g_1$ due to~\eqref{eqnCDomT}.

We are now ready to state and prove the main result of the present subsection; an embedding theorem for the de Branges spaces $\cB(c)$ into the space $\Lmu$, where $\mu$ is the spectral measure of $\T$ associated with the real entire solution $\phi$. 

\begin{theorem}\label{thmdBemb}
 For each $c\in\Sigma$, the de Branges space $\cB(c)$ is homeomorphically embedded in $\Lmu$ with 
 \begin{align}\label{eqndBemb}
  \spr{F}{G}_{\Lmu} = \spr{F}{G}_{\cB(c)} - \E^{c} F(0) G(0)^\ast, \quad F,\, G\in \cB(c).
 \end{align}
\end{theorem}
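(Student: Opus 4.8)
The plan is to identify the de Branges space $\cB(c)$ with a concrete subspace of $\HR$ via the transformation $\U_c$ from Lemma~\ref{lemdBTrans}, and then transport the inner product computation through the global transform $\F$. First I would recall that $\U_c$ is a surjective partial isometry from $\cH([c,\infty))$ onto $\cB(c)$ with initial subspace $\D(c)$, so that every $F\in\cB(c)$ can be written as $F = \U_c h$ for a (unique) $h\in\D(c)\subseteq\cH([c,\infty))$, and $\|F\|_{\cB(c)} = \|h\|_{\cH([c,\infty))}$. The idea is then to compare $\U_c h$ with $\F$ applied to the zero-extension of $h$ to all of $\R$. Concretely, for $h\in\cH([c,\infty))$ let $\tilde h\in\HR$ denote the extension by zero on $(-\infty,c)$; this lies in $\HR$ precisely because $h\in H^1([c,\infty))$ extended by $0$ need not be in $H^1(\R)$ in general — so here the key point is that $\D(c)$ consists of functions whose first component vanishes at $c$ in the relevant sense, or more precisely one works with $\cH_0([c,\infty))$ and its complement spanned by $\Phi_c(0,\redot)$, using~\eqref{eqndBTranszero} and~\eqref{eqndBTransOrth}. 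Comparing the integral formula~\eqref{eqnfUc} for $\U_c$ with the formula~\eqref{eqnFhat} for $\hat{\,\cdot\,}$, one sees that $\F\tilde h(\lambda) = \U_c h(\lambda)$ for $h$ supported in $[c,\infty)$ with $h_1(c)=0$, since the two integrals agree (the integrand is supported on $[c,\infty)$).

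The main step is then to show that, for $F\in\cB(c)$, one has $\|F\|_{\Lmu}^2 = \|F\|_{\cB(c)}^2 - \E^{c}|F(0)|^2$. I would proceed as follows. Split $\cB(c) = \cB_0(c)\oplus\linspan\{K(0,\redot,c)\}$, matching the decomposition $\cH([c,\infty)) = \cH_0([c,\infty))\oplus\linspan\{\Phi_c(0,\redot)\}$ under $\U_c$. For $F\in\cB_0(c)$, write $F = \U_c h$ with $h\in\cH_0([c,\infty))$; then the zero-extension $\tilde h$ lies in $\HR$ and moreover in $\overline{\mathrm{span}\{\delta_c : c\in\Sigma\}}\times L^2(\R;\dip) = \D$ (here is where $c\in\Sigma$ enters crucially, since one needs $\delta_c$ and the relevant building blocks to actually lie in $\D$, so that $\tilde h$ is not killed by the projection $\mathrm{P}$ onto $\D$). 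Since $\F$ is a partial isometry with initial space $\D$, we get $\|F\|_{\Lmu} = \|\F\tilde h\|_{\Lmu} = \|\tilde h\|_{\HR} = \|h\|_{\cH([c,\infty))} = \|F\|_{\cB(c)}$, establishing~\eqref{eqndBemb} for $F, G\in\cB_0(c)$ (with the $F(0)G(0)^\ast$ term vanishing). By polarization, $\spr{F}{G}_{\Lmu} = \spr{F}{G}_{\cB(c)}$ on $\cB_0(c)$.

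It remains to handle the one-dimensional complement spanned by $K(0,\redot,c) = \tfrac12\E^{-c/2}\phi(\redot,c)$. Here I would compute $\|\phi(\redot,c)\|_{\Lmu}^2$ directly using Proposition~\ref{propTransPE}, which gives $\F\delta_c(\lambda) = \phi(\lambda,c)$, hence $\|\phi(\redot,c)\|_{\Lmu}^2 = \|\F\delta_c\|_{\Lmu}^2 = \|\mathrm{P}\delta_c\|_{\HR}^2 = \|\delta_c\|_{\HR}^2$ (again using $c\in\Sigma$, so $\delta_c\in\D$). From~\eqref{eqnPEdeltacexp}, $\|\delta_c\|_{\HR}^2 = \spr{\delta_c}{\delta_c}_{\HR} = (\delta_c)_1(c) = 1$. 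On the other hand, the $\cB(c)$-norm of $K(0,\redot,c)$ is $K(0,0,c) = \tfrac14\E^{-c}\cdot(\text{something})$; more directly, by the reproducing kernel property $\spr{K(0,\redot,c)}{K(0,\redot,c)}_{\cB(c)} = K(0,0,c)$, and evaluating $F(0)$ for $F = K(0,\redot,c)$ gives $F(0) = K(0,0,c)$, so the correction term $\E^{c}|F(0)|^2 = \E^{c}K(0,0,c)^2$. One then checks the arithmetic: with $F = K(0,\redot,c)$, $\|F\|_{\Lmu}^2 = \tfrac14\E^{-c}\|\phi(\redot,c)\|_{\Lmu}^2 = \tfrac14\E^{-c}$, while $\|F\|_{\cB(c)}^2 - \E^c F(0)^2 = K(0,0,c) - \E^c K(0,0,c)^2$; using $K(0,0,c) = \tfrac12\E^{-c/2}\phi(0,c)\cdot\tfrac12\E^{-c/2}\cdot(\ldots)$ — i.e.\ $K(0,0,c) = \tfrac14\E^{-c}\cdot 1$ from $\phi(0,c) = \E^{-c/2}$ via~\eqref{eqnRESolnormzero} combined with~\eqref{eqndBTransOrth} — one confirms both sides equal $\tfrac14\E^{-c} - \tfrac1{16}\E^{-c}$... so I would double-check these constants carefully, as the bookkeeping of the $\E^{\pm c/2}$ factors coming from~\eqref{eqndBTranszero} and~\eqref{eqndBTransOrth} is exactly where errors creep in. Finally, orthogonality: $\cB_0(c)\perp K(0,\redot,c)$ in $\cB(c)$ by definition, and the corresponding vectors in $\HR$ ($\tilde h$ with $h\in\cH_0$, versus the extension of $\Phi_c(0,\redot)$) are orthogonal in $\HR$ by the decomposition $\cH([c,\infty)) = \cH_0([c,\infty))\oplus\linspan\{\Phi_c(0,\redot)\}$, hence their $\F$-images are orthogonal in $\Lmu$; combining the three pieces (the $\cB_0(c)$ part, the kernel part, and the cross term, where one checks $\spr{F}{G}_{\Lmu} = -\E^c F(0)G(0)^\ast = 0$ when $G\in\cB_0(c)$) yields~\eqref{eqndBemb} in general, and the norm equivalence $\|F\|_{\Lmu}^2 \le \|F\|_{\cB(c)}^2$ together with a lower bound gives the claimed homeomorphic embedding. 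The main obstacle I anticipate is precisely the careful tracking of the normalization constants and verifying that the zero-extension $\tilde h$ genuinely lands in $\D$ (not merely in $\HR$) — this is where the hypothesis $c\in\Sigma$ is indispensable, since otherwise $\delta_c$ and related functions could lie partly in $\mul{\T}$ and be annihilated by $\mathrm{P}$.
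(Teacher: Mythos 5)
Your strategy is the one the paper uses: split $\cB(c)$ into $\cB_0(c)$ and the one-dimensional complement spanned by $K(0,\redot,c)=\tfrac12\E^{-c/2}\phi(\redot,c)$, transport the $\cB_0(c)$ part isometrically into $\Lmu$ via zero-extension and the partial isometry $\F$, handle the complement through Proposition~\ref{propTransPE} and $\delta_c$, and check that the cross terms vanish. Two points need attention, one substantive and one arithmetic.

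The substantive point is the claim that the zero-extension $\tilde h$ of $h\in\cH_0([c,\infty))\cap\D(c)$ lands in $\D=\mul{\T}^\bot$ and not merely in $\HR$. You correctly identify this as the place where $c\in\Sigma$ is indispensable, but you assert it rather than prove it, and it is the heart of the proof: one must actually verify that for every $z\in\C$ and every $h\in\mul{\T}$ the pairing of $\Phi_c(z,\redot)$ with $h$ over $[c,\infty)$ vanishes. This follows from an integration by parts using the differential equation for $\phi(z,\redot)$, the fact that $h_1$ vanishes identically on $\Sigma$ (so in particular $h_1(c)=0$, killing the boundary term at $c$) and $\omega h_1=\dip h_1=0$, together with Lemma~\ref{lem:4.02} to dispose of the boundary term at $+\infty$; this is equation~\eqref{eqnLDpartinmul} in the paper. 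Without this computation the isometry $\|\F\tilde h\|_{\Lmu}=\|\tilde h\|_{\HR}$ is unjustified. A related cosmetic slip: the zero-extension of $\Phi_c(0,\redot)$ is not in $H^1(\R)$, so for the cross term and the complement you should pair against $\delta_c$ (whose $\F$-image is $\phi(\redot,c)$ by Proposition~\ref{propTransPE}) rather than against an ``extension'' of $\Phi_c(0,\redot)$; the conclusion $\spr{\tilde h}{\delta_c}_{\HR}=\tilde h_1(c)=0$ then gives the orthogonality you want. Finally, the constants do work out, but not as you wrote them: from~\eqref{eqnKint} with $\phi(0,x)=\E^{-x/2}$ one gets $K(0,0,c)=\tfrac12\E^{-c}$ (not $\tfrac14\E^{-c}$), so for $F=K(0,\redot,c)$ one has $\|F\|_{\cB(c)}^2-\E^{c}|F(0)|^2=\tfrac12\E^{-c}-\tfrac14\E^{-c}=\tfrac14\E^{-c}=\tfrac14\E^{-c}\|\phi(\redot,c)\|_{\Lmu}^2=\|F\|_{\Lmu}^2$, confirming~\eqref{eqndBemb} on the complement. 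With \eqref{eqnLDpartinmul} supplied and the constants corrected, your argument coincides with the paper's.
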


\begin{proof}
First of all note that for every $z\in\C$ and $h\in\mul{\T}$ we obtain 
\begin{align}\begin{split}\label{eqnLDpartinmul}
  \frac{1}{4} \int_{c}^\infty \phi(z,x) h_1(x)^\ast & dx + \int_c^\infty \phi'(z,x) h_1'(x)^\ast dx \\ 
        & + \int_{c}^\infty z\phi(z,x) h_2(x)^\ast d\dip(x) = \lim_{x\rightarrow \infty} \phi'(z,x) h_1(x)^\ast = 0.
\end{split}\end{align}
Here we used the fact that $h_2=0$, that $h_1$ vanishes almost everywhere with respect to $|\omega|+\dip$ (in particular, also note that $h_1(c)=0$), as well as Lemma~\ref{lem:4.02}. 

Now pick some arbitrary functions $f$, $g\in\mathrm{span}\lbrace \Phi_c(z,\redot) \,|\, z\in\C\rbrace$ and set  
\begin{align*}
 f_\triangleright & = f_1(c) \E^{\frac{c}{2}} \Phi_c(0,\redot), & f_\circ & = f - f_\triangleright,
\end{align*}
as well as similarly for the function $g$.
By setting them equal to zero outside of $[c,\infty)$, we may extend $f_\circ$, $g_\circ$ to functions $\bar{f}_\circ$, $\bar{g}_\circ\in\HR$. 
These extensions even belong to $\D$, as~\eqref{eqnLDpartinmul} shows that they are orthogonal to $\mul{\T}$.
Hence we get 
\begin{align*}\begin{split}
 \spr{\U_c f_\circ}{\U_c g_\circ}_{\Lmu}  = \spr{\F\bar{f}_\circ}{\F\bar{g}_\circ}_{\Lmu} =& \spr{\bar{f}_\circ}{\bar{g}_\circ}_{\HR} \\ = \spr{f_\circ}{g_\circ}_{\cH([c,\infty))} &  = \spr{\U_c f_\circ}{\U_c g_\circ}_{\cB(c)},
\end{split}\end{align*}
where we used that $\F$ is given by~\eqref{eqnFhat} and an isometry on $\D$, as well as Lemma~\ref{lemdBTrans}. 
Moreover, from~\eqref{eqndBTransOrth}, Proposition~\ref{propTransPE} (also note that $\delta_c\in\D$ and $\|\delta_c\|_{\HR} = 1$) and finally also~\eqref{eqndBTranszero} we get 
\begin{align*}
 \spr{\U_c f_\triangleright}{\U_c g_\triangleright}_{\Lmu} & = \frac{1}{4} f_1(c) g_1(c)^\ast \int_\R |\phi(\lambda,c)|^2 d\mu(\lambda) = \E^{c}\, \U_c f(0)\, \U_c g(0)^\ast.
\end{align*}
Furthermore, in a similar way one arrives at 
 \begin{align*}
  \spr{\U_c f_\circ}{\U_c g_\triangleright}_{\Lmu} & = \frac{1}{2} g_1(c)^\ast \int_\R \F \bar{f}_\circ(\lambda) \phi(\lambda,c) d\mu(\lambda) = \frac{1}{2} g_1(c)^\ast \spr{\bar{f}_\circ}{\delta_c}_{\HR} = 0,
 \end{align*}
 that is, the entire function $\U_c g_\triangleright$ is orthogonal to $\U_c f_\circ$ not only in $\cB(c)$ but also in the space $\Lmu$.
Using all these properties, we finally end up with
 \begin{align*}
  \spr{\U_c f}{\U_c g}_{\Lmu} & = \spr{\U_c f_\circ}{\U_c g_\circ}_{\Lmu} + \spr{\U_c f_\triangleright}{\U_c g_\triangleright}_{\Lmu}  \\ 
                &  = \spr{\U_c f}{\U_c g}_{\cB(c)} - \E^{c}\, \U_c f(0)\, \U_c g(0)^\ast,            
 \end{align*}
 where we also employed the simple identity
 \begin{align*}
  \spr{\U_c f_\triangleright}{\U_c g_\triangleright}_{\cB(c)} = \E^{c} f_1(c) g_1(c)^\ast K(0,0,c) = 2 \E^{c}\, \U_c f(0)\, \U_c g(0)^\ast.
 \end{align*}
 This guarantees that~\eqref{eqndBemb} holds for all $F$, $G$ in a dense subspace of $\cB(c)$. 
 Now by approximation, one shows that all $F$, $G\in\cB(c)$ belong to $\Lmu$ such that~\eqref{eqndBemb} holds. 
 Finally, the embedding is homeomorphic since the expression on the right-hand side of~\eqref{eqndBemb} gives rise to a norm on $\cB(c)$ which is equivalent to $\|\cdot\|_{\cB(c)}$. 
 \end{proof}

In particular, note that under the assumption of Theorem~\ref{thmdBemb} the subspace $\cB_0(c)$ is isometrically embedded in $\Lmu$.
Moreover, the orthogonal complement of $\cB_0(c)$ in $\cB(c)$ is also orthogonal to $\cB_0(c)$ in $\Lmu$.
For functions $F\in\cB(c)$ which are orthogonal to $\cB_0(c)$ we have 
\begin{align}\label{eqndBnormOC}
  \|F\|_{\cB(c)} = \sqrt{2} \E^{\frac{c}{2}} |F(0)| =  \sqrt{2} \| F\|_{\Lmu}.
\end{align} 
The difference between $\cB_0(c)$ and its orthogonal complement stems from the fact that the space $\cH_0([c,\infty))$ corresponding to $\cB_0(c)$ is isometrically embedded in $\HR$, whereas the whole space $\cH([c,\infty))$ is not.

In the remaining part of this subsection, we will provide several further properties of the de Branges spaces $\cB(c)$ which will turn out to be useful for the proof of the inverse uniqueness theorem in the following subsection. 
However, before we do this, we first introduce for every $c\in\R$ the entire function 
\begin{align}\label{eqnEcplus}
 E(z,c+) = z \phi(z,c) - \I \phi'(z,c+), \quad z\in\C.
\end{align}
 Similarly as above, one shows that this function is a de Branges function giving rise to a de Branges space $\cB(c+)$ with reproducing kernel given by 
\begin{align}\label{eqnKcplus}
 K(\zeta,z,c+) = K(\zeta,z,c) - \dip(\lbrace c\rbrace)\, z\phi(z,c)\, \zeta^\ast\phi(\zeta,c)^\ast, \quad \zeta,\,z\in\C. 
\end{align}
 Furthermore, we also introduce the closed subspace 
 \begin{align}
  \cB_0(c+) = \lbrace F\in \cB(c+) \,|\, F(0)=0 \rbrace.
 \end{align}
 Along the lines of the proof of Theorem~\ref{thmdBemb}, one can prove that for each $c\in\Sigma$, the de Branges space $\cB(c+)$ is homeomorphically embedded in $\Lmu$ as well with 
  \begin{align}\label{eqndBembplus}
  \spr{F}{G}_{\Lmu} = \spr{F}{G}_{\cB(c+)} - \E^{c} F(0) G(0)^\ast, \quad F,\, G\in \cB(c+).
 \end{align}

\begin{proposition}\label{propdBprop}
The de Branges spaces $\cB(c)$ have the following properties:
\begin{enumerate}[label=(\roman*), ref=(\roman*), leftmargin=*, widest=iiii] 
\item \label{itemdBplus}
 For every $c\in\R$ one has $\cB(c)\supseteq\cB(c+)$. 
 The inclusion is strict if and only if $\dip$ has mass in $c$. 
 In this case, $\cB(c+)$ has codimension one in $\cB(c)$ with  
 \begin{align}\label{eqnBcBcplus}
  \cB(c) =  \cB(c+) \oplus \mathrm{span}\lbrace z\mapsto z\phi(z,c) \rbrace. 
 \end{align}
\item \label{itemdBincl} 
 For every $c_1$, $c_2\in\R$ with $c_2<c_1$ one has $\cB(c_2+)\supseteq \cB(c_1)$.
 The inclusion is strict if and only if the intersection $(c_2,c_1]\cap\Sigma$ is not empty. 
 In this case, if $(c_2,c_1)\cap\Sigma$ is empty, then $\cB(c_1)$ has codimension one in $\cB(c_2+)$ with
 \begin{align}\label{eqnBcplusgap}
  \cB(c_2+) = \cB(c_1) \,\dot{+}\, \mathrm{span}\lbrace z\mapsto \phi'(z,c_1)\rbrace.
 \end{align}
 \item \label{itemdBcont}
 For every $c$, $a_n$, $b_n\in\Sigma$ with $a_n\uparrow c$ and $b_n\downarrow c$ as $n\rightarrow\infty$ one has 
\begin{align}\label{eqndBcont}
  \overline{\bigcup_{n\in\N} \cB(b_n)} & = \cB(c+), & \bigcap_{n\in\N} \cB(a_n) & = \cB(c).
\end{align}
\item  \label{itemdBempty}
 Unless $\Sigma$ is empty, one has 
\begin{align}
 \label{eqndBempty} \bigcap_{c\,\in\,\Sigma} \cB(c+) & = \begin{cases} \lbrace 0\rbrace, & \sup\Sigma = \infty, \\ \cB(\sup\Sigma+), & \sup\Sigma<\infty. \end{cases} 
\end{align}
 In the latter case, the space $\cB(\sup\Sigma+)$ is one dimensional. 
\item \label{itemdBdense}
 Unless $\Sigma$ is empty, one has 
\begin{align}
 \label{eqndBdense} \overline{\bigcup_{c\,\in\,\Sigma} \cB(c)} & = \begin{cases} \Lmu, & \inf\Sigma=-\infty, \\ \cB(\inf\Sigma), & \inf\Sigma>-\infty. \end{cases} 
\end{align}
 In the latter case, the space $\cB(\inf\Sigma)$ has codimension one in $\Lmu$. 
 \end{enumerate}
\end{proposition}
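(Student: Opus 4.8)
The plan is to prove all five assertions within the reproducing-kernel picture, using three ingredients repeatedly: the integral representation \eqref{eqnKint} of $K(\redot,\redot,c)$ together with its $c+$ counterpart obtained from \eqref{eqnKcplus}; the embedding Theorem~\ref{thmdBemb} and its plus-version \eqref{eqndBembplus}, which realize every $\cB(c)$, $\cB(c+)$ as a homeomorphically (hence closed) embedded subspace of the fixed space $\Lmu$ with norm $\|\redot\|_{\Lmu}^2+\E^{c}|\redot(0)|^2$; and the structural rigidity that on any open subinterval carrying no mass of $|\omega|+\dip$ the solution $\phi(z,\redot)$ equals $\alpha(z)\E^{x/2}+\beta(z)\E^{-x/2}$, so $\phi$ and $\phi'$ propagate rigidly across gaps of $\Sigma$. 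The workhorse is the de Branges complementation principle: if $K=K_1+K_2$ with $K_1$, $K_2$ nonnegative kernels, then $\cH(K)=\cH(K_1)+\cH(K_2)$ as sets, $\cH(K_1)\hookrightarrow\cH(K)$ is contractive, and $\dim\cH(K_2)$ bounds the defect; when the containment is isometric the reproducing kernel of $\cH(K_1)$ inside $\cH(K)$ is $P_{\cH(K_1)}K(\zeta,\redot)$, so $\cH(K)\ominus\cH(K_1)=\overline{\mathrm{span}}\lbrace K_2(\zeta,\redot):\zeta\in\C\rbrace$. Two explicit entire functions recur: $z\mapsto\phi(z,c)$, which by \eqref{eqndBTransOrth} (and its $\zeta=0$ case of \eqref{eqnKcplus}) equals $2\E^{c/2}K(0,\redot,c)=2\E^{c/2}K(0,\redot,c+)$ and hence lies in both $\cB(c)$ and $\cB(c+)$; and $z\mapsto z\phi(z,c)$, which by \eqref{eqnKcplus} generates the rank-one difference $K(\redot,\redot,c)-K(\redot,\redot,c+)=\dip(\lbrace c\rbrace)\,z\phi(z,c)\,\zeta^\ast\phi(\zeta,c)^\ast$.

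For \ref{itemdBplus}: the above difference is a nonnegative kernel of rank $\leq 1$, so $\cB(c+)\subseteq\cB(c)$ contractively, and comparing \eqref{eqndBemb} with \eqref{eqndBembplus} (the $\Lmu$-norms and the $\E^{c}|\redot(0)|^2$ corrections coincide) shows the containment is isometric. If $\dip(\lbrace c\rbrace)=0$ the kernels, hence the spaces, coincide. If $\dip(\lbrace c\rbrace)>0$, then $z\mapsto z\phi(z,c)\in\cB(c)$ but not in $\cB(c+)$ — else the spaces would be equal with equal norms, forcing equal kernels — and the isometric complementation principle gives $\cB(c)\ominus\cB(c+)=\overline{\mathrm{span}}\lbrace\dip(\lbrace c\rbrace)\zeta^\ast\phi(\zeta^\ast,c)\cdot z\phi(z,c)\rbrace=\mathrm{span}\lbrace z\mapsto z\phi(z,c)\rbrace$, i.e.\ \eqref{eqnBcBcplus}. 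For \ref{itemdBincl}: subtracting \eqref{eqnKint} at $c_1$ from its plus-version at $c_2$ yields the nonnegative kernel
\begin{align*}
 K(\zeta,z,c_2+) - K(\zeta,z,c_1)
 &= \frac14\int_{c_2}^{c_1}\phi(z,x)\phi(\zeta^\ast,x)\,dx
    + \int_{c_2}^{c_1}\phi'(z,x)\phi'(\zeta^\ast,x)\,dx \\
 &\quad + \int_{(c_2,c_1)} z\phi(z,x)\,\zeta^\ast\phi(\zeta^\ast,x)\,d\dip(x),
\end{align*}
the Gram kernel of $x\mapsto\Phi_{c_2}(\redot,x)$ restricted to $[c_2,c_1)$; hence $\cB(c_1)\subseteq\cB(c_2+)$ by complementation. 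When $(c_2,c_1)\cap\Sigma=\emptyset$ the $d\dip$ term drops and, using $\phi(z,x)=\alpha(z)\E^{x/2}+\beta(z)\E^{-x/2}$ on $[c_2,c_1]$, an elementary computation shows this difference has rank $\leq 2$ with reproducing kernel space $\mathrm{span}\lbrace z\mapsto\phi(z,c_1),\,z\mapsto\phi'(z,c_1)\rbrace$; since $z\mapsto\phi(z,c_1)$ already lies in $\cB(c_1)$, the set version of complementation gives $\cB(c_2+)=\cB(c_1)\,\dot{+}\,\mathrm{span}\lbrace z\mapsto\phi'(z,c_1)\rbrace$. This is codimension one precisely when $z\mapsto\phi'(z,c_1)\notin\cB(c_1)$; since $z\mapsto\phi(z,c_1)\in\cB(c_1)$ whereas $E(\redot,c_1)=z\phi(z,c_1)-\I\phi'(z,c_1)\notin\cB(c_1)$, the functions $z\phi(z,c_1)$ and $\phi'(z,c_1)$ cannot both lie in $\cB(c_1)$, and analysing \eqref{eqnKcplus} (now also involving $\omega(\lbrace c_1\rbrace)$) at $c_1$ identifies the dividing line as $c_1\in\Sigma$, i.e.\ $(c_2,c_1]\cap\Sigma\ne\emptyset$. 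The general inclusion and strictness in \ref{itemdBincl} then follow by chaining over the (at most countably many) gaps of $\Sigma$ in $[c_2,c_1]$, invoking \ref{itemdBcont} for the accumulation.

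For \ref{itemdBcont}: by \ref{itemdBplus}–\ref{itemdBincl} the maps $c\mapsto\cB(c)$ and $c\mapsto\cB(c+)$ are monotone, so along $b_n\downarrow c$ and $a_n\uparrow c$ in $\Sigma$ the $\cB(b_n)$ increase and the $\cB(a_n)$ decrease, while \eqref{eqnKint} with monotone/dominated convergence gives $K(\zeta,z,b_n)\to K(\zeta,z,c+)$ and $K(\zeta,z,a_n)\to K(\zeta,z,c)$ pointwise; viewing everything inside $\Lmu$ via Theorem~\ref{thmdBemb}, standard monotone-limit arguments identify $\overline{\bigcup_n\cB(b_n)}$ with $\cB(c+)$ and $\bigcap_n\cB(a_n)$ with $\cB(c)$. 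For \ref{itemdBempty}, $\bigcap_{c\in\Sigma}\cB(c+)$ is decreasing; for $F$ in it, writing $F=\E^{c/2}F(0)\phi(\redot,c)+F_\circ$ with $F_\circ\in\cB_0(c+)$ and using \eqref{eqndBembplus} together with $\phi(0,c)=\E^{-c/2}$ and $\|\phi(\redot,c)\|_{\Lmu}=1$ (Proposition~\ref{propTransPE}) gives $\|F_\circ\|_{\Lmu}^2=\|F\|_{\Lmu}^2-\E^{c}|F(0)|^2\geq 0$ for all $c\in\Sigma$, which forces $F(0)=0$ when $\sup\Sigma=\infty$ and then $F\in\bigcap_c\cB_0(c+)=\lbrace0\rbrace$ (only $0$ is $\F$-transform of a function supported in $(c,\infty)$ for every $c$); when $\sup\Sigma<\infty$ the intersection is the smallest term $\cB(\sup\Sigma+)$, which is one-dimensional since beyond $\sup\Sigma$ the solution must equal $\phi(z,\sup\Sigma)\E^{-(x-\sup\Sigma)/2}$ to lie in $\HR$, forcing $\cB_0(\sup\Sigma+)=\lbrace0\rbrace$. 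Dually, for \ref{itemdBdense} the increasing union $\bigcup_{c\in\Sigma}\cB(c)$ has $\Lmu$-closure equal to $\F(\D)=\Lmu$ with $\D$ as in \eqref{eqnCDomT} (using that $\F$ of compactly supported functions in $\D$ is dense in $\Lmu$ and that $\U_c$ agrees with $\F$ on functions supported in $(c,\infty)$); when $\inf\Sigma>-\infty$ this union is already $\cB(\inf\Sigma)$, closed in $\Lmu$ and of codimension one, the missing rank-one direction being $\mathrm{span}\lbrace\phi(\redot,\inf\Sigma)\rbrace$, which reflects that $\cH([\inf\Sigma,\infty))$, unlike $\cH_0([\inf\Sigma,\infty))$, does not embed isometrically into $\HR$.

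I expect the main obstacle to be the one-sided bookkeeping in \ref{itemdBplus}–\ref{itemdBincl}: determining exactly when $z\mapsto\phi'(z,c)$ and $z\mapsto z\phi(z,c)$ belong to $\cB(c)$ versus $\cB(c+)$, so that the "strict iff mass at $c$'' and "codimension one'' dichotomies come out with the correct half-open conventions for $\Sigma$ and for $\dip$; and, in \ref{itemdBdense}, pinning down precisely the single direction by which $\cB(\inf\Sigma)$ fails to exhaust $\Lmu$ (i.e.\ carefully matching the closure of $\bigcup_{c\in\Sigma}\cB(c)$ with $\F(\D)$ via Theorem~\ref{thmdBemb} and isolating the $\delta_{\inf\Sigma}$-component).
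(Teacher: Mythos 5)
Your overall architecture is the same as the paper's: decompose the difference of reproducing kernels via \eqref{eqnKint} and \eqref{eqnKcplus}, invoke Aronszajn's theorems on sums of kernels for the inclusions and the (co)dimension counts, use the embeddings \eqref{eqndBemb}, \eqref{eqndBembplus} to see that all inclusions are isometric inside $\Lmu$, and pass to monotone limits of kernels for \ref{itemdBcont}--\ref{itemdBdense}. Items \ref{itemdBplus} and \ref{itemdBcont} match the paper essentially verbatim, and your variant of \ref{itemdBempty} (forcing $F(0)=0$ from $\|F\|_{\Lmu}^2\geq \E^{c}|F(0)|^2$ and then killing $\bigcap_c\cB_0(c+)$) is a workable alternative to the paper's cleaner pointwise bound $|F(\zeta)|^2\leq 2K(\zeta,\zeta,c+)\|F\|_{\Lmu}^2\to 0$.

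The genuine gap is the strictness direction of \ref{itemdBincl}. You reduce it to deciding whether $z\mapsto\phi'(z,c_1)$ lies in $\cB(c_1)$ and assert that ``analysing \eqref{eqnKcplus} at $c_1$ identifies the dividing line as $c_1\in\Sigma$''; that is the entire content of the claim, and no argument is given. Your auxiliary observation --- that $z\phi(z,c_1)$ and $\phi'(z,c_1)$ cannot both lie in $\cB(c_1)$ because $E(\ledot,c_1)$ does not --- cannot decide the question: by \ref{itemdBplus}, $z\mapsto z\phi(z,c_1)$ belongs to $\cB(c_1)$ only when $\dip(\lbrace c_1\rbrace)>0$, so whenever $c_1\in\Sigma$ through $\omega$ alone the dichotomy is uninformative. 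Moreover, your fallback of ``chaining over the gaps of $\Sigma$, invoking \ref{itemdBcont} for the accumulation'' is circular as stated, since \ref{itemdBcont} is itself derived from \ref{itemdBincl}, and chaining non-strict single steps cannot by itself produce strictness. The paper closes this point by a different mechanism that is entirely absent from your proposal: after reducing (by adding, if necessary, a harmless point mass of $\omega$ at $c_2$) to the case $c_1,c_2\in\Sigma$, it observes that $\phi(\ledot,c_2)\in\cB(c_2+)$ and $\phi(\ledot,c_1)\in\cB(c_1)$ are \emph{both} orthogonal to the codimension-one subspace $\cB_0(c_1)$ with respect to the $\Lmu$ inner product (Theorem~\ref{thmdBemb} and \eqref{eqndBembplus}); hence if $\phi(\ledot,c_2)$ lay in $\cB(c_1)$ the two would be proportional, which via Proposition~\ref{propTransPE} and \eqref{eqnCDomT} would make $\delta_{c_1}$ and $\delta_{c_2}$ linearly dependent --- a contradiction. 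Some version of this spectral-transform input is needed; the kernel calculus alone does not see it. Two smaller points: in \ref{itemdBdense} the direction missing from $\cB(\inf\Sigma)$ is $\mathrm{span}\lbrace z\mapsto\phi'(z,\inf\Sigma)\rbrace$ coming from \eqref{eqnBcplusgap}, not $\mathrm{span}\lbrace\phi(\ledot,\inf\Sigma)\rbrace$ (the latter already equals $2\E^{\inf\Sigma/2}K(0,\ledot,\inf\Sigma)\in\cB(\inf\Sigma)$ by \eqref{eqndBTransOrth}), and the codimension-one claim should be derived, as in the paper, from the fact that $\cB(\inf\Sigma)\,\dot{+}\,\mathrm{span}\lbrace z\mapsto\phi'(z,\inf\Sigma)\rbrace$ is simultaneously dense and closed in $\Lmu$, rather than from a heuristic about non-isometric embeddings.
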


\begin{proof}
 \ref{itemdBplus}
  The inclusion follows from~\eqref{eqnKcplus} and \cite[Theorems~I.3 and I.6]{ar50}. 
 If $\dip$ does not have mass in $c$, then the reproducing kernels $K(\ledot,\redot,c)$ and $K(\ledot,\redot,c+)$ coincide and thus so do the spaces $\cB(c)$ and $\cB(c+)$. 
 Otherwise, if $\dip$ has mass in $c$, then 
  \begin{align*}
  \cB(c) = \cB(c+) + \linspan\lbrace z\mapsto z\phi(z,c) \rbrace,
 \end{align*} 
 and the norms of $\cB(c)$ and $\cB(c+)$ coincide (on their intersection) by~\eqref{eqndBemb} and~\eqref{eqndBembplus}.
 Since the respective reproducing kernels differ, this guarantees that the inclusion is strict.  
 Finally, the orthogonality in~\eqref{eqnBcBcplus} follows from \cite[Theorem~I.6]{ar50}.   

 \ref{itemdBincl}
 In view of~\eqref{eqnKint} and~\eqref{eqnKcplus}, the difference $K(\zeta,z,c_2+) - K(\zeta,z,c_1)$ is simply 
 \begin{align*}
    \frac{1}{4} \int_{c_2}^{c_1} \phi(z,x)\phi(\zeta^\ast,x) & dx + \int_{c_2}^{c_1} \phi'(z,x) \phi'(\zeta^\ast,x)dx \\
        & + \int_{(c_2,c_1)} z\phi(z,x) \zeta^\ast\phi(\zeta^\ast,x)d\dip(x), \quad\zeta,\,z\in\C. 
 \end{align*}
 Since this is a kernel function itself, the inclusion follows from \cite[Theorem~I.6]{ar50}.  
 If the intersection $(c_2,c_1)\cap\Sigma$ is empty, then an integration by parts shows that $K(\zeta,\redot,c_2+) - K(\zeta,\redot,c_1)$ 
 is a linear combination of the functions $\phi(\ledot,c_1)$ and $\phi'(\ledot,c_1)$ for every $\zeta\in\C$. 
 Since the coefficient of the latter function does not vanish for all $\zeta\in\C$ and $\phi(\ledot,c_1)\in\cB(c_1)$ by \eqref{eqndBTransOrth}, this shows
 \begin{align*}
  \cB(c_2+) = \cB(c_1) + \linspan\lbrace z\mapsto \phi'(z,c_1)\rbrace,
 \end{align*}
 in view of \cite[Theorem~I.6]{ar50}. 
 As a consequence, this implies that 
 \begin{align*}
  \cB(c_1) = \cB(c_1+)  =   \cB(c_1+\varepsilon) + \linspan\lbrace z\mapsto \phi'(z,c_1+\varepsilon)\rbrace = \cB(c_2+)
 \end{align*} 
 for all small enough $\varepsilon>0$, provided the intersection $(c_2,c_1]\cap\Sigma$ is empty. 
 
 Now suppose that the intersection $(c_2,c_1]\cap\Sigma$ is not empty. 
 Without loss of generality, we may assume that $c_1$ and $c_2$ belong to $\Sigma$. 
 This is clear if $(c_2,c_1]\cap\Sigma$ contains at least two points. 
 Otherwise, just note that we may add a point mass in $c_2$ to $\omega$ without changing $\cB(c_1)$ or $\cB(c_2+)$.
 Now if the function $\phi(\ledot,c_2)$, which belongs to $\cB(c_2+)$, belonged to $\cB(c_1)$, then it would be a scalar multiple of $\phi(\ledot,c_1)$ since both are orthogonal to $\cB_0(c_1)$ in $\cB(c_1)$ by Theorem~\ref{thmdBemb} and~\eqref{eqndBembplus}.  
  In view of Proposition~\ref{propTransPE} (also note that $\delta_{c_1}$ and $\delta_{c_2}$ belong to $\D$), one infers that $\delta_{c_1}$ and $\delta_{c_2}$ are linearly dependent as well in this case, which gives a contradiction. 

 \ref{itemdBcont}
 In order to prove the first equality in~\eqref{eqndBcont}, note that one inclusion follows readily from \ref{itemdBincl}.
 For the converse, we use \cite[Section~I.9]{ar50} to conclude that $K(\zeta,\redot,b_n)$ converges to $K(\zeta,\redot,c+)$ in $\cB(c+)$ as $n\rightarrow\infty$ for every $\zeta\in\C$.
 In fact, the class $F_0$ in \cite[Section~I.9]{ar50} coincides with the union of all $\cB(b_n)$, $n\in\N$, and the norm is the one inherited from $\cB(c+)$, which can be deduced from Theorem~\ref{thmdBemb} and~\eqref{eqndBembplus}. 
 Now \cite[Theorem~I.9.II]{ar50} guarantees that $K(\zeta,\redot,b_n)$ converges in $\cB(c+)$, where the limit is $K(\zeta,\redot,c+)$ in view of~\eqref{eqnKint} and~\eqref{eqnKcplus}. 
 This shows that the functions $K(\zeta,\redot,c+)$ belong to the closure of the union on the left-hand side of~\eqref{eqndBcont} for every $\zeta\in\C$.
  
 The second equality in~\eqref{eqndBcont} simply follows from~\eqref{eqnKint} and \cite[Theorem~I.9.I]{ar50}, where the necessary bound on norms is guaranteed by~\eqref{eqndBemb} in Theorem~\ref{thmdBemb}. 

 \ref{itemdBempty}
  If $F$ belongs to $\cB(c+)$ for some $c\in\Sigma$, then using Theorem~\ref{thmdBemb} one gets 
  \begin{align*}
   |F(\zeta)|^2 & \leq K(\zeta,\zeta,c+) \|F\|_{\cB(c+)}^2 \leq 2 K(\zeta,\zeta,c+) \|F\|_{\Lmu}^2, \quad \zeta\in\C. 
  \end{align*}
  This immediately gives~\eqref{eqndBempty} if $\sup\Sigma=\infty$.
 Otherwise, the claim follows readily from~\ref{itemdBincl} alone (also note that $\sup\Sigma\in\Sigma$). 
 Moreover, in this case 
 \begin{align*}
   K(\zeta,z,\sup\Sigma+) = \frac{1}{2} \phi(z,\sup\Sigma) \phi(\zeta,\sup\Sigma)^\ast, \quad \zeta,\,z\in\C, 
 \end{align*}
 which shows that $\cB(\sup\Sigma+)$ is one dimensional. 

 \ref{itemdBdense}
  Since the image of functions in $\HR$ with compact support under $\F$ is dense in $\Lmu$, we infer from Lemma~\ref{lemdBTrans} and upon comparing~\eqref{eqnfUc} with~\eqref{eqnFhat} that \begin{align*}
   \overline{\bigcup_{c\,\in\,\R} \cB(c)} = \Lmu. 
  \end{align*}
  In view of~\ref{itemdBincl}, this immediately gives~\eqref{eqndBdense} if $\inf\Sigma=-\infty$. 
  Otherwise, the claim follows readily from~\ref{itemdBincl} alone (also note that $\inf\Sigma\in\Sigma$). 
  Moreover, in this case 
  \begin{align*}
   \cB(\inf\Sigma) \,\dot{+}\, \linspan\lbrace z\mapsto\phi'(z,\inf\Sigma)\rbrace
  \end{align*}
  is dense in $\Lmu$ by~\ref{itemdBincl}, and hence coincides with $\Lmu$ since it is closed. 
 \end{proof}

 Roughly speaking, the preceding result shows that the de Branges spaces $\cB(c)$ form a descending chain of closed subspaces of $\Lmu$ which is complete in some sense. 
 More precisely, these properties, together with de Branges' subspace ordering theorem, will allow us to conclude that every other de Branges space which is homeomorphically embedded into $\Lmu$ as well, already appears somewhere in our chain of de Branges spaces. 
 In this respect, let us finally also mention that it is possible to read off the base point of a de Branges space in the sense that  
 \begin{align}\label{eqndBcenc}
   \sup\left\lbrace |F(0)|^2 \,\left|\, F\in\cB(c),\; \|F\|_{\Lmu}=1 \right.\right\rbrace = \E^{-c}, \quad c\in\Sigma. 
 \end{align}
 which follows essentially from~\eqref{eqndBnormOC} and the fact that the embedding in Theorem~\ref{thmdBemb} preserves orthogonality. 
 Also note that by \eqref{eq:6.05} and Theorem~\ref{thmdBemb} one has 
 \begin{align*}
  F(0) = \spr{F}{2 K(0,\redot,c)}_{\Lmu}, \quad F\in\cB(c),~c\in\Sigma. 
 \end{align*}
 The same equality also holds true if we replace the space $\cB(c)$ with the space $\cB(c+)$.

\subsection{Inverse uniqueness results}

 After having gathered all necessary prerequisites, we are now able to prove several inverse uniqueness results for our spectral problem. 
 To this end, let $\tilde{\omega}$ be another real-valued Borel measure on $\R$ and $\tilde{\dip}$ be a non-negative Borel measure on $\R$. 
 All quantities corresponding to these coefficients will be denoted in an obvious way with an additional tilde. 
 
 We will first consider an inverse problem on the whole line under the additional assumption of Hypothesis~\ref{hypRESol}. 
 More precisely, we will show when the spectral measure introduced in Subsection~\ref{subsST} uniquely determines both coefficients. 

\begin{theorem}\label{thmIU}
 Suppose that Hypothesis~\ref{hypRESol} holds such that the quotient
  \begin{align}\label{eqnIUquot}
  \tilde{\phi}(z,c) \phi(z,c)^{-1}, \quad z\in\H, 
 \end{align}
 is of bounded type\footnote{A function is of bounded type if it can be written as the quotient of bounded analytic functions.} 
 in the open upper complex half-plane $\H$ for some $c\in\R$. 
 If the corresponding spectral measures $\mu$ and $\tilde{\mu}$ are equal, then $\omega=\tilde{\omega}$ and $\dip=\tilde{\dip}$.
\end{theorem}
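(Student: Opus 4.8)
The plan is to carry out a de Branges subspace ordering argument in the spirit of \cite{LeftDefiniteSL}. First I would use the assumption $\mu = \tilde\mu$ to regard both scales of de Branges spaces, $\cB(c)$, $\cB(c+)$ and their tilde counterparts, as homeomorphically embedded in one and the same space $\Lmu$ via the embedding identities \eqref{eqndBemb}, \eqref{eqndBembplus} and their tilde versions. Next I would upgrade the bounded-type hypothesis: since $\phi'(\redot,c)/\phi(\redot,c)$ is $z$ times a Herglotz--Nevanlinna function (cf.\ \eqref{eq:wf_pm} with $\gamma = \tfrac{\pi}{2}$), it is of bounded type in $\H$, and because the functions of bounded type in $\H$ form a field, the assumption that $\tilde\phi(\redot,c)\phi(\redot,c)^{-1}$ is of bounded type implies that $\tilde E(\redot,c)/E(\redot,c)$ and its reciprocal are of bounded type for the distinguished point $c$.

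The core step is then to match the two scales. Applying de Branges' subspace ordering theorem \cite[Theorem~35]{dB68} to $\cB(c)$ and $\tilde\cB(c)$ shows that one is contained in the other, say $\cB(c) \subseteq \tilde\cB(c)$. Since a de Branges subspace of a de Branges space automatically has a bounded-type structure-function quotient, this comparability propagates: for every $c'$ the quotient $E(\redot,c')/\tilde E(\redot,c'')$ (or its reciprocal) turns out to be of bounded type, so $\cB(c')$ is comparable with every $\tilde\cB(c'')$ and every $\tilde\cB(c''+)$. The completeness of the tilde scale supplied by Proposition~\ref{propdBprop}—continuity at accumulation points of $\tilde\Sigma$, the intersection and union identities, and the codimension-one jumps—then forces $\cB(c')$ to coincide with exactly one member of the tilde scale, and the base-point identity \eqref{eqndBcenc} together with its tilde analogue, both sides evaluating to $\E^{-c'}$, pins that member to the base point $c'$. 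Running the same reasoning with the roles reversed yields $\cB(c) = \tilde\cB(c)$ and $\cB(c+) = \tilde\cB(c+)$ for all $c \in \R$, and hence $\Sigma = \tilde\Sigma$ by Proposition~\ref{propdBprop}\ref{itemdBincl}.

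It remains to recover the coefficients from the coinciding scales. For $c \in \Sigma$ the bounded functional $F \mapsto F(0)$ on $\cB(c) \subseteq \Lmu$ is represented, via $F(0) = \spr{F}{2K(0,\redot,c)}_{\Lmu}$ and \eqref{eqndBTransOrth}, by $\E^{-c/2}\phi(\redot,c)$; since $\cB(c) = \tilde\cB(c)$ as subspaces of $\Lmu$, uniqueness of the Riesz representative gives $\phi(z,c) = \tilde\phi(z,c)$ for all $z$. Moreover, the embedding identities recover the intrinsic reproducing kernels of $\cB(c)$ and $\cB(c+)$, hence the de Branges functions $E(z,c) = z\phi(z,c) - \I\phi'(z,c)$ and $E(z,c+) = z\phi(z,c) - \I\phi'(z,c+)$, so also $\phi'(z,c) = \tilde\phi'(z,c)$ and $\phi'(z,c+) = \tilde\phi'(z,c+)$ for all $c \in \Sigma$ and $z \in \C$. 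Since $\phi(z,\redot)$ solves $-f'' + \tfrac{1}{4}f = 0$ on each component of $\R \setminus \Sigma$ and lies in $\HR$ near $+\infty$, matching the (now coinciding) values of $\phi$ and $\tilde\phi$ at the endpoints of the bounded gaps—which belong to $\Sigma$—and using the decaying free solution on the unbounded gap above $\sup\Sigma$ shows $\phi = \tilde\phi$ on $[\inf\Sigma,\infty)$ (and on all of $\R$ if $\inf\Sigma = -\infty$). Subtracting the two measure differential equations on the open interval $(\inf\Sigma,\infty)$ and using that $\phi(z,\redot)$ is non-vanishing on any fixed bounded subinterval for $z$ in a neighbourhood of $0$ then forces $\omega = \tilde\omega$ and $\dip = \tilde\dip$ on $(\inf\Sigma,\infty)$; the only remaining freedom, a possible atom at $\inf\Sigma$, is eliminated by comparing the already-matched one-sided derivative jumps $\phi'(z,\inf\Sigma) - \phi'(z,\inf\Sigma+)$. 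As $\omega,\tilde\omega$ and $\dip,\tilde\dip$ are supported in $\Sigma$, we conclude $\omega = \tilde\omega$ and $\dip = \tilde\dip$.

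The step I expect to be the main obstacle is the chain-matching: checking that the bounded-type hypothesis imposed at a single point genuinely propagates so that de Branges' ordering theorem applies to all the relevant pairs, and that the structural results collected in Proposition~\ref{propdBprop} really do exclude a spurious de Branges subspace lodged strictly between consecutive members of the scale or at an accumulation point of $\Sigma$—these are precisely the places where the carefully assembled properties of the spaces $\cB(c)$ are indispensable.
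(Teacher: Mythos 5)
Your overall strategy --- comparing the two chains of de Branges spaces inside $\Lmu$ via de Branges' ordering theorem, using the completeness properties collected in Proposition~\ref{propdBprop} to locate $\cB(c)$ within the tilde chain, identifying the base point through \eqref{eqndBcenc}, recovering $\phi(\cdot,c)=\tilde\phi(\cdot,c)$ on $\Sigma$ from the representative of $F\mapsto F(0)$, propagating through the gaps, and finally reading off the coefficients from the integrated differential equation --- is exactly the paper's, and those parts of your sketch are sound.

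There is, however, a genuine gap in the step ``the embedding identities recover the intrinsic reproducing kernels \dots hence the de Branges functions $E(z,c)$ and $E(z,c+)$ \dots so also $\phi'(z,c)=\tilde\phi'(z,c)$''. A reproducing kernel of the form \eqref{eqnEdB} determines the de Branges function only up to a real $2\times 2$ transformation of determinant one acting on the pair $\bigl(z\phi(z,c),\phi'(z,c)\bigr)$; even after $\phi(\cdot,c)=\tilde\phi(\cdot,c)$ has been pinned down, there remains the one-parameter family $\tilde\phi'(z,c)=\phi'(z,c)+b\,z\phi(z,c)$, $b\in\R$. This ambiguity is not an artifact: by \eqref{eqnKint} the kernel $K(\cdot,\cdot,c)$ depends only on $\omega|_{(c,\infty)}$ and $\dip|_{[c,\infty)}$, whereas the left limit $\phi'(z,c)$ entering $E(z,c)$ also sees $\omega(\lbrace c\rbrace)$, so changing the atom of $\omega$ at $c$ realizes exactly this residual freedom. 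The place where this actually hurts you is a finite left endpoint $\underline{c}=\inf\Sigma$: without $\phi'(z,\underline{c})=\tilde\phi'(z,\underline{c})$ you can neither continue the identity $\phi=\tilde\phi$ onto $(-\infty,\underline{c})$ nor identify the atoms of $\omega$ and $\dip$ at $\underline{c}$. (Everywhere else your argument survives, since $\phi'=\tilde\phi'$ follows from $\phi=\tilde\phi$ on $[\underline{c},\infty)$ by taking limits from the right, and above $\sup\Sigma$ the solutions are explicit.) The paper closes this by a different device: Theorem~\ref{thmSARes} expresses $\bigl(\tfrac12-\phi'(z,\underline{c})/\phi(z,\underline{c})\bigr)^{-1}-1$ as the Stieltjes transform of $E_{\delta_{\underline{c}},\delta_{\underline{c}}}$, which by Lemma~\ref{lemSM} is determined by $\mu$ and $\phi(\cdot,\underline{c})$ alone; equality of the spectral measures then gives $\phi'(z,\underline{c})=\tilde\phi'(z,\underline{c})$ directly. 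You need this, or some equivalent extraction of $\phi'(\cdot,\underline{c})$ from $\mu$, to complete the proof.
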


\begin{proof}
 First of all note that the support $\Sigma$ of $|\omega|+\dip$ is empty if and only if the spectrum $\sigma(\T)$ is empty, in view of~\eqref{eqnCDomT}. 
 Therefore, the set $\Sigma$ is empty if and only if $\tilde{\Sigma}$ is empty as well. 
 Obviously, the claimed identity holds in this case. 
 
 After having dealt with this trivial case, we may suppose that the sets $\Sigma$ and $\tilde{\Sigma}$ are not empty and fix some point $c\in\Sigma$. 
 From Theorem~\ref{thmdBemb} and \cite[Theorem~A.1]{LeftDefiniteSL} (which is an immediate consequence of de Branges' subspace ordering theorem \cite[Theorem~35]{dB68}), we infer that for every $\tilde{x}\in\tilde{\Sigma}$ the space $\cB(c)$ is contained in $\tilde{\cB }(\tilde{x})$ or that $\tilde{\cB }(\tilde{x})$ is contained in $\cB(c)$. 
 Here, one should also mention that the quotient  
 \begin{align}\label{eqnQuoSplit}
   \frac{\tilde{E}(z,\tilde{x})}{E(z,c)} = \frac{\tilde{E}(z,\tilde{x})}{\tilde{\phi}(z,\tilde{x})} \cdot \frac{\tilde{\phi}(z,\tilde{x})}{\tilde{\phi}(z,c)} \cdot \frac{\tilde{\phi}(z,c)}{\phi(z,c)} \cdot \frac{\phi(z,c)}{E(z,c)}, \quad z\in\H, 
 \end{align}
  is of bounded type in $\H$, as all quotients in the factorization are as well. 
  More precisely, the first and the last factor are of bounded type in $\H$ by our definition of de Branges spaces since every function in the Hardy space $H^2(\H)$ is of bounded type \cite[Corollary~5.17]{roro94}.
  Furthermore, this means that the quotient of two functions in a de Branges space is of bounded type in $\H$, which guarantees that the second factor is as well in view of Proposition~\ref{propdBprop}~\ref{itemdBincl}. 
  In much the same way as above, one also concludes from~\eqref{eqndBembplus} that the space $\cB(c)$ is contained in $\tilde{\cB }(\tilde{x}+)$ or that $\tilde{\cB }(\tilde{x}+)$ is contained in $\cB(c)$.
  Now we introduce the sets 
\begin{align*}
 \Lambda_- & = \lbrace \tilde{x}\in\tilde{\Sigma} \,|\,  \tilde{\cB }(\tilde{x}) \supseteq \cB(c) \rbrace, &
 \Lambda_+ & = \lbrace \tilde{x}\in\tilde{\Sigma} \,|\,  \cB(c) \supseteq \tilde{\cB }(\tilde{x}+) \rbrace, 
\end{align*} 
 and note that both of them are not empty. 
 In fact, if the set $\Lambda_+$ was empty, then we would have $\cB(c) \subseteq \tilde{\cB }(\tilde{x}+)$ for every $\tilde{x}\in\tilde{\Sigma}$ and hence 
 \begin{align*}
  \cB(c) \subseteq \bigcap_{\tilde{x}\in\tilde{\Sigma}} \tilde{\cB }(\tilde{x}+) = \begin{cases} \lbrace 0\rbrace, & \sup\tilde{\Sigma} = \infty, \\ \tilde{\cB }(\sup\tilde{\Sigma}+), & \sup\tilde{\Sigma} < \infty,  \end{cases}
 \end{align*}
 in view of Proposition~\ref{propdBprop}~\ref{itemdBempty}. 
 Since $\cB(c)$ contains nonzero functions, we would obtain the contradiction that $\sup\tilde{\Sigma}<\infty$ and $\cB(c) = \tilde{\cB }(\sup\tilde{\Sigma}+)$. 
 Likewise, if the set $\Lambda_-$ was empty, then we would have $\tilde{\cB }(\tilde{x})\subseteq \cB(c)$ for every $\tilde{x}\in\tilde{\Sigma}$ and hence 
 \begin{align*}
  \Lmu \supseteq \cB(c) \supseteq \overline{\bigcup_{\tilde{x}\in\tilde{\Sigma}} \tilde{\cB }(\tilde{x})} = \begin{cases} \Lmu, & \inf\tilde{\Sigma} = -\infty, \\ \tilde{\cB }(\inf\tilde{\Sigma}), & \inf\tilde{\Sigma}>-\infty,  \end{cases} 
 \end{align*} 
 in view of Proposition~\ref{propdBprop}~\ref{itemdBdense}. 
 Since $\cB(c)$ has at least codimension one in $\Lmu$, we would obtain the contradiction that $\inf\tilde{\Sigma}>-\infty$ and $\cB(c) = \tilde{\cB}(\inf\tilde{\Sigma})$. 
  
 Since the set $\Lambda_+$ lies to the right of $\Lambda_-$, we infer that the quantities $\min\Lambda_+$ and $\max\Lambda_-$ are finite with $\max\Lambda_- \leq \min\Lambda_+$.  
 The fact that the maximum and the minimum are attained follows from Proposition~\ref{propdBprop}~\ref{itemdBcont}. 
 In particular, this guarantees the inclusions $\tilde{\cB }(\max\Lambda_-) \supseteq \cB(c) \supseteq \tilde{\cB }(\min\Lambda_++)$. 
 Moreover, if $\max\Lambda_-$ and $\min\Lambda_+$ do not coincide, then we also have the inclusions 
 \begin{align*}
  \tilde{\cB }(\max\Lambda_-) \supseteq \tilde{\cB }(\max\Lambda_-+) \supseteq \tilde{\cB }(\min\Lambda_+) \supseteq \tilde{\cB }(\min\Lambda_++),
 \end{align*}
 all of which differ at most by one dimension. 
 In fact, this follows from Proposition~\ref{propdBprop}~\ref{itemdBincl} since the intersection $(\max\Lambda_-,\min\Lambda_+)\cap\tilde{\Sigma}$ is empty in this case. 
 Consequently, the space $\cB(c)$ has to coincide with (at least) one of these spaces. 
 
 In conclusion, until now we showed that for every $c\in\Sigma$, there is some $\tilde{c}\in\tilde{\Sigma}$ such that $\tilde{\cB }(\tilde{c}) \supseteq \cB(c) \supseteq \tilde{\cB }(\tilde{c}+)$. 
 Taking equation~\eqref{eqndBcenc} into account, from this inclusion we actually conclude that $\tilde{c}=c$ and therefore also $\Sigma\subseteq\tilde{\Sigma}$.  
 Of course, due to symmetry reasons, we even have $\Sigma=\tilde{\Sigma}$ and also $\cB(c) \supseteq \tilde{\cB }(c) \supseteq \cB(c+)$. 
 This finally shows that $\cB(c) = \tilde{\cB }(c)$, including norms by Theorem~\ref{thmdBemb}.  

 As a consequence, we can employ~\eqref{eqndBTransOrth} to conclude that 
 \begin{align}\label{eqnPhiEq}
  \phi(z,x) = 2\E^{\frac{x}{2}} K(0,z,x) = 2 \E^{\frac{x}{2}} \tilde{K}(0,z,x) =  \tilde{\phi}(z,x), \quad z\in\C,
 \end{align}
 for all $x\in\Sigma$.
 Moreover, if $(a,b)$ is a gap of $\Sigma$, that is, whenever $a$, $b\in\Sigma$ but the intersection $(a,b)\cap\Sigma$ is empty, then for every $z\in\C$, the difference $\phi(z,\redot)-\tilde{\phi}(z,\redot)$ is a solution of the differential equation
 \begin{align}\label{eqnDEhozzero}
  -f'' + \frac{1}{4}f = 0
 \end{align}
  on $(a,b)$ which vanishes on the boundary of the gap. 
 But this guarantees that the solution vanishes on the whole gap and we infer that~\eqref{eqnPhiEq} holds for all $x$ in the convex hull of $\Sigma$.
 Now if $\overline{c}=\sup\Sigma$ is finite, then we have 
 \begin{align*}
  \phi(z,x) & = \phi(z,\overline{c}) \E^{-\frac{x-\overline{c}}{2}} = \tilde{\phi}(z,\overline{c}) \E^{-\frac{x-\overline{c}}{2}} = \tilde{\phi}(z,x), \quad x\geq\overline{c}. 
 \end{align*}
 On the other side, if $\underline{c}=\inf\Sigma$ is finite, then we get from Theorem~\ref{thmSARes} that 
 \begin{align*}
  \left(\frac{1}{2}-\frac{\phi'(z,\underline{c})}{\phi(z,\underline{c})}\right)^{-1} - 1 = z \spr{(\T-z)^{-1}\delta_{\underline{c}}}{\delta_{\underline{c}}}_{\HR} = \int_\R \frac{z}{\lambda-z}dE_{\delta_{\underline{c}},\delta_{\underline{c}}}(\lambda), \quad z\in\C\backslash\R. 
 \end{align*}
 From this and Lemma~\ref{lemSM}, we infer that $\phi'(z,\underline{c}) = \tilde{\phi}'(z,\underline{c})$ for every $z\in\C$.   
 But this guarantees that~\eqref{eqnPhiEq} holds for $x\leq\underline{c}$ as both sides are solutions of the differential equation~\eqref{eqnDEhozzero} to the left of $\underline{c}$ with the same boundary values at $\underline{c}$. 
 In any case, we finally conclude that~\eqref{eqnPhiEq} holds for all $x\in\R$.

  It remains to show that it is possible to read off the coefficients from the real entire solution $\phi$. 
  To this end, fix some $a$, $b\in\R$ with $a<b$ and note that for every $z\in\C$ we obtain from the differential equation~\eqref{eqnDEho} that   
 \begin{align*}
  \phi'(z,a) - \phi'(z,b) + \frac{1}{4} \int_a^b \phi(z,x)dx = z \int_a^b \phi(z,x) d\omega(x) + z^2 \int_a^b \phi(z,x) d\dip(x).  
 \end{align*}
 Since a similar equation holds for the second spectral problem as well, we get 
 \begin{align*}
   \int_a^b \phi(z,x) d\omega(x) + z \int_a^b \phi(z,x) d\dip(x) =  \int_a^b \phi(z,x) d\tilde{\omega}(x) + z \int_a^b \phi(z,x) d\tilde{\dip}(x), 
 \end{align*}
 also employing that~\eqref{eqnPhiEq} holds for all $x\in\R$. 
 Evaluating at zero, we conclude that $\omega = \tilde{\omega}$ since the points $a$ and $b$ were arbitrary. 
 Taking this into account, dividing by $z$ and evaluating at zero again, we finally end up with $\dip=\tilde{\dip}$ as well. 
\end{proof}

The assumption on the quotient in~\eqref{eqnIUquot} being of bounded type can be inconvenient for applications. 
 One way to verify it is provided by a theorem of Krein~\cite[Theorem~6.17]{roro94}, \cite[Section~16.1]{le96}, which states that an entire function is of bounded type in $\H$ if it belongs to the Cartwright class. 
 This means that the assumption on the quotient in~\eqref{eqnIUquot} holds if $\phi(\ledot,c)$ and $\tilde{\phi}(\ledot,c)$ belong to the Cartwright class for some $c\in\R$. 
 However, we can also state a variant of Theorem~\ref{thmIU} under somewhat different prerequisites on the real entire solutions $\phi$ and $\tilde{\phi}$.

\begin{corollary}
 Suppose that Hypothesis~\ref{hypRESol} holds such that the entire functions $E(\ledot,c)$ and $\tilde{E}(\ledot,\tilde{c})$ 
  are of exponential type zero for some $c\in\Sigma$ and $\tilde{c}\in\tilde{\Sigma}$. 
 If the corresponding spectral measures $\mu$ and $\tilde{\mu}$ are equal, then $\omega=\tilde{\omega}$ and $\dip=\tilde{\dip}$.
\end{corollary}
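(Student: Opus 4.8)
The plan is to reduce this corollary to Theorem~\ref{thmIU} by verifying that its hypothesis on the quotient~\eqref{eqnIUquot} being of bounded type in~$\H$ is automatically satisfied once the de Branges functions $E(\ledot,c)$ and $\tilde{E}(\ledot,\tilde{c})$ are of exponential type zero. The key point is the factorization~\eqref{eqnQuoSplit} already used in the proof of Theorem~\ref{thmIU}: once we know that a single de Branges space $\cB(c)$ (or its orthogonal-complement-adjusted version) is homeomorphically embedded in $\Lmu$, the whole argument of Theorem~\ref{thmIU} goes through provided we can establish that every relevant quotient of de Branges functions appearing along the chain is of bounded type in~$\H$. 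So the real content of the corollary is a bounded-type statement, not a fresh inverse-uniqueness argument.

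First I would recall that a function in the Hardy space $H^2(\H)$ is of bounded type in $\H$ (as cited in the proof of Theorem~\ref{thmIU} via \cite[Corollary~5.17]{roro94}), so that $\phi(\ledot,c)/E(\ledot,c)$ and $\tilde{\phi}(\ledot,\tilde{c})/\tilde{E}(\ledot,\tilde{c})$ are of bounded type in $\H$; this takes care of the outer factors in~\eqref{eqnQuoSplit} and also shows $\tilde{E}(\ledot,\tilde{x})/\tilde{\phi}(\ledot,\tilde{x})$ is of bounded type (its reciprocal is, and a function whose reciprocal is of bounded type is of bounded type, since bounded type is closed under reciprocals up to the zero set). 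Next, the middle factor $\tilde{\phi}(\ledot,\tilde{x})/\tilde{\phi}(\ledot,c)$ is a quotient of two functions lying in a common de Branges space by Proposition~\ref{propdBprop}~\ref{itemdBincl}, hence of bounded type. The only genuinely new factor to control is $\tilde{\phi}(z,c)/\phi(z,c)$, or rather — since we are starting the argument at $c\in\Sigma$ and $\tilde{c}\in\tilde{\Sigma}$ which need not coincide — the quotient $\tilde{E}(\ledot,\tilde{c})/E(\ledot,c)$. I would argue that since both $E(\ledot,c)$ and $\tilde{E}(\ledot,\tilde{c})$ are entire of exponential type zero, their quotient is of bounded type in $\H$ by an appeal to the theory of functions of bounded type together with the exponential-type bookkeeping: the quotient of two entire functions of exponential type zero is meromorphic of exponential type zero, and a meromorphic function of exponential type that is, say, bounded-type on one line (here automatically, because both numerator and denominator are products of Hardy-class functions with the corresponding $E$-functions) is of bounded type — more directly, one invokes that an entire function of exponential type zero which is in the Nevanlinna class of $\H$ is of bounded type, and the type-zero hypothesis removes the only obstruction (the possible exponential factor in the Krein–Cartwright representation). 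In fact the cleanest route is: $E(\ledot,c)$ of exponential type zero together with $|E(z,c)|\ge|E(z^\ast,c)|$ in $\H$ forces $E(\ledot,c)$ to be of bounded type in $\H$ (this is essentially the statement that a de Branges function of exponential type zero lies in the Cartwright class, cf.\ the argument in Corollary~\ref{corFScartwright} and \cite[Section~16.1]{le96}), and likewise for $\tilde{E}(\ledot,\tilde{c})$; hence their quotient is of bounded type, and then so is every factor in~\eqref{eqnQuoSplit}.

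With the bounded-type property of~\eqref{eqnQuoSplit} in hand for the specific starting points $c\in\Sigma$, $\tilde{c}\in\tilde{\Sigma}$, the de Branges subspace ordering theorem \cite[Theorem~35]{dB68} (via \cite[Theorem~A.1]{LeftDefiniteSL}) applies exactly as in the proof of Theorem~\ref{thmIU}, giving the comparability of $\cB(c)$ with all the $\tilde{\cB}(\tilde{x})$ and $\tilde{\cB}(\tilde{x}+)$; the sets $\Lambda_\pm$ are defined and shown nonempty using Proposition~\ref{propdBprop}~\ref{itemdBempty} and~\ref{itemdBdense} just as before; equation~\eqref{eqndBcenc} pins down $\tilde{c}=c$ and hence $\Sigma=\tilde{\Sigma}$ and $\cB(c)=\tilde{\cB}(c)$ with norms; \eqref{eqndBTransOrth} then yields $\phi=\tilde{\phi}$ on $\Sigma$, the gap-filling argument via the trivial equation~\eqref{eqnDEhozzero} extends this to the convex hull of $\Sigma$, the half-line tails are handled by the explicit exponential form of $\phi$ near $\sup\Sigma$ and by Theorem~\ref{thmSARes}/Lemma~\ref{lemSM} near $\inf\Sigma$, and finally the differential equation~\eqref{eqnDEho} itself reads off $\omega=\tilde\omega$ and then $\dip=\tilde\dip$. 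The main obstacle is the one new ingredient: making rigorous that exponential type zero of the de Branges functions implies the bounded-type property of the cross-quotient $\tilde{E}(\ledot,\tilde{c})/E(\ledot,c)$ — everything else is a verbatim replay of Theorem~\ref{thmIU}. One should be slightly careful that ``exponential type zero'' of $E(\ledot,c)$ transfers to $\phi(\ledot,c)$ and $\phi'(\ledot,c)$ (immediate, since $E(z,c)=z\phi(z,c)-\I\phi'(z,c)$ and a comparison of indicator functions, or simply because both $\phi(\ledot,c)$ and $\phi'(\ledot,c)$ are entire of finite exponential type by the remark after Lemma~\ref{lemSolEnt} and their combination $E$ has type zero) so that the Cartwright/bounded-type conclusion genuinely applies to the $\phi$-factors appearing in~\eqref{eqnQuoSplit}.
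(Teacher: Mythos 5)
Your reduction to Theorem~\ref{thmIU} is the right idea, but the step you yourself single out as ``the main obstacle'' is exactly where the argument breaks down. You claim that a de Branges function of exponential type zero is automatically of bounded type in $\H$ (equivalently, lies in the Cartwright class), so that the cross-quotient $\tilde{E}(\ledot,\tilde{c})/E(\ledot,c)$, and hence~\eqref{eqnIUquot}, is of bounded type. This implication is false: membership in the Cartwright class requires, besides finite exponential type, convergence of the logarithmic integral $\int_\R \log^+|E(\lambda,c)|\,(1+\lambda^2)^{-1}d\lambda$, and exponential type zero does not guarantee this (an entire function of minimal type of order one may grow like $\exp(|\lambda|/\log|\lambda|)$ on the real axis, for which the integral diverges; the Hermite--Biehler inequality $|E(z,c)|>|E(z^\ast,c)|$ does not rule such examples out). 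Krein's theorem, and the argument in Corollary~\ref{corFScartwright} that you cite, go in the \emph{opposite} direction: there one first establishes bounded type in both half-planes via Herglotz-function identities and only then concludes membership in the Cartwright class. Indeed, if exponential type zero implied bounded type, the present corollary would be a trivial special case of Theorem~\ref{thmIU} and Kotani's refinement of the ordering theorem would be superfluous.

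The paper argues in the reverse order. It invokes a variant of de Branges' subspace ordering theorem due to Kotani (\cite[Theorem~A.2]{LeftDefiniteSL}, \cite{ko76}) which, for structure functions of exponential type zero, yields directly from Theorem~\ref{thmdBemb} that one of the spaces $\cB(c)$, $\tilde{\cB}(\tilde{c})$ contains the other --- \emph{without} any bounded-type hypothesis. Once this comparability is known, say $\tilde{\cB}(\tilde{c})\subseteq\cB(c)$, both $\tilde{\phi}(\ledot,\tilde{c})/E(\ledot,c)$ and $\phi(\ledot,c)/E(\ledot,c)$ lie in $H^2(\H)$, so the quotient $\tilde{\phi}(\ledot,\tilde{c})/\phi(\ledot,c)$ is of bounded type; multiplying by the factor $\tilde{\phi}(\ledot,c)/\tilde{\phi}(\ledot,\tilde{c})$, which is of bounded type by Proposition~\ref{propdBprop}~\ref{itemdBincl} exactly as in~\eqref{eqnQuoSplit}, verifies hypothesis~\eqref{eqnIUquot}, and Theorem~\ref{thmIU} is then applied as a black box. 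So the bounded-type property is a \emph{consequence} of the ordering, not a prerequisite for it. To repair your proof you need Kotani's theorem (or an equivalent substitute); given that, your replay of the entire chain argument of Theorem~\ref{thmIU} is unnecessary, since the direct reduction suffices.
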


\begin{proof}
 By a  variant of de Branges' subspace ordering theorem \cite[Theorem~A.2]{LeftDefiniteSL}, \cite{ko76}, we infer from Theorem~\ref{thmdBemb} that the space $\cB(c)$ is contained in $\tilde{\cB}(\tilde{c})$ or that $\tilde{\cB}(\tilde{c})$ is contained in $\cB(c)$. 
 Similarly as for~\eqref{eqnQuoSplit}, one sees that the quotient 
 \begin{align*}
  \frac{\tilde{\phi}(z,c)}{\phi(z,c)} = \frac{\tilde{\phi}(z,c)}{\tilde{\phi}(z,\tilde{c})} \cdot \frac{\tilde{\phi}(z,\tilde{c})}{\phi(z,c)}, \quad z\in\H, 
 \end{align*}
 is of bounded type in $\H$ and it remains to apply Theorem~\ref{thmIU}. 
\end{proof}


Next, we will show that the semi-axis Weyl--Titchmarsh functions introduced in Subsection~\ref{subsecWTpm} uniquely determine the coefficients on the corresponding semi-axis.
  
\begin{theorem}
 Fix $c\in\R$ and let $\gamma$, $\tilde{\gamma}\in[0,\pi)$.  
 If the corresponding Weyl--Titchmarsh functions $m_{\gamma,\pm}$ and $\tilde{m}_{\tilde{\gamma},\pm}$ are equal, then $\gamma=\tilde{\gamma}$ as well as $\omega = \tilde{\omega}$ and $\dip = \tilde{\dip}$ on the semi-axis $J_\pm$. 
\end{theorem}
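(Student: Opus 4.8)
The strategy is a standard Borg--Marchenko type argument, transplanted to the present measure setting and adapted to the fact that the Weyl--Titchmarsh function has an extra factor $z$ built into the boundary conditions at $c$. I would treat the case $J_+$; the case $J_-$ is identical after the obvious reflection. First, observe that knowing $m_{\gamma,+}$ as a meromorphic function determines $\gamma$: from the asymptotics in Corollary~\ref{corWTasympm}, $m_{\gamma,+}(\I\varepsilon)\to\infty$ as $\varepsilon\downarrow0$ precisely when $\gamma=0$, and when $\gamma\in(0,\pi)$ the constant term $-\cot\gamma$ of the expansion recovers $\gamma$ uniquely. Hence $m_{\gamma,+}=\tilde m_{\tilde\gamma,+}$ forces $\gamma=\tilde\gamma$, and from now on I write $\gamma$ for both. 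It then suffices to show that the coefficients agree on $J_+$.

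The key object is the Weyl solution. For $z$ in the common resolvent set (excluding zero), let $\psi_+(z,\redot)$ be the solution of~\eqref{eqnDEho} (for the coefficients $\omega,\dip$) lying in $\HR$ near $+\infty$, normalized so that $\psi_+(z,\redot)=\theta_\gamma(z,\redot)+m_{\gamma,+}(z)\phi_\gamma(z,\redot)$ as in~\eqref{eq:m_pm}, and similarly $\tilde\psi_+$ for $\tilde\omega,\tilde\dip$. Since $\gamma=\tilde\gamma$, the initial data of $\theta_\gamma,\phi_\gamma$ at $c$ depend only on $\gamma$ and $z$, not on the coefficients; hence at $x=c$ one has $\psi_+(z,c)=\theta_\gamma(z,c)+m_{\gamma,+}(z)\phi_\gamma(z,c)$ and the same for $\tilde\psi_+$ with the same $m$, so $\psi_+(z,c)=\tilde\psi_+(z,c)$ and likewise $\psi_+'(z,c)=\tilde\psi_+'(z,c)$. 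Thus $\psi_+$ and $\tilde\psi_+$ have identical Cauchy data at $c$. The standard Borg--Marchenko mechanism now needs to be turned around: I would show that knowing, for all $z$ in a half-plane, the Cauchy data at $c$ of the $L^2$-Weyl solution forces the coefficients to agree. Concretely, consider the Weyl disk / Green's function picture from Theorem~\ref{thm:Tinv_pm} and Theorem~\ref{thmPencilpm}: the resolvent of $\T_{\gamma,+}$, equivalently the associated quadratic pencil resolvent, is built from $\psi_+$ and $\phi_\gamma$, and its diagonal Green's function at points $x>c$ equals (up to explicit entire corrections coming from the $\delta_{x,+}$ normalization, cf.~\eqref{eqnRelResm})
\begin{align*}
 \pm z^{-1}\bigl(\theta_\gamma(z,x)+m_{\gamma,+}(z)\phi_\gamma(z,x)\bigr)\phi_\gamma(z,x).
\end{align*}
Running the uniqueness argument of Theorem~\ref{thmIU} localized to $[c,\infty)$ — i.e. using the de Branges spaces $\cB(c)$ of Subsection~\ref{subsecdB}, which are governed entirely by the restriction of $\omega,\dip$ to $[c,\infty)$ — one identifies $\cB(c)=\tilde\cB(c)$ including norms, hence $\phi(z,x)=\tilde\phi(z,x)$ and $\psi_+(z,x)=\tilde\psi_+(z,x)$ for all $x\ge c$, where now $\phi$ is the distinguished real entire solution; from equality of these solutions the measures are read off on every $(a,b)\subseteq[c,\infty)$ exactly as in the last paragraph of the proof of Theorem~\ref{thmIU}, using~\eqref{eqnDEho} and evaluating the resulting identity at $z=0$ and then dividing by $z$.

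Alternatively, and perhaps more cleanly, I would avoid invoking Hypothesis~\ref{hypRESol} (which is not assumed here) and argue directly: from $\psi_+(z,c)=\tilde\psi_+(z,c)$, $\psi_+'(z,c)=\tilde\psi_+'(z,c)$ one gets that for each fixed $z$ the two solutions $\psi_+(z,\redot)$ and $\tilde\psi_+(z,\redot)$ of two a priori different equations have matching Cauchy data at $c$; writing down the integrated form~\eqref{eqnDEint} for each on an interval $[c,x]$ and subtracting yields, for $x>c$,
\begin{align*}
 -\bigl(\psi_+'(z,x)-\tilde\psi_+'(z,x)\bigr)+\tfrac14\!\int_c^x\!\!\bigl(\psi_+(z,s)-\tilde\psi_+(z,s)\bigr)ds
 = z\!\int_c^x\!\!\psi_+\,d\omega - z\!\int_c^x\!\!\tilde\psi_+\,d\tilde\omega + z^2\!\!\int_c^x\!\!\psi_+\,d\dip - z^2\!\!\int_c^x\!\!\tilde\psi_+\,d\tilde\dip,
\end{align*}
and the task is to propagate the equality $\psi_+=\tilde\psi_+$ rightward. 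This requires the asymptotic comparison at $+\infty$ of the two Weyl solutions — this is the genuine Borg--Marchenko input — so that the difference, which decays, can be controlled; the cleanest route is to note that the Weyl solutions are characterized up to scalars by lying in $\HR$ near $+\infty$, and to use the spectral/de Branges machinery as above to force global equality on $[c,\infty)$. \textbf{The main obstacle} is precisely this step: turning the equality of Cauchy data at the finite endpoint $c$ into equality of the coefficient measures on all of $[c,\infty)$, i.e.\ the analogue of the high-energy Borg--Marchenko asymptotics in the present singular, measure-valued, $z^2$-dependent setting; I expect this to be handled either by the localized de Branges-space argument (reducing to Theorem~\ref{thmdBemb} and de Branges' ordering theorem applied to the half-line problem) or by a direct Gronwall-type estimate combined with the characterization of $\psi_+$ as the unique $L^2$-solution near $+\infty$ from Corollary~\ref{corWS}. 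The remaining steps — extracting $\gamma$ from the asymptotics, reducing to equality of Cauchy data at $c$, and reading off $\omega,\dip$ from equality of solutions via~\eqref{eqnDEho} evaluated at $z=0$ — are routine.
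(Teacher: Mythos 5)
Your opening step (reading $\gamma=\tilde\gamma$ off the asymptotics of Corollary~\ref{corWTasympm} and reducing to $\gamma=0$ via~\eqref{eq:wf_pm}) matches the paper. But the central step --- converting equality of the Weyl functions into equality of the measures on the half-line --- is left as an acknowledged obstacle in your plan, and neither of the two routes you sketch actually closes it. The ``localized de Branges argument on $[c,\infty)$'' is not available as stated: the spaces $\cB(c)$, the embedding Theorem~\ref{thmdBemb}, and the whole chain of Proposition~\ref{propdBprop} are all built under Hypothesis~\ref{hypRESol} and refer to the \emph{whole-line} spectral measure $\mu$; nothing in Section~\ref{secIST} provides a half-line spectral measure or a half-line embedding theorem to localize against. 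Your ``alternative'' direct route ends by appealing back to ``the spectral/de Branges machinery as above,'' which is circular. Equality of the Cauchy data of $\psi_+$ and $\tilde\psi_+$ at the single point $c$, for solutions of two different equations, carries no information by itself without the analytic machinery that you correctly identify as missing.

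The idea the paper uses, and which your proposal lacks, is an extension-by-zero trick: to treat $J_-$, one may assume without loss of generality that $|\omega|+\dip$ and $|\tilde\omega|+\tilde\dip$ vanish on $J_+$. Then Hypothesis~\ref{hypRESol} holds trivially with the explicit choices $\phi(z,x)=\tilde\phi(z,x)=\E^{-x/2}$ and $\theta(z,x)=\tilde\theta(z,x)=\E^{x/2}$ for $x\geq c$, the whole-line singular Weyl--Titchmarsh function is computed explicitly as the fractional-linear expression
\begin{align*}
 M(z)=\E^{c}\,\frac{1+2z\,m_{0,-}(z)}{1-2z\,m_{0,-}(z)},
\end{align*}
so $M=\tilde M$, hence $\mu=\tilde\mu$ by~\eqref{eqndefmu}; since the quotient~\eqref{eqnIUquot} is then constant, the already-proved whole-line Theorem~\ref{thmIU} yields $\omega=\tilde\omega$ and $\dip=\tilde\dip$ on $J_-$. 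The case of $J_+$ is then handled by reflection (with a small bookkeeping correction $\omega(\lbrace c\rbrace)+z\,\dip(\lbrace c\rbrace)$ coming from the half-open interval convention, which your plan also does not address). This device is precisely what supplies the ``Borg--Marchenko input'' you were looking for; without it, or an equivalent substitute, the proposal does not constitute a proof.
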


\begin{proof}
 If $m_{\gamma,\pm}$ and $\tilde{m}_{\tilde{\gamma},\pm}$ are equal, then we infer from Corollary~\ref{corWTasympm} that $\gamma=\tilde{\gamma}$. 
 In particular, this guarantees that $m_{0,\pm}$ and $\tilde{m}_{0,\pm}$ are equal as well by~\eqref{eq:wf_pm}. 
 
 Firstly, we consider the case of the left semi-axis $J_-$, that is, when $m_{0,-}$ and $\tilde{m}_{0,-}$ are equal. 
 Without loss of generality, we may assume that the measures $|\omega|+\dip$ and $|\tilde{\omega}|+\tilde{\dip}$ vanish on $J_+$. 
 As a consequence, we may choose real entire fundamental systems as in Hypothesis~\ref{hypRESol} and Lemma~\ref{lemTheta} such that 
 \begin{align*}
    \phi(z,x) & = \tilde{\phi}(z,x)  = \E^{-\frac{x}{2}}, & \theta(z,x) & = \tilde{\theta}(z,x) = \E^{\frac{x}{2}}, 
 \end{align*}
 for all $x\geq c$ and $z\in\C$. 
 In view of~\eqref{eqn_mpsi} and~\eqref{eq:wf_pm}, the corresponding singular Weyl--Titchmarsh functions are then related via 
  \begin{align*}
  M(z) =  \E^c \frac{1+2z\,m_{0,-}(z)}{1-2z\,m_{0,-}(z)} = \E^c \frac{1+2z\,\tilde{m}_{0,-}(z)}{1-2z\,\tilde{m}_{0,-}(z)} = \tilde{M}(z), \quad z\in\C\backslash\R. 
 \end{align*}
 But this shows that the corresponding spectral measures $\mu$ and $\tilde{\mu}$ are equal by~\eqref{eqndefmu}. 
 Upon noting that the quotient in~\eqref{eqnIUquot} is constant, it remains to apply Theorem~\ref{thmIU} to conclude that $\omega=\tilde{\omega}$ and $\dip=\tilde{\dip}$ on $J_-$.
 
 Secondly, we turn to the case of the right semi-axis $J_+$. 
 By a reflection argument, this case can be reduced to concluding that $\omega=\tilde{\omega}$ and $\dip=\tilde{\dip}$ on $(-\infty,c]$ from 
 \begin{align*}
   m_{0,-}(z) + \omega(\lbrace c\rbrace) + z\,\dip(\lbrace c\rbrace) = \tilde{m}_{0,-}(z) + \tilde{\omega}(\lbrace c\rbrace) + z\,\tilde{\dip}(\lbrace c\rbrace) , \quad z\in\C\backslash\R. 
 \end{align*}
 However, this can be done in much the same manner as above. 
\end{proof}

Note that in general one can not read off the position of the interior point $c$ from our semi-axis Weyl--Titchmarsh function $m_{\gamma,\pm}$.  
For example, simple counterexamples are provided by cases in which the coefficients $\omega$ and $\dip$ are periodic. 

As a final remark, let us mention that it is also possible to obtain inverse uniqueness results for our spectral problem on bounded intervals. 
Employing the methods presented in Section~\ref{secIST}, it is possible to show that the Weyl--Titchmarsh function $m_{\alpha,\beta}$ introduced in Subsection~\ref{subsecWTbound} uniquely determines the coefficients $\omega$ and $\dip$ on the corresponding bounded interval $[a,b)$. 
Based on this, one also obtains classical inverse uniqueness results in terms of two discrete spectra in the usual manner.

\end{document}